\newtheorem{theorem}{Theorem}[section]							
\newtheorem{lemma}[theorem]{Lemma}
\newtheorem{proposition}[theorem]{Proposition}
\newtheorem{corollary}[theorem]{Corollary}
\newtheorem*{theoremunnum}{Theorem}
\theoremstyle{definition}
\newtheorem{definition}[theorem]{Definition}
\newtheorem{remark}[theorem]{Remark}
\newcommand{\partderx}[1]{\frac{\partial^{#1}}{\partial x^{#1}}}  			
\newcommand{\partdery}[1]{\frac{\partial^{#1}}{\partial y^{#1}}}
\newcommand{\partderxInd}[1]{\frac{\partial}{\partial x_{#1}}}
\newcommand{\oneton}{\left\{1, \ldots, n\right\}}
\newcommand{\zeroinfty}{\left[0,\infty\left[\right.\right.}
\newcommand{\Natzeroinfty}{\Natural_0\cup\left\{\infty\right\}}
\newcommand{\interclcl}[2]{\left[#1,#2\right]}
\newcommand{\interopcl}[2]{\left.\right]#1,#2\left.\right]}
\newcommand{\interclop}[2]{\left[\right.#1,#2\left[\right.}
\newcommand{\contsemiE}{\mathcal{P}_E}
\newcommand{\contsemiF}{\mathcal{P}_F}
\newcommand{\Weights}{\mathcal{W}}
\newcommand{\Real}{\mathds{R}}
\newcommand{\Natural}{\mathds{N}}
\newcommand{\cpsub}[1]{\mathcal{K}(#1)}
\newcommand{\defeq}{\stackrel{def}{=}}
\newcommand{\projectlim}{\underleftarrow{\lim}\:}
\newcommand{\Ht}{Hausdorff topological }
\newcommand{\Hlc}{Hausdorff locally convex }
\DeclareMathOperator{\ev}{ev}
\DeclareMathOperator{\supp}{supp}
\DeclareMathOperator{\id}{id}
\DeclareMathOperator{\Evol}{Evol}
\DeclareMathOperator{\evol}{evol}
\begin{document}
\title{\textbf{Exponential laws for weighted function spaces and regularity of weighted mapping groups}}
\author{Natalie Nikitin}
\date{}
\maketitle

\begin{abstract}
Let $E$ be a locally convex space, $U\subseteq\Real^n$ as well as $V\subseteq\Real^m$
be open and $k,l\in\Natzeroinfty$. Locally convex spaces $C^{k,l}(U\times V,E)$ of functions 
with different degrees of differentiability in the $U$- and $V$-variable were recently studied by H.Alzaareer, 
who established
an exponential law of the form $C^{k,l}(U\times V,E)\cong C^k(U,C^l(V,E))$. We establish an
analogous exponential law 
$C^{k,l}_{\Weights_1\otimes\Weights_2}(U\times V,E)\cong C^k_{\Weights_1}(U,C^l_{\Weights_2}(V,E))$
for suitable spaces of weighted $C^{k,l}$-maps, as well as an analogue for spaces of weighted continuous 
functions on locally compact spaces. The results entail that certain Lie groups $C^l_\Weights(U,H)$ of
weighted mappings introduced by B.Walter are $C^k$-regular, for each $C^k$-regular Lie group $H$ modeled
on a locally convex space and a suitable set of weights $\Weights$.
\end{abstract}

\section{Introduction}
For many purposes in Mathematical Analysis it is very natural to control the growth of a continuous (or differentiable)
function by means of weight functions. Prime examples ate the Schwartz spaces od rapidly decreasing smooth 
functions in the theory of (tempered) distributions.

\medskip
Spaces of weighted continuous functions on topological spaces were introduced and studied by L.Nachbin \cite{Nachbin} 
and W.H. Summers \cite{Summers} (in the scalar-valued case), K.-D. Bierstedt \cite{Bierstedt} and 
Prolla J.B. \cite{Prolla} (in the vector-valued case), and many others. Many results
concerning spaces of weighted differentiable functions can be found in \cite{Garnir-DeWilde-SchmetsIII} and
\cite{BWalter}. The aim of this article is the development of an Exponential Law for spaces of weighted differentiable functions with values in locally convex spaces, which says that under some conditions we have

\begin{equation*}
	C_{\Weights_1}^k(U,C_{\Weights_2}^l(V,E))\cong C_{\Weights_1\otimes\Weights_2}^{k,l}(U\times V,E),
\end{equation*}

where $U\subseteq\Real^n$, $V\subseteq\Real^m$ are open subsets, $\Weights_1$, $\Weights_2$ are sets of weights on $U$ and $V$, respectively, 
and $k,l\in\Natzeroinfty$.

\medskip
Starting with a simpler situation, we consider a subspace $C_\Weights(X,E)$ of $C(X,E)$, where $X$ is a \Ht space, $E$ is a \Hlc space and
$\Weights$ is a set of functions $f:X\to\zeroinfty$, called \textit{weights}, and endow it with a locally convex topology (see Definition \ref{def:weighted-function-space} for details).
Using the Exponential Law for spaces of continuous functions (Proposition \ref{prop:classical-exp-law}, as discussed in 
\cite[Appendix B]{GlExpLaws}), we prove Theorem \ref{thm:weighted-exp-law-comb}, which states:

\begin{theoremunnum}[\textbf{Exponential Law for spaces of weighted continuous functions}]
Let $X_1$, $X_2$ be locally compact spaces and $E$ be a \Hlc space. Let $\Weights_1$ and $\Weights_2$ be sets
of weights on $X_1$ and $X_2$, respectively, such that

\begin{enumerate}
	\item [(i)] $\Weights_1$, $\Weights_2$ satisfy the $o$-condition,
	\item [(ii)] all weights $f\in\Weights_1$, $g\in\Weights_2$ are bounded on compact subsets of $X_1$ and $X_2$, respectively,
	\item [(iii)] for each compact subset $K\subseteq X_1$ there exists a weight $f\in\Weights_1$ such that $\inf_{x\in K}f(x)>0$, and 
	likewise for $\Weights_2$.
\end{enumerate}

Then the linear map

\begin{equation*}
	\Psi:C_{\Weights_1}(X_1, C_{\Weights_2}(X_2,E))\to C_\Weights(X_1\times X_2, E),\quad\gamma\mapsto\gamma^\wedge,
\end{equation*}

where $\Weights=\Weights_1\otimes\Weights_2$, is a homeomorphism.
\end{theoremunnum}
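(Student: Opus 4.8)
The plan is to obtain the statement by transporting the classical exponential law for spaces of continuous functions (Proposition~\ref{prop:classical-exp-law}) to the weighted setting. That law provides a homeomorphism $\Phi\colon C(X_1,C(X_2,E))\to C(X_1\times X_2,E)$, $\gamma\mapsto\gamma^\wedge$ with $\gamma^\wedge(x_1,x_2)=\gamma(x_1)(x_2)$, together with its inverse $h\mapsto h^\vee$, where $h^\vee(x_1)\defeq h(x_1,\cdot)$. Since the map $\Psi$ in the statement is nothing but the restriction of $\Phi$, it suffices to verify three points: (a) $\Phi$ restricts to a well-defined map $\Psi$ into $C_\Weights(X_1\times X_2,E)$; (b) $\Phi^{-1}$ restricts to a well-defined map $C_\Weights(X_1\times X_2,E)\to C_{\Weights_1}(X_1,C_{\Weights_2}(X_2,E))$, so that $\Psi$ is bijective (being a restriction of the bijection $\Phi$); (c) $\Psi$ and $\Psi^{-1}$ are continuous. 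The continuity in (c) will turn out to be essentially automatic, since the defining seminorms of the two locally convex spaces correspond under $\Phi$; the real work is in (a) and, above all, in (b).

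For the seminorm bookkeeping, recall from Definition~\ref{def:weighted-function-space} that the topology of $C_{\Weights_2}(X_2,E)$ is generated by the seminorms $\eta\mapsto\|\eta\|_{g,q}\defeq\sup_{x_2\in X_2}g(x_2)\,q(\eta(x_2))$ for $g\in\Weights_2$ and $q$ a continuous seminorm on $E$; that of $C_{\Weights_1}(X_1,C_{\Weights_2}(X_2,E))$ is generated by $\gamma\mapsto\sup_{x_1\in X_1}f(x_1)\,\|\gamma(x_1)\|_{g,q}$ for $f\in\Weights_1$; and that of $C_\Weights(X_1\times X_2,E)$ is generated by $h\mapsto\sup_{(x_1,x_2)}(f\otimes g)(x_1,x_2)\,q\bigl(h(x_1,x_2)\bigr)$, using $\Weights=\Weights_1\otimes\Weights_2$. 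For any $\gamma\in C(X_1,C(X_2,E))$ one has the bare identity $\sup_{x_1}f(x_1)\,\|\gamma(x_1)\|_{g,q}=\sup_{x_1}\sup_{x_2}f(x_1)g(x_2)\,q(\gamma(x_1)(x_2))=\sup_{(x_1,x_2)}(f\otimes g)(x_1,x_2)\,q(\gamma^\wedge(x_1,x_2))$, so $\Psi$ sends each generating seminorm of its domain to the corresponding generating seminorm of its codomain. Thus, once (a) and (b) are settled, $\Psi$ is a topological embedding which is onto, hence a homeomorphism, and (c) follows at no extra cost.

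For (a): given $\gamma\in C_{\Weights_1}(X_1,C_{\Weights_2}(X_2,E))$, hypothesis (iii) for $\Weights_2$ forces the inclusion $C_{\Weights_2}(X_2,E)\hookrightarrow C(X_2,E)$ (compact–open topology) to be continuous, for if $K\subseteq X_2$ is compact and $g\in\Weights_2$ has $c\defeq\inf_{x\in K}g(x)>0$, then $\sup_{x\in K}q(\eta(x))\le c^{-1}\|\eta\|_{g,q}$. Hence $\gamma$ is continuous into $C(X_2,E)$, so $\gamma^\wedge=\Phi(\gamma)\in C(X_1\times X_2,E)$ by Proposition~\ref{prop:classical-exp-law}, and the identity of the previous paragraph shows that the relevant weighted seminorms of $\gamma^\wedge$ are finite. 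The remaining decay-at-infinity requirement of Definition~\ref{def:weighted-function-space} for $\gamma^\wedge$ is obtained from the $o$-condition (i) together with the compact-boundedness (ii): the pertinent sublevel set lies in the product of a relatively compact subset of $X_1$ (coming from $\gamma\in C_{\Weights_1}(\dots)$) with a relatively compact subset of $X_2$ (coming from the $o$-condition on the chosen $g\in\Weights_2$ and the boundedness of $f$ on the former set), and a product of relatively compact sets is relatively compact. Hence $\gamma^\wedge\in C_\Weights(X_1\times X_2,E)$.

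For (b) — the main obstacle — let $h\in C_\Weights(X_1\times X_2,E)$. Fixing $x_1\in X_1$ and applying (iii) for $\Weights_1$ to $K=\{x_1\}$ yields $f\in\Weights_1$ with $f(x_1)>0$, whence $\sup_{x_2}g(x_2)q(h(x_1,x_2))\le f(x_1)^{-1}\sup_{(y_1,y_2)}(f\otimes g)(y_1,y_2)q(h(y_1,y_2))<\infty$ for all $g\in\Weights_2$ and continuous seminorms $q$, and the decay condition for $h(x_1,\cdot)$ follows analogously; so $h^\vee(x_1)\in C_{\Weights_2}(X_2,E)$. It remains to upgrade the continuity of $h^\vee\colon X_1\to C(X_2,E)$ (given by Proposition~\ref{prop:classical-exp-law}) to continuity into the finer space $C_{\Weights_2}(X_2,E)$, and this is where all three hypotheses are used together. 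Fix $x_1^0\in X_1$, $g\in\Weights_2$, a continuous seminorm $q$ and $\varepsilon>0$. Choose a compact neighbourhood $K_1$ of $x_1^0$ (local compactness) and, by (iii) for $\Weights_1$, an $f\in\Weights_1$ with $c\defeq\inf_{x_1\in K_1}f(x_1)>0$. Since $(f\otimes g)\,(q\circ h)$ decays at infinity on $X_1\times X_2$ — here the $o$-condition (i) enters — one finds a compact $K_2\subseteq X_2$ with $f(x_1)g(x_2)q(h(x_1,x_2))<c\varepsilon/4$ whenever $x_2\notin K_2$; dividing by $f(x_1)\ge c$ gives $g(x_2)q(h(x_1,x_2))<\varepsilon/4$ for all $x_1\in K_1$, $x_2\notin K_2$, and combining with the same bound at $x_1^0$ yields $g(x_2)q(h(x_1,x_2)-h(x_1^0,x_2))<\varepsilon/2$ for $x_1\in K_1$, $x_2\notin K_2$. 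On the compact set $K_1\times K_2$ the map $h$ is uniformly continuous (equivalently, $x_1\mapsto h(x_1,\cdot)|_{K_2}\in C(K_2,E)$ is continuous, again by the classical exponential law applied to the compact space $K_2$), so there is a neighbourhood $N\subseteq K_1$ of $x_1^0$ with $q(h(x_1,x_2)-h(x_1^0,x_2))<\varepsilon\bigl(2(1+\sup_{K_2}g)\bigr)^{-1}$ for $x_1\in N$, $x_2\in K_2$, where $\sup_{K_2}g<\infty$ by (ii); multiplying by $g(x_2)$ gives the bound $\varepsilon/2$ on $N\times K_2$. Combining the two regimes, $\|h^\vee(x_1)-h^\vee(x_1^0)\|_{g,q}<\varepsilon$ for all $x_1\in N$, which proves continuity of $h^\vee$. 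Hence $\Phi^{-1}$ restricts as required, $\Psi$ is a bijection, and by the seminorm identity it is a homeomorphism. The crux, as indicated, is this last estimate: converting the pointwise weighted bounds available for $h$ into a bound that is simultaneously uniform in $x_2$ and continuous in $x_1$, for which the $o$-condition on $\Weights_2$, the compact-boundedness of the weights, and the existence of weights bounded below on compacta are all needed.
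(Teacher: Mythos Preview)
Your argument is correct and takes a genuinely different route from the paper's. The paper does not verify directly that $h^\vee$ is continuous into $C_{\Weights_2}(X_2,E)$; instead it first shows (Theorem~\ref{thm:weighted-exp-top-emb}) that $\Psi$ is a topological embedding via the same seminorm identity you use, and then obtains surjectivity indirectly: $C_c(X_1\times X_2,E)$ lies in the image (Proposition~\ref{prop:cp-supp-in-image}) and is dense in $C_\Weights(X_1\times X_2,E)$ by an Urysohn cut-off argument (Lemma~\ref{lem:cp-supp-dense-in-weighted}), while the image is closed because the domain is complete when $E$ is (Proposition~\ref{prop:weighted-space-complete}); general $E$ is handled by passage to the completion. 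Your direct estimate for the continuity of $h^\vee$ is more elementary---no completion, no density argument---and incidentally shows that only the $o$-condition on $\Weights_2$ is really used for this direction. The paper's detour, on the other hand, is the template for the harder $C^{k,l}$ case (Theorem~\ref{thm:weighted-Ckl-exp-law-comb}), where the density of $C_c^{k,l}$ (Proposition~\ref{prop:smooth-compact-supp-dense-in-weighted-Ckl-assertion}) becomes the central technical result and a direct continuity check of $h^\vee$ into a weighted $C^l$-space would be much more involved. One caveat: your references to a ``decay-at-infinity requirement'' in Definition~\ref{def:weighted-function-space} are spurious---that definition demands only $\|\gamma\|_{f,q}<\infty$, with no vanishing condition---so the corresponding sentences in (a) and at the start of (b) are superfluous (though harmless). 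The decay you genuinely exploit in (b) is not part of the definition but a \emph{consequence} of the $o$-condition on $\Weights_2$: choosing $g'\in\Weights_2$ with $g=o(g')$ gives $f(x_1)g(x_2)q(h(x_1,x_2))\le\delta\,\|h\|_{f\otimes g',q}$ outside a compact set in $X_2$, exactly the bound your argument needs.
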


Further, after recalling the concept of differentiability in locally convex spaces, 
we pass on to spaces of weighted differentiable functions introduced
in Definitions \ref{def:weighted-Ck-function-space} and \ref{def:weighted-Ckl-function-space}. 
Using the Exponential Law for spaces of differentiable functions (see Proposition 
\ref{cor:part-Ckl-classical-exponential-law}, as proven in \cite[Theorem 94]{Alz}), we prove the following
Theorem \ref{thm:weighted-Ckl-exp-law-comb}:

\begin{theoremunnum}[\textbf{Exponential Law for spaces of weighted differentiable functions}]
Let $E$ be a \Hlc space, $U\subseteq\Real^n$ and $V\subseteq\Real^m$ be open subsets, and $k,l\in\Natzeroinfty$.
For the set of weights $\Weights_1\subseteq C^k(U,\zeroinfty)$ on $U$ we assume that

\begin{itemize}
	\item [(i)]	$\Weights_1$ satisfies the $o$-condition,
	\item [(ii)]	for each $f\in\Weights_1$ and $\alpha\in\Natural_0^n$ with $|\alpha|\leq k$ there exists 
					$g\in\Weights_1$ such that
					
					\begin{equation*}
						\left|\partderx{\alpha}f(x)\right|\leq g(x)
					\end{equation*}
					
					for all $x\in U$,
\end{itemize}

and likewise for the set of weights $\Weights_2\subseteq C^l(V,\zeroinfty)$ on $V$. Then the linear map 

\begin{equation*}
	\Psi:C_{\Weights_1}^k(U,C_{\Weights_2}^l(V,E))\to C_\Weights^{k,l}(U\times V,E),\quad\gamma\mapsto\gamma^\wedge,
\end{equation*}

where $\Weights=\Weights_1\otimes\Weights_2$, is a homeomorphism.
\end{theoremunnum}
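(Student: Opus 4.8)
The plan is to realize $\Psi$ as the restriction and corestriction of the classical exponential law
$\Phi\colon C^k(U,C^l(V,E))\to C^{k,l}(U\times V,E)$, $\gamma\mapsto\gamma^\wedge$, which by Proposition~\ref{cor:part-Ckl-classical-exponential-law} is a linear homeomorphism with inverse $\eta\mapsto\eta^\vee$, where $\eta^\vee(x):=\eta(x,\cdot)$, and then to match up the weighted seminorms on both sides. Throughout I use that for $\gamma\in C^k(U,C^l(V,E))$ the partial derivatives $\partial_x^\alpha\partial_y^\beta\gamma^\wedge$ exist and are continuous on $U\times V$ for all $\alpha\in\Natural_0^n$, $\beta\in\Natural_0^m$ with $|\alpha|\le k$, $|\beta|\le l$, and that
\begin{equation*}
(\partial_x^\alpha\partial_y^\beta\gamma^\wedge)(x,y)=\bigl((\partial^\alpha\gamma)(x)\bigr)^{(\beta)}(y),
\end{equation*}
where $\partial^\alpha\gamma\colon U\to C^l(V,E)$ is the $\alpha$-th partial derivative of $\gamma$ and $\zeta^{(\beta)}$ the $\beta$-th partial derivative of $\zeta\in C^l(V,E)$; this is part of (or is readily extracted from) Proposition~\ref{cor:part-Ckl-classical-exponential-law}.

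First I would observe that $\Weights:=\Weights_1\otimes\Weights_2=\{f\otimes g:f\in\Weights_1,\ g\in\Weights_2\}$, with $(f\otimes g)(x,y)=f(x)g(y)$, is a set of weights in $C^{k,l}(U\times V,\zeroinfty)$ which again satisfies the two hypotheses of the statement: the $o$-condition for $\Weights$ follows from those for $\Weights_1$ and $\Weights_2$, and since $\partial_x^\alpha\partial_y^\beta(f\otimes g)=\bigl(\partderx{\alpha}f\bigr)\otimes\bigl(\partdery{\beta}g\bigr)$, choosing $\tilde f\in\Weights_1$, $\tilde g\in\Weights_2$ with $|\partderx{\alpha}f|\le\tilde f$ and $|\partdery{\beta}g|\le\tilde g$ dominates $|\partial_x^\alpha\partial_y^\beta(f\otimes g)|$ by $\tilde f\otimes\tilde g\in\Weights$; hence $C^{k,l}_\Weights(U\times V,E)$ is a well-defined locally convex space in the sense of Definition~\ref{def:weighted-Ckl-function-space}. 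The key computation is the identification of seminorms: writing $\|\zeta\|_{g,\beta,q}:=\sup_{y\in V}g(y)\,q(\partial^\beta\zeta(y))$ for the fundamental seminorms of $C^l_{\Weights_2}(V,E)$ (so that the seminorms $\gamma\mapsto\sup_{x\in U}f(x)\,\|(\partial^\alpha\gamma)(x)\|_{g,\beta,q}$ form a fundamental system of $C^k_{\Weights_1}(U,C^l_{\Weights_2}(V,E))$ by Definitions~\ref{def:weighted-Ck-function-space} and~\ref{def:weighted-Ckl-function-space}), the displayed formula gives, for $\gamma\in C^k(U,C^l(V,E))$,
\begin{equation*}
\sup_{x\in U}f(x)\,\bigl\|(\partial^\alpha\gamma)(x)\bigr\|_{g,\beta,q}
=\sup_{x\in U}\sup_{y\in V}f(x)g(y)\,q\Bigl(\bigl((\partial^\alpha\gamma)(x)\bigr)^{(\beta)}(y)\Bigr)
=\sup_{(x,y)\in U\times V}(f\otimes g)(x,y)\,q\bigl((\partial_x^\alpha\partial_y^\beta\gamma^\wedge)(x,y)\bigr),
\end{equation*}
i.e. the seminorm of $\gamma$ attached to $(f,\alpha)$ and $\|\cdot\|_{g,\beta,q}$ equals the weighted $C^{k,l}$-seminorm of $\gamma^\wedge$ attached to $f\otimes g\in\Weights$, $(\alpha,\beta)$ and $q$. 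In particular $\gamma$ has all these seminorms finite if and only if $\gamma^\wedge$ does, and once well-definedness is known this identity (together with the classical homeomorphism $\Phi$, which takes care of any non-weighted part of the topologies) shows $\Psi$ is a topological embedding onto its image.

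It remains to see that $\Psi$ is onto $C^{k,l}_\Weights(U\times V,E)$. For the forward inclusion, if $\gamma\in C^k_{\Weights_1}(U,C^l_{\Weights_2}(V,E))$ then, post-composing with the continuous linear inclusion $C^l_{\Weights_2}(V,E)\hookrightarrow C^l(V,E)$, we obtain $\gamma\in C^k(U,C^l(V,E))$, hence $\gamma^\wedge\in C^{k,l}(U\times V,E)$ by Proposition~\ref{cor:part-Ckl-classical-exponential-law}, and the seminorm identity shows $\gamma^\wedge\in C^{k,l}_\Weights(U\times V,E)$. For the reverse inclusion, let $\eta\in C^{k,l}_\Weights(U\times V,E)$. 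By the classical law $\eta^\vee\in C^k(U,C^l(V,E))$; fixing $x\in U$ and choosing $f\in\Weights_1$ with $f(x)>0$, the case $\alpha=0$ of the seminorm identity shows $\eta^\vee(x)\in C^l_{\Weights_2}(V,E)$. The substantive point is to upgrade ``$\eta^\vee$ is $C^k$ into $C^l(V,E)$'' to ``$\eta^\vee$ is $C^k$ into $C^l_{\Weights_2}(V,E)$'', i.e. to show that the difference quotients of $\eta^\vee$ and of all its partial derivatives in the $U$-variable up to order $k$ converge in the finer weighted topology of $C^l_{\Weights_2}(V,E)$, not merely in $C^l(V,E)$; once this is done, the seminorm identity yields $\eta^\vee\in C^k_{\Weights_1}(U,C^l_{\Weights_2}(V,E))$ with $\Psi(\eta^\vee)=\eta$, which together with the previous paragraph gives the homeomorphism.

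I expect this upgrade to be the main obstacle, and it is the only place where hypotheses (i) and (ii) are genuinely needed. I would run an induction on $k<\infty$ (the case $k=\infty$ then following from the fact that a map is $C^\infty$ iff it is $C^k$ for every finite $k$, together with $\partial_{x_j}\eta\in C^{k-1,l}_\Weights(U\times V,E)$ whenever $\eta\in C^{k,l}_\Weights(U\times V,E)$). The base case $k=0$ reduces to continuity of $\eta^\vee$ into $C^l_{\Weights_2}(V,E)$, which one obtains exactly as in the Exponential Law for weighted continuous functions (Theorem~\ref{thm:weighted-exp-law-comb}), applied to the maps $\partial_y^\beta\eta\in C_{\Weights}(U\times V,E)$ for $|\beta|\le l$ (the hypotheses of that theorem being met here: weights in $C^k(U,\zeroinfty)$ are continuous, hence bounded on compacta, and the remaining requirements are in force for $\Weights_1,\Weights_2$). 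The inductive step rests on the integral remainder in the $U$-variable: for $x\in U$, a direction $v\in\Real^n$ and small $t$,
\begin{equation*}
\tfrac{1}{t}\bigl(\eta^\vee(x+tv)-\eta^\vee(x)\bigr)-\bigl(d(\eta^\vee)(x)\bigr)(v)
=\int_0^1\Bigl(\bigl(d(\eta^\vee)(x+stv)\bigr)(v)-\bigl(d(\eta^\vee)(x)\bigr)(v)\Bigr)\,ds,
\end{equation*}
an identity in $C^l(V,E)$. Applying $\partial^\beta$ in $y$, weighting by $g\in\Weights_2$ and a continuous seminorm $q$ on $E$, and passing to the supremum over $y\in V$, the left-hand weighted seminorm is bounded by $\sup_{s\in[0,1]}\|d(\eta^\vee)(x+stv)-d(\eta^\vee)(x)\|$ measured in $C^l_{\Weights_2}(V,E)$; choosing on a compact neighbourhood of $x$ a weight in $\Weights_1$ bounded away from $0$, it is precisely condition (ii) (which keeps $\partial^\beta$ of $\eta$ in $y$, weighted by $g$, under control after the $U$-integration) and the $o$-condition (i) (which keeps the relevant suprema over $U$ finite) that allow one to conclude this quantity tends to $0$ with $t$, uniformly in $s$, using the inductive hypothesis applied to $\partial_{x_j}\eta\in C^{k-1,l}_\Weights(U\times V,E)$. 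This shows $\eta^\vee$ is $C^1$ into $C^l_{\Weights_2}(V,E)$ with $d(\eta^\vee)(x)(v)=\sum_{j=1}^{n}v_j\,(\partial_{x_j}\eta)^\vee(x)$, a map which is $C^{k-1}$ into the appropriate space of linear maps by the inductive hypothesis; hence $\eta^\vee\in C^k(U,C^l_{\Weights_2}(V,E))$, completing the induction and the proof.
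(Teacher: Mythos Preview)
Your approach is correct in outline and takes a genuinely different route from the paper. Both arguments begin by establishing that $\Psi$ is a linear topological embedding via the seminorm identity $\|\gamma^\wedge\|_{f_1\otimes f_2,(\alpha,\beta),q}=\|\gamma\|_{f_1,\alpha,\|\cdot\|_{f_2,\beta,q}}$ (this is the paper's Theorem~\ref{thm:weighted-Ck-exp-top-emb}), but they diverge sharply on surjectivity. The paper argues indirectly: it proves that $C_c^{k,l}(U\times V,E)$ is dense in $C_\Weights^{k,l}(U\times V,E)$ (Proposition~\ref{prop:smooth-compact-supp-dense-in-weighted-Ckl-assertion}, a substantial result requiring the mollifier constructions and the auxiliary Lemmas~\ref{lem:f-o-one-then-to-zero}--\ref{lem:estimate-der-rho}), observes that compactly supported functions lie in $\operatorname{im}\Psi$ (Proposition~\ref{prop:Ckl-cp-supp-in-image}), and then uses completeness of $C_{\Weights_1}^k(U,C_{\Weights_2}^l(V,E))$ (Proposition~\ref{prop:weighted-Ck-space-complete}) to conclude that the closed image is everything; non-complete $E$ is handled by passing to the completion. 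You instead prove surjectivity directly by induction on $k$, grounding the base case in Theorem~\ref{thm:weighted-exp-law-comb} applied to each $\partial_y^\beta\eta$, and running the induction via a Mean Value estimate.

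Your route is more elementary and, in fact, stronger than you claim: the inductive step uses only that $(\partial_{x_j}\eta)^\vee$ is continuous into $C_{\Weights_2}^l(V,E)$ (the inductive hypothesis), so the bound $\sup_{s\in[0,1]}\|(\partial_{x_j}\eta)^\vee(x+stv)-(\partial_{x_j}\eta)^\vee(x)\|_{g,\beta,q}\to 0$ follows from uniform continuity on a compact segment, with no appeal to hypothesis~(ii). The sentence in which you invoke (ii) ``to keep $\partial^\beta$ of $\eta$ in $y$ under control after the $U$-integration'' is doing no work; only the $o$-condition (via Theorem~\ref{thm:weighted-exp-law-comb}) and continuity of the weights are actually used. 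One small point to tighten: the integral identity you write down is a priori an identity in $C^l(V,E)$, and $\|\cdot\|_{g,\beta,q}$ is not a continuous seminorm there, so the estimate should be justified pointwise in $y$ (or by noting that the integrand is, by induction, a continuous curve in $C_{\Weights_2}^l(V,E)$, whence the weak integral lies there and the standard estimate applies). What the paper's longer argument buys in exchange is the density result Proposition~\ref{prop:smooth-compact-supp-dense-in-weighted-Ckl-assertion} itself, which is of independent use and which does genuinely require hypothesis~(ii).
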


In the proof, we crucially use that
the space $C_c^{k,l}(U\times V,E)$ is dense in
$C_\Weights^{k,l}(U\times V,E)$  (see Proposition 
\ref{prop:smooth-compact-supp-dense-in-weighted-Ckl-assertion}). The proof of this proposition varies 
the proof of the density of $C_c^\infty(U,\Real)$ in the space $C_\Weights^k(U,\Real)$ by H.G. Garnir,
M. De Wilde and J. Schmets in \cite{Garnir-DeWilde-SchmetsIII}. An immediate consequence (Corollary 
\ref{cor:weighted-Ckl-exp-law}) of the Exponential Law is that we have

\begin{equation*}
	C_{\Weights_1}^k(U,C_{\Weights_2}^l(V,E))\cong C_{\Weights_2}^l(V,C_{\Weights_1}^k(U,E))
\end{equation*}

for all $k,l\in\Natzeroinfty$ and suitable sets of weights $\Weights_1$, $\Weights_2$. Moreover, after some 
modifications in the proof of Proposition \ref{prop:smooth-compact-supp-dense-in-weighted-Ckl-assertion},
we obtain that

\begin{equation*}
	C_{\Weights_1}^k(U,C_{\Weights_2}^l(K,E))\cong C_{\Weights_2}^l(K,C_{\Weights_1}^k(U,E)),
\end{equation*}

where $U\subseteq\Real^n$ is open, $K\subseteq\Real^m$ is convex and compact, and $\Weights_1$, $\Weights_2$ 
are appropriate
sets of weights on $U$ and $K$ (see Corollary \ref{cor:weighted-Ckl-exp-law-cp-open}).

As a special case, constructing a set of weights $\Weights_1$ on a subset $U\subseteq\Real^n$ such that

\begin{equation*}
	C_{\Weights_1}^k(U,E)=C^k(U,E)
\end{equation*}

as topological vector spaces for each $k\in\Natzeroinfty$, we get the results

\begin{equation*}
	C^k(U,C_{\Weights_2}^l(V,E))\cong C_{\Weights_2}^l(V,C^k(U,E))
\end{equation*}

and

\begin{equation*}
	C^k(K,C_{\Weights_2}^l(V,E))\cong C_{\Weights_2}^l(V,C^k(K,E)),
\end{equation*}

where $U\subseteq\Real^n$ is open, $K\subseteq\Real^m$ is convex and compact, $\Weights_2$ is a suitable set of weights 
on an open subset $V\subseteq\Real^m$, and $l\in\Natzeroinfty$ (see Remark \ref{rem:weighted-Ck-space-eq-Ck-space} for details).

\medskip
The last section deals with regularity of Lie groups of weighted Lie group-valued functions. Precisely, applying Corollary 
\ref{cor:weighted-Ckl-exp-law-cp-open}, we show in Theorem \ref{thm:weighted-Lie-group-Ck-regular}:

\begin{theoremunnum}
Let $E$ be a \Hlc space and $H$ be a $C^k$-regular Lie group modeled on $E$, with $k\in\Natzeroinfty$. Let 
$\Weights\subseteq C^l(U,\zeroinfty)$ be a set of weights on an open subset $U\subseteq\Real^n$ such that

\begin{enumerate}
	\item [(i)] 	$\mathds{1}_U\in\Weights$,
	\item [(ii)]	$\Weights$ satisfies the $o$-condition,
	\item [(iii)]	for each $f\in\Weights$ and $\alpha\in\Natural_0^n$ with $|\alpha|\leq l$ there exists $g\in\Weights$ such that
					
					\begin{equation*}
						\left|\partderx{\alpha}f(x)\right|\leq g(x)
					\end{equation*}
					
					for all $x\in U$.
\end{enumerate}

Then the Lie group $G:=C_\Weights^l(U,H)$ is $C^k$-regular for each $l\in\Natzeroinfty$.
\end{theoremunnum}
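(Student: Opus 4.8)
The plan is to reduce $C^k$-regularity of $G := C^l_\Weights(U,H)$ to the $C^k$-regularity of $H$ by means of the exponential law. Recall that $C^k$-regularity means: for each $C^k$-curve $\gamma \colon [0,1] \to \mathfrak{g} = C^l_\Weights(U,\mathfrak{h})$ (where $\mathfrak{h} = L(H)$) there is a unique $C^{k+1}$-solution $\eta = \Evol(\gamma)$ to the initial value problem $\eta(0) = e$, $\eta'(t) = \eta(t).\gamma(t)$ (using the right logarithmic derivative), and the map $\evol \colon C^k([0,1],\mathfrak{g}) \to C^{k+1}([0,1],G)$, or rather its composition with evaluation at $1$, is smooth. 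Since $[0,1]$ is convex and compact and $U$ is open, Corollary \ref{cor:weighted-Ckl-exp-law-cp-open} gives a topological isomorphism
\begin{equation*}
	C^k([0,1], C^l_\Weights(U,\mathfrak{h})) \;\cong\; C^l_\Weights(U, C^k([0,1],\mathfrak{h})),
\end{equation*}
provided the weights satisfy hypotheses (i)--(iii); the condition $\mathds{1}_U \in \Weights$ is what makes $C^k([0,1],\mathfrak{h})$ (an unweighted space in the $[0,1]$-variable) appear on the right, and it is also needed so that constant-in-$U$ data and the group operations behave well. So a $C^k$-curve in $\mathfrak{g}$ is ``the same'' as a weighted $C^l$-map $U \to C^k([0,1],\mathfrak{h})$, i.e.\ a $U$-parametrized family of $C^k$-curves in $\mathfrak{h}$, depending on $x \in U$ in a weighted-$C^l$ fashion.

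The key steps, in order. First, establish the analogous exponential-law description of the group $G$ itself and of curves in $G$: $C^k([0,1],G) = C^k([0,1],C^l_\Weights(U,H)) \cong C^l_\Weights(U,C^k([0,1],H))$ as smooth manifolds (this requires the manifold version of the exponential law, obtained by applying Corollary \ref{cor:weighted-Ckl-exp-law-cp-open} chart-wise, using that $H$ is modeled on $E$ and that $C^l_\Weights(U,-)$ and $C^k([0,1],-)$ take smooth maps to smooth maps; one must check the transition functions and that the topology is the right one). Second, given a $C^k$-curve $\gamma$ in $\mathfrak{g}$, transport it to $\tilde\gamma \colon U \to C^k([0,1],\mathfrak{h})$; since $H$ is $C^k$-regular, $\Evol_H$ is defined and the map $\mathrm{evol}_H \colon C^k([0,1],\mathfrak{h}) \to H$ (evolution then evaluation at $1$) is smooth, and more precisely $\Evol_H \colon C^k([0,1],\mathfrak{h}) \to C^{k+1}([0,1],H)$ is smooth. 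Third, postcompose: define $\eta := C^l_\Weights(U,\Evol_H) \circ \tilde\gamma$, a weighted-$C^l$ map $U \to C^{k+1}([0,1],H)$, which under the exponential law again corresponds to a $C^{k+1}$-curve $[0,1] \to C^l_\Weights(U,H) = G$. Fourth, verify that this $\eta$ solves the evolution equation in $G$: this is a pointwise-in-$x$ check because the Lie bracket, the multiplication, and the logarithmic derivative on $G$ are all computed pointwise (composition with the corresponding operations on $H$), so $\eta(t).\gamma(t)$ in $T G$ corresponds pointwise to $\eta_x(t).\gamma_x(t)$ in $TH$, which holds by construction; uniqueness likewise follows from pointwise uniqueness in $H$ plus the injectivity built into the exponential law. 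Fifth, smoothness of $\evol_G$: it is $\mathrm{evol}_1 \colon C^k([0,1],\mathfrak g) \to G$, and under the two exponential-law isomorphisms it becomes $C^l_\Weights(U, \mathrm{evol}_{H,1})$; since $\mathrm{evol}_{H,1}$ is smooth and $C^l_\Weights(U,-)$ is a functor sending smooth maps between (open subsets of) locally convex spaces to smooth maps — this is where conditions (i)--(iii) on $\Weights$ are used, via the pushforward being continuous linear on tangent levels and the standard ``smoothness is tested on curves / the pushforward of a smooth map is smooth'' lemma for weighted mapping spaces — the composite is smooth.

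The main obstacle I anticipate is not any single hard estimate but the careful bookkeeping in Step~1: upgrading the topological-vector-space exponential law of Corollary \ref{cor:weighted-Ckl-exp-law-cp-open} to a \emph{diffeomorphism of smooth manifolds} $C^k([0,1],C^l_\Weights(U,H)) \cong C^l_\Weights(U,C^k([0,1],H))$, including checking that both sides carry the manifold structure coming from the common model space and that $\Evol_H$ and $\mathrm{evol}_{H,1}$ push forward to smooth maps of the weighted mapping spaces. Concretely one needs: (a) that $C^l_\Weights(U,-)$ preserves smoothness of maps defined on open subsets of locally convex spaces with values in locally convex spaces — which should follow by differentiating under the ``$\omega$''/weight seminorms and invoking that the pushforward of a continuous linear (resp.\ smooth) map is again such, using hypothesis (iii) to control derivatives of weights; (b) compatibility of the two functors $C^k([0,1],-)$ and $C^l_\Weights(U,-)$ with the chart changes of $H$, which is exactly the content of the exponential law applied to the (smooth) transition maps. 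Once Step~1 is in place, Steps~2--5 are essentially formal consequences of functoriality plus the pointwise nature of all the Lie-theoretic structure on $G$, and the uniqueness and the $C^{k+1}$-regularity of the evolution come for free from the corresponding properties of $H$.
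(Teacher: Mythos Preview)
Your proposal shares the essential idea with the paper's proof: use the exponential law of Corollary~\ref{cor:weighted-Ckl-exp-law-cp-open} to identify $C^k([0,1],C^l_\Weights(U,\mathfrak{h}))$ with $C^l_\Weights(U,C^k([0,1],\mathfrak{h}))$, push forward by $\Evol_H$ pointwise in $x\in U$, and flip back. The difference lies in how the manifold structure of $H$ is handled. You propose (Step~1) to upgrade the exponential law to a diffeomorphism of smooth manifolds $C^k([0,1],C^l_\Weights(U,H))\cong C^l_\Weights(U,C^k([0,1],H))$ and then argue globally; you correctly flag this as the main obstacle. The paper sidesteps this entirely. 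It invokes a local criterion for $C^k$-regularity (Lemma~\ref{lem:Lie-group-Ck-regular-iff}, taken from \cite{GlSemireg}): it suffices to produce a smooth map $\Theta$ on some open $0$-neighbourhood $\Omega\subseteq C^k([0,1],\mathfrak{g})$ together with a separating family of smooth homomorphisms $\alpha_j\colon G\to G_j$ such that $\alpha_j\circ\Theta(\gamma)=\Evol_{G_j}(T_{e_G}\alpha_j\circ\gamma)$. Taking $\alpha_j=\varepsilon_x$ (point evaluations), the paper chooses a chart $\phi\colon V\to W$ around $e_H$, shrinks to an open $Q\subseteq C^k([0,1],\mathfrak{h})$ with $\Evol_H(Q)\subseteq C^{k+1}([0,1],V)$, sets $\Omega=\Psi_1^{-1}(C^l_\Weights(U,Q))$, and builds $\Theta$ as a composite of the vector-space exponential laws, the pushforward $C^l_\Weights(U,C^{k+1}([0,1],\phi)\circ\Evol_H|_Q)$, and the chart $C^l_\Weights(U,\phi)^{-1}$. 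Everything then lives in open subsets of locally convex spaces, so only the topological exponential law and the known smoothness of pushforwards on $C^l_\Weights(U,\cdot)$ (from \cite{BWalter}) are needed.

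What this buys: the paper never has to assemble a global manifold-level exponential law or verify chart compatibilities for $C^l_\Weights(U,C^k([0,1],H))$; the local criterion plus the separating evaluations $\varepsilon_x$ absorb all of that. Your route would also work in principle, but your Step~1 is genuinely harder than anything the paper proves, and your Steps~4--5 (pointwise verification of the evolution equation, uniqueness, smoothness of $\evol_G$) are exactly what Lemma~\ref{lem:Lie-group-Ck-regular-iff} packages once and for all. If you want to streamline, replace your Step~1 by this local chart-based construction and invoke the cited lemma; the rest of your outline then collapses to the paper's argument.
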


For the theory of Lie groups modeled on locally convex spaces, the reader is referred to \cite{Milnor}, \cite{Neeb}
and \cite{GlNeeb}. It is known (cf. \cite{Milnor} and \cite{GlLieGrStr}) that if $H$ is a locally convex Lie group and $K$ is
a smooth compact manifold, then $C^l(K,H)$ is a locally convex Lie group modeled on the space $C^l(K,L(H))$ for
each $l\in\Natzeroinfty$. 
In Definition \ref{def:lie-group-Ck-reg}, we recall the concept of $C^k$-regularity of Lie groups, which goes back to J.Milnor 
(see \cite{Milnor}),
who works with $C^\infty$-regularity (simply called regularity). If $H$ is a $C^k$-regular Lie group, then
the Lie group $C^l(K,H)$ 
is $C^k$-regular for each 
$l\in\Natzeroinfty$
(as proven by H. Gl\"ockner in  \cite{GlSemireg}). 
The construction of 
Lie groups of weighted 
Lie group-valued functions is discussed in \cite{BWalter} (generalizing the seminal work
of H. Boseck, G. Czichowski and K.-P. Rudolph \cite{Boseck-Czichowski-Rudolph}). Moreover, 
B.Walter shows in \cite{BWalter} that if $U$ 
is an open subset of a normed space, $\mathds{1}_U\in\Weights$ and $H$ is a Banach Lie group, then the Lie group $C_\Weights^l(U,H)$ is regular.

\medskip
Exponential laws for function spaces related to infinite-dimensional Lie groups have also been established
in the recent work \cite{Kriegl-Michor-Rainer} by
A.Kriegl, P.W.Michor and A.Rainer, in the setting of convenient differential calculus. For the most part, the
results are complementary. Taking $U=\Real^n$, $V=\Real^m$, $k=l=\infty$ and $\Weights_1, \Weights_2$ 
as the respective sets of all squares of polynomial functions in Theorem \ref{thm:weighted-Ckl-exp-law-comb},
we obtain an exponential law $\mathcal{S}(\Real^{n+m},E)\cong\mathcal{S}(\Real^n(\mathcal{S}(\Real^m,E))$
for Schwartz spaces of vector-valued rapidly decreasing smooth functions, which (as a bornological isomorphism)
is also covered by \cite{Kriegl-Michor-Rainer}.

\medskip
The results presented here are based on the author's master's thesis \cite{NikMT} advised by Helge Gl\"ockner (Paderborn).

\medskip
All of the topological vector spaces will be $\mathds{K}$-vector spaces, with $\mathds{K}\in\left\{\Real,\mathds{C}\right\}$. 
Further, we denote
the set of all compact subsets of a topological space $X$ by $\cpsub{X}$, and the set of all continuous seminorms on a 
locally convex
space $E$ will be denoted by $\contsemiE$.

\section{Spaces of weighted continuous functions and the Exponential Law}

In this section we present the construction of the space of weighted continuous functions with values in a locally 
convex space and the corresponding topology. We study some important properties of such spaces and finally we
establish an Exponential Law for spaces of weighted continuous functions.

\begin{definition}\label{def:weighted-function-space}
Let $X$ be a \Ht space and $E$ be a \Hlc space. We denote by $\Weights$ a nonempty set of maps $f:X\to\zeroinfty$ 
such that for each $x\in X$ there exists $f_x\in\Weights$ with $f_x(x)>0$ (and call the elements of $\Weights$ \textit{weights}\index{weight}).

For a continuous function $\gamma:X\to E$, a seminorm $q\in\mathcal{P}_E$ and $f\in\Weights$ we define

\begin{equation*}
	\left\|\gamma\right\|_{f,q}:=\sup_{x\in X}f(x)q(\gamma(x))\in\interclcl{0}{\infty}.
\end{equation*}

Furthermore, we define the vector space of \textit{weighted continuous functions}\index{weighted continuous function}\index{function!weighted continuous}

\begin{equation*}
	C_\Weights(X,E):=\left\{\gamma\in C(X,E) : \left(\forall f\in\Weights\right)\left(\forall q\in\contsemiE\right)\left\|\gamma\right\|_{f,q}<\infty\right\}
\end{equation*}

and endow it with the locally convex topology induced by the seminorms

\begin{equation*}
	\left\|\cdot\right\|_{f,q}:C_\Weights(X,E)\to\zeroinfty.
\end{equation*}

For a subset $U\subseteq X$ we write

\begin{equation*}
	C_\Weights(U,E):=C_{\Weights\left|_U\right.}(U,E),
\end{equation*}

where

\begin{equation*}
	\Weights\left|_U\right.:=\left\{f\left|_U\right. :f\in\Weights\right\}.
\end{equation*}

Finally, for a subset $V\subseteq E$ we define

\begin{equation*}
	C_\Weights(X,V):=\left\{\gamma\in C_\Weights(X,E) : \gamma(X)\subseteq V\right\}.
\end{equation*}
\end{definition}

\begin{remark}\label{rem:weighted-function-space-Hausdorff}
The point evaluation

\begin{equation*}
	\ev_x:C_\Weights(X,E)\to E,\quad\ev_x(\gamma):=\gamma(x)
\end{equation*}

is continuous for all $x\in X$, since

\begin{equation*}
	q(\gamma(x))=\frac{1}{f_x(x)}f_x(x)q(\gamma(x))\leq\frac{1}{f_x(x)}\left\|\gamma\right\|_{f_x,q},
\end{equation*}

for each seminorm $q\in\contsemiE$ and a certain weight $f_x\in\Weights$ with $f_x(x)>0$. Thus, 
the topology on $C_\Weights(X,E)$ is Hausdorff.
\end{remark}

\begin{remark}\label{rem:sum-of-weights-in-set}
Let $X$ be a \Ht space and $E$ be a \Hlc space. If $\Weights_1$ is a set of weights on $X$, then for the set of weights

\begin{equation*}
	\Weights_2:=\left\{\sum_{i=1}^n r_i f_i : f_i\in\Weights_1,r_i>0\mbox{ for }i\in\oneton,n\in\Natural\right\}
\end{equation*}

on $X$ we have

\begin{equation*}
	C_{\Weights_1}(X,E) = C_{\Weights_2}(X,E)
\end{equation*}

as topological vector spaces. In fact, since $\Weights_1\subseteq\Weights_2$, we have 
$C_{\Weights_2}(X,E)\subseteq C_{\Weights_1}(X,E)$ and the inclusion map $C_{\Weights_2}(X,E)\to C_{\Weights_1}(X,E)$ is continuous. Conversely, if $\gamma\in C_{\Weights_1}(X,E)$, $f\in\Weights_2$ (that is $f=r_1f_1+\cdots+r_nf_n$ for some weights
$f_1,\ldots,f_n\in\Weights_1$ and $r_1,\ldots,r_n>0$) and $q\in\contsemiE$, then

\begin{equation*}
	\left\|\gamma\right\|_{f,q}=\left\|\gamma\right\|_{r_1f_1+\cdots+r_nf_n,q}\leq r_1\left\|\gamma\right\|_{f_1,q}+\cdots+r_n\left\|\gamma\right\|_{f_n,q}<\infty,
\end{equation*}

thus $\gamma\in C_{\Weights_2}(X,E)$ and the inclusion map $C_{\Weights_1}(X,E)\to C_{\Weights_2}(X,E)$ is continuous. 

Therefore, we can always assume that for a set of weights $\Weights$ on $X$ we have

\begin{equation*}
	\left(\forall f_1,\ldots,f_n\in\mathcal{W}\right)\left(\forall r_1,\ldots,r_n>0\right)r_1f_1+\cdots+r_nf_n\in\mathcal{W}.
\end{equation*}
\end{remark}

\begin{remark}\label{rem:cp-open-top-and-top-of-cp-conv}
Recall that a basis for the \textit{compact-open topology}\index{topology!compact-open}\index{compact-open topology} 
on $C(X,Y)$, where $X$, $Y$ are \Ht spaces, is given by the sets 

\begin{equation*}
	\left\lfloor K_1,U_1\right\rfloor\cap\ldots\cap\left\lfloor K_n,U_n\right\rfloor,
\end{equation*}

where $n\in\mathds{N}$, $K_1,\ldots,K_n\in\cpsub{X}$, $U_1,\ldots,U_n\subseteq Y$ are open sets and

\begin{equation*}
	\left\lfloor K_i,U_i\right\rfloor:=\left\{\gamma\in C(X,Y) : \gamma(K_i)\subseteq U_i\right\}
\end{equation*}

for each $i\in\oneton$. We always endow the space $C(X,Y)$ with the compact-open topology.
Further, if $E$ is a \Hlc space, then the compact-open topology on $C(X,E)$ coincides with the
locally convex topology induced by the seminorms

\begin{equation*}
	\left\|\cdot\right\|_{K,q}:C(X,E)\to\zeroinfty,\quad\left\|\gamma\right\|_{K,q}:=\sup_{x\in K}q(\gamma(x)),
\end{equation*}

where $K\in\cpsub{X}$ and $q\in\contsemiE$. (This topology is known as the 
\textit{topology of uniform convergence on compact sets}\index{topology!of uniform convergence on compact sets}.
For details, see, for example, \cite{BourbGenTop510}.
\end{remark}

Now we can easily show the continuity of the following inclusion map:

\begin{lemma}\label{lem:inclusion-map-continuous}
Let $X$ be a \Ht space, $E$ be a \Hlc space and $\Weights$ be a set of weights on $X$. Assume that for each compact subset $K\subseteq X$ 
there exists a weight $f_K\in\Weights$ such that $\inf_{x\in K}f_K(x)>0$. Then the inclusion map

\begin{equation*}
	i:C_\Weights(X,E)\to C(X,E),
\end{equation*}

is linear and continuous.
\end{lemma}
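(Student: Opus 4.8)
The plan is to show that the topology on $C_\Weights(X,E)$ is finer than the one induced by the compact-open seminorms $\|\cdot\|_{K,q}$ described in Remark \ref{rem:cp-open-top-and-top-of-cp-conv}; since the inclusion map $i$ is obviously linear and well-defined (every $\gamma\in C_\Weights(X,E)$ is by definition an element of $C(X,E)$), continuity is then immediate.

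First I would fix a compact subset $K\subseteq X$ and a continuous seminorm $q\in\contsemiE$, and seek to bound the seminorm $\|\gamma\|_{K,q}$ on $C(X,E)$ by a single seminorm of the form $\|\gamma\|_{f,q}$ on $C_\Weights(X,E)$, up to a positive constant. By hypothesis there is a weight $f_K\in\Weights$ with $m:=\inf_{x\in K}f_K(x)>0$. Then for every $\gamma\in C_\Weights(X,E)$ and every $x\in K$ we have
\begin{equation*}
	q(\gamma(x))=\frac{1}{f_K(x)}\,f_K(x)\,q(\gamma(x))\leq\frac{1}{m}\,f_K(x)\,q(\gamma(x))\leq\frac{1}{m}\,\|\gamma\|_{f_K,q},
\end{equation*}
using $f_K(x)\geq m>0$ in the first inequality and the definition of $\|\cdot\|_{f_K,q}$ as a supremum over all of $X$ in the second. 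Taking the supremum over $x\in K$ yields $\|\gamma\|_{K,q}\leq\frac{1}{m}\|\gamma\|_{f_K,q}$.

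Since $K$ and $q$ were arbitrary, this shows that each of the seminorms generating the compact-open topology on $C(X,E)$ is continuous on $C_\Weights(X,E)$, hence the inclusion $i$ is continuous. There is no real obstacle here: the only point requiring the hypothesis is the replacement of the pointwise factor $f_K(x)$ by a uniform positive lower bound on $K$, which is exactly what the assumption $\inf_{x\in K}f_K(x)>0$ provides (and it parallels the argument already used in Remark \ref{rem:weighted-function-space-Hausdorff} for point evaluations, which is the special case $K=\{x\}$). One should perhaps remark that $C_\Weights(X,E)$ is indeed a subspace of $C(X,E)$ as a set, so that speaking of the inclusion map makes sense, but this is immediate from Definition \ref{def:weighted-function-space}.
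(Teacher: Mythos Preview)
Your proof is correct and follows essentially the same approach as the paper: both fix $K$ and $q$, invoke the hypothesis to obtain a weight $f_K$ with positive infimum $m$ on $K$, and derive the estimate $\|\gamma\|_{K,q}\leq\frac{1}{m}\|\gamma\|_{f_K,q}$. The only cosmetic difference is that the paper phrases the inequality via $\varepsilon\mathds{1}_K\leq f_K$ and then takes suprema over all of $X$, whereas you work pointwise on $K$ before taking the supremum; the argument is the same.
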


\begin{proof}
The linearity of the inclusion map $i$ is clear. Now, for a compact subset $K\subseteq X$ we have $\varepsilon:=\inf_{x\in K}f_K(x)>0$ for a suitable weight $f_K\in\Weights$. Then

\begin{equation*}
	\varepsilon\mathds{1}_K \leq f_K,
\end{equation*} 

and hence for a seminorm $q\in\contsemiE$ we get

\begin{equation*}
	\begin{split}
		\left\|\gamma\right\|_{K,q} 
		&\defeq \sup_{x\in K}q(\gamma(x)) = \sup_{x\in X}\mathds{1}_K(x)q(\gamma(x))\\
		&\leq \frac{1}{\varepsilon}\sup_{x\in X}f_K(x)q(\gamma(x)) = \frac{1}{\varepsilon}\left\|\gamma\right\|_{f_K,q},
	\end{split}
\end{equation*}
for each $\gamma\in C_\Weights(X,E)$. Thus the map $i$ is continuous.
\end{proof}

\begin{remark}\label{rem:weights-cont-then-inf-geq-0}
If all given weights on $X$ are continuous, then the condition in Lemma \ref{lem:inclusion-map-continuous} is satisfied,
that is, for each compact subset $K\subseteq X$ there is a weight $f_K\in\Weights$ such that $\inf_{x\in K}f_K(x)>0$.
In fact, for each weight $f\in\Weights$ we define the set

\begin{equation*}
	U_f:=\left\{x\in X : f(x)>0\right\},
\end{equation*}

which is an open subset of $X$, since $f$ is continuous. By definition of $\Weights$, for each 
$x\in K$ there is a weight $f_x\in\Weights$ such that $f_x(x)>0$, thus $x\in U_{f_x}$. Since $K$ is compact, there exist $x_1,\ldots,x_n\in K$ such that 

\begin{displaymath}
	K\subseteq\bigcup_{i=1}^n U_{f_{x_i}},
\end{displaymath}

that is, for each $x\in K$ we have $x\in U_{f_{x_i}}$ for some $i\in\oneton$. We set

\begin{displaymath}
	f_K:=f_{x_1}+\cdots+f_{x_n},
\end{displaymath}

which is a weight on $X$ (see Remark \ref{rem:sum-of-weights-in-set}). Then we see that

\begin{equation*}
	(\forall x\in K)(\exists i\in\oneton) f_K(x)\geq f_{x_i}(x)>0,
\end{equation*}

and, using that the minimum is attained as $K$ is compact, we obtain

\begin{equation*}
	\inf_{x\in K}f_K(x)>0,
\end{equation*}

as required.
\end{remark}

Also superposition operators $C_\Weights(X,\lambda)$ are continuous.

\begin{lemma}\label{lem:superposition-operator-cont}
Let $E$, $F$ be \Hlc spaces and $\lambda:E\to F$ be a continuous linear function. Let $X$ be a \Ht space and $\Weights$ be a set of weights on $X$. 
If $\gamma\in C_\Weights(X,E)$, then

\begin{equation*}
	\lambda\circ\gamma\in C_\Weights(X,F)
\end{equation*}

and the map

\begin{equation*}
	C_\Weights(X,\lambda):C_\Weights(X,E)\to C_\Weights(X,F),\quad\gamma\mapsto\lambda\circ\gamma
\end{equation*}

is continuous and linear.
\end{lemma}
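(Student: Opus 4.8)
The plan is to reduce everything to the already-established facts about the underlying spaces $C_\Weights(X,E)$ and the linear map $\lambda$. First I would check that $\lambda\circ\gamma$ actually lies in $C_\Weights(X,F)$: it is continuous as a composition of continuous maps, so it remains to verify that the seminorm $\left\|\lambda\circ\gamma\right\|_{f,q}$ is finite for every $f\in\Weights$ and every $q\in\contsemiF$. The key observation is that $q\circ\lambda$ is a continuous seminorm on $E$, i.e.\ $q\circ\lambda\in\contsemiE$, because $\lambda$ is continuous and linear. Hence
\begin{equation*}
	\left\|\lambda\circ\gamma\right\|_{f,q}=\sup_{x\in X}f(x)\,q(\lambda(\gamma(x)))=\sup_{x\in X}f(x)\,(q\circ\lambda)(\gamma(x))=\left\|\gamma\right\|_{f,q\circ\lambda}<\infty,
\end{equation*}
the last inequality holding precisely because $\gamma\in C_\Weights(X,E)$. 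This identity does all the work.

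Next I would treat linearity, which is immediate: $(\lambda\circ(\gamma_1+r\gamma_2))(x)=\lambda(\gamma_1(x))+r\lambda(\gamma_2(x))$ pointwise, by linearity of $\lambda$, so $C_\Weights(X,\lambda)$ is linear. For continuity, since the topology on $C_\Weights(X,F)$ is the initial topology with respect to the seminorms $\left\|\cdot\right\|_{f,q}$ with $f\in\Weights$, $q\in\contsemiF$, it suffices to show that each composition $\left\|\cdot\right\|_{f,q}\circ C_\Weights(X,\lambda)$ is continuous on $C_\Weights(X,E)$. But the displayed identity shows $\left\|C_\Weights(X,\lambda)(\gamma)\right\|_{f,q}=\left\|\gamma\right\|_{f,q\circ\lambda}$, and $\left\|\cdot\right\|_{f,q\circ\lambda}$ is one of the defining continuous seminorms of $C_\Weights(X,E)$ since $q\circ\lambda\in\contsemiE$. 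Hence $C_\Weights(X,\lambda)$ is continuous.

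There is no real obstacle here; the only point requiring a moment's care is making sure the restriction-to-a-subset conventions of Definition~\ref{def:weighted-function-space} do not interfere, but since $X$ and $\Weights$ are fixed throughout, we work directly with the definition and nothing subtle arises. The proof is therefore just the chain of equalities above together with the standard fact that a linear map into a locally convex space is continuous iff each defining seminorm pulls back to a continuous seminorm on the source.
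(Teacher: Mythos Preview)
Your proof is correct and follows essentially the same approach as the paper: both hinge on the identity $\left\|\lambda\circ\gamma\right\|_{f,q}=\left\|\gamma\right\|_{f,q\circ\lambda}$, which simultaneously establishes membership in $C_\Weights(X,F)$ and continuity of the superposition operator. The paper's version is slightly terser, but your added remarks on continuity of the composition and the explicit linearity check are harmless elaborations of the same argument.
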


\begin{proof}
If $q\in\contsemiF$, then $q\circ\lambda\in\contsemiE$. Therefore, for a weight $f\in\Weights$ we see that

\begin{equation*}
	\left\|\lambda\circ\gamma\right\|_{f,q}\defeq\sup_{x\in X}f(x)q(\lambda(\gamma(x)))=\left\|\gamma\right\|_{f,q\circ\lambda}<\infty,
\end{equation*}

since $\gamma\in C_\Weights(X,E)$. Hence $\lambda\circ\gamma\in C_\Weights(X,F)$.

The linearity of the map $C_\Weights(X,\lambda)$ is clear, and we have

\begin{equation*}
	\left\|C_\Weights(X,\lambda)(\gamma)\right\|_{f,q}=\left\|\lambda\circ\gamma\right\|_{f,q}=\left\|\gamma\right\|_{f,q\circ\lambda},
\end{equation*}

thus the continuity follows.
\end{proof}

\begin{remark}\label{rem:k-space}
We recall that a topological space $X$ is called a \textit{$k$-space}\index{space!$k$-space}\index{$k$-space} if each subset $A\subseteq X$ is closed if and only if
$A\cap K$ is closed in $K$ for each subset $K\in\cpsub{X}$. In this case, a function $\gamma:X\to Y$ to a 
topological space $Y$ is continuous if and only if $\gamma\big|_K$ is continuous for each $K\in\cpsub{X}$. For
example, each locally convex space and each metrizable space is a $k$-space. (Details can be found in 
\cite{KelleyGenTop}).
\end{remark}

We show that in the following case the space $C_\Weights(X,E)$ is complete.

\begin{proposition}\label{prop:weighted-space-complete}
Let $X$ be a $k$-space and $E$ be a complete \Hlc space. If $\Weights$ is a set of weights on $X$ such that for 
each compact set 
$K\subseteq X$ there exists a weight $f_K\in\Weights$ such that $\inf_{x\in K}f_K(x)>~0$, then the space $C_\Weights(X,E)$ is 
complete.
\end{proposition}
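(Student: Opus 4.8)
The plan is to show that every Cauchy net in $C_\Weights(X,E)$ converges in $C_\Weights(X,E)$. First I would take a Cauchy net $(\gamma_i)_{i\in I}$ in $C_\Weights(X,E)$. By Lemma \ref{lem:inclusion-map-continuous} (whose hypothesis is exactly the assumption on $\Weights$ made here), the inclusion $i:C_\Weights(X,E)\to C(X,E)$ is continuous, so $(\gamma_i)$ is Cauchy in $C(X,E)$ with respect to the topology of uniform convergence on compact sets. Since $E$ is complete and $X$ is a $k$-space, the space $C(X,E)$ is complete: on each $K\in\cpsub{X}$ the net $(\gamma_i|_K)$ converges uniformly to some continuous $E$-valued function, these pointwise limits are compatible, hence define a function $\gamma:X\to E$ whose restriction to each compact set is continuous, and by the $k$-space property (Remark \ref{rem:k-space}) $\gamma\in C(X,E)$; moreover $\gamma_i\to\gamma$ uniformly on compact sets. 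In particular $\gamma_i(x)\to\gamma(x)$ in $E$ for every $x\in X$.

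Next I would show $\gamma\in C_\Weights(X,E)$, i.e.\ that $\|\gamma\|_{f,q}<\infty$ for all $f\in\Weights$ and $q\in\contsemiE$. Fix such $f$ and $q$. Since $(\gamma_i)$ is Cauchy for $\|\cdot\|_{f,q}$, it is in particular bounded: there is $M\geq 0$ and an index $i_0$ with $\|\gamma_i\|_{f,q}\leq M$ for all $i\geq i_0$. For each $x\in X$, continuity of $q$ and of scalar multiplication gives $f(x)q(\gamma(x))=\lim_i f(x)q(\gamma_i(x))\leq M$. Taking the supremum over $x\in X$ yields $\|\gamma\|_{f,q}\leq M<\infty$, so $\gamma\in C_\Weights(X,E)$.

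Finally I would show $\gamma_i\to\gamma$ in the topology of $C_\Weights(X,E)$, i.e.\ $\|\gamma_i-\gamma\|_{f,q}\to 0$ for each $f\in\Weights$, $q\in\contsemiE$. Let $\varepsilon>0$. Choose $i_0$ so that $\|\gamma_i-\gamma_j\|_{f,q}\leq\varepsilon$ for all $i,j\geq i_0$. Fix $i\geq i_0$ and $x\in X$; for every $j\geq i_0$ we have $f(x)q(\gamma_i(x)-\gamma_j(x))\leq\varepsilon$, and letting $j$ run over the net, $\gamma_j(x)\to\gamma(x)$ in $E$ together with continuity of $q$ gives $f(x)q(\gamma_i(x)-\gamma(x))\leq\varepsilon$. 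Since $x$ was arbitrary, $\|\gamma_i-\gamma\|_{f,q}\leq\varepsilon$ for all $i\geq i_0$. Hence $\gamma_i\to\gamma$ in $C_\Weights(X,E)$, proving completeness.

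The routine points are the passage to the limit inside the seminorms (continuity of $q$ and of scalar multiplication in $E$); the only genuine input one must be careful to invoke correctly is the completeness of $C(X,E)$ for $X$ a $k$-space and $E$ complete, which is where the $k$-space hypothesis and Remark \ref{rem:k-space} enter, and the use of Lemma \ref{lem:inclusion-map-continuous} to transfer the Cauchy property to $C(X,E)$ in the first place. I expect no serious obstacle; the main thing to watch is that one works with nets rather than sequences, since $C_\Weights(X,E)$ need not be metrizable.
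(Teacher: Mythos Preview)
Your proof is correct and follows essentially the same route as the paper: transfer the Cauchy property to $C(X,E)$ via Lemma~\ref{lem:inclusion-map-continuous}, use completeness of $C(X,E)$ for $X$ a $k$-space and $E$ complete to get a pointwise (in fact compact-uniform) limit $\gamma$, then pass to the limit inside the seminorms $\|\cdot\|_{f,q}$ to verify both $\gamma\in C_\Weights(X,E)$ and $\gamma_i\to\gamma$ there. The only cosmetic difference is that the paper derives the convergence estimate first and then reads off membership via the triangle inequality, whereas you argue membership from eventual boundedness of the Cauchy net; both are equally valid.
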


\begin{proof}
Let $(\gamma_a)_{a\in A}$ be a Cauchy net in $C_\Weights(X,E)$. Since the inclusion map $i:C_\Weights(X,E)\to C(X,E)$
is continuous (see Lemma \ref{lem:inclusion-map-continuous}), $(\gamma_a)_{a\in A}$ is a Cauchy net in $C(X,E)$. But the space $C(X,E)$ is complete, by \cite[7, Thm.12]{KelleyGenTop}, whence $(\gamma_a)_{a\in A}$ converges to a
$\gamma\in C(X,E)$.
It remains to show that $\gamma\in C_\Weights(X,E)$ and the Cauchy net $(\gamma_a)_{a\in A}$ converges to $\gamma$ in $C_\Weights(X,E)$. To this end, let $f\in\Weights$, $q\in\contsemiE$ and $\varepsilon>0$. There exists an index $a_\varepsilon\in A$ 
such that

\begin{equation*}
	(\forall a_1, a_2\geq a_\varepsilon)\left\|\gamma_{a_1}-\gamma_{a_2}\right\|_{f,q}\leq\varepsilon,
\end{equation*}

that is

\begin{equation*}
	(\forall a_1, a_2\geq a_\varepsilon, x\in X) f(x)q(\gamma_{a_1}(x)-\gamma_{a_2}(x))\leq\varepsilon,
\end{equation*}

by definition of the seminorm $\left\|\cdot\right\|_{f,q}$. Passing to the limit in $a_2$, we obtain

\begin{equation}\label{eq:conv-of-the-Cauchy-net}
	(\forall a_1\geq a_\varepsilon, x\in X) f(x)q(\gamma_{a_1}(x)-\gamma(x))\leq\varepsilon.
\end{equation}

Thus we see that for all $x\in X$

\begin{align*}
	f(x)q(\gamma(x))	& = f(x)q(\gamma_{a_1}(x)+\gamma(x)-\gamma_{a_1}(x))\\
							& \leq f(x)q(\gamma_{a_1}(x))+ f(x)q(\gamma(x)-\gamma_{a_1}(x))\\
							& \leq f(x)q(\gamma_{a_1}(x))+\varepsilon,
\end{align*}

hence

\begin{equation*}
	\left\|\gamma\right\|_{f,q}\leq\left\|\gamma_{a_1}\right\|_{f,q}+\varepsilon<\infty.
\end{equation*}

Thus $\gamma\in C_\Weights(X,E)$ and \eqref{eq:conv-of-the-Cauchy-net} shows that $(\gamma_a)_{a\in A}$ converges to $\gamma$ in $C_\Weights(X,E)$, as required. 
\end{proof}

We construct a set of weights for maps on products using sets of weights on the two factors.

\begin{definition}\label{def:double-weight}
Let $X_1$ and $X_2$ be \Ht spaces and $\Weights_1$, $\Weights_2$ be sets of weights on $X_1$ and $X_2$, respectively. For $f_1\in\Weights_1$ and $f_2\in\Weights_2$ we define the map

\begin{equation*}
	f_1\otimes f_2:X_1\times X_2\to\zeroinfty,\quad(x_1,x_2)\mapsto f_1(x_1)f_2(x_2)
\end{equation*}

and obtain a set of weights on $X_1\times X_2$ via

\begin{equation*}
	\Weights_1\otimes\Weights_2:=\left\{f_1\otimes f_2 : f_1\in\Weights_1, f_2\in\Weights_2\right\}.
\end{equation*}
\end{definition}

The following lemma will be useful.

\begin{lemma}\label{lem:cont-lin-inj-then-top-embedding}
Let $E$, $F$ be \Hlc spaces and $\lambda:E\to F$ be a continuous, linear and injective function. If for each seminorm $q_1\in\contsemiE$ there exists a seminorm $q_2\in\contsemiF$ such that $q_2(\lambda(x))=q_1(x)$ for all $x\in E$, then $\lambda$ is a topological embedding.
\end{lemma}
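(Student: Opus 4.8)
The statement to prove is that a continuous, linear, injective $\lambda : E \to F$ satisfying the seminorm-matching condition (for each $q_1 \in \contsemiE$ there is $q_2 \in \contsemiF$ with $q_2 \circ \lambda = q_1$) is a topological embedding. Since $\lambda$ is already continuous and injective, the only thing left to check is that $\lambda : E \to \lambda(E)$ is open onto its image, equivalently that $\lambda^{-1} : \lambda(E) \to E$ is continuous, where $\lambda(E)$ carries the subspace topology inherited from $F$. The plan is to verify this directly on the level of seminorms, using that both topologies are locally convex and hence described by their respective families of seminorms.

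First I would fix a basic neighborhood of $0$ in $E$, which (by local convexity) we may take to be of the form $W = \{x \in E : q_1(x) < \varepsilon\}$ for some $q_1 \in \contsemiE$ and $\varepsilon > 0$. By hypothesis there is a seminorm $q_2 \in \contsemiF$ with $q_2(\lambda(x)) = q_1(x)$ for all $x \in E$. Consider the neighborhood $\widetilde{W} = \{y \in F : q_2(y) < \varepsilon\}$ of $0$ in $F$; then $\widetilde{W} \cap \lambda(E)$ is a neighborhood of $0$ in the subspace topology on $\lambda(E)$. I claim $\lambda^{-1}(\widetilde{W} \cap \lambda(E)) = W$: indeed for $x \in E$ we have $\lambda(x) \in \widetilde{W}$ iff $q_2(\lambda(x)) < \varepsilon$ iff $q_1(x) < \varepsilon$ iff $x \in W$. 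Hence $\lambda(W) = \widetilde{W} \cap \lambda(E)$ is open in $\lambda(E)$, which shows $\lambda : E \to \lambda(E)$ is an open map; combined with continuity and injectivity this makes $\lambda$ a homeomorphism onto its image, i.e.\ a topological embedding.

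There is essentially no hard obstacle here: the seminorm-matching condition is precisely engineered to make the preimage of a subbasic subspace neighborhood equal to a subbasic neighborhood in $E$, so the argument is a one-line translation between the two seminorm systems. The only minor point requiring care is to note that a base of neighborhoods of $0$ in a locally convex space is given by the balls $\{q < \varepsilon\}$ of the defining seminorms (finite intersections of such reduce to a single seminorm via the max), and to remember that $\lambda$ being already injective and continuous means openness onto the image is the sole remaining ingredient; injectivity is only used implicitly in identifying $\lambda : E \to \lambda(E)$ as a bijection so that "open onto its image" upgrades to "homeomorphism onto its image." I would also remark that injectivity is in fact automatic from the hypothesis as soon as the family $\contsemiE$ separates points (which it does, $E$ being Hausdorff), since $\lambda(x) = 0$ forces $q_1(x) = q_2(\lambda(x)) = 0$ for all $q_1$; but since injectivity is stated as an assumption, no separate argument is needed.
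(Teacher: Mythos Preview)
Your proof is correct and follows essentially the same approach as the paper: both show that $\lambda^{-1}:\lambda(E)\to E$ is continuous by using the seminorm-matching hypothesis to identify $q_1\circ\lambda^{-1}$ with the restriction of $q_2$ to $\lambda(E)$. The paper states this directly on the seminorm level (writing $q_1(\lambda^{-1}(y))=q_2(y)$), whereas you translate it into the equivalent language of open balls $\{q_1<\varepsilon\}$ and $\{q_2<\varepsilon\}$; the substance is identical.
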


\begin{proof}
We need to show that the map

\begin{equation*}
	\left(\lambda\big|^{\lambda(E)}\right)^{-1}:\lambda(E)\to E
\end{equation*}

is continuous. Let $q_1\in\contsemiE$. By assumption, there exists a seminorm $q_2\in\contsemiF$ such that
$q_1=q_2\circ\lambda$. Then

\begin{equation*}
	q_1(\lambda^{-1}(y))=q_2(\lambda(\lambda^{-1}(y)))=q_2(y),
\end{equation*}

for all $y\in\lambda(E)$. Hence $\lambda$ is a topological embedding.
\end{proof}

Before proving the first part of the Exponential Law for spaces of weighted continuous functions,
let us recall the Exponential Law for spaces of continuous functions, which can be found
in \cite[Proposition B.15]{GlExpLaws}.

\begin{proposition}[\textbf{Exponential Law for spaces of continuous functions}]\label{prop:classical-exp-law}
Let $X_1$, $X_2$, $Y$ be \Ht spaces. If $\gamma:X_1\times X_2\to Y$ is a continuous map, then also the map

\begin{equation*}
	\gamma^\vee:X_1\to C(X_2,Y),\quad x\mapsto\gamma^\vee(x):=\gamma(x,\bullet)
\end{equation*}

is continuous. Moreover, the map

\begin{equation*}
	\Phi:C(X_1\times X_2,Y)\to C(X_1,C(X_2,Y)),\quad\gamma\mapsto\gamma^\vee
\end{equation*}

is a topological embedding.

If $X_2$ is locally compact or $X_1\times X_2$ is a k-space, then $\Phi$ is a homeomorphism.
\end{proposition}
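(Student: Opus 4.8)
The plan is to derive everything from two classical facts: the tube lemma, and continuity of the evaluation map over a locally compact space. Throughout, $C(\cdot,\cdot)$ carries the compact-open topology with subbasic sets $\lfloor K,U\rfloor$ as in Remark~\ref{rem:cp-open-top-and-top-of-cp-conv}. First I would check that $\Phi$ is well defined, i.e.\ that $\gamma^\vee$ is continuous whenever $\gamma$ is. For fixed $x\in X_1$ the map $\gamma(x,\bullet)$ is continuous, being $\gamma$ composed with $x_2\mapsto(x,x_2)$, so $\gamma^\vee(x)\in C(X_2,Y)$. To see that $\gamma^\vee$ is continuous it suffices to check that $(\gamma^\vee)^{-1}(\lfloor K,U\rfloor)$ is open for each $K\in\cpsub{X_2}$ and each open $U\subseteq Y$: if $\gamma^\vee(x_0)\in\lfloor K,U\rfloor$ then $\{x_0\}\times K\subseteq\gamma^{-1}(U)$, and since $K$ is compact the tube lemma yields an open $W\ni x_0$ in $X_1$ with $W\times K\subseteq\gamma^{-1}(U)$, i.e.\ $W\subseteq(\gamma^\vee)^{-1}(\lfloor K,U\rfloor)$. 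This makes $\Phi$ well defined, and it is injective because $\gamma(x_1,x_2)=\gamma^\vee(x_1)(x_2)$.

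Next I would show $\Phi$ is a topological embedding, by proving it is continuous and open onto its image. For continuity, take a subbasic set $\lfloor Q,\mathcal{O}\rfloor$ of $C(X_1,C(X_2,Y))$ and $\gamma_0$ with $\gamma_0^\vee(Q)\subseteq\mathcal{O}$. For each $x\in Q$ pick a basic neighbourhood $\bigcap_i\lfloor K_{x,i},U_{x,i}\rfloor\subseteq\mathcal{O}$ of $\gamma_0^\vee(x)$ and, by the tube lemma as above, an open $W_x\ni x$ with $\gamma_0(W_x\times K_{x,i})\subseteq U_{x,i}$ for all $i$; choose a finite subcover $W_{x_1},\dots,W_{x_s}$ of $Q$. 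The delicate point here is that $Q$ is compact Hausdorff, hence normal (and locally compact), so this finite open cover has a refinement by compact sets $Q_r\subseteq W_{x_r}$ with $\bigcup_rQ_r=Q$; then $\bigcap_{r,i}\lfloor Q_r\times K_{x_r,i},U_{x_r,i}\rfloor$ is an open neighbourhood of $\gamma_0$ in $C(X_1\times X_2,Y)$ contained in $\Phi^{-1}(\lfloor Q,\mathcal{O}\rfloor)$. For openness onto the image, take a subbasic set $\lfloor L,U\rfloor$ of $C(X_1\times X_2,Y)$ and $\gamma_0\in\lfloor L,U\rfloor$, and set $Q_1:=\pi_1(L)$, $Q_2:=\pi_2(L)$, both compact Hausdorff, hence locally compact. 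For each $(a,b)\in L$ the open set $\gamma_0^{-1}(U)\cap(Q_1\times Q_2)$ contains a product neighbourhood of $(a,b)$ in $Q_1\times Q_2$, which by local compactness shrinks to a box $C_a\times D_b\subseteq\gamma_0^{-1}(U)$ of compact sets with $(a,b)$ in its $(Q_1\times Q_2)$-interior. Finitely many such interiors cover the compact set $L$, so $L\subseteq\bigcup_{r=1}^mC_{a_r}\times D_{b_r}\subseteq\gamma_0^{-1}(U)$, and $\mathcal{V}:=\bigcap_{r=1}^m\lfloor C_{a_r},\lfloor D_{b_r},U\rfloor\rfloor$ is an open neighbourhood of $\gamma_0^\vee$ with $\Phi^{-1}(\mathcal{V})\subseteq\lfloor L,U\rfloor$, since $\delta^\vee\in\mathcal{V}$ forces $\delta\big(\bigcup_rC_{a_r}\times D_{b_r}\big)\subseteq U$ and $L$ lies in that union.

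Finally, for the homeomorphism claim only surjectivity of $\Phi$ remains. If $X_2$ is locally compact, the evaluation $\ev\colon C(X_2,Y)\times X_2\to Y$ is continuous, so for $\delta\in C(X_1,C(X_2,Y))$ the transpose $\delta^\wedge=\ev\circ(\delta\times\id_{X_2})$ is continuous and $\Phi(\delta^\wedge)=\delta$. If instead $X_1\times X_2$ is a $k$-space, it suffices to show $\delta^\wedge$ restricts continuously to each $K\in\cpsub{X_1\times X_2}$; writing $K\subseteq\pi_1(K)\times\pi_2(K)$ with $\pi_2(K)$ compact (hence locally compact), the restriction of $\delta^\wedge$ to $\pi_1(K)\times\pi_2(K)$ is the transpose of $\delta|_{\pi_1(K)}$ post-composed with the (continuous) restriction map $C(X_2,Y)\to C(\pi_2(K),Y)$, so the locally compact case applies. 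Either way $\Phi$ is a continuous open bijection, hence a homeomorphism. The main obstacles I foresee are precisely the two ``filling'' steps in the embedding part: producing a compact closed refinement of a finite open cover of $Q$, and squeezing finitely many \emph{products of compact sets} between $L$ and $\gamma_0^{-1}(U)$; both rest on the fact that compact subsets of a Hausdorff space are themselves compact Hausdorff, hence normal and locally compact. Everything else is a direct application of the tube lemma and of continuity of $\ev$.
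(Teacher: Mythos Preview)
Your argument is correct. Note, however, that the paper does not prove this proposition at all: it is merely quoted from \cite[Proposition~B.15]{GlExpLaws} as a known background result, so there is no in-paper proof to compare against. What you have written is a clean direct proof in the compact-open topology via the tube lemma and continuity of evaluation over a locally compact space; the two ``filling'' steps you flag (the finite closed shrinking of an open cover of a compact Hausdorff $Q$, and squeezing $L$ between finitely many compact boxes inside $\gamma_0^{-1}(U)$ using local compactness and regularity of the compact Hausdorff projections $\pi_1(L),\pi_2(L)$) are exactly the points where care is needed, and your treatment of them is sound. The surjectivity argument in the $k$-space case---reducing to the locally compact case by restricting to $\pi_1(K)\times\pi_2(K)$ and post-composing $\delta$ with the continuous restriction $C(X_2,Y)\to C(\pi_2(K),Y)$---is also correct and is the standard route.
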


\begin{theorem}\label{thm:weighted-exp-top-emb}
Let $E$ be a \Hlc space and $X_1$, $X_2$ be \Ht spaces such that $X_2$ is locally compact or $X_1\times X_2$ is a k-space. Let $\Weights_1$ and $\Weights_2$ be sets of weights on $X_1$ and $X_2$, respectively. We assume that for each compact subset $K\subseteq X_1$ there exists a weight $f\in\Weights_1$ such that 
$\inf_{x\in K}f(x)>0$, and likewise for $\Weights_2$. 
If $\gamma\in C_{\Weights_1}(X_1, C_{\Weights_2}(X_2,E))$, then

\begin{equation*}
	\gamma^\wedge\in C_\Weights(X_1\times X_2, E),
\end{equation*}

where $\gamma^\wedge$ is the map

\begin{equation*}
	\gamma^\wedge:X_1\times X_2\to E,\quad\gamma^\wedge(x_1,x_2):=\gamma(x_1)(x_2)
\end{equation*}

and $\Weights=\Weights_1\otimes\Weights_2$. Furthermore, the map

\begin{equation*}
	\Psi:C_{\Weights_1}(X_1, C_{\Weights_2}(X_2,E))\to C_\Weights(X_1\times X_2, E),\quad\gamma\mapsto\gamma^\wedge
\end{equation*}

is linear and a topological embedding.
\end{theorem}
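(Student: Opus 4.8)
The plan is to deduce everything from the classical exponential law for continuous functions (Proposition~\ref{prop:classical-exp-law}) together with a Fubini-type interchange of suprema for the weighted seminorms. Note that the three assertions to be proved are: that $\gamma^\wedge$ lands in $C_\Weights(X_1\times X_2,E)$, that $\Psi$ is linear, and that $\Psi$ is a topological embedding (not necessarily surjective).

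First I would check that $\gamma^\wedge$ is continuous. If $\gamma\in C_{\Weights_1}(X_1,C_{\Weights_2}(X_2,E))$, then $\gamma$ is in particular continuous as a map from $X_1$ into $C_{\Weights_2}(X_2,E)$; composing with the continuous inclusion $C_{\Weights_2}(X_2,E)\hookrightarrow C(X_2,E)$ provided by Lemma~\ref{lem:inclusion-map-continuous} (applicable since the hypothesis on $\Weights_2$ supplies, for each compact $K\subseteq X_2$, a weight with positive infimum on $K$), we see that $\gamma$ is continuous as a map $X_1\to C(X_2,E)$ for the compact-open topology. Since $X_2$ is locally compact or $X_1\times X_2$ is a $k$-space, the map $\Phi\colon C(X_1\times X_2,E)\to C(X_1,C(X_2,E))$ of Proposition~\ref{prop:classical-exp-law} is a homeomorphism, in particular surjective, so there is $\delta\in C(X_1\times X_2,E)$ with $\Phi(\delta)=\gamma$; unravelling the definition of $\Phi$, $\delta(x_1,x_2)=\delta^\vee(x_1)(x_2)=\gamma(x_1)(x_2)=\gamma^\wedge(x_1,x_2)$, so $\gamma^\wedge=\delta\in C(X_1\times X_2,E)$.

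Next I would establish the identity
\begin{equation*}
	\left\|\gamma^\wedge\right\|_{f_1\otimes f_2,\,q}=\left\|\gamma\right\|_{f_1,\,\|\cdot\|_{f_2,q}}
\end{equation*}
for all $f_1\in\Weights_1$, $f_2\in\Weights_2$ and $q\in\contsemiE$, obtained by writing the supremum over $X_1\times X_2$ as an iterated supremum (first over $x_2$, then over $x_1$) and observing that $\sup_{x_2\in X_2}f_2(x_2)\,q(\gamma(x_1)(x_2))=\|\gamma(x_1)\|_{f_2,q}$, where $\|\cdot\|_{f_2,q}$ denotes the corresponding defining seminorm of $C_{\Weights_2}(X_2,E)$. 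Since $\|\cdot\|_{f_2,q}$ is a continuous seminorm on $C_{\Weights_2}(X_2,E)$ and $\gamma\in C_{\Weights_1}(X_1,C_{\Weights_2}(X_2,E))$, the right-hand side is finite; as every weight of $\Weights=\Weights_1\otimes\Weights_2$ has the form $f_1\otimes f_2$, this shows $\gamma^\wedge\in C_\Weights(X_1\times X_2,E)$, proving the first assertion. Linearity of $\Psi$ is immediate, and $\Psi$ is injective because $\gamma^\wedge=0$ forces $\gamma(x_1)=0$ for every $x_1$ (equivalently, $\Psi$ is the restriction of the injective map $\Phi^{-1}$).

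To upgrade this to a topological embedding I would describe the topologies on both sides by matched generating families of seminorms, in the spirit of Lemma~\ref{lem:cont-lin-inj-then-top-embedding}. The topology of $C_\Weights(X_1\times X_2,E)$ is generated by the seminorms $\left\|\cdot\right\|_{f_1\otimes f_2,q}$, and the topology of $C_{\Weights_1}(X_1,C_{\Weights_2}(X_2,E))$ is generated by the seminorms $\left\|\cdot\right\|_{f_1,\,\|\cdot\|_{f_2,q}}$ ($f_1\in\Weights_1$, $f_2\in\Weights_2$, $q\in\contsemiE$): indeed any defining seminorm $\left\|\cdot\right\|_{f_1,p}$, with $p$ a continuous seminorm on $C_{\Weights_2}(X_2,E)$, is dominated by a constant multiple of some $\max_i\left\|\cdot\right\|_{f_1,\,\|\cdot\|_{f_2^{(i)},q^{(i)}}}$, because $p$ is dominated by a finite maximum of such defining seminorms of $C_{\Weights_2}(X_2,E)$ and $\left\|\cdot\right\|_{f_1,-}$ commutes with finite maxima. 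The displayed identity then shows that under $\Psi$ these two generating families correspond bijectively: each generating seminorm of the target pulls back to a generating seminorm of the source (so $\Psi$ is continuous), and each generating seminorm of the source transports to the restriction to $\Psi\big(C_{\Weights_1}(X_1,C_{\Weights_2}(X_2,E))\big)$ of a continuous seminorm of the target (so the inverse of $\Psi$ on its image is continuous); hence $\Psi$ is a topological embedding. The step I expect to be the main obstacle is the first one — producing $\gamma^\wedge$ as an honest continuous map — since that is where the hypotheses on $X_2$ (local compactness or the $k$-space condition, used to get surjectivity of $\Phi$) and the regularity assumption on $\Weights_2$ (through Lemma~\ref{lem:inclusion-map-continuous}) are genuinely needed; the remaining weighted-seminorm bookkeeping is then routine.
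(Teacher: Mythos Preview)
Your proof is correct and follows essentially the same strategy as the paper: use the classical exponential law (Proposition~\ref{prop:classical-exp-law}) together with the inclusion of Lemma~\ref{lem:inclusion-map-continuous} to see that $\gamma^\wedge$ is continuous, then derive the seminorm identity $\|\gamma^\wedge\|_{f_1\otimes f_2,q}=\|\gamma\|_{f_1,\|\cdot\|_{f_2,q}}$ and invoke Lemma~\ref{lem:cont-lin-inj-then-top-embedding}. The only cosmetic difference is that the paper packages the continuity of $\gamma\mapsto\gamma^\wedge$ as the composite $\Phi^{-1}\circ j\circ C_{\Weights_1}(X_1,i)$ of continuous linear maps (thereby also using the hypothesis on $\Weights_1$ via Lemma~\ref{lem:inclusion-map-continuous} for $j$), whereas you argue pointwise for each $\gamma$ and then read off continuity of $\Psi$ directly from the seminorm identity; your extra remark that the seminorms $\|\cdot\|_{f_1,\|\cdot\|_{f_2,q}}$ already generate the topology on the source is a welcome bit of care that the paper leaves implicit.
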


\begin{proof}
By Lemma \ref{lem:superposition-operator-cont}, the map

\begin{equation*}
	C_{\Weights_1}(X_1, i):C_{\Weights_1}(X_1, C_{\Weights_2}(X_2,E))\to C_{\Weights_1}(X_1, C(X_2,E))
\end{equation*}

is continuous and linear, where $i$ is the continuous and linear inclusion map

\begin{equation*}
	i:C_{\Weights_2}(X_2,E)\to C(X_2,E),
\end{equation*}

as in Lemma \ref{lem:inclusion-map-continuous}. Also the inclusion map

\begin{equation*}
	j:C_{\Weights_1}(X_1, C(X_2,E))\to C(X_1, C(X_2,E))
\end{equation*}

is continuous and linear, by Lemma \ref{lem:inclusion-map-continuous}. We assumed that $X_1\times X_2$ is a $k$-space or $X_2$ is locally compact, thus the map $\Phi$ in Proposition
\ref{prop:classical-exp-law} is a homeomorphism. Hence, using the inverse map

\begin{equation*}
	\Phi^{-1}:  C(X_1,C(X_2,E))\to C(X_1\times X_2,E),\quad\gamma\mapsto\gamma^\wedge
\end{equation*}

we set

\begin{equation*}
	\Theta:=\Phi^{-1}\circ j\circ C_{\Weights_1}(X_1, i)
\end{equation*}
 
and obtain the continuous linear map

\begin{equation*}
	\begin{split}
		\Theta:C_{\Weights_1}(X_1, C_{\Weights_2}(X_2,E))	&\to C(X_1\times X_2,E), \\
																		\gamma	&\mapsto\gamma^\wedge.
	\end{split}
\end{equation*}

To show that $\gamma^\wedge\in C_\Weights(X_1\times X_2, E)$ for each $\gamma\in C_{\Weights_1}(X_1, C_{\Weights_2}(X_2,E))$, let $q\in\contsemiE$ and 
$f\in\Weights$, that is $f=f_1\otimes f_2$ for some weights $f_1\in\Weights_1$, $f_2\in\Weights_2$. Then

\begin{align*}
	\left\|\gamma^\wedge\right\|_{f,q}	&\stackrel{def}{=} \sup_{(x_1,x_2)\in X_1\times X_2}f_1\otimes f_2(x_1,x_2)q(\gamma^\wedge(x_1,x_2))\\
													&= \sup_{(x_1,x_2)\in X_1\times X_2}f_1(x_1)f_2(x_2)q(\gamma(x_1)(x_2))\\
													&= \sup_{x_1\in X_1}f_1(x_1)\sup_{x_2\in X_2}f_2(x_2)q(\gamma(x_1)(x_2)).
\end{align*}

Since $\gamma(x_1)\in C_{\Weights_2}(X_2,E)$, we see that

\begin{equation*}
	\sup_{x_2\in X_2}f_2(x_2)q(\gamma(x_1)(x_2))=\left\|\gamma(x_1)\right\|_{f_2,q}<\infty,
\end{equation*}

and hence

\begin{align*}
	\sup_{x_1\in X_1}f_1(x_1)\sup_{x_2\in X_2}f_2(x_2)q(\gamma(x_1)(x_2))	&=\sup_{x_1\in X_1}f_1(x_1)\left\|\gamma(x_1)\right\|_{f_2,q}\\
																											&=\left\|\gamma\right\|_{f_1,\left\|\cdot\right\|_{f_2,q}}<\infty,
\end{align*}

since $\gamma\in C_{\Weights_1}(X_1, C_{\Weights_2}(X_2,E))$, whence $\gamma^\wedge\in C_\Weights(X_1\times X_2, E)$, as required. Therefore, we have $\Theta\left(C_{\Weights_1}(X_1, C_{\Weights_2}(X_2,E))\right)\subseteq  C_\Weights(X_1\times X_2, E)$ and the map

\begin{equation*}
	\begin{split}
		\Psi:=\Theta\left|^{C_\Weights(X_1\times X_2, E)}\right.:C_{\Weights_1}(X_1, C_{\Weights_2}(X_2,E))	&\to C_\Weights(X_1\times X_2, E),\\
																																			\gamma	&\mapsto\gamma^\wedge
	\end{split}
\end{equation*}

is continuous, linear and, obviously, injective. Thus, $\Psi$ is a topological embedding, by Lemma 
\ref{lem:cont-lin-inj-then-top-embedding}, since 

\begin{equation*}
	\left\|\Psi(\gamma)\right\|_{f,q}=\left\|\gamma^\wedge\right\|_{f,q}=\left\|\gamma\right\|_{f_1,\left\|.\right\|_{f_2,q}},
\end{equation*}

for each $f_1\in\Weights_1$, $f_2\in\Weights_2$, $q\in\contsemiE$ and $\gamma\in C_{\Weights_1}(X_1, C_{\Weights_2}(X_2,E))$.
\end{proof}

We want to find conditions ensuring that the topological embedding defined in the preceding theorem will be bijective.
We will work with spaces of continuous compactly supported functions:

\begin{definition}\label{def:cp-supp}
Let $X$ be a \Ht space and $E$ be a Hausdorff locally convex space. For a compact subset $K\subseteq X$ we define the space

\begin{equation*}
	C_K(X,E):=\left\{\gamma\in C(X,E) : \supp (\gamma)\subseteq K\right\}
\end{equation*}

(where $\supp(\gamma)$ denotes the support of $\gamma$) and endow it with the locally convex topology defined by the seminorms

\begin{equation*}
	\left\|\cdot\right\|_q:C_K(X,E)\to\zeroinfty,\quad\left\|\gamma\right\|_q:=\sup_{x\in K}q(\gamma(x)),
\end{equation*}

where $q\in\contsemiE$.

Additionally, we define the space

\begin{equation*}
	C_c(X,E):=\left\{\gamma\in C(X,E) : \supp (\gamma) \mbox{ is compact}\right\}.
\end{equation*}

We obviously have

\begin{equation*}
	C_c(X,E)=\bigcup_{K\in\cpsub{X}}C_K(X,E).
\end{equation*}
\end{definition}

Applying the classical Exponential Law (Proposition \ref{prop:classical-exp-law}), we get the following result:

\begin{lemma}\label{lem:gamma-check-cp-supp}
Let $X_1$, $X_2$ be \Ht spaces and $E$ be a \Hlc space. If
$\gamma\in C_{K_1\times K_2}(X_1\times X_2,E)$ for some compact subsets $K_1\subseteq X_1$, $K_2\subseteq X_2$, then

\begin{equation*}
	\gamma_x:=\gamma(x,\bullet)\in C_{K_2}(X_2,E)
\end{equation*}

for all $x\in X_1$ and 

\begin{equation*}
	\gamma^\vee\in C_{K_1}(X_1,C_{K_2}(X_2,E)),
\end{equation*}

where $\gamma^\vee$ is the map

\begin{equation*}
	\gamma^\vee:X_1\to C_{K_2}(X_2,E),\quad\gamma^\vee(x):=\gamma_x.
\end{equation*}
\end{lemma}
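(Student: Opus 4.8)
The plan is to deduce everything from the classical exponential law (Proposition~\ref{prop:classical-exp-law}) together with a bookkeeping argument about supports in a product of \Ht spaces. Note first that $K_1\times K_2$ is a compact subset of $X_1\times X_2$, so the hypothesis $\gamma\in C_{K_1\times K_2}(X_1\times X_2,E)$ makes sense and gives $\supp(\gamma)\subseteq K_1\times K_2$. For fixed $x\in X_1$, the map $\gamma_x=\gamma(x,\bullet)$ is continuous on $X_2$, being the composition of $\gamma$ with the continuous map $x_2\mapsto(x,x_2)$. For its support: if $\gamma_x(x_2)\neq 0$ then $(x,x_2)$ lies in the set where $\gamma$ is nonzero, hence in $\supp(\gamma)\subseteq K_1\times K_2$, so $x_2\in K_2$; since $K_2$ is compact, hence closed in the \Ht space $X_2$, passing to closures yields $\supp(\gamma_x)\subseteq K_2$. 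Thus $\gamma_x\in C_{K_2}(X_2,E)$ for every $x\in X_1$, and $\gamma^\vee\colon X_1\to C_{K_2}(X_2,E)$ is a well-defined map.

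Next I would establish continuity of $\gamma^\vee$ with values in $C_{K_2}(X_2,E)$. By Proposition~\ref{prop:classical-exp-law}, the map $\gamma^\vee\colon X_1\to C(X_2,E)$ is continuous (this direction of the proposition needs no extra hypothesis on $X_1,X_2$). To upgrade this to continuity of the corestriction, I would check that the topology defined on $C_{K_2}(X_2,E)$ in Definition~\ref{def:cp-supp} coincides with the subspace topology induced from the compact-open topology on $C(X_2,E)$: for $\gamma\in C_{K_2}(X_2,E)$, any compact $K\subseteq X_2$ and $q\in\contsemiE$ one has $\|\gamma\|_{K,q}=\sup_{x_2\in K\cap K_2}q(\gamma(x_2))\leq\|\gamma\|_q$, since $\gamma$ vanishes off $K_2$, while conversely $\|\cdot\|_q=\|\cdot\|_{K_2,q}$ is itself one of the subspace seminorms. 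Hence $C_{K_2}(X_2,E)\hookrightarrow C(X_2,E)$ is a topological embedding, and the universal property of subspaces gives that $\gamma^\vee\colon X_1\to C_{K_2}(X_2,E)$ is continuous.

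Finally I would bound the support of $\gamma^\vee$. If $x\notin K_1$, then for every $x_2\in X_2$ the point $(x,x_2)$ lies outside $K_1\times K_2\supseteq\supp(\gamma)$, so $\gamma(x,x_2)=0$; therefore $\gamma^\vee(x)=0$ in $C_{K_2}(X_2,E)$. Consequently $\{x\in X_1:\gamma^\vee(x)\neq 0\}\subseteq K_1$, and since $K_1$ is closed, $\supp(\gamma^\vee)\subseteq K_1$. Combining this with the continuity just established yields $\gamma^\vee\in C_{K_1}(X_1,C_{K_2}(X_2,E))$, as claimed.

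The step I expect to be the main obstacle is the continuity upgrade in the second paragraph: one must be careful that the intrinsic topology on $C_{K_2}(X_2,E)$ really agrees with the subspace topology from $C(X_2,E)$, since Proposition~\ref{prop:classical-exp-law} only delivers continuity into $C(X_2,E)$, which by itself is not enough to conclude membership in $C_{K_1}(X_1,C_{K_2}(X_2,E))$. Everything else is a routine manipulation of supports in a product of \Ht spaces.
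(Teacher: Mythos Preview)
Your proof is correct and follows essentially the same route as the paper: establish $\gamma_x\in C_{K_2}(X_2,E)$ by a support check, invoke Proposition~\ref{prop:classical-exp-law} for continuity of $\gamma^\vee$, and then bound $\supp(\gamma^\vee)\subseteq K_1$. The one place where you are more careful than the paper is the continuity upgrade: the paper simply asserts that Proposition~\ref{prop:classical-exp-law} gives continuity of $\gamma^\vee$ into $C_{K_2}(X_2,E)$, whereas you explicitly verify that the intrinsic topology on $C_{K_2}(X_2,E)$ coincides with the subspace topology inherited from the compact-open topology on $C(X_2,E)$, so that the corestriction is automatically continuous. This is a genuine point the paper leaves implicit, and your verification (comparing $\|\cdot\|_{K,q}$ with $\|\cdot\|_q=\|\cdot\|_{K_2,q}$) is the right one.
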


\begin{proof}
The map $\gamma_{x_1}$ is continuous for each $x_1\in X_1$. Further, if 
$x_2\in X_2\backslash K_2$, then we have

\begin{equation*}
	\gamma_{x_1}(x_2)=\gamma(x_1,x_2)=0,
\end{equation*}

whence $\supp(\gamma_{x_1})\subseteq K_2$. Thus we have 

\begin{equation*}
	\gamma_{x_1}\in C_{K_2}(X_2,E).
\end{equation*}

That is, $\gamma^\vee(X_1)\subseteq C_{K_2}(X_2,E)$ and using Proposition \ref{prop:classical-exp-law}, we see that

\begin{equation*}
	\gamma^\vee:X_1\to C_{K_2}(X_2,E),\quad\gamma^\vee(x):=\gamma_x
\end{equation*}

is continuous. For $x_1\in X_1\backslash K_1$ and $x_2\in X_2$ we have

\begin{equation*}
	\gamma^\vee(x_1)(x_2)=\gamma(x_1,x_2)=0,
\end{equation*}

whence $\supp(\gamma^\vee)\subseteq K_1$, and therefore $\gamma^\vee\in C_{K_1}(X_1,C_{K_2}(X_2,E))$, as required.
\end{proof}

The following lemma will be helpful for an important result.

\begin{lemma}\label{lem:K-supp-in-weighted-incl-cont}
Let $X$ be a \Ht space, $E$ be a \Hlc space and $K\subseteq X$ be a compact subset. If $\Weights$ is a set of weights on $X$ such that
each weight $f\in\Weights$ is bounded on $K$, then

\begin{equation*}
	C_K(X,E)\subseteq C_\Weights(X,E)
\end{equation*}

and the inclusion map

\begin{equation*}
	i:C_K(X,E)\to C_\Weights(X,E)
\end{equation*}

is continuous and linear.
\end{lemma}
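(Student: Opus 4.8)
The plan is to verify the two claims — the set-theoretic inclusion $C_K(X,E)\subseteq C_\Weights(X,E)$ and the continuity of the inclusion map — essentially in one go, by estimating the weighted seminorms $\|\cdot\|_{f,q}$ on a function $\gamma\in C_K(X,E)$ in terms of the compact-$K$ seminorm $\|\cdot\|_q$ of Definition \ref{def:cp-supp}. Linearity is immediate and needs no comment beyond stating it. The whole argument rests on the single observation that the support condition $\supp(\gamma)\subseteq K$ lets us replace a supremum over all of $X$ by a supremum over $K$, and then the boundedness hypothesis on the weights turns that into a finite multiple of $\|\gamma\|_q$.

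Concretely, fix $\gamma\in C_K(X,E)$, a weight $f\in\Weights$ and a seminorm $q\in\contsemiE$. First I would observe that for $x\in X\setminus K$ we have $\gamma(x)=0$ (since $\supp(\gamma)\subseteq K$), hence $f(x)q(\gamma(x))=0$ for such $x$, and therefore
\begin{equation*}
	\left\|\gamma\right\|_{f,q}=\sup_{x\in X}f(x)q(\gamma(x))=\sup_{x\in K}f(x)q(\gamma(x)).
\end{equation*}
By hypothesis $f$ is bounded on $K$, so put $M_f:=\sup_{x\in K}f(x)<\infty$. Then
\begin{equation*}
	\left\|\gamma\right\|_{f,q}=\sup_{x\in K}f(x)q(\gamma(x))\leq M_f\sup_{x\in K}q(\gamma(x))=M_f\left\|\gamma\right\|_q<\infty.
\end{equation*}
Finiteness of the right-hand side shows $\gamma\in C_\Weights(X,E)$, giving the inclusion $C_K(X,E)\subseteq C_\Weights(X,E)$; and since $M_f$ depends only on $f$ and $K$, not on $\gamma$, the same inequality shows that each generating seminorm $\|\cdot\|_{f,q}$ of the topology on $C_\Weights(X,E)$, restricted along $i$, is dominated by a constant times the seminorm $\|\cdot\|_q$ on $C_K(X,E)$. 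This is exactly the continuity criterion for a linear map into a locally convex space whose topology is defined by the seminorms $\|\cdot\|_{f,q}$, so $i$ is continuous and linear.

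There is no real obstacle here: the only thing to be careful about is the bookkeeping at the edge — making sure the support argument genuinely covers the case where $K$ might fail to contain $\supp(\gamma)$ as a closed set in a pathological $X$, but since $\supp(\gamma)\subseteq K$ by definition of $C_K(X,E)$ and $\gamma$ vanishes off its support, the displayed reduction from $\sup_{x\in X}$ to $\sup_{x\in K}$ is valid verbatim. If one wanted to be scrupulous one could note $\sup_{x\in K}q(\gamma(x))=\|\gamma\|_q$ is already built into the definition of the topology on $C_K(X,E)$, so no separate compactness argument is needed to know it is finite. The proof is therefore short and purely computational.
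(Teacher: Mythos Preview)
Your proposal is correct and follows essentially the same route as the paper's proof: reduce the supremum over $X$ to a supremum over $K$ using $\supp(\gamma)\subseteq K$, bound $f$ on $K$ by a constant, and obtain the estimate $\|\gamma\|_{f,q}\leq M_f\|\gamma\|_q$, which simultaneously gives the inclusion and the continuity. The paper's argument is verbatim the same computation (writing $r$ for your $M_f$).
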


\begin{proof}
Let $\gamma\in C_K(X,E)$, $f\in\Weights$ and $q\in\contsemiE$. Since $f$ is bounded on $K$, we set

\begin{equation*}
	r:=\sup_{x\in K}f(x)<\infty,
\end{equation*}

and obtain

\begin{equation*}
	\left\|\gamma\right\|_{f,q} \defeq \sup_{x\in X}f(x)q(\gamma(x)) = \sup_{x\in K}f(x)q(\gamma(x))
	\leq r\left\|\gamma\right\|_{q}<\infty,
\end{equation*}

whence $\gamma\in C_\Weights(X,E)$. 

The inclusion map $i:C_K(X,E)\to C_\Weights(X,E)$ is obviously linear and since

\begin{equation*}
	\left\|\gamma\right\|_{f,q}\leq r\left\|\gamma\right\|_{q},
\end{equation*}

the map $i$ is continuous.
\end{proof}

\begin{proposition}\label{prop:cp-supp-in-image}
Let $X_1$, $X_2$ be \Ht spaces, $E$ be a \Hlc space and $\Weights_1$, $\Weights_2$ be sets of weights on $X_1$ and $X_2$, respectively.
If each of the following conditions is satisfied:

\begin{enumerate}
	\item [(i)] $X_2$ is locally compact or $X_1\times X_2$ is a $k$-space,
	\item [(ii)] all weights $f\in\Weights_1$, $g\in\Weights_2$ are bounded on compact subsets of $X_1$ and $X_2$, respectively,
	\item [(iii)] for each compact subset $K\subseteq X_1$ there exists a weight $f\in\Weights_1$ such that $\inf_{x\in K}f(x)>0$, and 
	likewise for $\Weights_2$,
\end{enumerate}

then $C_c(X_1\times X_2,E)\subseteq im(\Psi)$, where $\Psi$ is the topological embedding 
	
\begin{equation*}
	\Psi:C_{\Weights_1}(X_1, C_{\Weights_2}(X_2,E))\to C_\Weights(X_1\times X_2, E),\quad\gamma\mapsto\gamma^\wedge
\end{equation*}
defined in Theorem \ref{thm:weighted-exp-top-emb}.
\end{proposition}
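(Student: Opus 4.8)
The plan is to take an arbitrary $\eta \in C_c(X_1 \times X_2, E)$, show that it is compactly supported inside a product of compact sets, and then produce a preimage under $\Psi$ using the classical exponential law together with the inclusion lemmas established above. First I would fix $\eta \in C_c(X_1 \times X_2, E)$ and set $K := \supp(\eta)$, a compact subset of $X_1 \times X_2$. Letting $\pi_i : X_1 \times X_2 \to X_i$ be the projections, put $K_1 := \pi_1(K)$ and $K_2 := \pi_2(K)$; these are compact, and $K \subseteq K_1 \times K_2$, so $\eta \in C_{K_1 \times K_2}(X_1 \times X_2, E)$. By Lemma \ref{lem:gamma-check-cp-supp}, we obtain $\eta^\vee \in C_{K_1}(X_1, C_{K_2}(X_2, E))$, with $\eta^\vee(x_1) = \eta(x_1, \bullet) \in C_{K_2}(X_2, E)$ for all $x_1 \in X_1$.

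Next I would use hypothesis (ii) and Lemma \ref{lem:K-supp-in-weighted-incl-cont} twice to move from compactly supported spaces to weighted spaces. Since every weight $g \in \Weights_2$ is bounded on $K_2$, the inclusion $i_2 : C_{K_2}(X_2, E) \to C_{\Weights_2}(X_2, E)$ is continuous and linear; composing, $i_2 \circ \eta^\vee \in C(X_1, C_{\Weights_2}(X_2, E))$ is continuous with support in $K_1$, so it lies in $C_{K_1}(X_1, C_{\Weights_2}(X_2, E))$. Now, since every weight $f \in \Weights_1$ is bounded on $K_1$, a second application of Lemma \ref{lem:K-supp-in-weighted-incl-cont} (with the locally convex space $C_{\Weights_2}(X_2, E)$ in place of $E$) gives the continuous linear inclusion $C_{K_1}(X_1, C_{\Weights_2}(X_2, E)) \to C_{\Weights_1}(X_1, C_{\Weights_2}(X_2, E))$. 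Hence the map $\gamma : X_1 \to C_{\Weights_2}(X_2, E)$, $\gamma(x_1) := \eta(x_1, \bullet)$, satisfies $\gamma \in C_{\Weights_1}(X_1, C_{\Weights_2}(X_2, E))$.

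Finally I would check that $\Psi(\gamma) = \eta$, which is immediate from the definitions: $\gamma^\wedge(x_1, x_2) = \gamma(x_1)(x_2) = \eta(x_1, x_2)$ for all $(x_1, x_2) \in X_1 \times X_2$, so $\gamma^\wedge = \eta$. Since $\Psi$ is well-defined on all of $C_{\Weights_1}(X_1, C_{\Weights_2}(X_2, E))$ by Theorem \ref{thm:weighted-exp-top-emb} (where hypotheses (i) and (iii) guarantee the hypotheses of that theorem), this shows $\eta \in \operatorname{im}(\Psi)$, and therefore $C_c(X_1 \times X_2, E) \subseteq \operatorname{im}(\Psi)$.

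I do not expect a genuine obstacle here; the only point requiring a little care is the bookkeeping that the two lemmas apply in the right order — one must verify that $\eta^\vee$ genuinely lands in $C_{K_2}(X_2,E)$ before invoking Lemma \ref{lem:K-supp-in-weighted-incl-cont} on the inner variable, and that after post-composing with $i_2$ the resulting map is still continuous into the \emph{weighted} space (which is exactly what Lemma \ref{lem:superposition-operator-cont} / the continuity of $i_2$ provides) with support still contained in $K_1$, so that the outer Lemma \ref{lem:K-supp-in-weighted-incl-cont} is applicable. Hypothesis (i) is used only to ensure $\Psi$ is defined as in Theorem \ref{thm:weighted-exp-top-emb}, and hypothesis (iii) is likewise inherited from that theorem; neither is needed again in this argument beyond invoking $\Psi$.
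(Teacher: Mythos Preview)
Your proposal is correct and follows essentially the same route as the paper's proof: project the support to get $K_1\times K_2$, apply Lemma~\ref{lem:gamma-check-cp-supp} to obtain $\eta^\vee\in C_{K_1}(X_1,C_{K_2}(X_2,E))$, and then use Lemma~\ref{lem:K-supp-in-weighted-incl-cont} to pass to the weighted spaces. The only difference is cosmetic---the paper writes the last step as a single inclusion $C_{K_1}(X_1,C_{\Weights_2}(X_2,E))\subseteq C_{\Weights_1}(X_1,C_{\Weights_2}(X_2,E))$ while you spell out the second application of the lemma explicitly.
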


\begin{proof}
Let $\gamma\in C_c(X_1\times X_2,E)$. We show that $\gamma^\vee\in C_{\Weights_1}(X_1,C_{\Weights_2}(X_2,E))$. (Then 
$\Psi(\gamma^\vee)=(\gamma^\vee)^\wedge=\gamma$, and the proof is finished.) Consider the projections

\begin{equation*}
	\pi_1:X_1\times X_2\to X_1,\quad\pi_2:X_1\times X_2\to X_2
\end{equation*}

onto the first and second component, respectively. For $K:=\supp(\gamma)\subseteq X_1\times X_2$ we define $K_1:=\pi_1(K)$ and 
$K_2:=\pi_2(K)$. Since $K$ is compact and the projection maps are continuous, the sets $K_1$ and $K_2$ are compact
and we have $K\subseteq K_1\times K_2$, whence $\gamma\in C_{K_1\times K_2}(X_1\times X_2,E)$. From Lemma 
\ref{lem:gamma-check-cp-supp} we conclude that $\gamma^\vee\in C_{K_1}(X_1,C_{K_2}(X_2,E))$. The inclusion map

\begin{equation*}
	i:C_{K_2}(X,E)\to C_{\Weights_2}(X_2,E)
\end{equation*}

is continuous and linear by Lemma \ref{lem:K-supp-in-weighted-incl-cont}, thus we have 

\begin{equation*}
\gamma^\vee=i\circ\gamma^\vee\in C_{K_1}(X_1,C_{\Weights_2}(X_2,E))\subseteq C_{\Weights_1}(X_1,C_{\Weights_2}(X_2,E)),
\end{equation*}

as required.
\end{proof}

We introduce the following notation:

\begin{definition}\label{def:little-o}
Let $X$ be a \Ht space. For two functions $f,g:X\to\zeroinfty$ we write 

\begin{equation*}
	f=o(g)
\end{equation*}

if for each $\varepsilon>0$ there exists a compact subset $K_\varepsilon\subseteq X$ such that $f(x)\leq\varepsilon g(x)$ for all $x\in X\backslash K_\varepsilon$. 

We say that a set of weights $\Weights$ on $X$ \textit{satisfies the $o$-condition}\index{o-condition} if for each weight $f\in\Weights$ there exists a weight 
$g\in\Weights$ such that $f=o(g)$.
\end{definition}

\begin{lemma}\label{lem:double-weight-little-o}
Let $X_1$, $X_2$ be \Ht spaces. If the sets of weights $\Weights_1$ and $\Weights_2$ on $X_1$ and $X_2$, respectively, satisfy the $o$-condition, then also the set of weights $\Weights_1\otimes\Weights_2$ on $X_1\times X_2$ satisfies the $o$-condition.
\end{lemma}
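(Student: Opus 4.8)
The plan is to show directly that $\Weights_1\otimes\Weights_2$ satisfies the $o$-condition by exhibiting, for an arbitrary weight $f_1\otimes f_2\in\Weights_1\otimes\Weights_2$, a dominating weight of the product form $g_1\otimes g_2$. First I would apply the $o$-condition in each factor separately: choose $\tilde g_1\in\Weights_1$ with $f_1=o(\tilde g_1)$ and $\tilde g_2\in\Weights_2$ with $f_2=o(\tilde g_2)$. The naive candidate $\tilde g_1\otimes\tilde g_2$ does not quite work: given $\varepsilon>0$ together with compact sets $A\subseteq X_1$, $B\subseteq X_2$ such that $f_1\leq\varepsilon\tilde g_1$ off $A$ and $f_2\leq\varepsilon\tilde g_2$ off $B$, one controls $f_1(x_1)f_2(x_2)$ by $\varepsilon\tilde g_1(x_1)\tilde g_2(x_2)$ only when \emph{both} $x_1\notin A$ and $x_2\notin B$, whereas on, say, $A\times(X_2\setminus B)$ the factor $f_1(x_1)$ need not be dominated by $\tilde g_1(x_1)$ at all (a weight may vanish, or be unbounded, on a compact set), and $A\times(X_2\setminus B)$ is not relatively compact.

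To repair this, I would replace $\tilde g_i$ by $g_i:=f_i+\tilde g_i$. Using the convention of Remark \ref{rem:sum-of-weights-in-set} that a set of weights may be taken closed under finite positive sums, we have $g_1\in\Weights_1$ and $g_2\in\Weights_2$; moreover $f_i\leq g_i$ holds \emph{pointwise}, and $f_i=o(g_i)$ still holds since $f_i\leq\varepsilon\tilde g_i\leq\varepsilon g_i$ off the relevant compact set. Put $g:=g_1\otimes g_2\in\Weights_1\otimes\Weights_2$.

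It then remains to verify $f_1\otimes f_2=o(g)$. Given $\varepsilon>0$, pick compact $A\subseteq X_1$ and $B\subseteq X_2$ with $f_1(x_1)\leq\varepsilon g_1(x_1)$ for $x_1\in X_1\setminus A$ and $f_2(x_2)\leq\varepsilon g_2(x_2)$ for $x_2\in X_2\setminus B$, and set $K:=A\times B$, which is compact in $X_1\times X_2$. For $(x_1,x_2)\notin K$ we have $x_1\notin A$ or $x_2\notin B$. If $x_1\notin A$, then $f_1(x_1)f_2(x_2)\leq\varepsilon g_1(x_1)\cdot g_2(x_2)$, using $f_2\leq g_2$ pointwise; if instead $x_1\in A$, then necessarily $x_2\notin B$, and $f_1(x_1)f_2(x_2)\leq g_1(x_1)\cdot\varepsilon g_2(x_2)$, using $f_1\leq g_1$ pointwise. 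In either case $(f_1\otimes f_2)(x_1,x_2)\leq\varepsilon\,g(x_1,x_2)$, so $f_1\otimes f_2=o(g)$; as $f_1\otimes f_2$ was arbitrary, $\Weights_1\otimes\Weights_2$ satisfies the $o$-condition.

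The only genuine obstacle is the one flagged in the first paragraph: the decay estimates in the two factors combine cleanly only on the joint tail $(X_1\setminus A)\times(X_2\setminus B)$, and one must still handle $A\times(X_2\setminus B)$ and $(X_1\setminus A)\times B$. This is exactly what the device $g_i:=f_i+\tilde g_i$ takes care of, by arranging that $f_i$ is controlled by $g_i$ everywhere rather than only off a compact set, at the modest cost of invoking the sum-closure convention for sets of weights.
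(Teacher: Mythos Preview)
Your proof is correct and follows essentially the same approach as the paper's: both replace the dominating weights $\tilde g_i$ (the paper's $g_i$) by $f_i+\tilde g_i$ (the paper's $h_i$) via Remark~\ref{rem:sum-of-weights-in-set}, thereby securing both the pointwise bound $f_i\leq g_i$ and $f_i=o(g_i)$, and then handle the complement of $A\times B$ by the same case split. Your explicit discussion of why the naive candidate $\tilde g_1\otimes\tilde g_2$ fails is a helpful piece of motivation that the paper omits, but the argument itself is the same.
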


\begin{proof}
Let $f\in\Weights_1\otimes\Weights_2$, that is $f=f_1\otimes f_2$, where $f_1\in\Weights_1$ and $f_2\in\Weights_2$. By assumption, $f_1=o(g_1)$ and $f_2=o(g_2)$ for some
weights $g_1\in\Weights_1$, $g_2\in\Weights_2$. By Remark \ref{rem:sum-of-weights-in-set}, we can define the weights $h_1:=f_1+g_1\in\Weights_1$, 
$h_2:=f_2+g_2\in\Weights_2$, so that 

\begin{align*}
	f_1\leq h_1,\quad	&	f_1=o(h_1),\\
	f_2\leq h_2,\quad	&	f_2=o(h_2).
\end{align*}

Now, given $\varepsilon>0$, there exist compact subsets $K_1\subseteq X_1$, $K_2\subseteq X_2$ such that for $x_1\in X_1\backslash K_1$, $x_2\in X_2$

\begin{equation*}
	f_1(x_1)f_2(x_2)\leq\varepsilon h_1(x_1)f_2(x_2)\leq\varepsilon h_1(x_1)h_2(x_2),
\end{equation*}

and for $x_1\in X_1$, $x_2\in X_2\backslash K_2$

\begin{equation*}
	f_1(x_1)f_2(x_2)\leq f_1(x_1)\varepsilon h_2(x_2)\leq h_1(x_1)\varepsilon h_2(x_2).
\end{equation*}

Hence, since the subset $K_1\times K_2\subseteq X_1\times X_2$ is compact, we have

\begin{equation*}
	f=o(h),
\end{equation*}

where $h:=h_1\otimes h_2\in\Weights_1\otimes\Weights_2$.
\end{proof}

The next lemma enables us to prove the Exponential Law for spaces of weighted continuous functions.

\begin{lemma}\label{lem:cp-supp-dense-in-weighted}
Let $X$ be a Hausdorff locally compact space and $E$ be a \Hlc space. If $\Weights$ is a set of weights on $X$ such that 

\begin{enumerate}
	\item [(i)] each weight $f\in\Weights$ is bounded on compact subsets of $X$,
	\item [(ii)] the $o$-condition is satisfied,
\end{enumerate}

then $C_c(X,E)$ is dense in $C_\Weights(X,E)$.
\end{lemma}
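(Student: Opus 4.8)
The goal is to approximate an arbitrary $\gamma\in C_\Weights(X,E)$ by compactly supported continuous functions, in each seminorm $\|\cdot\|_{f,q}$ simultaneously for all $f\in\Weights$, $q\in\contsemiE$. The natural idea is multiplication by cutoff functions: since $X$ is locally compact Hausdorff, for every compact $L\subseteq X$ there is $\varphi\in C_c(X,\Real)$ with $0\le\varphi\le 1$ and $\varphi\equiv 1$ on a neighbourhood of $L$. For such $\varphi$, the product $\varphi\cdot\gamma$ lies in $C_c(X,E)$ (its support is contained in $\supp\varphi$) and, using condition (i) that each weight is bounded on compacts, one checks $\varphi\gamma\in C_\Weights(X,E)$ as well (indeed $\varphi\gamma\in C_{\supp\varphi}(X,E)\subseteq C_\Weights(X,E)$ by Lemma~\ref{lem:K-supp-in-weighted-incl-cont}).

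The heart of the matter is estimating $\|\gamma-\varphi\gamma\|_{f,q}$. Fix $\gamma\in C_\Weights(X,E)$, a weight $f\in\Weights$, a seminorm $q\in\contsemiE$, and $\varepsilon>0$. By the $o$-condition there is $g\in\Weights$ with $f=o(g)$, and after replacing $g$ by $f+g$ (a weight, by Remark~\ref{rem:sum-of-weights-in-set}) we may assume $f\le g$ in addition to $f=o(g)$. Set $M:=\|\gamma\|_{g,q}<\infty$. If $M=0$ the estimate is trivial, so assume $M>0$. Applying the definition of $f=o(g)$ with tolerance $\varepsilon/M$, there is a compact $K:=K_{\varepsilon/M}\subseteq X$ with $f(x)\le(\varepsilon/M)\,g(x)$ for all $x\in X\setminus K$. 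Now choose a cutoff $\varphi\in C_c(X,\Real)$ with $0\le\varphi\le1$ and $\varphi\equiv1$ on a neighbourhood of $K$. Then for $x\in K$ we have $\gamma(x)-\varphi(x)\gamma(x)=0$, while for $x\in X\setminus K$,
\begin{equation*}
	f(x)\,q\bigl(\gamma(x)-\varphi(x)\gamma(x)\bigr)
	=f(x)\,(1-\varphi(x))\,q(\gamma(x))
	\le f(x)\,q(\gamma(x))
	\le\frac{\varepsilon}{M}\,g(x)\,q(\gamma(x))
	\le\frac{\varepsilon}{M}\,M=\varepsilon.
\end{equation*}
Taking the supremum over $x\in X$ gives $\|\gamma-\varphi\gamma\|_{f,q}\le\varepsilon$.

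However, this only controls one seminorm at a time, and an element of a neighbourhood basis in $C_\Weights(X,E)$ involves finitely many seminorms $\|\cdot\|_{f_1,q_1},\dots,\|\cdot\|_{f_n,q_n}$ at once. To finish, I would run the above argument with a single weight dominating all the $f_i$: given finitely many weights $f_1,\dots,f_n$, choose $g_i\in\Weights$ with $f_i=o(g_i)$, put $g:=\sum_{i=1}^n(f_i+g_i)\in\Weights$, and note $f_i\le g$ and $f_i=o(g)$ for every $i$ (the exceptional compact set for $f_i=o(g)$ can be enlarged to a common one). Also replace $q_1,\dots,q_n$ by $q:=q_1+\cdots+q_n\in\contsemiE$, which dominates each $q_i$. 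Then a single cutoff $\varphi$, chosen as above relative to $g$, $q$, and $\varepsilon$, simultaneously achieves $\|\gamma-\varphi\gamma\|_{f_i,q_i}\le\|\gamma-\varphi\gamma\|_{g,q}\le\varepsilon$ for all $i$, so $\varphi\gamma$ lies in the prescribed basic neighbourhood of $\gamma$. The main obstacle is really just bookkeeping: combining the finitely many seminorms into one and arranging $f\le g$ alongside $f=o(g)$ so that the local error $(1-\varphi)q(\gamma)$ and the tail error $o(g)$ estimates can be bounded using the single finite quantity $\|\gamma\|_{g,q}$; local compactness of $X$ is exactly what supplies the cutoff functions needed.
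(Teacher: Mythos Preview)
Your proof is correct and follows essentially the same route as the paper's: use the $o$-condition to find a dominating weight $g$ with $f=o(g)$, choose a compact set outside which $f\le\delta g$, and multiply $\gamma$ by a Urysohn cutoff supported in a relatively compact neighbourhood of that set. Your extra paragraph handling finitely many seminorms at once is not needed in the paper's setup---by Remark~\ref{rem:sum-of-weights-in-set} one may assume $\Weights$ is closed under positive linear combinations, and finite sums of continuous seminorms are again continuous seminorms, so controlling a single $\|\cdot\|_{f,q}$ already suffices for density---but your explicit treatment does no harm. (Minor remark: the step of replacing $g$ by $f+g$ to force $f\le g$ is never actually used in your estimate; the inequality $f(x)q(\gamma(x))\le(\varepsilon/M)g(x)q(\gamma(x))$ comes purely from $f=o(g)$.)
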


\begin{proof}
Using Lemma \ref{lem:K-supp-in-weighted-incl-cont}, we conclude that $C_c(X,E)\subseteq C_\Weights(X,E)$. By assumption, $\Weights$ satisfies the $o$-condition,
thus for a weight $f\in\Weights$ there is a weight $g\in\Weights$ such that $f=o(g)$. Let $\gamma\in C_\Weights(X,E)$, $q\in\contsemiE$
and $\varepsilon>0$. We choose a $\delta>0$ such that

\begin{equation}\label{eq:delta-seminorm-less-epsilon}
	\delta\left\|\gamma\right\|_{g,q}<\varepsilon.
\end{equation}

Since $f=o(g)$, there is a compact subset $K_\delta\subseteq X$ with 

\begin{equation}\label{eq:f-leq-delta-g}
	f(x)\leq\delta g(x)
\end{equation}

for all $x\in X\backslash K_\delta$. The space $X$ is assumed locally compact, thus there is an open subset $U_\delta\subseteq X$
such that $\overline{U_\delta}$ is compact and $K_\delta\subseteq U_\delta$. By Urysohn's Lemma, there is a continuous function 
$h:X\to\interclcl{0}{1}$ such that $h\big|_{K_\delta}=1$ and $h\big|_{X\backslash U_\delta}=0$. We set

\begin{equation*}
	\eta:=h\cdot\gamma.
\end{equation*}

Then $\eta\in C_c(X,E)$ because $\supp(\eta)\subseteq\overline{U_\delta}$. Further, 

\begin{equation*}
	f(x)q(\eta(x)-\gamma(x))=0
\end{equation*}

for all $x\in K_\delta$. Now, if $x\in X\backslash K_\delta$, then we have

\begin{align*}
	f(x)q(\eta(x)-\gamma(x)) 
	&= f(x)q(h(x)\gamma(x)-\gamma(x))
	= f(x)\underbrace{|h(x)-1|}_{\leq1}q(\gamma(x))\\
	&\leq f(x)q(\gamma(x))\leq\delta g(x)q(\gamma(x)),
\end{align*}

using \eqref{eq:f-leq-delta-g}. Therefore

\begin{equation*}
	\left\|\eta-\gamma\right\|_{f,q} \leq \delta\left\|\gamma\right\|_{g,q}<\varepsilon,
\end{equation*}

by the choice of $\delta$ in \eqref{eq:delta-seminorm-less-epsilon}. Thus $C_c(X,E)$ is dense in $C_\Weights(X,E)$.
\end{proof}

\begin{theorem}[\textbf{Exponential Law for spaces of weighted continuous functions}]\label{thm:weighted-exp-law-comb}
Let $X_1$, $X_2$ be locally compact spaces and $E$ be a \Hlc space. Let $\Weights_1$ and $\Weights_2$ be sets
of weights on $X_1$ and $X_2$, respectively, such that

\begin{enumerate}
	\item [(i)] $\Weights_1$, $\Weights_2$ satisfy the $o$-condition,
	\item [(ii)] all weights $f\in\Weights_1$, $g\in\Weights_2$ are bounded on compact subsets of $X_1$ and $X_2$, respectively,
	\item [(iii)] for each compact subset $K\subseteq X_1$ there exists a weight $f\in\Weights_1$ such that $\inf_{x\in K}f(x)>0$, and 
	likewise for $\Weights_2$.
\end{enumerate}

Then the linear map

\begin{equation*}
	\Psi:C_{\Weights_1}(X_1, C_{\Weights_2}(X_2,E))\to C_\Weights(X_1\times X_2, E),\quad\gamma\mapsto\gamma^\wedge,
\end{equation*}

where $\Weights=\Weights_1\otimes\Weights_2$, is a homeomorphism.
\end{theorem}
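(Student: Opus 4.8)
The plan is to deduce the statement from Theorem~\ref{thm:weighted-exp-top-emb} together with the density results established just above. Since $X_2$ is locally compact and condition~(iii) is exactly the hypothesis on $\Weights_1$ and $\Weights_2$ required there, Theorem~\ref{thm:weighted-exp-top-emb} already tells us that $\Psi$ is a linear topological embedding; hence it only remains to prove that $\Psi$ is surjective, for then $\Psi$ is a homeomorphism onto $C_\Weights(X_1\times X_2,E)$.

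So fix $\gamma\in C_\Weights(X_1\times X_2,E)$, where $\Weights=\Weights_1\otimes\Weights_2$, and consider $\gamma^\vee$ with $\gamma^\vee(x_1):=\gamma(x_1,\bullet)$. Two of the three requirements for $\gamma^\vee\in C_{\Weights_1}(X_1,C_{\Weights_2}(X_2,E))$ are immediate: each $\gamma(x_1,\bullet)$ is continuous, and choosing $f_1\in\Weights_1$ with $f_1(x_1)>0$ yields $f_1(x_1)\,\|\gamma(x_1,\bullet)\|_{f_2,q}\le\|\gamma\|_{f_1\otimes f_2,q}<\infty$ for all $f_2\in\Weights_2$ and $q\in\contsemiE$, so that $\gamma^\vee(x_1)\in C_{\Weights_2}(X_2,E)$; moreover
\begin{equation*}
\left\|\gamma^\vee\right\|_{f_1,\left\|\cdot\right\|_{f_2,q}}=\sup_{x_1\in X_1}f_1(x_1)\,\left\|\gamma^\vee(x_1)\right\|_{f_2,q}=\left\|\gamma\right\|_{f_1\otimes f_2,q}<\infty.
\end{equation*}
Thus, once we know that $\gamma^\vee\colon X_1\to C_{\Weights_2}(X_2,E)$ is continuous, we obtain $\gamma^\vee\in C_{\Weights_1}(X_1,C_{\Weights_2}(X_2,E))$ and $\Psi(\gamma^\vee)=(\gamma^\vee)^\wedge=\gamma$, which proves surjectivity.

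The remaining, and only genuinely delicate, point is the continuity of $\gamma^\vee$ as a map into the \emph{finer} space $C_{\Weights_2}(X_2,E)$; continuity into $C(X_2,E)$ is automatic by Proposition~\ref{prop:classical-exp-law}. This is where I would use density. By Lemma~\ref{lem:double-weight-little-o} the set $\Weights$ satisfies the $o$-condition, and each weight $f_1\otimes f_2\in\Weights$ is bounded on every compact $K\subseteq X_1\times X_2$ because $K\subseteq\pi_1(K)\times\pi_2(K)$ while $f_1$ and $f_2$ are bounded on the compact sets $\pi_1(K)$ and $\pi_2(K)$ by~(ii); hence Lemma~\ref{lem:cp-supp-dense-in-weighted} applies (the product $X_1\times X_2$ being locally compact) and $C_c(X_1\times X_2,E)$ is dense in $C_\Weights(X_1\times X_2,E)$. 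Choose a net $(\gamma_a)_{a\in A}$ in $C_c(X_1\times X_2,E)$ with $\gamma_a\to\gamma$. By Proposition~\ref{prop:cp-supp-in-image} each $\gamma_a$ lies in $\operatorname{im}(\Psi)$, and since $\Psi$ is injective with $\Psi(\eta)=\eta^\wedge$ while also $(\gamma_a^\vee)^\wedge=\gamma_a$, it follows that $\gamma_a^\vee\in C_{\Weights_1}(X_1,C_{\Weights_2}(X_2,E))$; in particular every $\gamma_a^\vee\colon X_1\to C_{\Weights_2}(X_2,E)$ is continuous.

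Finally I would transfer the convergence. Exactly as in the computation in the proof of Theorem~\ref{thm:weighted-exp-top-emb},
\begin{equation*}
\left\|\gamma_a-\gamma\right\|_{f_1\otimes f_2,q}=\sup_{x_1\in X_1}f_1(x_1)\,\left\|\gamma_a^\vee(x_1)-\gamma^\vee(x_1)\right\|_{f_2,q},
\end{equation*}
so for any compact $K\subseteq X_1$, picking $f_1\in\Weights_1$ with $c:=\inf_{x_1\in K}f_1(x_1)>0$ by~(iii), we get $\sup_{x_1\in K}\left\|\gamma_a^\vee(x_1)-\gamma^\vee(x_1)\right\|_{f_2,q}\le c^{-1}\left\|\gamma_a-\gamma\right\|_{f_1\otimes f_2,q}\to 0$ for every $f_2\in\Weights_2$ and $q\in\contsemiE$; that is, $\gamma_a^\vee\big|_K\to\gamma^\vee\big|_K$ uniformly in the topology of $C_{\Weights_2}(X_2,E)$. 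Since a uniform limit of a net of continuous maps into a locally convex space is continuous, $\gamma^\vee\big|_K$ is continuous for each $K\in\cpsub{X_1}$, and as $X_1$ is locally compact, hence a $k$-space (Remark~\ref{rem:k-space}), the map $\gamma^\vee\colon X_1\to C_{\Weights_2}(X_2,E)$ is continuous. This completes the verification that $\gamma^\vee\in C_{\Weights_1}(X_1,C_{\Weights_2}(X_2,E))$ with $\Psi(\gamma^\vee)=\gamma$, so $\Psi$ is a bijective topological embedding and therefore a homeomorphism. The main obstacle throughout is precisely the continuity of $\gamma^\vee$ into the weighted space, for which the density of $C_c$ (rather than completeness of $E$, which is not assumed) is the essential tool.
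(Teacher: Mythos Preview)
Your proof is correct, but it takes a genuinely different route from the paper. The paper argues in two steps: first assuming $E$ complete, it uses that $C_{\Weights_1}(X_1,C_{\Weights_2}(X_2,E))$ is then complete (Proposition~\ref{prop:weighted-space-complete}), so that $\operatorname{im}(\Psi)$ is closed; combined with density of $C_c(X_1\times X_2,E)$ (Lemma~\ref{lem:cp-supp-dense-in-weighted}) and the inclusion $C_c\subseteq\operatorname{im}(\Psi)$ (Proposition~\ref{prop:cp-supp-in-image}), this forces $\operatorname{im}(\Psi)=C_\Weights(X_1\times X_2,E)$. The general case is then reduced to the complete case by passing to the completion $\widetilde{E}$ and observing that the resulting $\eta$ actually takes values in $E$. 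Your argument bypasses completeness entirely: you use the same density and the same inclusion, but instead of closing the image, you transfer continuity of $\gamma^\vee$ into $C_{\Weights_2}(X_2,E)$ directly, by showing that on each compact $K\subseteq X_1$ the net $\gamma_a^\vee$ converges uniformly (with respect to every seminorm $\|\cdot\|_{f_2,q}$) to $\gamma^\vee$, and then invoking the $k$-space property of $X_1$. This is a cleaner, more elementary proof that avoids the detour through the completion; the paper's approach, on the other hand, packages the surjectivity as an abstract ``closed plus dense'' argument, which is the template reused verbatim in the differentiable case (Theorem~\ref{thm:weighted-Ckl-exp-law-comb}).
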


\begin{proof}
\underline{Step $1$.} First we assume that the space $E$ is complete. Since $X$ is a $k$-space, being locally compact,
we conclude from Proposition \ref{prop:weighted-space-complete} that $C_{\Weights_2}(X_2,E)$ is complete, whence 
also $C_{\Weights_1}(X_1, C_{\Weights_2}(X_2,E))$ is complete. By Theorem \ref{thm:weighted-exp-top-emb}, the map $\Psi$ is a topological embedding, thus $im(\Psi)$ is complete, hence it
is closed in $C_\Weights(X_1\times X_2, E)$. We know by Proposition \ref{prop:cp-supp-in-image} that 
$C_c(X_1\times X_2,E)\subseteq im(\Psi)$. Moreover, by Lemma \ref{lem:cp-supp-dense-in-weighted}, the space
$C_c(X_1\times X_2,E)$ is dense in $C_\Weights(X_1\times X_2, E)$ (because $X_1\times X_2$ is locally compact and 
$\Weights=\Weights_1\otimes\Weights_2$ satisfies the $o$-condition, by Lemma \ref{lem:double-weight-little-o}). Thus $im(\Psi)$
is dense in $C_\Weights(X_1\times X_2, E)$. Consequently, we have

\begin{equation*}
	im(\Psi)=\overline{im(\Psi)}=C_\Weights(X_1\times X_2, E),
\end{equation*}

which shows that the topological embedding $\Psi$ is surjective, hence a homeomorphism.

\underline{Step $2$.} Now we show the surjectivity of $\Psi$ in the general case. To this end, let 
$\gamma\in C_\Weights(X_1\times X_2, E)\subseteq C_\Weights(X_1\times X_2, \widetilde{E})$, 
where $\widetilde{E}$ is the completion of $E$. The map

\begin{equation*}
	\widetilde{\Psi}:C_{\Weights_1}(X_1, C_{\Weights_2}(X_2,\widetilde{E}))\to C_\Weights(X_1\times X_2, \widetilde{E}),
	\quad\eta\mapsto\eta^\wedge
\end{equation*}

is bijective, by Step $1$, thus there exists $\eta\in C_{\Weights_1}(X_1, C_{\Weights_2}(X_2,\widetilde{E}))$
such that

\begin{equation*}
	\widetilde{\Psi}(\eta)=\eta^\wedge=\gamma.
\end{equation*}

But this means that

\begin{equation*}
	\eta(x_1)(x_2)=\eta^\wedge(x_1,x_2)=\gamma(x_1,x_2)\in E
\end{equation*}

for all $(x_1,x_2)\in X_1\times X_2$, whence $\eta\in C_{\Weights_1}(X_1, C_{\Weights_2}(X_2,E))$. Therefore, we have
 $\Psi(\eta)=\gamma$, and $\Psi$ is surjective, as asserted.
\end{proof}

\section{Spaces of weighted differentiable functions and the Exponential Law}

In this section we establish an Exponential Law for spaces of weighted differentiable functions. First of all
we recall some concepts of differentiability of maps between locally convex spaces (the calculus of maps defined
on open subsets of locally convex spaces goes back to A.Bastiani \cite{Bastiani} and is also known as
Keller's $C_c^k$-calculus \cite{Keller}). SInce we are also interested in differentiability on sets which are
not necessarily open (for example, on the interval $\interclcl{0}{1}$), we recall the following concept 
(cf. \cite{GlNeeb}):

\begin{definition}\label{def:locally-convex-subset}\
Let $E$ be a \Hlc space. We call a subset $U\subseteq E$ \textit{locally convex}\index{locally convex subset}\index{subset!locally convex}
 if each element $x\in U$ has a convex neighborhood $V\subseteq U$.
\end{definition}

Note that each open set and each convex set with nonempty interior satisfies this condition.

\begin{definition}\label{def:Ck-map}
Let $E$, $F$ be \Hlc spaces
\begin{itemize}
	\item [(a)] 
	Let $\gamma:U\to F$ be a map on an open subset $U\subseteq E$. 
	For $x\in U$ and $h\in E$ the \textit{derivative of $\gamma$ at $x$ in the direction $h$}\index{derivative!directional} is defined as 

	\begin{equation*}
		D_h\gamma(x):=d^{(1)}\gamma(x,h):=d\gamma(x,h):=\lim_{t\to 0} \frac{\gamma(x+th)-\gamma(x)}{t},
	\end{equation*}

	whenever the limit exists.

	For $k\in\Natural$ the map $\gamma$ is called a \textit{$C^k$-map}\index{map!$C^k$} if $\gamma$ is continuous, the \textit{iterated directional derivatives}

	\begin{equation*}
		d^{(j)}\gamma(x,h_1,\ldots,h_j):=(D_{h_j}\cdots D_{h_1}\gamma)(x)
	\end{equation*}

	exist for all $j\in\Natural$ with $j\leq k$, $(x,h_1,\ldots,h_j)\in U\times E^j$, and define continuous functions

	\begin{equation*}
		d^{(j)}\gamma:U\times E^j\to F.
	\end{equation*}
	
	If $\gamma$ is $C^k$ for each $k\in\Natural$, then $\gamma$ is called $C^\infty$ or \textit{smooth}\index{map!smooth}; continuous maps are called $C^0$ 
	and $d^{(0)}\gamma:=\gamma$.
	
	\item[(b)]
	Let $U\subseteq E$ be a locally convex subset with dense interior. 
	A continuous map $\gamma:U\to F$ is called a \textit{$C^k$-map}\index{map!$C^k$} (for $k\in\Natzeroinfty$) if 
	$\gamma\big|_{U^\circ}:U^\circ\to F$ is $C^k$ and for each $j\in\Natural_0$ with $j\leq k$ the map 
	$d^{(j)}\left(\gamma\big|_{U^\circ}\right):U^\circ\times E^j\to F$ admits a (unique) continuous extension
	$d^{(j)}\gamma:U\times E^j\to F$.
\end{itemize}
\end{definition}

In \cite{Alz}, a notion of differentiability on products of locally convex subsets is presented:

\begin{definition}\label{def:C-alpha-map}
Let $E_1$, \ldots, $E_n$ and $F$ be \Hlc spaces.
\begin{itemize}
	\item [(a)] 
	Let $U_i\subseteq E_i$ be an open subset for each $i\in\oneton$ and $\alpha\in(\Natzeroinfty)^n$.
	A continuous function $\gamma:U_1\times\cdots\times U_n\to F$ is called a \textit{$C^\alpha$-map}\index{map!$C^\alpha$} if for all 
	$\beta\in\Natural_0^n$ with
	$\beta\leq\alpha$ the iterated directional derivatives
	
	\begin{equation*}
		d^\beta\gamma(x, h_1,\ldots,h_n):=(\breve{D}_1\cdots\breve{D}_n\gamma)(x),
	\end{equation*}
	
	where
	
	\begin{equation*}
		\breve{D}_i\gamma(x):=(D_{(h_i)_{\beta_i}^*}\cdots D_{(h_i)_1^*}\gamma)(x)
	\end{equation*}
	
	exist for all $x:=(x_1,\ldots, x_n)\in U_1\times\cdots\times U_n$, $h_i:=((h_i)_1,\ldots,(h_i)_{\beta_i})\in E_i^{\beta_i}$,
	$h_i^*:=((h_i)_1^*,\ldots,(h_i)_{\beta_i}^*)\in (\{0\}^{i-1}\times E_i\times\{0\}^{n-1})^{\beta_i}$ and the maps
	
	\begin{equation*}
		d^{\beta}\gamma:U_1\times\cdots\times U_n\times E_1^{\beta_1}\times\cdots\times E_n^{\beta_n}\to F
	\end{equation*}
	
	are continuous.
	
	\item [(b)] 
	Let $U_i\subseteq E_i$ be a locally convex subset with dense interior for each $i\in\oneton$ and $\alpha\in(\Natzeroinfty)^n$.
	A continuous map $\gamma:U_1\times\cdots\times U_n\to F$ is called a \textit{$C^\alpha$-map}\index{map!$C^\alpha$} if
	$\gamma\big|_{U_1^\circ\times\cdots\times U_n^\circ}:U_1^\circ\times\cdots\times U_n^\circ\to F$ is a $C^\alpha$-map
	and for all $\beta\in\Natural_0^n$ with $\beta\leq\alpha$ the maps
	
	\begin{equation*}
		d^\beta(\gamma\big|_{U_1^\circ\times\cdots\times U_n^\circ}):U_1^\circ\times\cdots\times U_n^\circ\times E_1^{\beta_1}\times\cdots\times E_n^{\beta_n}\to F
	\end{equation*}
	
	extend to (unique) continuous maps
	
	\begin{equation*}
		d^{\beta}\gamma:U_1\times\cdots\times U_n\times E_1^{\beta_1}\times\cdots\times E_n^{\beta_n}\to F.
	\end{equation*}
\end{itemize}
\end{definition}

Now, consider the following concept of differentiability for functions defined on subsets of $\Real^n$:

\begin{definition}\label{def:partially-Ck-map}
Let $E$ be a \Hlc space.

\begin{itemize}
	\item[(a)] 
	Let $U\subseteq\Real^n$ be an open subset, $\gamma:U\to E$ be a continuous function, $x\in U$ and $i\in\oneton$.
	We define the \textit{partial derivative \index{derivative!partial} of $\gamma$ with respect to the $i$-th variable} via
	
	\begin{equation*}
		\frac{\partial^1}{\partial x_i}\gamma(x):=\partderxInd{i}\gamma(x):=\lim_{t\to 0}\frac{\gamma(x+te_i)-\gamma(x)}{t},
	\end{equation*}

	whenever the limit exists. We call $\gamma$ \textit{partially $C^1$}\index{map!partially $C^1$} if $\gamma$ is continuous, 
	the partial
	derivative $\partderxInd{i}\gamma(x)$ exists for each $x\in U$
	and $i\in\oneton$ and each of the maps
	
	\begin{equation*}
		\partderxInd{i}\gamma:U\to E
	\end{equation*}
	
	is continuous.
	For $k\in\mathds{N}\cup\{\infty\}$ we call $\gamma$ a \textit{partially $C^k$-map}\index{map!partially $C^k$} if 
	$\gamma$ is 
	partially $C^1$ and each of the partial derivatives $\partderxInd{i}\gamma$ is partially $C^{k-1}$. In this case we 
	denote the higher partial derivatives of $\gamma$ by

	\begin{equation*}
		\partderx{\alpha}\gamma(x):=
		\frac{\partial^{\alpha_1}}{\partial x_1^{\alpha_1}}\cdots\frac{\partial^{\alpha_n}}{\partial x_n^{\alpha_n}}
		\gamma(x)
	\end{equation*}

	for $x\in U$ and $\alpha\in\Natural_0^n$ with $|\alpha|\leq k$, which define continuous functions

	\begin{equation*}
		\partderx{\alpha}\gamma:U\to E.
	\end{equation*}
	
	Finally, we call continuous maps \textit{partially $C^0$}.
	
	\item[(b)] 
	If $U\subseteq\Real^n$ is a locally convex subset with dense interior, then we call a continuous map 
	$\gamma:U\to E$ \textit{partially $C^k$}\index{map!partially $C^k$} (for $k\in\Natzeroinfty$) if 
	$\gamma\big|_{U^\circ}:~U^\circ\to ~E$ is partially $C^k$ and the 
	maps $\partderx{\alpha}\left(\gamma\big|_{U^\circ}\right):U^\circ\to E$ extend to (unique) continuous maps
	$\partderx{\alpha}\gamma:U\to E$ for all $\alpha\in\Natural_0^n$ with $|\alpha|\leq k$.
\end{itemize}
\end{definition}

\begin{remark}\label{rem:Ck-eq-C-alpha-eq-partially-Ck}
If $E$ is a \Hlc space, $U:=U_1\times\cdots\times U_n\subseteq\Real^n$  is a locally convex subset with dense interior,
and $k\in\Natzeroinfty$, then for a continuous function $\gamma:U\to E$ the following assertions are equivalent:

\begin{itemize}
	\item[(i)]
	$\gamma$ is partially $C^k$,
	
	\item[(ii)]
	$\gamma$ is $C^\alpha$ for each $\alpha\in(\Natzeroinfty)^n$ with $|\alpha|\leq k$,
	
	\item[(iii)]
	$\gamma$ is $C^k$.
\end{itemize}

Moreover, the topology on the space $C^k(U,E)$ (consisting of all $C^k$-functions $\gamma:U\to E$) defined
in \cite{Alz} coincides with the locally convex topology defined by the 
seminorms

\begin{equation*}
	\left\|\cdot\right\|_{K,\alpha,q}:C^k(U,E)\to\zeroinfty,
	\quad\left\|\gamma\right\|_{K,\alpha,q}:=\sup_{x\in K}q\left(\partderx{\alpha}\gamma(x)\right),
\end{equation*}

where $\alpha\in\Natural_0^n$ with $|\alpha|\leq k$, $q\in\contsemiE$ and $K\subseteq U$ is compact.
\end{remark}

We pass on to the definition of the space of weighted differentiable functions:

\begin{definition}\label{def:weighted-Ck-function-space}
Let $E$ be a \Hlc space, $U\subseteq\Real^n$ be a locally convex subset with dense interior and 
$\gamma:U\to E$ be partially $C^k$ with $k\in\Natzeroinfty$. Let $\Weights$ be a set of weights on $U$. For a weight $f\in\Weights$, a seminorm $q\in\contsemiE$ and $\alpha\in\Natural_0^n$ with $|\alpha|\leq k$ we define

\begin{equation*}
	\left\|\gamma\right\|_{f,\alpha,q}:=\sup_{x\in U}f(x)q\left(\partderx{\alpha}\gamma(x)\right)\in\interclcl{0}{\infty}.
\end{equation*}

Further, we define the vector space of \textit{weighted $C^k$-functions}

\begin{equation*}
	\begin{split}
	C_\Weights^k(U,E):=\{\gamma\in C^k(U,E) : &(\forall f\in\Weights)(\forall q\in\contsemiE)\\
		&(\forall\alpha\in\Natural_0^n, |\alpha|\leq k)\left\|\gamma\right\|_{f,\alpha,q}<\infty\}
	\end{split}
\end{equation*}

and endow it with the locally convex topology induced by the seminorms

\begin{equation*}
	\left\|\cdot\right\|_{f,\alpha,q}:C_\Weights^k(U,E)\to\zeroinfty
\end{equation*}

where $f\in\Weights$, $q\in\contsemiE$ and $\alpha\in\Natural_0^n$ with $|\alpha|\leq k$.

For a subset $V\subseteq E$ we additionally define

\begin{equation*}
	C_\Weights^k(U,V):=\{\gamma\in C_\Weights^k(U,E):\gamma(U)\subseteq V\}.
\end{equation*}
\end{definition}

\begin{remark}\label{rem:initial-topology}
We recall that for a set $X$, the \textit{initial topology on X with respect to the family $(\gamma_i)_{i\in I}$}\index{topology!initial}\index{initial topology} of mappings
$\gamma_i:X\to Y_i$ into topological spaces $Y_i$ is defined as the coarsest topology making each $\gamma_i$ continuous.
\end{remark}

\begin{remark}\label{rem:top-on-weighted-Ck-space}
The topology on $C_\Weights^k(U,E)$ is Hausdorff, since the point evaluation map 
$\ev_x:C_\Weights^k(U,E)\to E, \gamma\mapsto\gamma(x)$ is continuous for each $x\in U$, this can be 
shown as in Remark \ref{rem:weighted-function-space-Hausdorff}.

Further, the topology on $C_\Weights^k(U,E)$ for $k\in\Natural$ is initial with respect to the maps

\begin{equation*}
	\partderx{\alpha}:C_\Weights^k(U,E)\to C_\Weights(U,E)
\end{equation*}

for $\alpha\in\Natural_0^n$ with $|\alpha|\leq k$. Moreover, using the transitivity of initial topologies 
(cf. \cite{GlNeeb}), 
one can show that this topology is initial with respect to the maps

\begin{equation*}
	\begin{split}
		j:C_\Weights^k(U,E)&\to C_\Weights(U,E),\\
		\partderxInd{i}:C_\Weights^k(U,E)&\to C_\Weights^{k-1}(U,E)
	\end{split}
\end{equation*}	
where $j$ is the inclusion map and $i\in\oneton$. That is, the map

\begin{equation*}
	\Theta:C_\Weights^k(U,E)\to C_\Weights(U,E)\times C_\Weights^{k-1}(U,E)^n,
	\quad\gamma\mapsto(\gamma,\partderxInd{1}\gamma,\ldots\partderxInd{n}\gamma)
\end{equation*}

is a (linear) topological embedding.

Further, the topology on $C_\Weights^\infty(U,E)$ is initial with respect to the inclusion maps

\begin{equation*}
	C_\Weights^\infty(U,E)\to C_\Weights^k(U,E)
\end{equation*}

for $k\in\Natural_0$; we even have

\begin{equation*}
	C_\Weights^\infty(U,E)=\projectlim C_\Weights^k(U,E).
\end{equation*}
\end{remark}

The following lemma can be proven similarly to Lemma \ref{lem:inclusion-map-continuous}.

\begin{lemma}\label{lem:k-inclusion-map-continuous}
Let $E$ be a \Hlc space and $U\subseteq\Real^n$ be a locally convex subset with dense interior. Let $\Weights$ be a set of weights on $U$ such that for each compact subset 
$K\subseteq U$ there is a weight $f_K\in\Weights$ with $\inf_{x\in K}f_K(x)>0$. Then the inclusion map

\begin{equation*}
	i:C_\Weights^k(U,E)\to C^k(U,E)
\end{equation*}

is linear and continuous for each $k\in\Natzeroinfty$.
\end{lemma}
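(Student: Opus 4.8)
The plan is to imitate the proof of Lemma~\ref{lem:inclusion-map-continuous}, now controlling all partial derivatives of order at most $k$ simultaneously. Linearity of $i$ is immediate, so only continuity needs to be established.

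Recall that $C^k(U,E)$ carries the topology of uniform convergence, on compact subsets of $U$, of all partial derivatives of order $\le k$; that is, its topology is generated by the seminorms $\left\|\cdot\right\|_{K,\alpha,q}$, where $\left\|\gamma\right\|_{K,\alpha,q}=\sup_{x\in K}q(\partderx{\alpha}\gamma(x))$, with $K\subseteq U$ compact, $\alpha\in\Natural_0^n$, $|\alpha|\le k$ and $q\in\contsemiE$ (cf.\ Remark~\ref{rem:Ck-eq-C-alpha-eq-partially-Ck}). Fix such a triple $(K,\alpha,q)$. By hypothesis there is a weight $f_K\in\Weights$ with $\varepsilon:=\inf_{x\in K}f_K(x)>0$, so $\varepsilon\mathds{1}_K\le f_K$ pointwise on $U$; hence, for every $\gamma\in C_\Weights^k(U,E)$,
\begin{equation*}
	\left\|\gamma\right\|_{K,\alpha,q}\defeq\sup_{x\in K}q\bigl(\partderx{\alpha}\gamma(x)\bigr)=\sup_{x\in U}\mathds{1}_K(x)\,q\bigl(\partderx{\alpha}\gamma(x)\bigr)\le\frac1\varepsilon\sup_{x\in U}f_K(x)\,q\bigl(\partderx{\alpha}\gamma(x)\bigr)=\frac1\varepsilon\left\|\gamma\right\|_{f_K,\alpha,q}.
\end{equation*}
Since $\left\|\cdot\right\|_{f_K,\alpha,q}$ is one of the continuous seminorms defining the topology of $C_\Weights^k(U,E)$ and $(K,\alpha,q)$ was arbitrary, it follows that $i$ is continuous.

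For $k=\infty$ one may, if preferred, reduce to finite order by using $C^\infty(U,E)=\projectlim C^m(U,E)$ and $C_\Weights^\infty(U,E)=\projectlim C_\Weights^m(U,E)$ (Remark~\ref{rem:top-on-weighted-Ck-space}) together with the universal property of the initial topology; but the displayed estimate already covers all $k\in\Natzeroinfty$ at once. I do not expect a genuine obstacle here: the only point requiring a little attention is to invoke the seminorm presentation of the topology of $C^k(U,E)$ from Remark~\ref{rem:Ck-eq-C-alpha-eq-partially-Ck}, after which the computation is formally identical to the one in the proof of Lemma~\ref{lem:inclusion-map-continuous}.
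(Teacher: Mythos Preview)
Your proof is correct and follows exactly the approach the paper intends: it states that the lemma ``can be proven similarly to Lemma~\ref{lem:inclusion-map-continuous}'', and your argument does precisely this, replacing the seminorms $\left\|\cdot\right\|_{K,q}$ by $\left\|\cdot\right\|_{K,\alpha,q}$ and $\left\|\cdot\right\|_{f_K,q}$ by $\left\|\cdot\right\|_{f_K,\alpha,q}$.
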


The next two lemmas can be proven with simple induction.

\begin{lemma}\label{lem:lcx-part-Ck-in-image}
Let $F$ be a \Hlc space, $U\subseteq\Real^n$ be a locally convex subset with dense interior and $\gamma:U\to F$ be
a partially $C^k$-map for $k\in\Natural\cup\left\{\infty\right\}$. If $E$ is a subspace of $F$, then $\gamma\in C^k(U,E)$ if
and only if $\partderx{\alpha}\gamma(U)\subseteq E$ for each $\alpha\in\Natural_0^n$ with $|\alpha|\leq k$.
\end{lemma}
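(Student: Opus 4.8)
The plan is to prove both implications directly, the crux being that $E$ carries the subspace topology inherited from $F$; consequently a net in $E$ which converges in $F$ to a point of $E$ already converges in $E$, and a map into $F$ with image contained in $E$ that is continuous as a map into $F$ is continuous as a map into $E$. The case $k=\infty$ reduces throughout to the finite cases by the recursive definition of partial $C^\infty$ and of $C^\infty(U,E)$, so I would assume $k\in\Natural$ and induct.

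First I would treat the case where $U$ is open. For the forward implication, suppose $\gamma\in C^k(U,E)$. By Definition \ref{def:partially-Ck-map} the partial derivatives $\partderx{\alpha}\gamma\colon U\to E$ exist and are continuous for $|\alpha|\leq k$. Composing with the continuous linear inclusion $\iota\colon E\to F$, and using that a continuous linear map commutes with the limits defining the directional difference quotients, one gets $\partderx{\alpha}(\iota\circ\gamma)=\iota\circ\partderx{\alpha}\gamma$; that is, the partial derivatives of $\gamma$ viewed as a map into $F$ have image in $E$, which is the asserted condition. For the converse, assume $\partderx{\alpha}\gamma(U)\subseteq E$ for all $\alpha\in\Natural_0^n$ with $|\alpha|\leq k$; taking $\alpha=0$ gives $\gamma(U)\subseteq E$ in particular. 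For $k=0$ the only thing to check beyond this is that $\gamma\colon U\to E$ is continuous, which holds because $\gamma$ is continuous into $F$ with image in $E$ and $E$ has the trace topology. For the inductive step, fix $i\in\oneton$ and $x\in U$: the difference quotients $t^{-1}(\gamma(x+te_i)-\gamma(x))$ all lie in $E$ and converge in $F$ to $\partderxInd{i}\gamma(x)\in E$, so this limit exists in $E$; thus the $i$-th partial derivative of $\gamma\colon U\to E$ exists and equals $\partderxInd{i}\gamma$. This map is continuous into $F$ with image in $E$ (apply the hypothesis to multi-indices $\geq e_i$), hence continuous into $E$; and applying the inductive hypothesis to $\partderxInd{i}\gamma$ in place of $\gamma$ — legitimate since $\partderx{\alpha}(\partderxInd{i}\gamma)=\partderx{\alpha+e_i}\gamma$ has image in $E$ for $|\alpha|\leq k-1$ — shows $\partderxInd{i}\gamma\in C^{k-1}(U,E)$. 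Hence $\gamma\in C^k(U,E)$.

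Finally I would pass to a general locally convex subset $U\subseteq\Real^n$ with dense interior. Applying the open case to $\gamma|_{U^\circ}$ shows $\gamma|_{U^\circ}$ is partially $C^k$ into $E$, with $\partderx{\alpha}(\gamma|_{U^\circ})$ equal to the restriction of $\partderx{\alpha}\gamma$. Since $\gamma$ is partially $C^k$ into $F$, each $\partderx{\alpha}(\gamma|_{U^\circ})$ extends to the continuous map $\partderx{\alpha}\gamma\colon U\to F$; by hypothesis this extension has image in $E$, hence is continuous as a map $U\to E$, and since $U^\circ$ is dense in $U$ and $E$ is Hausdorff it is the unique continuous $E$-valued extension. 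By Definition \ref{def:partially-Ck-map}(b) this is precisely what it means for $\gamma$ to lie in $C^k(U,E)$. I expect no serious obstacle; the only point needing care is keeping straight that "subspace" refers to the trace topology, as this is exactly what validates the three passages used above — "the $F$-limit lies in $E$, so the $E$-limit exists", "continuous into $F$ with image in $E$ implies continuous into $E$", and uniqueness of the $E$-valued continuous extension.
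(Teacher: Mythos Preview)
Your proof is correct and is precisely the ``simple induction'' the paper alludes to (the paper does not spell out a proof, merely stating that the lemma ``can be proven with simple induction''). The key ingredients you identify---that $E$ carries the trace topology, so $F$-limits of $E$-valued nets that land in $E$ are $E$-limits, and that difference quotients of an $E$-valued map stay in $E$---are exactly the right ones, and your reduction from the locally convex case to the open case via Definition~\ref{def:partially-Ck-map}(b) is clean.
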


\begin{lemma}\label{lem:lcx-partCk-lin-operator}
Let $E$, $F$ be \Hlc spaces and $\lambda:E\to F$ be a continuous linear map. Let $U\subseteq \Real^n$ be a locally
convex subset with dense interior and $\gamma:U\to E$ be a partially $C^k$-map with $k\in\Natural\cup\left\{\infty\right\}$. 
The map $\lambda\circ\gamma:U\to F$ is partially $C^k$ and

\begin{equation*}
	\partderx{\alpha}(\lambda\circ\gamma)=\lambda\circ\partderx{\alpha}\gamma
\end{equation*}

for each $\alpha\in\Natural_0^n$ with $|\alpha|\leq k$.
\end{lemma}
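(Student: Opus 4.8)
The plan is to prove the statement by induction on $k$, treating first the case that $U\subseteq\Real^n$ is open and then transferring to a locally convex subset $U$ with dense interior by means of the uniqueness of continuous extensions. This is the ``simple induction'' alluded to in the text.

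For the base case $k=1$ with $U$ open, I would fix $x\in U$ and $i\in\oneton$. Since $\lambda$ is linear, the difference quotient $\tfrac{1}{t}\bigl((\lambda\circ\gamma)(x+te_i)-(\lambda\circ\gamma)(x)\bigr)$ equals $\lambda$ applied to the corresponding difference quotient of $\gamma$; since $\lambda$ is continuous, it commutes with the limit as $t\to 0$, so $\partderxInd{i}(\lambda\circ\gamma)(x)$ exists and equals $\lambda\bigl(\partderxInd{i}\gamma(x)\bigr)$. Hence $\partderxInd{i}(\lambda\circ\gamma)=\lambda\circ\partderxInd{i}\gamma$ as maps $U\to F$, which is continuous as a composition of continuous maps; therefore $\lambda\circ\gamma$ is partially $C^1$.

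For the inductive step, with $k\in\Natural$, $k\ge 2$, and the statement assumed for $k-1$, I would use that each $\partderxInd{i}\gamma:U\to E$ is partially $C^{k-1}$; the inductive hypothesis then gives that $\lambda\circ\partderxInd{i}\gamma=\partderxInd{i}(\lambda\circ\gamma)$ is partially $C^{k-1}$, so $\lambda\circ\gamma$ is partially $C^k$. Iterating the identity $\partderxInd{i}(\lambda\circ\gamma)=\lambda\circ\partderxInd{i}\gamma$ along a multi-index $\alpha$ with $|\alpha|\le k$ yields $\partderx{\alpha}(\lambda\circ\gamma)=\lambda\circ\partderx{\alpha}\gamma$. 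The case $k=\infty$ follows at once, since ``partially $C^\infty$'' means ``partially $C^k$ for every $k\in\Natural_0$''.

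Finally, for a locally convex subset $U\subseteq\Real^n$ with dense interior, the open case applied to $\gamma\big|_{U^\circ}$ shows that $(\lambda\circ\gamma)\big|_{U^\circ}$ is partially $C^k$ with $\partderx{\alpha}\bigl((\lambda\circ\gamma)\big|_{U^\circ}\bigr)=\lambda\circ\partderx{\alpha}(\gamma\big|_{U^\circ})$ for $|\alpha|\le k$. The map $\lambda\circ\partderx{\alpha}\gamma:U\to F$ is continuous and restricts on the dense subset $U^\circ$ to exactly this expression, so it is the unique continuous extension, giving $\partderx{\alpha}(\lambda\circ\gamma)=\lambda\circ\partderx{\alpha}\gamma$ and that $\lambda\circ\gamma$ is partially $C^k$. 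I do not expect a genuine obstacle here; the only point requiring care is this last step, where one must invoke the density of $U^\circ$ in $U$ to identify $\lambda\circ\partderx{\alpha}\gamma$ with the continuous extension that, by definition, equals $\partderx{\alpha}(\lambda\circ\gamma)$.
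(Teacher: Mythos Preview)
Your proposal is correct and is precisely the ``simple induction'' the paper alludes to (the paper does not spell out a proof beyond that phrase). The structure you give---difference quotient argument for $k=1$ on open $U$, induction over $k$ via $\partderxInd{i}(\lambda\circ\gamma)=\lambda\circ\partderxInd{i}\gamma$, and then passage to locally convex $U$ by uniqueness of continuous extensions from the dense interior---is exactly what is intended.
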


The continuity of superposition operators $C_\Weights^k(U,\lambda)$ can be proven similarly
to Lemma \ref{lem:superposition-operator-cont}, using the preceding fact.

\begin{lemma}\label{lem:k-superposition-operator-cont}
Let $E$, $F$ be \Hlc spaces and $\lambda:E\to F$ be a continuous linear function. Let $U\subseteq\Real^n$ be a locally convex subset with dense interior and $\Weights$ be a set of weights on $U$. If $\gamma\in C_\Weights^k(U,E)$, then

\begin{equation*}
	\lambda\circ\gamma\in C_\Weights^k(U,F)
\end{equation*}

for each $k\in\Natzeroinfty$

Moreover, the map

\begin{equation*}
	C_\Weights^k(U,\lambda):C_\Weights^k(U,E)\to C_\Weights^k(U,F),\quad\gamma\mapsto\lambda\circ\gamma
\end{equation*}

is continuous and linear.
\end{lemma}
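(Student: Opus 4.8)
The plan is to reduce the statement to the three ingredients the paper has already set up: first, that $\lambda \circ \gamma$ is again partially $C^k$ with the expected partial derivatives (Lemma~\ref{lem:lcx-partCk-lin-operator}); second, that the seminorm estimates pass through $\lambda$ just as they did for $C_\Weights$ in Lemma~\ref{lem:superposition-operator-cont}; and third, the bookkeeping over the multi-index $\alpha$. So first I would fix $\gamma \in C_\Weights^k(U,E)$ and invoke Lemma~\ref{lem:lcx-partCk-lin-operator} to get that $\lambda \circ \gamma : U \to F$ is partially $C^k$ and
\begin{equation*}
	\partderx{\alpha}(\lambda \circ \gamma) = \lambda \circ \partderx{\alpha}\gamma
\end{equation*}
for every $\alpha \in \Natural_0^n$ with $|\alpha| \leq k$. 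This already shows $\lambda \circ \gamma \in C^k(U,F)$.

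Next I would check membership in $C_\Weights^k(U,F)$. Let $f \in \Weights$, $q \in \contsemiF$, and $\alpha \in \Natural_0^n$ with $|\alpha| \leq k$. Since $\lambda$ is continuous and linear, $q \circ \lambda \in \contsemiE$, so using the derivative identity just established,
\begin{equation*}
	\left\|\lambda \circ \gamma\right\|_{f,\alpha,q} = \sup_{x \in U} f(x)\, q\!\left(\lambda\!\left(\partderx{\alpha}\gamma(x)\right)\right) = \sup_{x \in U} f(x)\, (q \circ \lambda)\!\left(\partderx{\alpha}\gamma(x)\right) = \left\|\gamma\right\|_{f,\alpha,q \circ \lambda} < \infty,
\end{equation*}
the finiteness being precisely the defining condition for $\gamma \in C_\Weights^k(U,E)$. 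Hence $\lambda \circ \gamma \in C_\Weights^k(U,F)$, so the map $C_\Weights^k(U,\lambda)$ is well-defined; its linearity is immediate from linearity of $\lambda$ and of composition.

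Finally, for continuity I would observe that the same computation gives, for each generating seminorm $\left\|\cdot\right\|_{f,\alpha,q}$ on $C_\Weights^k(U,F)$,
\begin{equation*}
	\left\|C_\Weights^k(U,\lambda)(\gamma)\right\|_{f,\alpha,q} = \left\|\gamma\right\|_{f,\alpha,q \circ \lambda},
\end{equation*}
and since $\left\|\cdot\right\|_{f,\alpha,q \circ \lambda}$ is one of the continuous seminorms defining the topology on $C_\Weights^k(U,E)$, this is exactly the estimate needed for continuity of a linear map between locally convex spaces. For $k = \infty$ one can either run the argument for all finite $|\alpha|$ at once, or appeal to $C_\Weights^\infty(U,E) = \projectlim C_\Weights^k(U,E)$ from Remark~\ref{rem:top-on-weighted-Ck-space} and the finite-$k$ case. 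I do not expect any genuine obstacle here; the only mildly delicate point is making sure the derivative interchange $\partderx{\alpha}(\lambda \circ \gamma) = \lambda \circ \partderx{\alpha}\gamma$ is legitimate on the (possibly non-open) locally convex domain $U$, but this is already packaged in Lemma~\ref{lem:lcx-partCk-lin-operator}, so the proof is essentially a transcription of the $C_\Weights$ argument with the extra index $\alpha$ carried along.
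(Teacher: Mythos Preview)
Your proof is correct and is exactly the argument the paper has in mind: the paper does not spell out a proof but simply states that it ``can be proven similarly to Lemma~\ref{lem:superposition-operator-cont}, using the preceding fact'' (namely Lemma~\ref{lem:lcx-partCk-lin-operator}), and what you wrote is precisely that transcription---the derivative interchange from Lemma~\ref{lem:lcx-partCk-lin-operator} combined with the seminorm identity $\left\|\lambda\circ\gamma\right\|_{f,\alpha,q}=\left\|\gamma\right\|_{f,\alpha,q\circ\lambda}$.
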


For further work we recall a notion of integrability of curves with values in a locally convex space. Many important 
results (for example, the corresponding Fundamental Theorem of Calculus) can be found in \cite{GlNeeb}.

\begin{remark}\label{rem:facts-about-weak-integral}
Let $E$ be a \Hlc space, $\gamma:I\to E$ be a continuous curve on an interval $I\subseteq\Real$ and $a,b\in I$. If there exists an element $z\in E$ such that

\begin{equation*}
	\lambda(z)=\int_a^b \lambda\left(\gamma(t)\right)dt
\end{equation*}

for each $\lambda\in E'$, then $z$ is called the \textit{weak integral of $\gamma$ from $a$ to $b$}\index{weak integral} and we
write

\begin{equation*}
	\int_a^b \gamma(t)dt:=z.
\end{equation*}
Note that, since the space $E'$ separates points on $E$ by the Hahn-Banach Theorem, the weak integral is uniquely determined if it exists. Further, it is known that the weak integral of a curve $\gamma:I\to E$ always exists if $E$ is sequentially 
complete (see, for example \cite{GlNeeb}). Since every locally convex space $F$ can be completed, the weak integral 
$z:=\int_a^b\eta(t)dt$ of a continuous curve $\eta:I\to F$ always exists in the completion $\widetilde{F}$ of $F$. One can show that the
weak integral $w:=\int_a^b\eta(t)dt$ in $F$ exists if and only if $z\in F$, in which case we have $z=w$.
\end{remark}

We will deduce the completeness of $C_\Weights^k(U,E)$ from the next
proposition:

\begin{proposition}\label{prop:der-top-emb-closed-image}
Let $E$ be a \Hlc space and $U\subseteq\Real^n$ be a locally convex subset with dense interior. Let $\Weights$ be a set of weights on 
$U$ such that for each compact subset $K\subseteq U$ there is a weight $f_K\in\Weights$ with $\inf_{x\in K}f_K(x)>0$. Then the map

\begin{equation*}
	\Theta:C_\Weights^k(U,E)\to C_\Weights(U,E)\times C_\Weights^{k-1}(U,E)^n,
		\quad\gamma\mapsto(\gamma,\partderxInd{1}\gamma,\ldots,\partderxInd{n}\gamma)
\end{equation*}

is a linear topological embedding with closed image, for each $k\in\Natural$.
\end{proposition}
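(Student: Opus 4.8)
The plan is to split the proof into two parts: first showing that $\Theta$ is a linear topological embedding, and then showing that its image is closed.

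For the embedding statement, linearity is immediate from the linearity of the inclusion map and of the partial derivative operators $\partial/\partial x_i$. Injectivity follows because the first coordinate of $\Theta(\gamma)$ is $\gamma$ itself (applied via the inclusion $C_\Weights^k(U,E)\hookrightarrow C_\Weights(U,E)$), so $\Theta(\gamma)=0$ forces $\gamma=0$. That $\Theta$ is a topological embedding was essentially already asserted in Remark \ref{rem:top-on-weighted-Ck-space}: the topology on $C_\Weights^k(U,E)$ is initial with respect to the inclusion $j$ into $C_\Weights(U,E)$ together with the maps $\partial/\partial x_i:C_\Weights^k(U,E)\to C_\Weights^{k-1}(U,E)$, by transitivity of initial topologies. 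I would spell this out: the seminorms $\|\cdot\|_{f,\alpha,q}$ defining the topology of $C_\Weights^k(U,E)$ with $|\alpha|\le k$ split according to whether $\alpha=0$ (these come from $j$) or $\alpha\ne0$, in which case writing $\alpha=e_i+\beta$ for a suitable $i$ gives $\|\gamma\|_{f,\alpha,q}=\|\partial_{x_i}\gamma\|_{f,\beta,q}$ with $|\beta|\le k-1$, so these seminorms are pulled back from $C_\Weights^{k-1}(U,E)$ via $\partial/\partial x_i$. Hence $\Theta$ is a topological embedding onto its image.

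For the closedness of the image, let $(\gamma_a,\eta_1^a,\dots,\eta_n^a)=\Theta(\gamma_a)$ be a net in $\mathrm{im}(\Theta)$ converging in $C_\Weights(U,E)\times C_\Weights^{k-1}(U,E)^n$ to some $(\gamma,\eta_1,\dots,\eta_n)$. I must show $\gamma\in C_\Weights^k(U,E)$ and $\partial_{x_i}\gamma=\eta_i$ for all $i$. The key analytic input is the Fundamental Theorem of Calculus on $U^\circ$: for $x$ in the interior and $t$ small, $\gamma_a(x+te_i)-\gamma_a(x)=\int_0^t \eta_i^a(x+se_i)\,ds$ (a weak integral, which exists since we may pass to the completion $\widetilde E$ and project back, cf. Remark \ref{rem:facts-about-weak-integral}). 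Using Lemma \ref{lem:k-inclusion-map-continuous}, the net $(\gamma_a)$ converges to $\gamma$ and $(\eta_i^a)$ to $\eta_i$ in $C^k(U,E)$ resp. $C^{k-1}(U,E)$, i.e., uniformly on compact sets together with all derivatives up to the relevant order. Since convergence in $C(U,E)$ is uniform on compact subsets, I can pass to the limit under the weak integral (testing against $\lambda\in E'$ and using dominated/uniform convergence on the compact segment $[x,x+te_i]$) to get $\gamma(x+te_i)-\gamma(x)=\int_0^t\eta_i(x+se_i)\,ds$ on $U^\circ$. By the converse direction of the Fundamental Theorem of Calculus, $\gamma|_{U^\circ}$ has continuous partial derivative $\partial_{x_i}(\gamma|_{U^\circ})=\eta_i|_{U^\circ}$; iterating (using that $\eta_i\in C_\Weights^{k-1}$, hence is itself partially $C^{k-1}$) shows $\gamma|_{U^\circ}$ is partially $C^k$ with $\partial^\alpha(\gamma|_{U^\circ})$ determined by the $\eta_i$'s, and these extend continuously to $U$ because the corresponding derivatives of $\eta_i$ do (by density of $U^\circ$ in $U$ and uniqueness of continuous extensions). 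Thus $\gamma\in C^k(U,E)$ in the sense of Definition \ref{def:partially-Ck-map}(b), and $\Theta(\gamma)=(\gamma,\eta_1,\dots,\eta_n)$. Finally, $\gamma\in C_\Weights^k(U,E)$: for each $f\in\Weights$, $q\in\contsemiE$ and $|\alpha|\le k$, the seminorm $\|\gamma\|_{f,\alpha,q}$ equals either $\|\gamma\|_{f,q}$ (finite since $\gamma\in C_\Weights(U,E)$) or $\|\partial_{x_i}\gamma\|_{f,\beta,q}=\|\eta_i\|_{f,\beta,q}<\infty$ since $\eta_i\in C_\Weights^{k-1}(U,E)$. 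Hence $(\gamma,\eta_1,\dots,\eta_n)\in\mathrm{im}(\Theta)$, so the image is closed.

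The main obstacle I anticipate is the interchange of limit and (weak) integral when upgrading from pointwise/compact-open information to the Fundamental Theorem of Calculus identity in the limit — one must be careful that the weak integral is taken in $E$ (or its completion) and that convergence of $(\eta_i^a)$ in $C_\Weights^{k-1}(U,E)$, hence in $C^{k-1}(U,E)$ by Lemma \ref{lem:k-inclusion-map-continuous}, really does give uniform convergence on the compact segment needed; applying a continuous linear functional $\lambda\in E'$ reduces this to the classical scalar statement. A secondary technical point is the bookkeeping for $k=\infty$, where one either argues for all finite $k$ and uses $C_\Weights^\infty(U,E)=\projectlim C_\Weights^k(U,E)$ from Remark \ref{rem:top-on-weighted-Ck-space}, or notes that the statement as phrased only concerns $k\in\Natural$ anyway.
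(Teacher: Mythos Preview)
Your proposal is correct and follows essentially the same approach as the paper: both invoke Remark~\ref{rem:top-on-weighted-Ck-space} for the embedding part, and for closedness both represent the difference $\gamma_a(x+te_i)-\gamma_a(x)$ as a weak integral of $\partial_{x_i}\gamma_a$ along the segment, use Lemma~\ref{lem:k-inclusion-map-continuous} to get uniform convergence on that compact segment, interchange limit and integral, and then read off $\partial_{x_i}\gamma=\eta_i$ on $U^\circ$ before extending and checking the weighted seminorms via the decomposition $\alpha=\beta+e_i$. The only cosmetic difference is that the paper phrases the integral step via the Mean Value Theorem and the continuity of the parameter-dependent integral at $t=0$, whereas you phrase it as the Fundamental Theorem of Calculus plus its converse; these are the same computation.
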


\begin{proof}
We know that $\Theta$ is a topological embedding, by Remark \ref{rem:top-on-weighted-Ck-space}, and
it is easy to show that $\Theta$ is linear.

Now, let $(\gamma_a)_{a\in A}$ be a net in $C_\Weights^k(U,E)$ such that $(\gamma_a)_{a\in A}$ converges to $\gamma$ in $C_\Weights(U,E)$ and each of the nets 
$\left(\partderxInd{i}\gamma_a\right)_{a\in A}$ converges to $\gamma^i$ in $C_\Weights^{k-1}(U,E)$, for each $i\in\oneton$. We have to show that 
$(\gamma,\gamma^1,\ldots,\gamma^n)\in im(\Theta)$, that is, $\gamma\in C_\Weights^k(U,E)$ and $\partderxInd{i}\gamma=\gamma^i$. 

For $x\in U^\circ$, $e_i\in\Real^n$ and $0<t\leq1$ with $K_{x,t,i}:=\left\{x+tue_i : u\in\interclcl{0}{1}\right\}\subseteq U^\circ$ we have

\begin{equation*}
	\frac{\gamma(x+te_i)-\gamma(x)}{t} = \lim_{a\in A}\frac{\gamma_a(x+te_i)-\gamma_a(x)}{t}.
\end{equation*}

We apply
the Mean Value Theorem (cf. \cite{GlNeeb}) and obtain

\begin{align*}
	\lim_{a\in A}\frac{\gamma_a(x+te_i)-\gamma_a(x)}{t}
	&=\lim_{a\in A}\frac{1}{t}\int_0^1 d\gamma_a(x+tue_i,te_i)du\\ 
	&=\lim_{a\in A}\frac{1}{t}\int_0^1 td\gamma_a(x+tue_i,e_i)du\\
	&= \lim_{a\in A}\frac{1}{t}\int_0^1\partderxInd{i}\gamma_a(x+tue_i)du.
\end{align*}

By the hypothesis on $\Weights$, the net $\left(\partderxInd{i}\gamma_a\right)_{a\in A}$con\-verges to $\gamma^i$ uniformly on the segment
$K_{x,t,i}$ (see Lemma \ref{lem:k-inclusion-map-continuous}),
hence we can use the fact, that weak integrals and uniform limits can be interchanged (see \cite{GlNeeb})
and get

\begin{equation*}
	\lim_{a\in A}\int_0^1\partderxInd{i}\gamma_a(x+tue_i)du = \int_0^1\gamma^i(x+tue_i)du.
\end{equation*}

The map

\begin{equation*}
	(t,u)\mapsto\gamma^i(x+tue_i)
\end{equation*}

is continuous, whence the parameter-dependent integral

\begin{equation*}
	t\mapsto\int_0^1\gamma^i(x+tue_i)du
\end{equation*}

is continuous
(also at $t=0$, see \cite{GlNeeb}) and we have

\begin{equation*}
	\lim_{t\to0}\frac{\gamma(x+te_i)-\gamma(x)}{t} = \lim_{t\to0}\int_0^1\gamma^i(x+tue_i)du = \int_0^1\gamma^i(x)du = \gamma^i(x).
\end{equation*}

Thus we see that $\gamma\big|_{U^\circ}$ is partially $C^1$ with partial derivatives $\partderxInd{i}\left(\gamma\big|_{U^\circ}\right)=\gamma^i\big|_{U^\circ}$ for each $i\in\oneton$. But $\gamma^i\big|_{U^\circ}$ continuously extends to the map $\gamma^i:U\to E$. Hence $\partderxInd{i}\left(\gamma\big|_{U^\circ}\right)$ continuously extends to

\begin{equation*}
	\partderxInd{i}\gamma:U\to E,\quad x\mapsto\gamma^i(x),
\end{equation*}

whence $\gamma$ is partially $C^1$. Moreover, each of the maps $\partderxInd{i}\gamma=\gamma^i$ is partially $C^{k-1}$, thus $\gamma$ is partially $C^k$.

It remains to show that $\gamma\in C_\Weights^k(U,E)$. To this end, let $f\in\Weights$, $q\in\contsemiE$ and $\alpha\in\Natural_0^n$ with $|\alpha|\leq k$. If 
$|\alpha|=0$, then we have

\begin{equation*}
	\left\|\gamma\right\|_{f,\alpha,q} = \left\|\gamma\right\|_{f,q} < \infty.
\end{equation*}

Otherwise, we write $\alpha=\beta+e_i$ for suitable $\beta\in\Natural_0^n$ and $i\in\oneton$ and obtain

\begin{align*}
	\left\|\gamma\right\|_{f,\alpha,q} 
	&\defeq \sup_{x\in U}f(x)q\left(\partderx{\alpha}\gamma(x)\right)\\
	&= \sup_{x\in U}f(x)q\left(\partderx{\beta}\partderxInd{i}\gamma(x)\right)
	= \left\|\partderxInd{i}\gamma\right\|_{f,\beta,q}<\infty,
\end{align*}

since $\partderxInd{i}\gamma=\gamma^i\in C_\Weights^{k-1}(U,E)$, and the assertion is proven.
\end{proof}

\begin{proposition}\label{prop:weighted-Ck-space-complete}
Let $E$ be a complete \Hlc space and $U\subseteq~\Real^n$ be a locally convex subset with dense interior. If $\Weights$
is a set of weights on $U$ such that for each compact subset $K\subseteq U$ there is a weight $f_K\in\Weights$ with 
$\inf_{x\in K}f_K(x)>0$, then the space $C_\Weights^k(U,E)$ is complete for each $k\in\Natzeroinfty$.
\end{proposition}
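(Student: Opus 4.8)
The plan is to induct on $k\in\Natzeroinfty$, using the topological embedding with closed image provided by Proposition \ref{prop:der-top-emb-closed-image} together with the completeness of $C_\Weights(U,E)$ established in Proposition \ref{prop:weighted-space-complete}.

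First I would treat the base case $k=0$. Since $U$ is a locally convex subset of $\Real^n$ with dense interior, it is in particular metrizable and hence a $k$-space (see Remark \ref{rem:k-space}). As $E$ is complete, Proposition \ref{prop:weighted-space-complete} applies and yields that $C_\Weights^0(U,E)=C_\Weights(U,E)$ is complete.

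Next comes the inductive step. Assume $k\in\Natural$ and that $C_\Weights^{k-1}(U,E)$ is complete. Then the finite product $C_\Weights(U,E)\times C_\Weights^{k-1}(U,E)^n$ is complete, being a finite product of complete spaces (the factor $C_\Weights(U,E)$ is complete by the base case). By Proposition \ref{prop:der-top-emb-closed-image}, the map $\Theta$ is a linear topological embedding with closed image, so $C_\Weights^k(U,E)$ is isomorphic (as a topological vector space) to a closed subspace of a complete space, hence complete. This settles all finite $k$.

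Finally I would handle $k=\infty$. Here I would invoke Remark \ref{rem:top-on-weighted-Ck-space}, which gives $C_\Weights^\infty(U,E)=\projectlim C_\Weights^k(U,E)$ as a projective limit (equivalently, $C_\Weights^\infty(U,E)$ carries the initial topology with respect to the inclusions into the $C_\Weights^k(U,E)$, $k\in\Natural_0$, and may be identified with a closed subspace of $\prod_{k\in\Natural_0}C_\Weights^k(U,E)$). Since each $C_\Weights^k(U,E)$ is complete by the previous step, the product is complete, and a projective limit of complete spaces is complete (the relevant subspace is closed because it is cut out by the compatibility conditions, which are continuity requirements). Hence $C_\Weights^\infty(U,E)$ is complete as well. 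I do not anticipate a serious obstacle here: the proof is essentially bookkeeping built on the two completeness/embedding propositions already at hand; the only point demanding a little care is verifying that the relevant subspace in the $k=\infty$ case is genuinely closed, which follows from the fact that the restriction/inclusion maps relating the $C_\Weights^k(U,E)$ are continuous.
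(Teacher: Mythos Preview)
Your proposal is correct and follows essentially the same route as the paper: induction on finite $k$ using the closed-image embedding of Proposition~\ref{prop:der-top-emb-closed-image} together with the base case from Proposition~\ref{prop:weighted-space-complete}, and then the projective-limit description from Remark~\ref{rem:top-on-weighted-Ck-space} for $k=\infty$.
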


\begin{proof}
First we proof the assertion for $k<\infty$ by induction.

\textit{The case $k=0$.} Follows from Proposition \ref{prop:weighted-space-complete}, since every metrizable space
is a $k$-space.

\textit{Induction step.} By the induction hypothesis, the space $C_\Weights(U,E)\times C_\Weights^{k-1}(U,E)^n$ is complete.
From Proposition \ref{prop:der-top-emb-closed-image} we conclude that $C_\Weights^k(U,E)\cong im(\Theta)$ is complete,
since $im(\Theta)$ is closed (hence complete) in $C_\Weights(U,E)\times C_\Weights^{k-1}(U,E)^n$.

Finally, from Remark \ref{rem:top-on-weighted-Ck-space} we deduce that $C_\Weights^\infty(U,E)$ is complete,
since projective limits of complete topological vector spaces are complete (see \cite[5.3]{SchTopVec}).
\end{proof}

\begin{definition}\label{def:part-Ckl-maps}
Let $E$ be a \Hlc space and $k,l\in\Natzeroinfty$.

\begin{itemize}
	\item [(a)] Let $U\subseteq\Real^n$, $V\subseteq\Real^m$ be open subsets. We call a map 
	$\gamma:U\times V\to E$ a \textit{partially $C^{k,l}$-map}\index{map!partially $C^{k,l}$}, if for each $\alpha\in\Natural_0^n$
	 with $|\alpha|\leq k$, $\beta\in\Natural_0^m$ with $|\beta|\leq l$ the partial derivative
	
	\begin{equation*}
		\partderx{\alpha}\partdery{\beta}\gamma(x,y)
	\end{equation*}
	
	exists in $E$ for all  $(x,y)\in U\times V$, and the map
	
	\begin{equation*}
		\partderx{\alpha}\partdery{\beta}\gamma:U\times V\to E.
	\end{equation*}
	
	is continuous.
	\item [(b)] If $U\subseteq\Real^n$, $V\subseteq\Real^m$ are locally convex with dense interior, then we say that a
	continuous map $\gamma:U\times V\to E$ is \textit{partially $C^{k,l}$}\index{map!partially $C^{k,l}$} if 
	$\gamma\big|_{U^\circ\times V^\circ}$ is partially $C^{k,l}$
	and for each $\alpha,\beta$ as in (a) the map
	
	\begin{equation*}
		\partderx{\alpha}\partdery{\beta}\left(\gamma\big|_{U^\circ\times V^\circ}\right):U^\circ\times V^\circ\to E
	\end{equation*}
	
	admits a (unique) continuous extension
	
	\begin{equation*}
		\partderx{\alpha}\partdery{\beta}\gamma:U\times V\to E.
	\end{equation*}	
\end{itemize}
\end{definition}

\begin{remark}\label{rem:C-alpha-beta-eq-partially-Ckl}
A continuous map $\gamma:U\times V\to E$ as in Definition \ref{def:part-Ckl-maps} is partially $C^{k,l}$ if and only
if $\gamma$ is $C^{(\delta,\varepsilon)}$ for all $\delta\in(\Natzeroinfty)^n$ with $|\delta|\leq k$ and $\varepsilon\in(\Natzeroinfty)^m$
with $|\varepsilon|\leq l$. Thus, using the Schwarz' Theorem for $C^\alpha$-mappings (\cite[Prop.69]{Alz}) we see that

\begin{equation*}
	\partderx{\alpha}\partdery{\beta}\gamma(x,y)=\partdery{\beta}\partderx{\alpha}\gamma(x,y)
\end{equation*}

for all $(x,y)\in U\times V$.
Moreover, $\gamma:U\times V\to E$ is partially $C^{k,l}$ if and only if the map

\begin{equation*}
	V\times U\to E,\quad(y,x)\mapsto\gamma(x,y)
\end{equation*}

is partially $C^{l,k}$.

The topology on $C^{k,l}(U\times V,E)$ (the space of all partially $C^{k,l}$-maps $\gamma:U\times V\to E$)
defined in \cite{Alz} coincides with
the locally convex topology defined by the seminorms

\begin{equation*}
	\begin{split}
		\left\|\cdot\right\|_{K,(\alpha,\beta),q}:C^{k,l}(U\times V,E)&\to\zeroinfty,\\
		\left\|\gamma\right\|_{K,(\alpha,\beta),q}&:=\sup_{(x,y)\in K}q\left(\partderx{\alpha}\partdery{\beta}\gamma(x,y)\right)
	\end{split}
\end{equation*}

with $\alpha\in\Natural_0^n$, $\beta\in\Natural_0^m$ such that $|\alpha|\leq k$, $|\beta|\leq l$,
$K\subseteq U\times V$ a compact subset and $q\in\contsemiE$.
\end{remark}

\begin{remark}[\textbf{Product Rule}]\label{rem:part-Ck-part-Ckl-product-rules}
Let $E$ be a \Hlc space, $U\subseteq\Real^n$ and $V\subseteq\Real^m$ be locally convex subsets with dense interior, and
$k,l\in\Natzeroinfty$. We can show the following Product Rules for differentiable functions:

\begin{itemize}
	\item[(i)] If the maps $f:U\to\Real$ and $\gamma:U\to E$ are partially $C^k$, then the map $f\cdot\gamma$ is
	partially $C^k$ and the partial derivatives are given by
	
	\begin{equation*}
		\partderx{\alpha}(f\cdot\gamma)(x)=\sum_{\tau\leq\alpha}\binom{\alpha}{\tau}
		\partderx{\alpha-\tau}f(x)\partderx{\tau}\gamma(x)
	\end{equation*}
	
	for each $x\in U$ and $\alpha\in\Natural_0^n$ with $|\alpha|\leq k$.
	
	\item[(ii)] If the maps $f:U\times V\to\Real$ and $\gamma:U\times V\to E$ are partially $C^{k,l}$, then the map
	$f\cdot\gamma:U\times V\to E$ is partially $C^{k,l}$ with partial derivatives

	\begin{equation*}
		\partderx{\alpha}\partdery{\beta}(f\cdot\gamma)(x,y)
		=\sum_{\tau\leq\alpha}\sum_{\kappa\leq\beta}\binom{\alpha}{\tau}\binom{\beta}{\kappa}
		\partderx{\alpha-\tau}\partdery{\beta-\kappa}f(x,y)\partderx{\tau}\partdery{\kappa}\gamma(x,y)
	\end{equation*} 

	for $\alpha\in\Natural_0^n$ with $|\alpha|\leq k$, $\beta\in\Natural_0^m$ with $|\beta|\leq l$ and $(x,y)\in U\times V$.
\end{itemize}
\end{remark}

\begin{definition}\label{def:weighted-Ckl-function-space}
Let $E$ be a \Hlc space, $U\subseteq\Real^n$, $V\subseteq\Real^m$ be locally convex subsets with dense interior,
and $k,l\in\Natzeroinfty$. Let $\Weights_1$ and $\Weights_2$ be sets of weights on $U$ and $V$, respectively, and
$\Weights:=\Weights_1\otimes\Weights_2$. For $\gamma\in C^{k,l}(U\times V,E)$, $f=f_1\otimes f_2\in\Weights$,
$q\in\contsemiE$ and $\alpha\in\Natural_0^n$ with $|\alpha|\leq k$, $\beta\in\Natural_0^m$ with $|\beta|\leq l$ we define

\begin{equation*}
	\left\|\gamma\right\|_{f,(\alpha,\beta),q}:=\sup_{(x,y)\in U\times V}f_1(x)f_2(y)q\left(\partderx{\alpha}\partdery{\beta}\gamma(x,y)\right)\in\interclcl{0}{\infty}
\end{equation*}

and endow the vector space

\begin{multline*}
		C_\Weights^{k,l}(U\times V,E):=\{\gamma\in C^{k,l}(U\times V,E) : (\forall f\in\Weights)(\forall q\in\contsemiE)\\
		(\forall \alpha\in\Natural_0^n,|\alpha|\leq k)
		(\forall \beta\in\Natural_0^m,|\beta|\leq l)\left\|\gamma\right\|_{f,(\alpha,\beta),q}<\infty\}
\end{multline*}

with the locally convex topology defined by the seminorms

\begin{equation*}
	\left\|\cdot\right\|_{f,(\alpha,\beta),q}:C_\Weights^{k,l}(U\times V,E)\to\zeroinfty.
\end{equation*}
\end{definition}

Using the fact, that metrizable spaces are $k$-spaces, we can rephrase the Exponential Law for spaces
of differentiable functions (which is proven in \cite[Theorem 94]{Alz}) as follows:

\begin{proposition}[\textbf{Exponential Law for spaces of differentiable functions}]\label{cor:part-Ckl-classical-exponential-law}
Let $E$ be a \Hlc space and $U\subseteq\Real^n$, $V\subseteq\Real^m$ be locally convex subsets with dense interior. 
If $\gamma\in C^{k,l}(U\times V,E)$ for some $k,l\in\Natzeroinfty$, then

\begin{equation*}
	\gamma_x:=\gamma(x,\bullet):V\to E,\quad\gamma_x(y):=\gamma(x,y)
\end{equation*}

is partially $C^l$ for each $x\in U$, and

\begin{equation*}
	\gamma^\vee:U\to C^l(V,E),\quad\gamma^\vee(x):=\gamma_x
\end{equation*} 

is partially $C^k$. 

Moreover, the linear map

\begin{equation*}
	\Phi:C^{k,l}(U\times V,E)\to C^k(U,C^l(V,E)),\quad\gamma\mapsto\gamma^\vee
\end{equation*}

is a homeomorphism.
\end{proposition}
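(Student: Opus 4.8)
The plan is to deduce the statement from the exponential law of H.\ Alzaareer's $C^\alpha$-calculus, namely \cite[Theorem 94]{Alz}, by passing through the dictionary between the notion of a partially $C^{k,l}$-map (resp.\ a partially $C^k$-map) used here and the notion of a $C^\alpha$-map used there. So the work splits into a translation of definitions and topologies, an automatic verification of a $k$-space hypothesis, and a quotation.

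First I would invoke Remark \ref{rem:Ck-eq-C-alpha-eq-partially-Ck} and Remark \ref{rem:C-alpha-beta-eq-partially-Ckl} to identify the function spaces in play. By Remark \ref{rem:C-alpha-beta-eq-partially-Ckl}, a continuous map $\gamma\colon U\times V\to E$ is partially $C^{k,l}$ precisely when it is $C^{(\delta,\varepsilon)}$ for all $\delta\in(\Natzeroinfty)^n$ with $|\delta|\leq k$ and $\varepsilon\in(\Natzeroinfty)^m$ with $|\varepsilon|\leq l$; by Remark \ref{rem:Ck-eq-C-alpha-eq-partially-Ck}, a continuous map into a locally convex space is partially $C^k$ precisely when it is $C^\alpha$ for all $\alpha$ with $|\alpha|\leq k$; and in both cases the locally convex topology given by the seminorms $\|\cdot\|_{K,(\alpha,\beta),q}$ (resp.\ $\|\cdot\|_{K,\alpha,q}$) coincides with the topology used in \cite{Alz}. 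Hence $C^{k,l}(U\times V,E)$ and $C^k(U,C^l(V,E))$, as defined here via Definitions \ref{def:part-Ckl-maps} and \ref{def:partially-Ck-map}, agree as locally convex spaces with the spaces occurring in Alzaareer's exponential law, and the map $\gamma\mapsto\gamma^\vee$ is literally the map appearing there.

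Next I would check that the hypotheses of \cite[Theorem 94]{Alz} are met. The only point to verify is the $k$-space (or local compactness) assumption on the domain factors that is needed to make $\gamma\mapsto\gamma^\vee$ a bijection already at the level of continuous maps (compare Proposition \ref{prop:classical-exp-law}): since $U\subseteq\Real^n$ and $V\subseteq\Real^m$ carry the metrizable topology inherited from a finite-dimensional space, they are $k$-spaces, so this is automatic. Applying the theorem then gives at once that $\gamma_x=\gamma(x,\bullet)$ is partially $C^l$ for every $x\in U$, that $\gamma^\vee\colon U\to C^l(V,E)$ is partially $C^k$, and that the linear map $\Phi$ together with its inverse $\eta\mapsto\eta^\wedge$ is continuous, i.e.\ $\Phi$ is a homeomorphism.

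Thus the only real work is the translation carried out in the two Remarks; granting \cite[Theorem 94]{Alz}, the statement is essentially a verbatim restatement. If one instead wanted a self-contained argument, the main obstacle would be the inductive differentiation step --- showing that differentiating $\gamma^\vee$ in a $U$-direction corresponds to differentiating $\gamma$ in that direction --- which relies on a difference-quotient computation combined with the Fundamental Theorem of Calculus and the interchange of weak integrals with the (pointwise in the $V$-variable) seminorms defining $C^l(V,E)$, the same circle of ideas appearing in the proof of Proposition \ref{prop:der-top-emb-closed-image}. Given \cite[Theorem 94]{Alz}, this is not needed here.
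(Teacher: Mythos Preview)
Your proposal is correct and matches the paper's own treatment: the paper does not give an independent proof of this proposition but simply states that it is a rephrasing of \cite[Theorem 94]{Alz}, using that metrizable spaces are $k$-spaces, exactly as you do. Your additional remarks about the dictionary via Remarks \ref{rem:Ck-eq-C-alpha-eq-partially-Ck} and \ref{rem:C-alpha-beta-eq-partially-Ckl} make explicit what the paper leaves implicit.
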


Now we obtain the following intermediate result:

\begin{theorem}\label{thm:weighted-Ck-exp-top-emb}
Let $E$ be a \Hlc space and $U\subseteq\Real^n$, $V\subseteq\Real^m$ be locally convex subsets with dense interior. Let $\Weights_1$ and $\Weights_2$ be sets of weights on $U$ and $V$, respectively. We assume that for each compact subset $K\subseteq U$ there is a weight $f_K\in\Weights_1$ such that $\inf_{x\in K}f_K(x)>0$, and likewise for $\Weights_2$. If 
$\gamma\in C_{\Weights_1}^k(U,C_{\Weights_2}^l(V,E))$ for some $k,l\in\Natzeroinfty$, then

\begin{equation*}
	\gamma^\wedge\in C_\Weights^{k,l}(U\times V,E),
\end{equation*}

where $\gamma^\wedge$ is the map

\begin{equation*}
	\gamma^\wedge:U\times V\to E,\quad\gamma^\wedge(x,y):=\gamma(x)(y)
\end{equation*}

and $\Weights=\Weights_1\otimes\Weights_2$.

Moreover, the map

\begin{equation*}
	\Psi:C_{\Weights_1}^k(U,C_{\Weights_2}^l(V,E))\to C_\Weights^{k,l}(U\times V,E),\quad\gamma\mapsto\gamma^\wedge
\end{equation*}

is a topological embedding.
\end{theorem}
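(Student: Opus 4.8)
The plan is to mirror the structure of the proof of Theorem~\ref{thm:weighted-exp-top-emb}, replacing the classical exponential law for continuous functions (Proposition~\ref{prop:classical-exp-law}) by the one for differentiable functions (Proposition~\ref{cor:part-Ckl-classical-exponential-law}), and to use the characterization of the topology of $C^k_{\Weights}$-spaces via iterated partial derivatives (Remark~\ref{rem:top-on-weighted-Ck-space}) to reduce all estimates to the already-proven continuous case. First I would assemble the continuous linear map into $C^{k,l}(U\times V,E)$: by Lemma~\ref{lem:k-superposition-operator-cont} the superposition operator $C^k_{\Weights_1}(U,i):C^k_{\Weights_1}(U,C^l_{\Weights_2}(V,E))\to C^k_{\Weights_1}(U,C^l(V,E))$ is continuous and linear, where $i:C^l_{\Weights_2}(V,E)\to C^l(V,E)$ is the continuous inclusion from Lemma~\ref{lem:k-inclusion-map-continuous}; composing with the inclusion $C^k_{\Weights_1}(U,C^l(V,E))\to C^k(U,C^l(V,E))$ (again Lemma~\ref{lem:k-inclusion-map-continuous}) and with $\Phi^{-1}$ from Proposition~\ref{cor:part-Ckl-classical-exponential-law}, I obtain a continuous linear map $\Theta:C^k_{\Weights_1}(U,C^l_{\Weights_2}(V,E))\to C^{k,l}(U\times V,E)$, $\gamma\mapsto\gamma^\wedge$.

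Next I would check that $\gamma^\wedge$ actually lies in $C^{k,l}_{\Weights}(U\times V,E)$ and compute the relevant seminorm. Fix $\alpha\in\Natural_0^n$ with $|\alpha|\le k$, $\beta\in\Natural_0^m$ with $|\beta|\le l$, $q\in\contsemiE$ and $f=f_1\otimes f_2\in\Weights$. The key identity, obtained from the exponential law applied to derivatives (Proposition~\ref{cor:part-Ckl-classical-exponential-law} gives $\partial^\beta_y(\gamma^\wedge)(x,\bullet)=(\partial^\alpha_x\gamma(x))$ evaluated appropriately; more precisely $\partial^\alpha_x\partial^\beta_y(\gamma^\wedge)(x,y)=\big(\partial^\alpha_x\gamma\big)(x)\big(\text{with }\partial^\beta_y\text{ applied in }C^l_{\Weights_2}(V,E)\big)(y)$), is that $\partial^\alpha_x\partial^\beta_y(\gamma^\wedge)$ is the $\wedge$ of the map $x\mapsto \partial^\beta\big(\partial^\alpha_x\gamma(x)\big)$. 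Then the computation proceeds exactly as in Theorem~\ref{thm:weighted-exp-top-emb}:
\begin{align*}
	\left\|\gamma^\wedge\right\|_{f,(\alpha,\beta),q}
	&= \sup_{x\in U}f_1(x)\sup_{y\in V}f_2(y)q\!\left(\partderx{\alpha}\partdery{\beta}\gamma^\wedge(x,y)\right)\\
	&= \sup_{x\in U}f_1(x)\left\|\partderx{\alpha}\gamma(x)\right\|_{f_2,\beta,q}
	= \left\|\partderx{\alpha}\gamma\right\|_{f_1,\left\|\cdot\right\|_{f_2,\beta,q}},
\end{align*}
which is finite since $\gamma\in C^k_{\Weights_1}(U,C^l_{\Weights_2}(V,E))$ and $\partderx{\alpha}\gamma\in C_{\Weights_1}(U,C^l_{\Weights_2}(V,E))$. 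Hence $\Theta$ corestricts to a continuous linear map $\Psi:C^k_{\Weights_1}(U,C^l_{\Weights_2}(V,E))\to C^{k,l}_{\Weights}(U\times V,E)$, which is injective because $\Phi$ is.

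Finally, to see that $\Psi$ is a topological embedding I would invoke Lemma~\ref{lem:cont-lin-inj-then-top-embedding}: the displayed identity shows $\left\|\Psi(\gamma)\right\|_{f,(\alpha,\beta),q}=\left\|\partderx{\alpha}\gamma\right\|_{f_1,\left\|\cdot\right\|_{f_2,\beta,q}}$, so every generating seminorm of the target pulls back to a continuous seminorm on the source; conversely one must check that the seminorms $\gamma\mapsto\left\|\partderx{\alpha}\gamma\right\|_{f_1,\left\|\cdot\right\|_{f_2,\beta,q}}$ generate the topology of $C^k_{\Weights_1}(U,C^l_{\Weights_2}(V,E))$, which follows from Remark~\ref{rem:top-on-weighted-Ck-space} (the topology is initial with respect to $\partderx{\alpha}:C^k_{\Weights_1}(U,\cdot)\to C_{\Weights_1}(U,\cdot)$) together with the definition of the seminorms on $C_{\Weights_1}(U,C^l_{\Weights_2}(V,E))$ built from those on $C^l_{\Weights_2}(V,E)$. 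The main obstacle I anticipate is the bookkeeping in the derivative identity $\partderx{\alpha}\partdery{\beta}\gamma^\wedge(x,y)=\big(\partderx{\alpha}\gamma(x)\big)$ with $\partdery{\beta}$ applied inside $C^l_{\Weights_2}(V,E)$ — one needs that differentiation in the $U$-variable commutes with point-evaluation/differentiation in the $V$-variable, which is exactly what Proposition~\ref{cor:part-Ckl-classical-exponential-law} provides on the underlying $C^{k,l}$-level, but it must be transported carefully through the superposition operators and the fact (Lemma~\ref{lem:lcx-partCk-lin-operator}) that continuous linear maps commute with partial derivatives. Everything else is a routine adaptation of Theorem~\ref{thm:weighted-exp-top-emb}.
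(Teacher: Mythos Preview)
Your proposal is correct and follows essentially the same route as the paper: assemble $\Theta=\Phi^{-1}\circ i\circ C_{\Weights_1}^k(U,j)$ from Lemmas~\ref{lem:k-inclusion-map-continuous}, \ref{lem:k-superposition-operator-cont} and Proposition~\ref{cor:part-Ckl-classical-exponential-law}, establish the seminorm identity $\left\|\gamma^\wedge\right\|_{f_1\otimes f_2,(\alpha,\beta),q}=\left\|\gamma\right\|_{f_1,\alpha,\left\|\cdot\right\|_{f_2,\beta,q}}$ (your $\left\|\partderx{\alpha}\gamma\right\|_{f_1,\left\|\cdot\right\|_{f_2,\beta,q}}$ is the same thing by definition), and conclude via Lemma~\ref{lem:cont-lin-inj-then-top-embedding}. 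The derivative bookkeeping you flag as the main obstacle is handled in the paper exactly as you suggest, by reading off $\partderx{\alpha}\gamma^\wedge(x,\bullet)=\partderx{\alpha}\gamma(x)$ from the exponential law of Proposition~\ref{cor:part-Ckl-classical-exponential-law}.
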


\begin{proof}
We know by Lemma \ref{lem:k-inclusion-map-continuous} that the inclusion maps

\begin{equation*}
	i:C_{\Weights_1}^k(U,C^l(V,E))\to C^k(U,C^l(V,E))
\end{equation*}

and

\begin{equation*}
	j:C_{\Weights_2}^l(V,E))\to C^l(V,E)
\end{equation*}

are continuous and linear. Thus, by Lemma \ref{lem:k-superposition-operator-cont}, the map

\begin{equation*}
	C_{\Weights_1}^k(U,j):C_{\Weights_1}^k(U,C_{\Weights_2}^l(V,E))\to C_{\Weights_1}^k(U,C^l(V,E))
\end{equation*}

is continuous and linear. Now, we use the inverse map

\begin{equation*}
	\Phi^{-1}:C^k(U,C^l(V,E))\to C^{k,l}(U\times V,E),\quad\gamma\mapsto\gamma^\wedge
\end{equation*}

of the homeomorphism $\Phi$ from Proposition \ref{cor:part-Ckl-classical-exponential-law} and define the continuous linear map

\begin{equation*}
	\begin{split}
		\Theta:=\Phi^{-1}\circ i\circ C_{\Weights_1}^k(U,j):C_{\Weights_1}^k(U,C_{\Weights_2}^l(V,E))	&\to C^{k,l}(U\times V,E)\\
																																\gamma	&\mapsto\gamma^\wedge.
	\end{split}
\end{equation*}

To show that $\Theta(\gamma)=\gamma^\wedge\in C_\Weights^{k,l}(U\times V,E)$ for 
$\gamma\in C_{\Weights_1}^k(U,C_{\Weights_2}^l(V,E))$, let $q\in\contsemiE$, $f\in\Weights$ (that is $f=f_1\otimes f_2$ for some weights 
$f_1\in\Weights_1$, $f_2\in\Weights_2$) and $\alpha\in\Natural_0^n$ with $|\alpha|\leq k$, $\beta\in\Natural_0^m$
with $|\beta|\leq l$. We have

\begin{align}
	\left\|\gamma^\wedge\right\|_{f,(\alpha,\beta),q}
	&\defeq\sup_{(x,y)\in U\times V}f_1(x)f_2(y)q\left(\partderx{\alpha}\partdery{\beta}\gamma^\wedge(x,y)\right)\nonumber\\
	&=\sup_{x\in U}f_1(x)\sup_{y\in V}f_2(y)q\left(\partdery{\beta}\left(\partderx{\alpha}\gamma^\wedge(x,\bullet)\right)(y)\right)\nonumber\\
	&=\sup_{x\in U}f_1(x)\sup_{y\in V}f_2(y)q\left(\partdery{\beta}\left(\partderx{\alpha}\gamma(x)\right)(y)\right).
	\label{eq:gamma-wedge-in-weighted-k-l-step-one}
\end{align}

We have $\partderx{\alpha}\gamma(x)\in C_{\Weights_2}^l(V,E)$ for each $x\in U$, hence

\begin{equation}\label{eq:gamma-wedge-in-weighted-k-l-step-two}
	\sup_{y\in V}f_2(y)q\left(\partdery{\beta}\left(\partderx{\alpha}\gamma(x)\right)(y)\right) = \left\|\partderx{\alpha}\gamma(x)\right\|_{f_2,\beta,q} <\infty.
\end{equation}

Applying \eqref{eq:gamma-wedge-in-weighted-k-l-step-two} to \eqref{eq:gamma-wedge-in-weighted-k-l-step-one}, we obtain

\begin{equation}\label{eq:gamma-wedge-in-weighted-k-l-step-three}
	\begin{split}
		\left\|\gamma^\wedge\right\|_{f,(\alpha,\beta),q}
		&=\sup_{x\in U}f_1(x)\sup_{y\in V}f_2(y)q\left(\partdery{\beta}\left(\partderx{\alpha}\gamma(x)\right)(y)\right)\\
		&=\sup_{x\in U}f_1(x)\left\|\partderx{\alpha}\gamma(x)\right\|_{f_2,\beta,q}\\
		&=\left\|\gamma\right\|_{f_1,\alpha,\left\|.\right\|_{f_2,\beta,q}} <\infty,
	\end{split}
\end{equation}

whence $\gamma^\wedge\in C_\Weights^{k,l}(U\times V,E)$.

Thus, we can define the map

\begin{equation*}
	\Psi:=\Theta\big|^{C_\Weights^{k,l}(U\times V,E)}:C_{\Weights_1}^k(U,C_{\Weights_2}^l(V,E))\to C_\Weights^{k,l}(U\times V,E),\quad\gamma\mapsto\gamma^\wedge,
\end{equation*}

which is continuous, linear and injective, by construction. Moreover, for each $f_1\in\Weights_1$, $f_2\in\Weights_2$, $q\in\contsemiE$, and $\alpha$, $\beta$ as above, we have

\begin{equation*}
	\left\|\Psi(\gamma)\right\|_{f_1\otimes f_2,(\alpha,\beta),q} \defeq \left\|\gamma^\wedge\right\|_{f_1\otimes f_2,(\alpha,\beta),q} = 
	\left\|\gamma\right\|_{f_1,\alpha,\left\|.\right\|_{f_2,\beta,q}}
\end{equation*}

for all $\gamma\in C_{\Weights_1}^k(U,C_{\Weights_2}^l(V,E))$, by \eqref{eq:gamma-wedge-in-weighted-k-l-step-three}. Thus, by Lemma \ref{lem:cont-lin-inj-then-top-embedding}, the map $\Psi$ is a topological embedding, which completes the proof.
\end{proof}

For the further work, we need spaces of differentiable maps with compact support.

\begin{definition}\label{def:part-Ck-cp-supp}
Let $E$ be a \Hlc space and $U\subseteq\Real^n$,  $V\subseteq\Real^m$ be locally convex subsets with dense interior. For a compact subset $K\subseteq U$ and $k\in\Natzeroinfty$ we define the space

\begin{equation*}
	C_K^k(U,E):=\left\{\gamma\in C^k(U,E) : \supp(\gamma)\subseteq K\right\}
\end{equation*}

and endow it with the locally convex topology defined by the seminorms

\begin{equation*}
	\left\|\cdot\right\|_{\alpha,q}:C_K^k(U,E)\to\zeroinfty,
	\quad\left\|\gamma\right\|_{\alpha,q}:=\sup_{x\in K}q\left(\partderx{\alpha}\gamma(x)\right)
\end{equation*}

with $q\in\contsemiE$ and $\alpha\in\Natural_0^n$ such that $|\alpha|\leq k$.

Similarly, for a compact subset $L\subseteq U\times V$ and $k,l\in\Natzeroinfty$, the space

\begin{equation*}
	C_L^{k,l}(U\times V,E):=\{\gamma\in C^{k,l}(U\times V,E) : \supp(\gamma)\subseteq L\}
\end{equation*}

is endowed with the locally convex topology defined by the seminorms

\begin{equation*}
	\left\|\cdot\right\|_{(\alpha,\beta),q}:C_L^{k,l}(U\times V,E)\to\zeroinfty,
	\quad\left\|\gamma\right\|_{(\alpha,\beta),q}:=\sup_{(x,y)\in L}q\left(\partderx{\alpha}\partdery{\beta}\gamma(x,y)\right)
\end{equation*}

with $q\in\contsemiE$, $\alpha\in\Natural_0^n$ such that $|\alpha|\leq k$ and $\beta\in\Natural_0^m$ such that $|\beta|\leq l$.

Additionally, we define the spaces

\begin{equation*}
\begin{split}
	C_c^k(U,E):&=\left\{\gamma\in C^k(U,E) : \supp(\gamma)\subseteq U\mbox{ is compact}\right\}\\
	&=\bigcup\{C_K^k(U,E) : K\subseteq U\mbox{ is compact}\}.
\end{split}
\end{equation*}

and

\begin{equation*}
\begin{split}
	C_c^{k,l}(U\times V,E):&=\left\{\gamma\in C^{k,l}(U\times V,E) : \supp(\gamma)\subseteq U\times V\mbox{ is compact}\right\}\\
	&=\bigcup\{C_L^{k,l}(U\times V,E) : L\subseteq U\times V\mbox{ is compact}\}.
\end{split}
\end{equation*}
\end{definition}

\begin{lemma}\label{lem:gamma-check-kl-cp-supp}
Let $E$ be a \Hlc space and $U\subseteq\Real^n$, $V\subseteq\Real^m$ be locally convex subsets with dense interior.
Let $K_1\subseteq U$ and $K_2\subseteq V$ be compact subsets. If $\gamma\in C_{K_1\times K_2}^{k,l}(U\times V,E)$, then

\begin{equation*}
	\gamma_x:=\gamma(x,\bullet)\in C_{K_2}^l(V,E)
\end{equation*}

for each $x\in U$, and

\begin{equation*}
	\gamma^\vee\in C_{K_1}^k(U,C_{K_2}^l(V,E))
\end{equation*}

holds for the map

\begin{equation*}
	\gamma^\vee:U\to C_{K_2}^l(V,E),\quad x\mapsto\gamma_x.
\end{equation*}
\end{lemma}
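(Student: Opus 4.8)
The plan is to bootstrap from the classical exponential law for differentiable functions (Proposition~\ref{cor:part-Ckl-classical-exponential-law}) and then keep careful track of supports. Since $\gamma\in C_{K_1\times K_2}^{k,l}(U\times V,E)\subseteq C^{k,l}(U\times V,E)$, that proposition immediately yields $\gamma_x=\gamma(x,\bullet)\in C^l(V,E)$ for every $x\in U$ and $\gamma^\vee\in C^k(U,C^l(V,E))$. It therefore only remains to sharpen the target spaces, i.e.\ to replace $C^l(V,E)$ by $C_{K_2}^l(V,E)$ and $C^k(U,C_{K_2}^l(V,E))$ by $C_{K_1}^k(U,C_{K_2}^l(V,E))$, which is purely a matter of support bookkeeping.

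First I would record that $C_{K_2}^l(V,E)$ is a vector subspace of $C^l(V,E)$ carrying the subspace topology: if $\eta\in C_{K_2}^l(V,E)$, then $\eta$ vanishes on the open set $V^\circ\setminus K_2$, hence so does $\partdery{\beta}\eta$, and $V^\circ\setminus K_2$ is dense in $V\setminus K_2$ (each $v\in V\setminus K_2$ has a ball avoiding the closed set $K_2$ that still meets the dense set $V^\circ$), so $\partdery{\beta}\eta$ vanishes on all of $V\setminus K_2$ by continuity. Consequently, for any compact $L\subseteq V$ one has $\sup_{y\in L}q(\partdery{\beta}\eta(y))=\sup_{y\in L\cap K_2}q(\partdery{\beta}\eta(y))\leq\sup_{y\in K_2}q(\partdery{\beta}\eta(y))=\left\|\eta\right\|_{\beta,q}$, which forces the two seminorm families to generate the same topology. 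Next, for fixed $x\in U$ and $y\in V\setminus K_2$ we have $(x,y)\notin K_1\times K_2\supseteq\supp(\gamma)$, so $\gamma_x(y)=0$; hence $\supp(\gamma_x)\subseteq K_2$ and $\gamma_x\in C_{K_2}^l(V,E)$, giving in particular $\gamma^\vee(U)\subseteq C_{K_2}^l(V,E)$.

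To upgrade $\gamma^\vee$ to a partially $C^k$-map into $C_{K_2}^l(V,E)$ I would apply Lemma~\ref{lem:lcx-part-Ck-in-image} with the subspace $C_{K_2}^l(V,E)$ of $C^l(V,E)$; the task thus reduces to checking $\partderx{\alpha}\gamma^\vee(U)\subseteq C_{K_2}^l(V,E)$ for every $\alpha\in\Natural_0^n$ with $|\alpha|\leq k$. Since $\ev_y:C^l(V,E)\to E$ is continuous and linear, Lemma~\ref{lem:lcx-partCk-lin-operator} gives $\partderx{\alpha}\gamma^\vee(x)(y)=\ev_y\bigl(\partderx{\alpha}\gamma^\vee(x)\bigr)=\partderx{\alpha}\bigl(\ev_y\circ\gamma^\vee\bigr)(x)=\partderx{\alpha}\bigl(\gamma(\bullet,y)\bigr)(x)$; for $y\notin K_2$ the curve $\gamma(\bullet,y)$ vanishes identically on $U$ (again because $(x',y)\notin\supp(\gamma)$ for every $x'$), so $\partderx{\alpha}\gamma^\vee(x)(y)=0$, and therefore $\supp\bigl(\partderx{\alpha}\gamma^\vee(x)\bigr)\subseteq K_2$. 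Lemma~\ref{lem:lcx-part-Ck-in-image} then yields $\gamma^\vee\in C^k(U,C_{K_2}^l(V,E))$ for $k\in\Natural\cup\{\infty\}$, while for $k=0$ the same conclusion is just the corestriction of the continuous map $\gamma^\vee$ onto the (topological) subspace $C_{K_2}^l(V,E)$. Finally, for $x\in U\setminus K_1$ and every $y\in V$ we have $(x,y)\notin\supp(\gamma)$, so $\gamma^\vee(x)(y)=0$ for all $y$, i.e.\ $\gamma^\vee(x)=0$; hence $\supp(\gamma^\vee)\subseteq K_1$ and $\gamma^\vee\in C_{K_1}^k(U,C_{K_2}^l(V,E))$, as claimed.

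I expect the only genuinely delicate steps to be (a) identifying the intrinsic topology of $C_{K_2}^l(V,E)$ with the one induced from $C^l(V,E)$, which is needed for Lemma~\ref{lem:lcx-part-Ck-in-image} to apply, and (b) the support propagation $\supp\bigl(\partderx{\alpha}\gamma^\vee(x)\bigr)\subseteq K_2$; both are taken care of by the density-of-interior-points observation above. Everything else is a routine combination of Proposition~\ref{cor:part-Ckl-classical-exponential-law} with Lemmas~\ref{lem:lcx-part-Ck-in-image} and~\ref{lem:lcx-partCk-lin-operator}.
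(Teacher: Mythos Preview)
Your proof is correct and follows essentially the same route as the paper's: invoke Proposition~\ref{cor:part-Ckl-classical-exponential-law} to get $\gamma^\vee\in C^k(U,C^l(V,E))$, verify that each $\partderx{\alpha}\gamma^\vee(x)$ has support in $K_2$, apply Lemma~\ref{lem:lcx-part-Ck-in-image} to land in $C^k(U,C_{K_2}^l(V,E))$, and finish by checking $\supp(\gamma^\vee)\subseteq K_1$. You are in fact more careful than the paper in justifying that $C_{K_2}^l(V,E)$ carries the subspace topology from $C^l(V,E)$ (needed for Lemma~\ref{lem:lcx-part-Ck-in-image}) and in computing $\partderx{\alpha}\gamma^\vee(x)(y)$ via $\ev_y$ and Lemma~\ref{lem:lcx-partCk-lin-operator}, both of which the paper leaves implicit.
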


\begin{proof}
We know from Proposition \ref{cor:part-Ckl-classical-exponential-law} that the map $\gamma_x$ is partially $C^l$ for each
$x\in U$. If $y\in V\backslash K_2$, then

\begin{equation*}
	\gamma_x(y)=\gamma(x,y)=0,
\end{equation*}

whence $\supp(\gamma_x)\subseteq K_2$. Thus we have

\begin{equation*}
	\gamma_x\in C_{K_2}^l(V,E).
\end{equation*}

Further, the map

\begin{equation*}
	\gamma^\vee:U\to C^l(V,E),\quad x\mapsto\gamma_x
\end{equation*}

is partially $C^k$ (by Proposition \ref{cor:part-Ckl-classical-exponential-law}). But we have

\begin{equation*}
	\partderx{\alpha}\gamma^\vee(x)=\partderx{\alpha}\gamma_x\in C_{K_2}^l(V,E)
\end{equation*}

for each $x\in U$ and $\alpha\in\Natural_0^n$ with $|\alpha|\leq k$, whence $\gamma^\vee\in C^k(U,C_{K_2}^l(V,E))$,
by Lemma \ref{lem:lcx-part-Ck-in-image}. Moreover, if $x\in U\backslash K_1$ and $y\in V$, then we have

\begin{equation*}
	\gamma^\vee(x)(y)=\gamma(x,y)=0,
\end{equation*}

hence $\gamma^\vee\in C_{K_1}^k(U,C_{K_2}^l(V,E))$, as asserted.
\end{proof}

The next two lemmas can be proven similarly to Lemma \ref{lem:K-supp-in-weighted-incl-cont}

\begin{lemma}\label{lem:Ck-K-supp-in-part-Ck-weighted-incl-cont}
Let $E$ be a \Hlc space, $U\subseteq\Real^n$ be a locally convex subset with dense interior and $K\subseteq U$
be compact. If $\Weights$ is a set of weights on $U$ such that each weight $f\in\Weights$ is bounded on $K$, then

\begin{equation*}
	C_K^k(U,E)\subseteq C_\Weights^k(U,E)
\end{equation*}

and the inclusion map

\begin{equation*}
	i:C_K^k(U,E)\to C_\Weights^k(U,E)
\end{equation*}

is continuous and linear for each $k\in\Natzeroinfty$.
\end{lemma}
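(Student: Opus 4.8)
The plan is to mimic the proof of Lemma~\ref{lem:K-supp-in-weighted-incl-cont}, replacing the single seminorm $\|\cdot\|_q$ by the family $\|\cdot\|_{\alpha,q}$ and the function $\gamma$ by its partial derivatives $\partderx{\alpha}\gamma$. First I would note that linearity of the inclusion is immediate. The one point that needs a small argument is that $\supp(\partderx{\alpha}\gamma)\subseteq K$ for every $\gamma\in C_K^k(U,E)$ and every $\alpha\in\Natural_0^n$ with $|\alpha|\leq k$: since $\gamma$ vanishes on $U\setminus K$, which is open in $U$, and $U^\circ$ is dense in $U$, the map $\partderx{\alpha}(\gamma|_{U^\circ})$ vanishes on the dense subset $(U\setminus K)\cap U^\circ$ of $U\setminus K$, hence its continuous extension $\partderx{\alpha}\gamma$ vanishes on all of $U\setminus K$.

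Granting this, let $\gamma\in C_K^k(U,E)$, $f\in\Weights$, $q\in\contsemiE$ and $\alpha\in\Natural_0^n$ with $|\alpha|\leq k$. Since $f$ is bounded on $K$, put $r:=\sup_{x\in K}f(x)<\infty$; then
\begin{equation*}
	\left\|\gamma\right\|_{f,\alpha,q}\defeq\sup_{x\in U}f(x)q\left(\partderx{\alpha}\gamma(x)\right)=\sup_{x\in K}f(x)q\left(\partderx{\alpha}\gamma(x)\right)\leq r\left\|\gamma\right\|_{\alpha,q}<\infty,
\end{equation*}
where the middle equality uses that $\partderx{\alpha}\gamma$ vanishes off $K$. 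As $f$, $q$ and $\alpha$ (with $|\alpha|\leq k$) were arbitrary, this shows $\gamma\in C_\Weights^k(U,E)$, so $C_K^k(U,E)\subseteq C_\Weights^k(U,E)$.

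Finally, the displayed estimate $\|\gamma\|_{f,\alpha,q}\leq r\|\gamma\|_{\alpha,q}$, valid for all $\gamma\in C_K^k(U,E)$, shows that each seminorm generating the topology of $C_\Weights^k(U,E)$ pulls back along the inclusion to a continuous seminorm on $C_K^k(U,E)$; hence $i$ is continuous. The same argument applies verbatim for $k=\infty$, since the defining seminorms of $C_\Weights^\infty(U,E)$ are exactly the $\|\cdot\|_{f,\alpha,q}$ with $\alpha\in\Natural_0^n$ arbitrary. The only mildly delicate step is the support statement for the partial derivatives on a possibly non-open locally convex domain; everything else is routine bookkeeping.
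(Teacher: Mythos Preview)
Your proof is correct and follows exactly the approach the paper intends: the paper does not spell out a proof but simply states that the lemma ``can be proven similarly to Lemma~\ref{lem:K-supp-in-weighted-incl-cont}'', which is precisely what you do by replacing $\|\cdot\|_q$ with $\|\cdot\|_{\alpha,q}$ and $\gamma$ with $\partderx{\alpha}\gamma$. Your explicit verification that $\supp(\partderx{\alpha}\gamma)\subseteq K$ (via density of $U^\circ\setminus K$ in the relatively open set $U\setminus K$ and continuity of the extension) is a detail the paper leaves implicit, but it is correct and worth making explicit.
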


\begin{lemma}\label{lem:Ckl-K-supp-in-part-Ckl-weighted-incl-cont}
Let $E$ be a \Hlc space, $U\subseteq\Real^n$ and $V\subseteq\Real^m$ be locally convex subsets with dense interior,
and $k,l\in\Natzeroinfty$. Let $\Weights_1$ and $\Weights_2$ be sets of weights on $U$ and $V$, respectively, consisting
of functions that are bounded on compact sets. Then

\begin{equation*}
	C_K^{k,l}(U\times V,E)\subseteq C_{\Weights_1\otimes\Weights_2}^{k,l}(U\times V,E)
\end{equation*}

for each compact subset $K\subseteq U\times V$, and the inclusion map

\begin{equation*}
	i:C_K^{k,l}(U\times V,E)\to C_{\Weights_1\otimes\Weights_2}^{k,l}(U\times V,E)
\end{equation*}

is continuous and linear.
\end{lemma}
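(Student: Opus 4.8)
The plan is to follow the proof of Lemma~\ref{lem:K-supp-in-weighted-incl-cont} almost verbatim, the only additional point being that every weight in $\Weights_1\otimes\Weights_2$ is bounded on the compact set $K$. To establish this, recall that the coordinate projections $\pi_1:U\times V\to U$ and $\pi_2:U\times V\to V$ are continuous, so $K_1:=\pi_1(K)\subseteq U$ and $K_2:=\pi_2(K)\subseteq V$ are compact with $K\subseteq K_1\times K_2$. Given $f=f_1\otimes f_2\in\Weights_1\otimes\Weights_2$, the hypotheses on $\Weights_1$ and $\Weights_2$ then yield $r_1:=\sup_{x\in K_1}f_1(x)<\infty$ and $r_2:=\sup_{y\in K_2}f_2(y)<\infty$, so that $r:=\sup_{(x,y)\in K}f_1(x)f_2(y)\leq r_1r_2<\infty$.

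Next I would use that partial derivatives do not enlarge supports: if $\gamma\in C_K^{k,l}(U\times V,E)$, then $\gamma$ vanishes on the relatively open set $(U\times V)\backslash K\subseteq(U\times V)\backslash\supp(\gamma)$, and hence so does every partial derivative $\partderx{\alpha}\partdery{\beta}\gamma$ with $\alpha\in\Natural_0^n$, $|\alpha|\leq k$ and $\beta\in\Natural_0^m$, $|\beta|\leq l$. Therefore, for such $\alpha,\beta$ and any $q\in\contsemiE$,
\begin{align*}
	\left\|\gamma\right\|_{f_1\otimes f_2,(\alpha,\beta),q}
	&=\sup_{(x,y)\in U\times V}f_1(x)f_2(y)\,q\!\left(\partderx{\alpha}\partdery{\beta}\gamma(x,y)\right)\\
	&=\sup_{(x,y)\in K}f_1(x)f_2(y)\,q\!\left(\partderx{\alpha}\partdery{\beta}\gamma(x,y)\right)
	\leq r\left\|\gamma\right\|_{(\alpha,\beta),q}<\infty .
\end{align*}
This shows $\gamma\in C_{\Weights_1\otimes\Weights_2}^{k,l}(U\times V,E)$, so the inclusion $C_K^{k,l}(U\times V,E)\subseteq C_{\Weights_1\otimes\Weights_2}^{k,l}(U\times V,E)$ is well defined, and it is obviously linear.

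Finally, the displayed estimate $\left\|\gamma\right\|_{f_1\otimes f_2,(\alpha,\beta),q}\leq r\left\|\gamma\right\|_{(\alpha,\beta),q}$ holds for every generating seminorm of $C_{\Weights_1\otimes\Weights_2}^{k,l}(U\times V,E)$, which is precisely what is needed for continuity of the inclusion map $i$. I do not expect a genuine obstacle here; the only step worth a word of justification is that $\partderx{\alpha}\partdery{\beta}\gamma$ again vanishes on $(U\times V)\backslash K$ when $U$ and $V$ are merely locally convex with dense interior. This follows because $\gamma$, and hence $\partderx{\alpha}\partdery{\beta}\gamma$, vanishes on the open set $(U^\circ\times V^\circ)\backslash K$, which is dense in $(U\times V)\backslash K$, so the claim is a consequence of continuity of $\partderx{\alpha}\partdery{\beta}\gamma$ on $U\times V$.
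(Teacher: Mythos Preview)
Your proof is correct and follows exactly the approach the paper indicates: it does not give a separate proof of this lemma but simply remarks that it ``can be proven similarly to Lemma~\ref{lem:K-supp-in-weighted-incl-cont}.'' You have carried out precisely that adaptation, and in fact supplied more detail than the paper does, notably the verification that each $f_1\otimes f_2$ is bounded on $K$ via the projections, and the justification that $\partderx{\alpha}\partdery{\beta}\gamma$ vanishes on $(U\times V)\setminus K$ in the locally-convex-with-dense-interior setting.
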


The next proposition will be essential for the proof of the Exponential Law:

\begin{proposition}\label{prop:smooth-compact-supp-dense-in-weighted-Ckl-assertion}
Let $E$ be a \Hlc space, $U\subseteq\Real^n$ and $V\subseteq~\Real^m$ be open subsets, and $k,l\in\Natzeroinfty$. Assume that for a set of weights 
$\Weights_1\subseteq C^k(U,\zeroinfty)$ on $U$ we have:

\begin{itemize}
	\item [(i)]	$\Weights_1$ satisfies the $o$-condition,
	\item [(ii)]	for each $f\in\Weights_1$ and $\alpha\in\Natural_0^n$ with $|\alpha|\leq k$ there exists 
					$g\in\Weights_1$ such that
					
					\begin{equation*}
						\left|\partderx{\alpha}f(x)\right|\leq g(x)
					\end{equation*}
					
					for all $x\in U$,
\end{itemize}

and likewise for a set of weights $\Weights_2\subseteq C^l(V,\zeroinfty)$ on $V$. 
Then the space $C_c^{k,l}(U\times V,E)$ is dense in $C_\Weights^{k,l}(U\times V,E)$, where 
$\Weights=\Weights_1\otimes\Weights_2$.
\end{proposition}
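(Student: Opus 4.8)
The plan is to adapt the cutoff argument of Garnir--De Wilde--Schmets \cite{Garnir-DeWilde-SchmetsIII} for the scalar $C^k$-case, pushing the derivatives through the Product Rule of Remark \ref{rem:part-Ck-part-Ckl-product-rules}(ii). First I would record that $C_c^{k,l}(U\times V,E)\subseteq C_\Weights^{k,l}(U\times V,E)$: every weight in $\Weights_1$ or $\Weights_2$ is continuous, hence bounded on compact sets, so this is Lemma \ref{lem:Ckl-K-supp-in-part-Ckl-weighted-incl-cont} together with $C_c^{k,l}(U\times V,E)=\bigcup_L C_L^{k,l}(U\times V,E)$. Since a basic zero-neighbourhood in $C_\Weights^{k,l}(U\times V,E)$ is cut out by finitely many of the seminorms $\|\cdot\|_{f,(\alpha,\beta),q}$, and since (replacing $\Weights_1$, $\Weights_2$ by the sets of their finite positive linear combinations, which changes neither the spaces nor the hypotheses (i), (ii), as in Remark \ref{rem:sum-of-weights-in-set}) finitely many weights are dominated by a single one, the claim reduces to the following: given $\gamma\in C_\Weights^{k,l}(U\times V,E)$, $f_1\in\Weights_1$, $f_2\in\Weights_2$, $q\in\contsemiE$ and $\varepsilon>0$, find $\eta\in C_c^{k,l}(U\times V,E)$ with $\|\eta-\gamma\|_{f_1\otimes f_2,(\alpha,\beta),q}<\varepsilon$ for all $\alpha\in\Natural_0^n$, $\beta\in\Natural_0^m$ with $|\alpha|\le k$, $|\beta|\le l$.

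To prepare the auxiliary weights I would use hypothesis (ii) (and closure under sums) to pick $G_1\in\Weights_1$ with $f_1\le G_1$ and $\left|\partderx{\alpha}f_1\right|\le G_1$ for all $|\alpha|\le k$, and likewise $G_2\in\Weights_2$ for $f_2$; then hypothesis (i) gives $H_1\in\Weights_1$, $H_2\in\Weights_2$ with $G_1=o(H_1)$, $G_2=o(H_2)$, and after passing to $G_i+H_i$ I may assume $G_i\le H_i$. Setting $H:=H_1\otimes H_2\in\Weights$ one has $f_1\otimes f_2=o(H)$ by the argument of Lemma \ref{lem:double-weight-little-o}, and $M:=\max\{\|\gamma\|_{H,(\tau,\kappa),q}:\tau\le\alpha,\;\kappa\le\beta\}<\infty$ because $\gamma\in C_\Weights^{k,l}(U\times V,E)$. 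The approximant I would take of product form $\eta:=(\phi_1\otimes\phi_2)\cdot\gamma$ with $\phi_1\in C_c^\infty(U,\interclcl{0}{1})$ equal to $1$ on a neighbourhood of a large compact set $K_1\subseteq U$ and with $\supp\phi_1$ compact, and $\phi_2$ likewise on $V$; then $\supp\eta\subseteq\supp\phi_1\times\supp\phi_2$ is a compact subset of $U\times V$ and, by the Product Rule, $\eta\in C_c^{k,l}(U\times V,E)$. Writing $h:=\phi_1\otimes\phi_2$ and expanding $\partderx{\alpha}\partdery{\beta}\bigl((h-1)\gamma\bigr)$ by Remark \ref{rem:part-Ck-part-Ckl-product-rules}(ii), one uses that $h-1$ and all of its partial derivatives vanish on the neighbourhood of $K_1\times K_2$ on which $h\equiv1$, that off $K_1\times K_2$ either $x\notin K_1$ or $y\notin K_2$ holds (so $f_1(x)\le\delta H_1(x)$ or $f_2(y)\le\delta H_2(y)$, once $K_1,K_2$ are taken from the $o$-condition of Definition \ref{def:little-o} for $G_1,G_2$ at parameter $\delta$), that $f_i\le H_i$, and that $\left|\partderx{\mu}\partdery{\nu}(h-1)\right|\le C$ for $|\mu|\le k$, $|\nu|\le l$, where $C$ bounds the derivatives of $\phi_1$ and $\phi_2$. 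This yields $\|\eta-\gamma\|_{f_1\otimes f_2,(\alpha,\beta),q}\le 2^{k+l}\,C\,\delta\,M$.

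The genuine difficulty is exactly the construction of $\phi_1,\phi_2$ in the last step. The bound above is useless as written, because the constant $C$ controlling the derivatives of the cutoffs grows as the sets $K_1\times K_2$ exhaust $U\times V$, while $\delta$ must be driven to $0$ in order to neglect the region off $K_1\times K_2$; one cannot simply fix a cutoff and shrink $\delta$ afterwards, and for $U,V$ not all of $\Real^n$ one cannot obtain cutoffs that are $\equiv1$ arbitrarily close to the boundary with uniformly bounded derivatives. Following \cite{Garnir-DeWilde-SchmetsIII}, I would instead take $h$ to be a partial sum $h_N=\sum_{j\le N}\theta_j$ of a locally finite partition of unity $(\theta_j)$ subordinate to exhaustions of $U$ and $V$ chosen compatibly with chains of weights obtained by iterating the $o$-condition of (i) and with the derivative bounds furnished by (ii), and estimate $\|(h_N-1)\gamma\|_{f_1\otimes f_2,(\alpha,\beta),q}\le\sum_{j>N}\|\theta_j\gamma\|_{f_1\otimes f_2,(\alpha,\beta),q}\le 2^{k+l}M\sum_{j>N}c_j\,\sigma_j$, where $c_j$ bounds the derivatives of $\theta_j$ of order $\le k$ in $U$ and $\le l$ in $V$ and $\sigma_j:=\sup_{\supp\theta_j}\bigl((f_1\otimes f_2)/H\bigr)$. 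Since $\supp\theta_j$ escapes every compact set and $f_1\otimes f_2=o(H)$, one has $\sigma_j\to0$; the heart of the matter is to arrange the exhaustions and the weight $H$ so that in fact $\sum_j c_j\sigma_j<\infty$, after which taking $N$ large makes the tail smaller than $\varepsilon$ and completes the proof that $C_c^{k,l}(U\times V,E)$ is dense in $C_\Weights^{k,l}(U\times V,E)$. This balancing between the growth of the partition-of-unity derivatives and the decay dictated by the $o$-condition is precisely the technical point inherited from \cite{Garnir-DeWilde-SchmetsIII}.
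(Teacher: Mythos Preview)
Your identification of the core difficulty is correct: near $\partial U$ (resp.\ $\partial V$) the derivatives of the cutoffs blow up like $d(x,\partial U)^{-(|\alpha|-|\tau|)}$, and a single application of the $o$-condition gives only qualitative decay $\sigma_j\to 0$, not a rate. The step you leave open---arranging the partition of unity and the weight $H$ so that $\sum_j c_j\sigma_j<\infty$---is the entire content of the argument, and I do not see how to carry it out from hypotheses (i) and (ii). Iterating the $o$-condition produces a chain $f_1=o(g_1),\,g_1=o(g_2),\ldots$, but the compact sets witnessing each step are handed to you, not chosen, so there is no way to match them to the shells that determine $c_j$. Notice also that your argument never uses hypothesis (ii) in an essential way: the bound $|\partial^\alpha f_1|\le G_1$ plays no role in estimating $\partial^\alpha\partial^\beta\bigl((h-1)\gamma\bigr)$, since only derivatives of $h$ and of $\gamma$ occur there. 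That is a warning sign that the mechanism is missing.

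The paper's route is different and this is precisely where (ii) enters. The approximation is split into two stages: first cut off at infinity (where the cutoff derivatives stay bounded independently of $r_1$, so Lemma \ref{lem:o-condition-then-Ckl-derivatives-to-zero} alone suffices), and then cut off near the boundary with mollified indicators of $U_{3/(4r_2)}$. For the boundary stage the key observation is that hypothesis (ii), combined with the $o$-condition, forces not only $f_1\cdot\partial^\tau\gamma_{\beta,y,f_2}\to 0$ but \emph{all} derivatives $\partial^\pi\bigl(f_1\cdot\partial^\tau\gamma_{\beta,y,f_2}\bigr)\to 0$ as $x\to\partial U$ (Remark \ref{rem:o-condition-then-derivatives-to-zero}). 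Hence these functions extend by zero to $C^{|\alpha|-|\tau|}$-maps on all of $\Real^n$ (Proposition \ref{prop:partially-Ck-extensions-partially-Ck}), and a Taylor estimate at a nearby boundary point (Lemma \ref{lem:taylor-series}) then yields a factor $d(x_j,\partial U)^{|\alpha|-|\tau|}$ that exactly cancels the $s_j^{|\alpha|-|\tau|}$ coming from Lemma \ref{lem:estimate-der-rho}. So hypothesis (ii) is used to gain higher-order vanishing at the boundary, and this pointwise cancellation replaces the summability you were hoping for.
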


The proof (which can be found after Lemma \ref{lem:estimate-der-rho}) varies the proof of  the density
of $C_c^\infty(U,\Real)$ in the space $C_\Weights^k(U,\Real)$ by H.G. Garnir,
M. De Wilde and J. Schmets in \cite{Garnir-DeWilde-SchmetsIII}. In addition, several auxiliary results will be necessary. 
We introduce some useful constructions.

\begin{lemma}\label{lem:useful-weighted-Ckl-maps}
Let $E$ be a \Hlc space, $U\subseteq\Real^n$, $V\subseteq\Real^m$ be locally convex subsets with dense interior. Let 
$\Weights_1$ and $\Weights_2$ be sets of weights on $U$ and $V$, respectively. For a $C^{k,l}$-map 
$\gamma:U\times V\to E$ (with $k,l\in\Natzeroinfty$) we define the maps

\begin{equation*}
	\gamma_{\beta,y,f_2}:=f_2(y)\partdery{\beta}\gamma(\bullet,y):U\to E
\end{equation*}

for some weight $f_2\in\Weights_2$, $\beta\in\Natural_0^m$ with $|\beta|\leq l$ and $y\in V$, and

\begin{equation*}
	\gamma_{\alpha,x,f_1}:=f_1(x)\partderx{\alpha}\gamma(x,\bullet):V\to E
\end{equation*}

for some weight $f_1\in\Weights_1$, $\alpha\in\Natural_0^n$ with $|\alpha|\leq k$ and $x\in U$. 

If $\gamma\in C_{\Weights_1\otimes\Weights_2}^{k,l}(U\times V,E)$, then $\gamma_{\beta,y,f_2}\in C_{\Weights_1}^k(U,E)$ and 
$\gamma_{\alpha,x,f_1}\in C_{\Weights_2}^l(V,E)$.
\end{lemma}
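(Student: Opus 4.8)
The plan is to reduce everything to the (unweighted) Exponential Law of Proposition~\ref{cor:part-Ckl-classical-exponential-law} and to handle the weighted estimates by a pointwise comparison with a weight that is positive at the distinguished point. By the symmetry of the statement under interchanging the two factors---recall that $\gamma$ is partially $C^{k,l}$ if and only if $(y,x)\mapsto\gamma(x,y)$ is partially $C^{l,k}$, and, correspondingly, $\gamma\in C_{\Weights_1\otimes\Weights_2}^{k,l}(U\times V,E)$ if and only if $(y,x)\mapsto\gamma(x,y)$ lies in $C_{\Weights_2\otimes\Weights_1}^{l,k}(V\times U,E)$ (Remark~\ref{rem:C-alpha-beta-eq-partially-Ckl})---it suffices to prove that $\gamma_{\beta,y,f_2}\in C_{\Weights_1}^k(U,E)$ for all $\beta\in\Natural_0^m$ with $|\beta|\leq l$, $y\in V$ and $f_2\in\Weights_2$. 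Two things then have to be checked: that $\gamma_{\beta,y,f_2}$ is a $C^k$-map $U\to E$ (with an explicit formula for its partial derivatives), and that $\left\|\gamma_{\beta,y,f_2}\right\|_{f_1,\alpha,q}<\infty$ for all $f_1\in\Weights_1$, $q\in\contsemiE$ and $\alpha\in\Natural_0^n$ with $|\alpha|\leq k$.

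For the first point I would not differentiate $\gamma_{\beta,y,f_2}$ by hand, but instead write it as a composition of a partially $C^k$-map with a continuous linear map. Since $C_{\Weights_1\otimes\Weights_2}^{k,l}(U\times V,E)\subseteq C^{k,l}(U\times V,E)$, Proposition~\ref{cor:part-Ckl-classical-exponential-law} provides $\gamma^\vee\in C^k(U,C^l(V,E))$, with $\partderx{\alpha}\gamma^\vee(x)=(\partderx{\alpha}\gamma)(x,\bullet)$ for all $\alpha\in\Natural_0^n$ with $|\alpha|\leq k$. Next, the map
\begin{equation*}
	\varphi:C^l(V,E)\to E,\quad\eta\mapsto f_2(y)\,\partdery{\beta}\eta(y)
\end{equation*}
is linear and continuous, since $q(\varphi(\eta))=f_2(y)\left\|\eta\right\|_{\{y\},\beta,q}$ for each $q\in\contsemiE$, and $\left\|\cdot\right\|_{\{y\},\beta,q}$ is one of the seminorms defining the topology of $C^l(V,E)$ (Remark~\ref{rem:Ck-eq-C-alpha-eq-partially-Ckl}). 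Since $\gamma_{\beta,y,f_2}=\varphi\circ\gamma^\vee$ by construction, Lemma~\ref{lem:lcx-partCk-lin-operator} (the case $k=0$ being immediate) shows that $\gamma_{\beta,y,f_2}$ is partially $C^k$ and
\begin{equation*}
	\partderx{\alpha}\gamma_{\beta,y,f_2}(x)=\varphi\bigl((\partderx{\alpha}\gamma)(x,\bullet)\bigr)=f_2(y)\,\partdery{\beta}\partderx{\alpha}\gamma(x,y)=f_2(y)\,\partderx{\alpha}\partdery{\beta}\gamma(x,y),
\end{equation*}
the last equality by Schwarz' Theorem (Remark~\ref{rem:C-alpha-beta-eq-partially-Ckl}).

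For the second point, fix $f_1\in\Weights_1$, $q\in\contsemiE$ and $\alpha\in\Natural_0^n$ with $|\alpha|\leq k$. Using the derivative formula just obtained,
\begin{equation*}
	\left\|\gamma_{\beta,y,f_2}\right\|_{f_1,\alpha,q}=\sup_{x\in U}f_1(x)q\bigl(\partderx{\alpha}\gamma_{\beta,y,f_2}(x)\bigr)=f_2(y)\sup_{x\in U}f_1(x)q\bigl(\partderx{\alpha}\partdery{\beta}\gamma(x,y)\bigr).
\end{equation*}
Since $\Weights_2$ is a set of weights on $V$, there is a weight $g_2\in\Weights_2$ with $g_2(y)>0$ (Definition~\ref{def:weighted-function-space}); then, for every $x\in U$,
\begin{equation*}
	f_1(x)q\bigl(\partderx{\alpha}\partdery{\beta}\gamma(x,y)\bigr)=\frac{1}{g_2(y)}f_1(x)g_2(y)q\bigl(\partderx{\alpha}\partdery{\beta}\gamma(x,y)\bigr)\leq\frac{1}{g_2(y)}\left\|\gamma\right\|_{f_1\otimes g_2,(\alpha,\beta),q},
\end{equation*}
which is finite because $f_1\otimes g_2\in\Weights_1\otimes\Weights_2$ (Definition~\ref{def:double-weight}) and $\gamma\in C_{\Weights_1\otimes\Weights_2}^{k,l}(U\times V,E)$. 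Taking the supremum over $x$ gives $\left\|\gamma_{\beta,y,f_2}\right\|_{f_1,\alpha,q}\leq\frac{f_2(y)}{g_2(y)}\left\|\gamma\right\|_{f_1\otimes g_2,(\alpha,\beta),q}<\infty$, so $\gamma_{\beta,y,f_2}\in C_{\Weights_1}^k(U,E)$.

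The only genuine obstacle is the differentiability bookkeeping in the first step, i.e.\ establishing that $\gamma_{\beta,y,f_2}$ is $C^k$ with the asserted derivative; routing this through the composition $\varphi\circ\gamma^\vee$ reduces it entirely to the already-available Exponential Law and Schwarz' Theorem, after which the seminorm estimate is routine. Finally, $\gamma_{\alpha,x,f_1}\in C_{\Weights_2}^l(V,E)$ follows by the symmetric argument (applying what has just been proved to $(y,x)\mapsto\gamma(x,y)\in C_{\Weights_2\otimes\Weights_1}^{l,k}(V\times U,E)$, with the roles of the two factors, of $\Weights_1,\Weights_2$, and of $\alpha,\beta$ interchanged).
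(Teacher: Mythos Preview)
Your proof is correct and structurally the same as the paper's: first establish that $\gamma_{\beta,y,f_2}$ is partially $C^k$ with $\partderx{\alpha}\gamma_{\beta,y,f_2}(x)=f_2(y)\,\partderx{\alpha}\partdery{\beta}\gamma(x,y)$, then bound the weighted seminorms. Two minor differences are worth noting. For the differentiability step, the paper simply invokes \cite[Lemma~28]{Alz} to conclude that $\partdery{\beta}\gamma(\bullet,y)$ is partially $C^k$ (and then multiplies by the scalar $f_2(y)$), whereas you route the argument through the Exponential Law (Proposition~\ref{cor:part-Ckl-classical-exponential-law}) and the composition $\varphi\circ\gamma^\vee$ with Lemma~\ref{lem:lcx-partCk-lin-operator}; your path is slightly longer but has the advantage of being self-contained within the paper. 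For the seminorm estimate, your detour through an auxiliary weight $g_2$ with $g_2(y)>0$ is unnecessary: since $f_1\otimes f_2\in\Weights_1\otimes\Weights_2$, one has directly
\[
\left\|\gamma_{\beta,y,f_2}\right\|_{f_1,\alpha,q}=\sup_{x\in U}f_1(x)f_2(y)\,q\bigl(\partderx{\alpha}\partdery{\beta}\gamma(x,y)\bigr)\leq\left\|\gamma\right\|_{f_1\otimes f_2,(\alpha,\beta),q}<\infty,
\]
which is the estimate the paper uses.
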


\begin{proof}
We prove the assertion for the map $\gamma_{\beta,y,f_2}$, the proof for $\gamma_{\alpha,x,f_1}$ will be similar. , 
From \cite[Lemma 28]{Alz} follows that the map $\partdery{\beta}\gamma(\bullet,y)$ is partially $C^k$, whence 
also the map 
$f_2(y)\partdery{\beta}\gamma(\bullet,y)=\gamma_{\beta,y,f_2}$ is partially $C^k$. 
Now, if $f_1\in\Weights_1$, $q\in\contsemiE$, and $\alpha\in\Natural_0^n$ with $|\alpha|\leq k$, then we have

\begin{align*}
	\left\|\gamma_{\beta,y,f_2}\right\|_{f_1,\alpha,q}
	&\defeq \sup_{x\in U}f_1(x)q\left(\partderx{\alpha}\gamma_{\beta,y,f_2}(x)\right)\\
	&=\sup_{x\in U}f_1(x)q\left(\partderx{\alpha}\left(f_2(y)\partdery{\beta}\gamma(\bullet,y)\right)(x)\right)\\
	&=\sup_{x\in U}f_1(x)f_2(y)q\left(\partderx{\alpha}\partdery{\beta}\gamma(x,y)\right)
	   =\left\|\gamma\right\|_{f_1\otimes f_2,(\alpha,\beta),q}<\infty.
\end{align*}

Hence $\gamma_{\beta,y,f_2}\in C_{\Weights_1}^k(U,E)$.
\end{proof}

The next result will be very helpful:

\begin{lemma}\label{lem:f-o-one-then-to-zero}
Let $f:U\to\Real$ be a map on an open subset $U\subseteq\Real^n$. If $f=o(1)$, then $f(x)\to0$ as $\left\|x\right\|_\infty\to\infty$ or
$x\to\overline{x}\in\partial U$.
\end{lemma}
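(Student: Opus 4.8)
The plan is to observe that the conclusion is merely a geometric restatement of the hypothesis. Recall that $f=o(1)$ means $f=o(\mathds{1}_U)$, i.e.\ for every $\varepsilon>0$ there is a compact set $K_\varepsilon\subseteq U$ with $|f(x)|\leq\varepsilon$ for all $x\in U\setminus K_\varepsilon$. Hence it suffices to show: for each $\varepsilon>0$ there exist $R>0$ and $\delta>0$ such that $|f(x)|\leq\varepsilon$ for every $x\in U$ with $\|x\|_\infty>R$, and for every $x\in U$ with $\operatorname{dist}(x,\partial U)<\delta$. This is exactly the assertion that $f(x)\to 0$ as $\|x\|_\infty\to\infty$ or $x\to\overline{x}\in\partial U$ (equivalently, that $f(x_j)\to0$ for every sequence $(x_j)$ in $U$ with $\|x_j\|_\infty\to\infty$ or $x_j\to\overline{x}\in\partial U$).

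First I would dispose of the trivial cases: if $U=\emptyset$ there is nothing to prove, and if $U=\Real^n$ then $\partial U=\emptyset$ and only the condition $\|x\|_\infty\to\infty$ is relevant, so the $\delta$-part of the argument below is simply dropped. Assume therefore $\emptyset\neq U\subsetneq\Real^n$, so that $\partial U$ is nonempty, closed, and disjoint from the open set $U$. Fix $\varepsilon>0$ and pick a compact set $K_\varepsilon\subseteq U$ with $|f(x)|\leq\varepsilon$ on $U\setminus K_\varepsilon$. Since $K_\varepsilon$ is bounded, there is $R>0$ with $K_\varepsilon\subseteq\{x\in\Real^n:\|x\|_\infty\leq R\}$, so every $x\in U$ with $\|x\|_\infty>R$ lies in $U\setminus K_\varepsilon$ and hence satisfies $|f(x)|\leq\varepsilon$. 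Moreover, $K_\varepsilon$ (compact) and $\partial U$ (closed) are disjoint, so $\delta:=\operatorname{dist}(K_\varepsilon,\partial U)>0$; then any $x\in U$ with $\operatorname{dist}(x,\partial U)<\delta$ cannot belong to $K_\varepsilon$, and again $|f(x)|\leq\varepsilon$. As $\varepsilon>0$ was arbitrary, this proves the lemma.

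I do not expect a genuine obstacle here: the whole content is the equivalence between ``$x$ leaving every compact subset of $U$'' and ``$\|x\|_\infty\to\infty$ or $x$ approaching $\partial U$''. The only step needing a little care is the boundary estimate, where the complement of the single compact set $K_\varepsilon$ must be turned into an honest neighbourhood of $\partial U$ inside $U$ on which $|f|\leq\varepsilon$; this is exactly what the positivity of $\operatorname{dist}(K_\varepsilon,\partial U)$ provides, and it rests on $K_\varepsilon$ being compact and $\partial U$ closed (hence their positive distance when disjoint).
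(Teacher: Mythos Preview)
Your proof is correct and follows essentially the same approach as the paper: both arguments use the compact set $K_\varepsilon\subseteq U$ from the definition of $f=o(1)$ and observe that points with large norm or close to $\partial U$ must lie outside $K_\varepsilon$. The only cosmetic difference is that the paper argues via sequences (showing $|f(x_m)|\leq\varepsilon$ for $m$ large along any sequence $(x_m)$ with $\|x_m\|_\infty\to\infty$ or $x_m\to\overline{x}\in\partial U$), whereas you phrase the same thing metrically by extracting a single radius $R$ and a uniform $\delta=\operatorname{dist}(K_\varepsilon,\partial U)>0$; your formulation is in fact slightly stronger (uniform over all boundary points) but the underlying idea is identical.
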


\begin{proof}
First, let $(x_m)_{m\in\Natural}$ be a sequence in $U$ such that $\left\|x_m\right\|_\infty\to\infty$ as $m\to\infty$, and let 
$\varepsilon>0$. Since $f=o(1)$, there is a compact subset $K_\varepsilon\subseteq U$ such that $\left|f(x)\right|\leq\varepsilon$ for each
$x\in U\backslash K_\varepsilon$. Further, there exists $N\in\Natural$ such that for each $m\geq N$ we have 
$\left\|x_m\right\|_\infty>\max_{x\in K_\varepsilon}\left\|x\right\|_\infty$, that is $x_m\in U\backslash K_\varepsilon$. Thus
$\left|f(x_m)\right|\leq\varepsilon$ for all $m\geq N$, as required.

Now let $(x_m)_{m\in\Natural}$ be a sequence in $U$ which converges to some $\overline{x}\in\partial U$, $\varepsilon>0$ and
$K_\varepsilon$ as above. The set $\Real^n\backslash K_\varepsilon$ is open in $\Real^n$ and 
$\overline{x}\in\Real^n\backslash K_\varepsilon$, thus there exists $N\in\Natural$ such that for all $m\geq N$ we have
$x_m\in\Real^n\backslash K_\varepsilon$ (more precisely, we have $x_m\in U\backslash K_\varepsilon$), whence 
$f(x_m)\leq\varepsilon$ for all $m\geq N$.
\end{proof}

Let us consider some cases, in which the products of weights and weighted maps
tend to zero.

\begin{remark}\label{rem:o-condition-then-derivatives-to-zero}
We recall from Definition \ref{def:little-o} that if the set of weights $\Weights$ on an open subset $U\subseteq\Real^n$ satisfies 
the $o$-condition, then for each weight $f\in\Weights$ there is a weight $g\in\Weights$ such that $f=o(g)$. (We can always assume
that $g(x)=0$ only if $f(x)=0$ and set $\frac{f(x)}{g(x)}:=0$ in this case.)  Therefore, we have $\frac{f}{g}=o(1)$, which yields that

\begin{equation*}
	\frac{f(x)}{g(x)}\to0
\end{equation*}

as $\left\|x\right\|_\infty\to\infty$ or $x\to\overline{x}\in\partial U$ (by Lemma \ref{lem:f-o-one-then-to-zero}). Thus for each 
$\gamma\in C_\Weights^k(U,E)$ and $\alpha\in\Natural_0^n$ with $|\alpha|\leq k$ we have

\begin{equation}\label{eq:derivatives-to-zero}
	f(x)\partderx{\alpha}\gamma(x)\to 0
\end{equation}

as $\left\|x\right\|_\infty\to\infty$ or $x\to\overline{x}\in\partial U$. In fact, if $q\in\contsemiE$ and $f=o(g)$, then

\begin{equation*}
	f(x)q\left(\partderx{\alpha}\gamma(x)\right) = \frac{f(x)}{g(x)}g(x)q\left(\partderx{\alpha}\gamma(x)\right) \leq \frac{f(x)}{g(x)}\left\|\gamma\right\|_{g,\alpha,q}\to 0.
\end{equation*}

If, additionally, $\Weights\subseteq C^k(U,\interclop{0}{\infty})$ and for each weight $f\in\Weights$ and 
$\alpha\in\Natural_0^n$ with $|\alpha|\leq k$ there exists a weight $h_{f,\alpha}\in\Weights$ such that

\begin{equation*}
	\left|\partderx{\alpha}f(x)\right|\leq h_{f,\alpha}(x)
\end{equation*}

for all $x\in U$, then for all $\beta,\tau\in\Natural_0^n$ with $|\beta|+|\tau|\leq k$ we have

\begin{equation*}
	\partderx{\beta}\left(f\cdot\partderx{\tau}\gamma\right)(x)\to 0
\end{equation*}

as $\left\|x\right\|_\infty\to\infty$ or $x\to\overline{x}\in\partial U$. To see this, we use the Product Rule 
(see Remark \ref{rem:part-Ck-part-Ckl-product-rules} (i)) and obtain

\begin{align*}
	q\left(\partderx{\beta}\left(f\cdot\partderx{\tau}\gamma\right)(x)\right)
	&= q\left(\sum_{\kappa\leq\beta}\binom{\beta}{\kappa}\partderx{\beta-\kappa}f(x)\partderx{\kappa+\tau}\gamma(x)\right)\\
	&\leq\sum_{\kappa\leq\beta}\binom{\beta}{\kappa}\left|\partderx{\beta-\kappa}f(x)\right|q\left(\partderx{\kappa+\tau}\gamma(x)\right)\\
	&\leq\sum_{\kappa\leq\beta}\binom{\beta}{\kappa}h_{f,\beta-\kappa}(x)q\left(\partderx{\kappa+\tau}\gamma(x)\right)\to 0,
\end{align*}

by \eqref{eq:derivatives-to-zero}.
\end{remark}

\begin{lemma}\label{lem:o-condition-then-Ckl-derivatives-to-zero}
Let $E$ be a \Hlc space, $U\subseteq\Real^n$ and $V\subseteq\Real^m$ be locally convex subsets with dense interior,
and $k,l\in\Natzeroinfty$. Let $\Weights_1$ and $\Weights_2$ be sets of weights on $U$ and $V$, respectively, and
$\alpha\in\Natural_0^n$ with $|\alpha|\leq k$, $\beta\in\Natural_0^m$ with $|\beta|\leq l$.
\begin{enumerate}
	\item[(i)] If $U$ is open and $\Weights_1$ satisfies the $o$-condition, then for each
	$f_1\in\Weights_1$, $f_2\in\Weights_2$ and $\gamma\in C_{\Weights_1\otimes\Weights_2}^{k,l}(U\times V,E)$
	we have
	
	\begin{equation*}
		f_1(x)f_2(y)\partderx{\alpha}\partdery{\beta}\gamma(x,y)\to 0
	\end{equation*}
	
	uniformly in $y$, as $\left\|x\right\|_\infty\to\infty$ or $x\to\overline{x}\in\partial U$.
	\item[(ii)] If $V$ is open and $\Weights_2$ satisfies the $o$-condition, then for each
	$f_1\in\Weights_1$, $f_2\in\Weights_2$ and $\gamma\in C_{\Weights_1\otimes\Weights_2}^{k,l}(U\times V,E)$
	we have
	
	\begin{equation*}
		f_1(x)f_2(y)\partderx{\alpha}\partdery{\beta}\gamma(x,y)\to 0
	\end{equation*}
	
	uniformly in $x$, as $\left\|y\right\|_\infty\to\infty$ or $y\to\overline{y}\in\partial V$.
\end{enumerate}
\end{lemma}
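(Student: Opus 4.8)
The plan is to reduce the assertion to a uniform estimate on each continuous seminorm and then combine the auxiliary maps of Lemma~\ref{lem:useful-weighted-Ckl-maps} with the quantitative $o$-estimate underlying Remark~\ref{rem:o-condition-then-derivatives-to-zero}. I will only prove~(i); part~(ii) then follows by the symmetry recorded in Remark~\ref{rem:C-alpha-beta-eq-partially-Ckl} (interchange $U\leftrightarrow V$, $\Weights_1\leftrightarrow\Weights_2$ and $\alpha\leftrightarrow\beta$, and use $\gamma_{\alpha,x,f_1}\in C^l_{\Weights_2}(V,E)$ in place of $\gamma_{\beta,y,f_2}$).

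First I would observe that, since the topology of $E$ is induced by the seminorms in $\contsemiE$, it suffices to show, for each fixed $q\in\contsemiE$, that $f_1(x)f_2(y)\,q\bigl(\partderx{\alpha}\partdery{\beta}\gamma(x,y)\bigr)\to 0$ uniformly in $y\in V$, as $\left\|x\right\|_\infty\to\infty$ or $x\to\overline{x}\in\partial U$. Next, using that $\Weights_1$ satisfies the $o$-condition, I would pick $g_1\in\Weights_1$ with $f_1=o(g_1)$ and, as in Remark~\ref{rem:o-condition-then-derivatives-to-zero}, assume that $g_1(x)=0$ only if $f_1(x)=0$, with the convention $\frac{f_1(x)}{g_1(x)}:=0$ in that case. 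Then $\frac{f_1}{g_1}=o(1)$, so by Lemma~\ref{lem:f-o-one-then-to-zero} (this is where the openness of $U$ is used) $\frac{f_1(x)}{g_1(x)}\to 0$ as $\left\|x\right\|_\infty\to\infty$ or $x\to\overline{x}\in\partial U$.

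The key step is then the following estimate, valid for all $x\in U$ and $y\in V$. By Lemma~\ref{lem:useful-weighted-Ckl-maps} we have $\gamma_{\beta,y,f_2}\in C^k_{\Weights_1}(U,E)$, and, as in the proof of that lemma, $\partderx{\alpha}\gamma_{\beta,y,f_2}(x)=f_2(y)\partderx{\alpha}\partdery{\beta}\gamma(x,y)$; hence
\begin{align*}
	f_1(x)f_2(y)\,q\bigl(\partderx{\alpha}\partdery{\beta}\gamma(x,y)\bigr)
	&= \frac{f_1(x)}{g_1(x)}\;g_1(x)\,q\bigl(\partderx{\alpha}\gamma_{\beta,y,f_2}(x)\bigr)\\
	&\leq \frac{f_1(x)}{g_1(x)}\,\left\|\gamma_{\beta,y,f_2}\right\|_{g_1,\alpha,q}
	\leq \frac{f_1(x)}{g_1(x)}\,\left\|\gamma\right\|_{g_1\otimes f_2,(\alpha,\beta),q},
\end{align*}
where the last inequality holds because $\left\|\gamma_{\beta,y,f_2}\right\|_{g_1,\alpha,q}=\sup_{x'\in U}g_1(x')f_2(y)\,q\bigl(\partderx{\alpha}\partdery{\beta}\gamma(x',y)\bigr)\leq\left\|\gamma\right\|_{g_1\otimes f_2,(\alpha,\beta),q}$. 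Since $g_1\otimes f_2\in\Weights_1\otimes\Weights_2$ and $\gamma\in C^{k,l}_{\Weights_1\otimes\Weights_2}(U\times V,E)$, the number $\left\|\gamma\right\|_{g_1\otimes f_2,(\alpha,\beta),q}$ is finite and \emph{independent of $y$}; combined with $\frac{f_1(x)}{g_1(x)}\to 0$ this gives the desired uniform convergence, and as $q\in\contsemiE$ was arbitrary, the lemma follows.

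The step I expect to be the only genuine obstacle is the upgrade from the pointwise-in-$y$ decay that Remark~\ref{rem:o-condition-then-derivatives-to-zero} supplies directly (applied to $\gamma_{\beta,y,f_2}$ for each fixed $y$) to decay that is uniform in $y$; this is handled precisely by dominating the $C^k_{\Weights_1}$-seminorm $\left\|\gamma_{\beta,y,f_2}\right\|_{g_1,\alpha,q}$ by the $y$-independent $C^{k,l}_{\Weights_1\otimes\Weights_2}$-seminorm $\left\|\gamma\right\|_{g_1\otimes f_2,(\alpha,\beta),q}$ in the display above. Everything else is routine bookkeeping.
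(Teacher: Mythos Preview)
Your argument is correct and follows essentially the same route as the paper: pick $g_1\in\Weights_1$ with $f_1=o(g_1)$, factor out $\frac{f_1(x)}{g_1(x)}$, and bound the remaining expression by the $y$-independent quantity $\left\|\gamma\right\|_{g_1\otimes f_2,(\alpha,\beta),q}$, invoking Lemma~\ref{lem:f-o-one-then-to-zero} for the convergence of the quotient. The only difference is cosmetic: the paper goes directly from $g_1(x)f_2(y)\,q\bigl(\partderx{\alpha}\partdery{\beta}\gamma(x,y)\bigr)$ to $\left\|\gamma\right\|_{g_1\otimes f_2,(\alpha,\beta),q}$ without the intermediate detour through $\gamma_{\beta,y,f_2}$ and Lemma~\ref{lem:useful-weighted-Ckl-maps}, so your proof is slightly more elaborate than necessary but not genuinely different.
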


\begin{proof}
Assume that $U$ is open and $\Weights_1$ satisfies the $o$-condition. Then there exists a weight
$g_1\in\Weights_1$ such that $f_1=o(g_1)$. For a seminorm $q\in\contsemiE$, $x\in U$ and $y\in V$ we have

\begin{align*}
	f_1(x)f_2(y)q\left(\partderx{\alpha}\partdery{\beta}\gamma(x,y)\right)
	&=\frac{f_1(x)}{g_1(x)}g_1(x)f_2(y)q\left(\partderx{\alpha}\partdery{\beta}\gamma(x,y)\right)\\
	&\leq\frac{f_1(x)}{g_1(x)}\left\|\gamma\right\|_{g_1\otimes f_2,(\alpha,\beta),q}\to 0
\end{align*}

as $\left\|x\right\|_\infty\to\infty$ or $x\to\overline{x}\in\partial U$, see Remark \ref{rem:o-condition-then-derivatives-to-zero}.
Thus $(i)$ holds. To prove $(ii)$, we proceed similarly.
\end{proof}

We will also need the following facts concerning differentiable extensions of mappings:

\begin{lemma}\label{lem:C1-extension-on-interval}
Let $E$ be a \Hlc space and $\gamma:\interclcl{a}{b}\to E$ be a continuous curve. If $\gamma\big|_{\interopcl{a}{b}}$ is $C^1$ and the derivative 
$\left(\gamma\big|_{\interopcl{a}{b}}\right)'$ admits a continuous extension $\eta:\interclcl{a}{b}\to E$, 
then $\gamma$ is $C^1$ and $\gamma'= \eta$.
\end{lemma}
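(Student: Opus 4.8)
The plan is to note first that $\gamma$ is automatically $C^1$ on the interior $]a,b[$: since $\gamma|_{]a,b]}$ is $C^1$ and $]a,b[$ is open, $\gamma|_{]a,b[}$ is $C^1$ with $d^{(1)}(\gamma|_{]a,b[})(x,h)=h\,\eta(x)$ (using that $\eta$ extends $(\gamma|_{]a,b]})'$). Since $(x,h)\mapsto h\,\eta(x)$ is a continuous map $[a,b]\times\Real\to E$ and $\gamma$ is itself continuous, Definition \ref{def:Ck-map}(b) will yield that $\gamma$ is $C^1$ with $\gamma'=\eta$ once we know this; but to make the statement genuinely useful it is worth checking directly that $\eta(a)$ is the one-sided derivative of $\gamma$ at $a$, and this is where the work lies. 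The route I would take is: (1) represent $\gamma$ near $a$ as an integral of $\eta$; (2) differentiate this representation at $a$.

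For (1): for $a<s\le t\le b$ the curve $\gamma|_{[s,t]}$ is $C^1$, being a restriction of $\gamma|_{]a,b]}$, so the Fundamental Theorem of Calculus (cf. \cite{GlNeeb}) yields
\begin{equation*}
	\gamma(t)-\gamma(s)=\int_s^t\eta(u)\,du ,
\end{equation*}
where the weak integral exists in $E$ because its value $\gamma(t)-\gamma(s)$ lies in $E$ (Remark \ref{rem:facts-about-weak-integral}). Keeping $t$ fixed and letting $s\to a^{+}$, the left-hand side converges to $\gamma(t)-\gamma(a)$ by continuity of $\gamma$ at $a$, while the right-hand side converges to $\int_a^t\eta(u)\,du$ by continuity of the parameter-dependent integral $s\mapsto\int_s^t\eta(u)\,du$ (cf. \cite{GlNeeb}, as used in the proof of Proposition \ref{prop:der-top-emb-closed-image}). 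Thus $\gamma(t)-\gamma(a)=\int_a^t\eta(u)\,du$ for all $t\in[a,b]$.

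For (2): given $q\in\contsemiE$, for $a<t\le b$ the standard seminorm estimate for weak integrals gives
\begin{equation*}
	q\left(\frac{\gamma(t)-\gamma(a)}{t-a}-\eta(a)\right)
	=q\left(\frac{1}{t-a}\int_a^t\bigl(\eta(u)-\eta(a)\bigr)\,du\right)
	\le\sup_{u\in[a,t]}q\bigl(\eta(u)-\eta(a)\bigr),
\end{equation*}
and the right-hand side tends to $0$ as $t\to a^{+}$ by continuity of $\eta$ at $a$. Hence the difference quotient of $\gamma$ at $a$ converges to $\eta(a)$; combined with the first paragraph, $\gamma$ is $C^1$ on $[a,b]$ with $d^{(1)}\gamma(x,h)=h\,\eta(x)$, i.e. $\gamma'=\eta$. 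I expect the only real obstacle to be the bookkeeping around the weak integrals — their existence in $E$ (not merely in the completion) and their interchange with the limit $s\to a^{+}$ — all of which is covered by Remark \ref{rem:facts-about-weak-integral} and the properties of parameter-dependent integrals recorded in \cite{GlNeeb}; the rest is routine.
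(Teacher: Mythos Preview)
Your proposal is correct. In fact, your first paragraph already contains a complete proof: since $\gamma|_{]a,b[}$ is $C^1$ with $d^{(1)}(\gamma|_{]a,b[})(x,h)=h\,\eta(x)$, and this map extends continuously to $[a,b]\times\Real$, Definition~\ref{def:Ck-map}(b) applied to the locally convex set $[a,b]$ immediately gives that $\gamma$ is $C^1$ with $\gamma'=\eta$. The paper does not take this definitional shortcut.

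Your explicit endpoint computation in steps~(1) and~(2) is also correct and is close in spirit to the paper's argument, but organized differently. The paper anchors the integral at $b$, writing $\gamma(t)=\gamma(b)-\int_t^b\eta(s)\,ds$ for $t\in\,]a,b]$, observes that the right-hand side (as a map into the completion $\widetilde{E}$) is continuous on $[a,b]$ and agrees with $\gamma$ on $]a,b]$, hence equals $\gamma$ everywhere; this forces $\int_a^b\eta(s)\,ds\in E$, and then the second part of the Fundamental Theorem of Calculus is invoked to conclude. You instead anchor at $a$ by passing to the limit $s\to a^{+}$ in $\gamma(t)-\gamma(s)=\int_s^t\eta$, and then estimate the difference quotient at $a$ directly via the seminorm bound for weak integrals. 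Both routes handle the same bookkeeping issue (existence of the integral in $E$ rather than $\widetilde{E}$) in the same way, namely by matching the integral with a value already known to lie in $E$; your version trades the citation of FTC~part~2 for a one-line estimate, which is arguably more self-contained.
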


\begin{proof}
For each $t\in\interopcl{a}{b}$ the curve $\gamma\big|_{\interopcl{t}{b}}$ is $C^1$, thus

\begin{equation}\label{eq:gamma-eq-integral}
	\gamma(t) = \gamma(b)-\int_t^b \gamma'(s)ds = \gamma(b)-\int_t^b \eta(s)ds,
\end{equation}

by the first part of the Fundamental Theorem of Calculus 
(see \cite{GlNeeb}). By Remark \ref{rem:facts-about-weak-integral}, 
the weak integral $\int_a^b \eta(s)ds$ exists in the completion $\widetilde{E}$ of $E$, thus we can define the continuous curve

\begin{equation*}
	\xi:\interclcl{a}{b}\to \widetilde{E}\quad\xi(t):=\gamma(b)-\int_t^b \eta(s)ds.
\end{equation*}

Since $\gamma$ is continuous and $\gamma\big|_{\interopcl{a}{b}} = \xi\big|_{\interopcl{a}{b}}$ (by \eqref{eq:gamma-eq-integral}), we conclude that $\gamma = \xi$. Hence $\xi(\interclcl{a}{b})\subseteq E$, that is $\int_a^b \eta(s)ds$ exists in $E$. The second part of the Fundamental Theorem of Calculus 
(can also be found in \cite{GlNeeb}) yields that $\gamma$ is $C^1$ and $\gamma' = \eta$, as required.
\end{proof}

\begin{proposition}\label{prop:partially-Ck-extensions-partially-Ck}
Let $E$ be a \Hlc space and $\gamma:U\to E$ be a partially $C^k$-map on an open subset $U\subseteq\Real^n$ for $k\in\Natural\cup\left\{\infty\right\}$. Assume that 
$\gamma$ admits the continuous extension

\begin{equation}\label{eq:cont-extension-of-gamma}
	\eta:\Real^n\to E,\quad\eta(x):=\left\{
															\begin{array}{ll}
																\gamma(x)	& x\in U\\
																0	& x\notin U\\
															\end{array}
														\right.
\end{equation}

and the partial derivatives $\partderx{\alpha}\gamma$ admit the continuous extensions

\begin{equation}\label{eq:cont-extension-of-partder-gamma}
	\eta_{\alpha}:\Real^n\to E,\quad\eta_{\alpha}(x):=\left\{
																						\begin{array}{ll}
																							\partderx{\alpha}\gamma(x)	& x\in U\\
																							0	& x\notin U\\
																						\end{array}
																					\right.
\end{equation}

for each $\alpha\in\Natural_0^n$ with $|\alpha|\leq k$. Then the map $\eta$ is partially $C^k$ with the partial 
derivatives $\partderx{\alpha}\eta = \eta_{\alpha}$ for all $\alpha$.
\end{proposition}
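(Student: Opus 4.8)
The plan is to argue by induction on $k$, reducing the statement to the case $k=1$. For finite $k$ the induction step is routine: assuming the result for $k-1$ in place of $k$, I would apply it to each map $\partderxInd{i}\gamma:U\to E$, which is partially $C^{k-1}$, has partial derivatives $\partderx{\alpha}(\partderxInd{i}\gamma)=\partderx{\alpha+e_i}\gamma$ for $\alpha\in\Natural_0^n$ with $|\alpha|\leq k-1$, and whose zero-extension, together with the zero-extensions of its partial derivatives, are precisely $\eta_{e_i}$ and the $\eta_{\alpha+e_i}$ (all continuous by hypothesis). The induction hypothesis then yields that $\eta_{e_i}$ is partially $C^{k-1}$ with $\partderx{\alpha}\eta_{e_i}=\eta_{\alpha+e_i}$. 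Combined with the case $k=1$ applied to $\gamma$ (which gives that $\eta$ is partially $C^1$ with $\partderxInd{i}\eta=\eta_{e_i}$), this shows that $\eta$ is partially $C^1$ with each $\partderxInd{i}\eta=\eta_{e_i}$ partially $C^{k-1}$, hence partially $C^k$, and that $\partderx{\alpha}\eta=\eta_{\alpha}$ for all $\alpha$ with $|\alpha|\leq k$ (writing $\alpha=\beta+e_i$ when $|\alpha|\geq1$). The case $k=\infty$ is then immediate, since $\eta$ is partially $C^m$ with $\partderx{\alpha}\eta=\eta_\alpha$ for every $m\in\Natural$.

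For the base case $k=1$ I would fix $i\in\oneton$ and $x\in\Real^n$ and show that $\partderxInd{i}\eta(x)$ exists and equals $\eta_{e_i}(x)$; granting this, $\partderxInd{i}\eta=\eta_{e_i}$ is continuous by assumption, so $\eta$ is partially $C^1$. If $x\in U$, then $x+te_i\in U$ for small $|t|$, so $\eta$ coincides with $\gamma$ near $x$ along the $i$-th axis and $\partderxInd{i}\eta(x)=\partderxInd{i}\gamma(x)=\eta_{e_i}(x)$. If $x\notin\overline{U}$, then $\eta$ vanishes identically near $x$, so $\partderxInd{i}\eta(x)=0=\eta_{e_i}(x)$ (note $x\notin U$). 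The only genuine case is $x\in\partial U=\overline{U}\setminus U$, where $\eta(x)=0$ and $\eta_{e_i}(x)=0$, so that one must show $\frac{1}{t}\eta(x+te_i)\to0$ as $t\to0$.

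To do this I would set $J:=\{s\in\Real : x+se_i\in U\}$, an open subset of $\Real$ with $0\notin J$. For $t>0$ with $x+te_i\in U$, let $\interopop{a}{b}$ be the connected component of $J$ containing $t$; since $0\notin J$ and $t>0$ we have $0\leq a<t$, and $a\notin J$, so $\eta(x+ae_i)=0$. On each interval $\interclcl{r}{t}$ with $r\in\interopcl{a}{t}$ the curve $s\mapsto\gamma(x+se_i)$ is defined and $C^1$ with derivative $s\mapsto\partderxInd{i}\gamma(x+se_i)=\eta_{e_i}(x+se_i)$, so the Fundamental Theorem of Calculus (see \cite{GlNeeb}) together with the standard seminorm estimate for weak integrals gives, for every $q\in\contsemiE$,
\begin{equation*}
	q\left(\gamma(x+te_i)-\gamma(x+re_i)\right)\leq(t-r)\sup_{s\in\interclcl{0}{t}}q\left(\eta_{e_i}(x+se_i)\right).
\end{equation*}
Letting $r\to a^+$ and using the continuity of $\eta$, the relation $\eta(x+ae_i)=0$ and $a\geq0$, one obtains $q(\eta(x+te_i))\leq t\sup_{s\in\interclcl{0}{t}}q(\eta_{e_i}(x+se_i))$, an inequality which clearly also holds when $x+te_i\notin U$. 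Dividing by $t$ and letting $t\to0^+$, the right-hand side tends to $q(\eta_{e_i}(x))=0$ by continuity of $\eta_{e_i}$; since $q$ was arbitrary, $\frac{1}{t}\eta(x+te_i)\to0$ as $t\to0^+$. The limit $t\to0^-$ is handled symmetrically, using the component of $J$ to the left of $0$. Hence $\partderxInd{i}\eta(x)=0=\eta_{e_i}(x)$, finishing the case $k=1$.

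The main obstacle is exactly this boundary case: the line through $x$ in direction $e_i$ may meet $U$ in an infinite, possibly non-locally-finite family of intervals accumulating at $x$, so one cannot simply invoke a single-interval extension statement such as Lemma \ref{lem:C1-extension-on-interval}. What makes the uniform estimate above work is only the elementary observation that $0\notin J$ forces the left endpoint $a$ of any relevant component lying to the right of $0$ to satisfy $a\geq0$, which is precisely what is needed to bound $t-a$ by $t$ and thereby recover the correct vanishing rate.
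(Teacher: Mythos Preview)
Your proof is correct and takes essentially the same approach as the paper: induction on $k$ reducing to the case $k=1$, and for the boundary case at $k=1$, passing to the connected component of $\{s\in\Real:x+se_i\in U\}$ containing $t$ and using the Fundamental Theorem of Calculus to bound the difference quotient by $t\cdot\sup_{s\in\interclcl{0}{t}}q(\eta_{e_i}(x+se_i))$. The paper organizes the base case slightly differently---first treating $n=1$ as a separate step and invoking Lemma~\ref{lem:C1-extension-on-interval} on the closure of each component before integrating, then reducing general $n$ to the one-dimensional situation---but the core argument is identical.
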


\begin{proof}

We may assume that $k$ is finite and prove the assertion by induction.

\textit{The case $k=1$.}\\ \underline{Step $1$: $n=1$.} Assume that $\gamma:U\to E$ is a $C^1$-map on an open subset $U\subseteq\Real$ such that there are continuous extensions

\begin{equation*}
	\eta:\Real\to E,\quad\eta(t):=\left\{\begin{array}{ll}\gamma(t)	& t\in U\\
																					0	& t\notin U\\
													\end{array}\right.	
\end{equation*}

of $\gamma$ and

\begin{equation*}
	\eta_1:\Real\to E,\quad\eta_1(t):=\left\{\begin{array}{ll}\gamma'(t)	& t\in U\\
																							0	& t\notin U\\
													\end{array}\right.	
\end{equation*}

of the derivative $\gamma'$. For $t\in U\cup\left(\Real\backslash\overline{U}\right)$ we obviously have

\begin{equation*}
	\eta'(t)=\eta_1(t).
\end{equation*}

Now, we want to show that the right derivative 
$\eta_+'(t):=\lim_{s\to 0_+}\frac{\eta(t+s)-\eta(t)}{s}$ exists for each $t\in\partial U$ and we have

\begin{equation}\label{eq:right-derivative-of-eta-is-zero}
	\eta_+'(t)=\eta_1(t)=0.
\end{equation}

The proof for the left derivative $\eta_-'(t):=\lim_{s\to 0_-}\frac{\eta(t+s)-\eta(t)}{s}$ is similar. 

Let $\left(t_m\right)_{m\in\Natural}\subseteq\left.\right]t,\infty\left[\right.$ be a sequence such that $t_m\to t$ as $m\to\infty$. To prove 
\eqref{eq:right-derivative-of-eta-is-zero}, it suffices to show that

\begin{equation*}
	\lim_{m\to\infty}q\left(\frac{\eta(t_m)-\eta(t)}{t_m-t}\right)=0
\end{equation*}

for each seminorm $q\in\contsemiE$.

If $t_m\notin U$ for all $m\in\Natural$, then the assertion is clear. Otherwise, for $m\in\Natural$ with $t_m\in U$ we define

\begin{equation*}
	s_m:=\min\left\{s\in\left[t,t_m\left[\right.\right. : \left.\left.\right]s,t_m\right]\subseteq U\right\},
\end{equation*}

and obtain

\begin{align*}
	q\left(\frac{\eta(t_m)-\eta(t)}{t_m-t}\right)	
	&= q\left(\frac{\eta(t_m)}{t_m-t}\right) = \frac{1}{t_m-t}q\left(\eta(t_m)\right)\\
	&\leq \frac{1}{t_m-s_m}q\left(\eta(t_m)\right) = \frac{1}{t_m-s_m}q\left(\eta(t_m)-\eta(s_m)\right).
\end{align*}

In the last step we used the fact that $s_m\notin U$, whence $\eta(s_m)=0$. Since $\left.\left.\right]s_m,t_m\right]\subseteq U$, we have 

\begin{equation*}
	\eta\big|_{\left.\left.\right]s_m,t_m\right]}=\gamma\big|_{\left.\left.\right]s_m,t_m\right]},
\end{equation*}

thus $\eta\big|_{\left.\left.\right]s_m,t_m\right]}$ is $C^1$ and $\left(\eta\big|_{\left.\left.\right]s_m,t_m\right]}\right)' = \left(\gamma\big|_{\left.\left.\right]s_m,t_m\right]}\right)'$ admits the continuous extension $\eta_1\big|_{\left[s_m,t_m\right]}$. Lemma 
\ref{lem:C1-extension-on-interval} yields that $\eta\big|_{\left[s_m,t_m\right]}$ is a $C^1$-curve with the derivative 
$\left(\eta\big|_{\left[s_m,t_m\right]}\right)' = \eta_1\big|_{\left[s_m,t_m\right]}$. Using the first part of the 
Fundamental Theorem of Calculus 
(see \cite{GlNeeb}), we obtain

\begin{equation*}
	\frac{1}{t_m-s_m}q\left(\eta(t_m)-\eta(s_m)\right) = \frac{1}{t_m-s_m}q\left(\int_{s_m}^{t_m}\eta_1(u)du\right).
\end{equation*}

Further,
we have

\begin{equation*}
	\frac{1}{t_m-s_m}q\left(\int_{s_m}^{t_m}\eta_1(u)du\right) 
	\leq \frac{t_m-s_m}{t_m-s_m}\min_{u\in\interclcl{s_m}{t_m}}q(\eta_1(u))
	=q\left(\eta_1(S_m)\right),
\end{equation*}

for a suitable $S_m\in\left[s_m,t_m\right]\subseteq\left[t,t_m\right]$ (see again \cite{GlNeeb}). 
But for $m\to\infty$ we have $S_m\to t$, whence

\begin{equation*}
	\lim_{m\to\infty}q\left(\eta_1(S_m)\right) = q\left(\eta_1(t)\right)=0,
\end{equation*}

as required. Consequently, the map $\eta$ is $C^1$ and $\eta'=\eta_1$.

\underline{Step $2$: $n\in\Natural$.} Now, let $U\subseteq\Real^n$ be an open subset and $\gamma:U\to E$ be partially $C^1$ such that $\gamma$ admits the continuous extension

\begin{equation*}
	\eta:\Real^n\to E,\quad\eta(x):=\left\{\begin{array}{ll}\gamma(x)	& x\in U\\
																						0	& x\notin U\\
													\end{array}\right.
\end{equation*}

and for each $i\in\oneton$ the partial derivative $\partderxInd{i}\gamma$ admits the continuous extension

\begin{equation*}
	\eta_{e_i}:\Real^n\to E,\quad\eta_{e_i}(x):=\left\{\begin{array}{ll}\partderxInd{i}\gamma(x)	& x\in U\\
																															0	& x\notin U.\\
																\end{array}\right.
\end{equation*}

For $x\in\Real^n$ and $i\in\oneton$ we define the continuous maps

\begin{align*}
	\xi:\Real\to E,&\quad\xi(t):=\eta(x+te_i)
	\intertext{and}
	\xi_i:\Real\to E,&\quad\xi_i(t):=\eta_{e_i}(x+te_i).
\end{align*}

Further, we define the set

\begin{equation*}
	V:=\left\{t\in\Real : x+te_i\in U\right\},
\end{equation*}

which is open in $\Real$. For $t\in V$ we obtain

\begin{equation*}
	\xi(t) \defeq \eta(x+te_i) = \gamma(x+te_i),
\end{equation*}

thus $\xi\big|_V$ is $C^1$ with the derivative

\begin{equation*}
	\left(\xi\big|_V\right)'(t)=\partderxInd{i}\gamma(x+te_i) = \eta_{e_i}(x+te_i) = \xi_i(t).
\end{equation*}

If $t\notin V$, then we have

\begin{align*}
	\xi(t) &\defeq \eta(x+te_i)=0
	\intertext{and}
	\xi_i(t) &\defeq \eta_{e_i}(x+te_i)=0.
\end{align*}

Thus, using the result of the first step of the proof, we know that $\xi$ is a $C^1$-map with the derivative $\xi'=\xi_i$. Hence we have

\begin{equation*}
	\lim_{t\to 0}\frac{\eta(x+te_i)-\eta(x)}{t} = \lim_{t\to 0}\frac{\xi(t)-\xi(0)}{t} = \xi'(0) = \xi_i(0) = \eta_{e_i}(x).
\end{equation*}

Therefore, the map $\eta$ is partially $C^1$ with $\partderxInd{i}\eta=\eta_{e_i}$ for each $i\in\oneton$.

\textit{Induction step.} We assume that $\gamma:U\to E$ is a partially $C^k$ map on an open subset $U\subseteq\Real^n$ for $n\in\Natural$ and $k\geq 2$, and that the continuous extensions $\eta$ and $\eta_{\alpha}$ defined in 
\eqref{eq:cont-extension-of-gamma} and \eqref{eq:cont-extension-of-partder-gamma} exist for all 
$\alpha\in\Natural_0^n$ with $|\alpha|\leq k$. Since $\gamma$ is partially $C^1$, the map $\eta$ is partially $C^1$ with the partial derivatives 
$\partderxInd{i}\eta=\eta_{e_i}$ for all $i\in\oneton$. By assumption, there exist continuous maps

\begin{equation*}
	\eta_{\beta+e_i}:\Real^n\to E,\quad\eta_{\beta+e_i}(x):=\left\{\begin{array}{ll}\partderx{\beta+e_i}\gamma(x)=\partderx{\beta}\partderxInd{i}\gamma(x)	& x\in U\\
														0	& x\notin U\\
																		\end{array}\right.	
\end{equation*}

for each $i\in\oneton$ and $\beta\in\Natural_0^n$ with $|\beta|\leq k-1$ (we may assume that $|\alpha|\neq 0$). Since each
$\partderxInd{i}\gamma$ is partially $C^{k-1}$ (by definition) with the continuous extension $\eta_{e_i}$ and the extensions $\eta_{\beta+e_i}$ of its partial derivatives, we conclude that $\eta_{e_i}$ is partially $C^{k-1}$ with $\partderx{\beta}\eta_{e_i} = \eta_{\beta+e_i}$, by induction. Thus, $\eta$ is partially $C^k$, since $\eta$ is partially $C^1$ and each 
$\eta_{e_i} = \partderxInd{i}\eta$ is partially $C^{k-1}$, and we have

\begin{equation*}
	\partderx{\alpha}\eta = \partderx{\beta+e_i}\eta = \partderx{\beta}\partderxInd{i}\eta = \partderx{\beta}\eta_{e_i} = \eta_{\beta+e_i} = \eta_{\alpha}
\end{equation*}

for each $\alpha\in\Natural_0^n$ with $0<|\alpha|\leq k$.
\end{proof}

\begin{lemma}\label{lem:taylor-series}
Let $E$ be a \Hlc space, $U\subsetneqq\Real^n$ be an open subset and $k\in\Natural$. Let $\gamma:\Real^n\to E$ be a partially 
$C^k$-map such that for all $\alpha\in\Natural_0^n$ with $|\alpha|\leq k$ and $y\notin U$ we have 
$\gamma(y)=0=\partderx{\alpha}\gamma(y)$. For each seminorm $q\in\contsemiE$, $x\in U$ and $\overline{x}\in\partial U$ there exists $\xi\in\interclcl{0}{1}$ such that

\begin{equation*}
	q(\gamma(x)) \leq \frac{1}{(k-1)!}\left\|x-\overline{x}\right\|_\infty^k 
				\sum_{|\alpha|=k} q\left(\partderx{\alpha}\gamma(\overline{x}+\xi(x-\overline{x}))\right).
\end{equation*}
\end{lemma}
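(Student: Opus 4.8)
The plan is to restrict $\gamma$ to the line segment joining the boundary point $\overline{x}$ to $x$ and to apply Taylor's formula with integral remainder to the resulting $C^k$-curve, exploiting that every partial derivative of $\gamma$ up to order $k$ vanishes at $\overline{x}$. Concretely, I would fix $q\in\contsemiE$, $x\in U$ and $\overline{x}\in\partial U$, put $v:=x-\overline{x}$, and define $g\colon[0,1]\to E$, $g(t):=\gamma(\overline{x}+tv)$. Since $\Real^n=\Real\times\cdots\times\Real$ is locally convex with dense interior, Remark \ref{rem:Ck-eq-C-alpha-eq-partially-Ck} shows that the partially $C^k$-map $\gamma$ is $C^k$ in the sense of Definition \ref{def:Ck-map}; composing with the smooth affine curve $t\mapsto\overline{x}+tv$ then makes $g$ a $C^k$-curve, and the iterated chain rule, the multilinearity of the higher differentials of $\gamma$, and Schwarz' theorem (Remark \ref{rem:C-alpha-beta-eq-partially-Ckl}) give
\begin{equation*}
	g^{(j)}(t)=\sum_{|\alpha|=j}\binom{j}{\alpha}v^\alpha\,\partderx{\alpha}\gamma(\overline{x}+tv)\qquad(0\le j\le k,\ t\in[0,1]),
\end{equation*}
where $v^\alpha:=v_1^{\alpha_1}\cdots v_n^{\alpha_n}$ and $\binom{j}{\alpha}$ is the multinomial coefficient.

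Next I would use that $U$ is open, so $\overline{x}\in\partial U$ forces $\overline{x}\notin U$; by hypothesis $\partderx{\alpha}\gamma(\overline{x})=0$ for all $|\alpha|\le k$, hence $g^{(j)}(0)=0$ for $j=0,\ldots,k-1$. Taylor's formula with integral remainder for $C^k$-curves (cf.\ \cite{GlNeeb}; the weak integrals exist in $E$ since $g$ is $C^k$) therefore collapses to $\gamma(x)=g(1)=\frac{1}{(k-1)!}\int_0^1(1-t)^{k-1}g^{(k)}(t)\,dt$. Applying $q$, moving it inside the weak integral, and using $0\le(1-t)^{k-1}\le1$ on $[0,1]$ gives
\begin{equation*}
	q(\gamma(x))\le\frac{1}{(k-1)!}\int_0^1 q\bigl(g^{(k)}(t)\bigr)\,dt\le\frac{1}{(k-1)!}\left\|x-\overline{x}\right\|_\infty^{\,k}\int_0^1\phi(t)\,dt,
\end{equation*}
where $\phi\colon[0,1]\to\zeroinfty$ is the continuous function obtained from $q(g^{(k)}(t))$ after replacing every factor $|v_i|$ by $\left\|x-\overline{x}\right\|_\infty$ and collecting the derivatives $\partderx{\alpha}\gamma(\overline{x}+tv)$ with $|\alpha|=k$. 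Finally, since $\phi$ is continuous, the intermediate value theorem yields $\xi\in[0,1]$ with $\int_0^1\phi(t)\,dt=\phi(\xi)$, and substituting this into the last display gives the asserted inequality for that $\xi$.

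I expect the Taylor expansion, the interchange of $q$ with the integral, and the intermediate-value step to be routine. The step demanding genuine care is the computation of $g^{(k)}$: one has to track the $n^k$ terms produced by the $k$-th order chain rule, regroup them by multi-index (here Schwarz' theorem is needed to turn $D_{e_{i_1}}\cdots D_{e_{i_k}}\gamma$ into $\partderx{\alpha}\gamma$), and then estimate the resulting multinomial factors so that, after bounding the coordinates of $x-\overline{x}$ by $\left\|x-\overline{x}\right\|_\infty$, one lands exactly on $\left\|x-\overline{x}\right\|_\infty^{\,k}\sum_{|\alpha|=k}q(\partderx{\alpha}\gamma(\,\cdot\,))$. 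Existence of the weak integrals in $E$ rather than only in its completion is automatic here, because $g^{(k)}$ is the top derivative of a $C^k$-curve, so that point poses no difficulty.
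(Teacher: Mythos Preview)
Your approach is essentially the paper's: restrict $\gamma$ to the segment from $\overline{x}$ to $x$, use that all derivatives of the resulting curve vanish at $0$, apply Taylor with integral remainder, and extract $\xi$ via the mean value theorem for integrals. The only technical difference is that the paper first applies Hahn--Banach to pick $\lambda\in E'$ with $\lambda(h(1))=q(h(1))$ and $|\lambda|\le q$, thereby reducing to the scalar Taylor formula and the scalar mean value theorem, whereas you work directly with the $E$-valued Taylor formula and pass $q$ through the weak integral; both routes are valid and yield the same bound (note that in either case the $n^k$ directional derivatives regroup into $\sum_{|\alpha|=k}\binom{k}{\alpha}\,q(\partial^\alpha\gamma)$, which is how the paper's final $\sum_{|\alpha|=k}$ should be read).
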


\begin{proof}
We define the curve

\begin{equation*}
	h:\interclcl{0}{1}\to E,\quad h(t):=\gamma(\overline{x}+t(x-\overline{x})),
\end{equation*}

which is $C^k$ with derivatives

\begin{equation}\label{eq:derivatives-of-h}
	h^{(j)}(t)=\sum_{i_1,\ldots,i_j=1}^n (x_{i_1}-\overline{x}_{i_1})\cdots(x_{i_j}-\overline{x}_{i_j})
										\frac{\partial^j}{\partial x_{i_j}\cdots\partial x_{i_1}}\gamma(\overline{x}+t(x-\overline{x}))
\end{equation}

for each $j\leq k$. (We have $h^{(j)}(t)=d^{(j)}\gamma(\overline{x}+t(x-\overline{x}),x-\overline{x},\cdots,x-\overline{x})$.) Thus we have

\begin{equation}\label{eq:values-of-h}
	h(1)=\gamma(x)\quad\mbox{and}\quad h^{(j)}(0)=0
\end{equation}

for all $j\in\Natural_0$ such that $j\leq k$. By Hahn-Banach, for each $z\in E$ there is $\lambda\in E'$ such that $\lambda(z)=q(z)$ and 
$|\lambda(z')|\leq q(z')$ for all $z'\in E$. Therefore, with $z:=h(1)$ we have

\begin{equation*}
	q(h(1))= (\lambda\circ h)(1) = \sum_{j=0}^{k-1} \frac{1}{j!}(\lambda\circ h)^{(j)}(0) + \int_0^1 \frac{(1-s)^{k-1}}{(k-1)!}(\lambda\circ h)^{(k)}(s)ds
\end{equation*}

(cf. Taylor's Theorem in \cite[10.15]{WalAnalysis1}). Applying Lemma 
\ref{lem:lcx-partCk-lin-operator}, we get

\begin{equation*}
	(\lambda\circ h)^{(j)}(0) = \lambda(h^{(j)}(0)) = 0,
\end{equation*}

by \eqref{eq:values-of-h}, thus we have

\begin{equation*}
	q(h(1)) =  \int_0^1 \frac{(1-s)^{k-1}}{(k-1)!}\lambda (h^{(k)}(s))ds.
\end{equation*}

The Mean Value Theorem yields $\xi\in\interclcl{0}{1}$ such that

\begin{equation*}
	\int_0^1 \frac{(1-s)^{k-1}}{(k-1)!}\lambda (h^{(k)}(s))ds = \frac{(1-\xi)^{k-1}}{(k-1)!}\lambda (h^{(k)}(\xi)) \leq \frac{1}{(k-1)!}\lambda (h^{(k)}(\xi)),
\end{equation*}

whence we have

\begin{equation*}
	q(h(1)) \leq \frac{1}{(k-1)!}\lambda (h^{(k)}(\xi)) \leq \frac{1}{(k-1)!} q(h^{(k)}(\xi)).
\end{equation*}

Finally, we apply \eqref{eq:derivatives-of-h} and get

\begin{align*}
	&q(\gamma(x))=q(h(1))\\ 
	&\leq \frac{1}{(k-1)!}\sum_{i_1,\ldots,i_k=1}^n |x_{i_1}-\overline{x}_{i_1}|\cdots|x_{i_k}-\overline{x}_{i_k}|
											q\left(\frac{\partial^k}{\partial x_{i_k}\cdots\partial x_{i_1}}\gamma(\overline{x}+\xi(x-\overline{x}))\right)\\
	&\leq \frac{1}{(k-1)!}\left\|x-\overline{x}\right\|_\infty^k\sum_{i_1,\ldots,i_k=1}^n 
											q\left(\frac{\partial^k}{\partial x_{i_k}\cdots\partial x_{i_1}}\gamma(\overline{x}+\xi(x-\overline{x}))\right)\\
	&=\frac{1}{(k-1)!}\left\|x-\overline{x}\right\|_\infty^k 
				\sum_{|\alpha|=k} q\left(\partderx{\alpha}\gamma(\overline{x}+\xi(x-\overline{x}))\right),
\end{align*}

as asserted.
\end{proof}

Further we will work with some so-called mollifiers.

\begin{remark}\label{rem:mollifier}
We recall that a \textit{mollifier} is a function

\begin{equation*}
	\rho_\varepsilon:\Real^n\to\Real,\quad\rho_\varepsilon(x):=\frac{1}{\varepsilon^n}\rho\left(\frac{x}{\varepsilon}\right),
\end{equation*}

where $\varepsilon>0$ and

\begin{enumerate}
	\item [(i)] $\rho\in C^\infty(\Real^n,\Real)$,
	\item [(ii)] $\rho(x)\geq0$ for all $x\in\Real^n$,
	\item [(iii)] $\supp(\rho)\subseteq\overline{B_1(0)}$,
	\item [(iv)] $\int_{\Real^n}\rho(x)d\lambda_n(x)=1$.
\end{enumerate}

It is known that (i), (ii) and (iv) also hold for $\rho_\varepsilon$ and 

\begin{equation*}
	\supp(\rho_\varepsilon)\subseteq\overline{B_\varepsilon(0)}.
\end{equation*}
\end{remark}

\begin{lemma}\label{lem:estimate-der-rho}
Let $B\subseteq\Real^n$ be a Borel set and $\varepsilon>0$. For the map

\begin{equation*}
	g:\Real^n\to\interclcl{0}{1},\quad g(x):=\left(\mathds{1}_B *\rho_\varepsilon\right)(x),
\end{equation*}

where $*$ means the convolution, we have

\begin{equation*}
	\left\|\partderx{\alpha}g\right\|_\infty\leq\frac{1}{\varepsilon^{|\alpha|}}\left\|\partderx{\alpha}\rho\right\|_{L^1}
\end{equation*}

for each $\alpha\in\Natural_0^n$.
\end{lemma}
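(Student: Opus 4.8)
The plan is to push all derivatives from the bounded, measurable factor $\mathds{1}_B$ onto the smooth, compactly supported factor $\rho_\varepsilon$, and then change variables to absorb the $\varepsilon$-powers. First I would observe that $g$ is well defined because $\rho_\varepsilon\in C_c^\infty(\Real^n,\Real)$ (Remark \ref{rem:mollifier}) and $\mathds{1}_B$ is bounded and measurable, so the convolution integral converges absolutely for every $x$. Next I would show, by induction on $|\alpha|$, that $g$ is $C^\infty$ with
\begin{equation*}
	\partderx{\alpha}g(x)=\bigl(\mathds{1}_B*\partderx{\alpha}\rho_\varepsilon\bigr)(x)
	=\int_{\Real^n}\mathds{1}_B(y)\,\partderx{\alpha}\rho_\varepsilon(x-y)\,d\lambda_n(y).
\end{equation*}
The differentiation under the integral sign is justified in the usual way: for $x$ ranging over a fixed compact neighbourhood, the integrand and its next partial derivative in $x$ are, as functions of $y$, supported in a fixed compact set (since $\supp(\rho_\varepsilon)\subseteq\overline{B_\varepsilon(0)}$) and bounded by $\|\partderx{\beta}\rho_\varepsilon\|_\infty$ for the relevant $\beta$, hence dominated by an integrable function; this supplies the hypotheses of the dominated-convergence form of Leibniz's rule for the induction step.

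With that identity in hand, the estimate is immediate: for every $x\in\Real^n$,
\begin{equation*}
	\left|\partderx{\alpha}g(x)\right|
	\leq\int_{\Real^n}\mathds{1}_B(y)\left|\partderx{\alpha}\rho_\varepsilon(x-y)\right|d\lambda_n(y)
	\leq\int_{\Real^n}\left|\partderx{\alpha}\rho_\varepsilon(z)\right|d\lambda_n(z)
	=\left\|\partderx{\alpha}\rho_\varepsilon\right\|_{L^1},
\end{equation*}
using $0\leq\mathds{1}_B\leq1$ and the translation invariance of $\lambda_n$; the right-hand side is independent of $x$, so $\left\|\partderx{\alpha}g\right\|_\infty\leq\left\|\partderx{\alpha}\rho_\varepsilon\right\|_{L^1}$.

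It remains to compute $\left\|\partderx{\alpha}\rho_\varepsilon\right\|_{L^1}$. From $\rho_\varepsilon(x)=\varepsilon^{-n}\rho(x/\varepsilon)$ and the chain rule one gets $\partderx{\alpha}\rho_\varepsilon(x)=\varepsilon^{-n-|\alpha|}(\partderx{\alpha}\rho)(x/\varepsilon)$, and the substitution $u=x/\varepsilon$ (so $d\lambda_n(x)=\varepsilon^{n}\,d\lambda_n(u)$) yields
\begin{equation*}
	\left\|\partderx{\alpha}\rho_\varepsilon\right\|_{L^1}
	=\varepsilon^{-n-|\alpha|}\int_{\Real^n}\left|(\partderx{\alpha}\rho)(x/\varepsilon)\right|d\lambda_n(x)
	=\frac{1}{\varepsilon^{|\alpha|}}\int_{\Real^n}\left|\partderx{\alpha}\rho(u)\right|d\lambda_n(u)
	=\frac{1}{\varepsilon^{|\alpha|}}\left\|\partderx{\alpha}\rho\right\|_{L^1},
\end{equation*}
which combined with the previous display gives the claim. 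There is no genuine obstacle here; the only point requiring a little care is the legitimacy of differentiating under the integral, which is handled by the compact-support/boundedness domination described above.
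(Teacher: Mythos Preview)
Your proof is correct and follows essentially the same route as the paper: push the derivative onto the smooth factor to get $\partderx{\alpha}g=\mathds{1}_B*\partderx{\alpha}\rho_\varepsilon$, bound pointwise by $\|\partderx{\alpha}\rho_\varepsilon\|_{L^1}$ using $0\leq\mathds{1}_B\leq1$, and then rescale via $u=x/\varepsilon$. The only difference is that the paper cites a reference for the differentiation-under-the-integral step whereas you spell out the domination argument explicitly.
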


\begin{proof}
We have (cf. \cite[7.22]{WalAnalysis2})

\begin{equation*}
	\partderx{\alpha}g = \partderx{\alpha}\left(\mathds{1}_B *\rho_\varepsilon\right) = \mathds{1}_B * \partderx{\alpha}\rho_\varepsilon,
\end{equation*}

and for $x\in\Real^n$ we have

\begin{align*}
	\left|\left(\mathds{1}_B * \partderx{\alpha}\rho_\varepsilon\right)(x)\right|	
	&= \left|\int_{\Real^n}\mathds{1}_B(x-y)\partderx{\alpha}\rho_\varepsilon(y)d\lambda_n(y)\right|\\
	&\leq\int_{\Real^n}\underbrace{\left|\mathds{1}_B(x-y)\right|}_{\leq 1}\left|\partderx{\alpha}\rho_\varepsilon(y)\right|d\lambda_n(y)\\
	&\leq\int_{\Real^n}\left|\partderx{\alpha}\rho_\varepsilon(y)\right|d\lambda_n(y).
\end{align*}

Using the definition of $\rho_\varepsilon$, we obtain

\begin{equation*}
	\partderx{\alpha}\rho_\varepsilon(y) = \frac{1}{\varepsilon^n}\frac{1}{\varepsilon^{|\alpha|}}\partderx{\alpha}\rho\left(\frac{y}{\varepsilon}\right),
\end{equation*}

whence

\begin{equation*}
	\int_{\Real^n}\left|\partderx{\alpha}\rho_\varepsilon(y)\right|d\lambda_n(y) 
	= \frac{1}{\varepsilon^n}\frac{1}{\varepsilon^{|\alpha|}}\int_{\Real^n}\left|\partderx{\alpha}\rho\left(\frac{y}{\varepsilon}\right)\right|d\lambda_n(y).
\end{equation*}

After the substitution $u=\frac{y}{\varepsilon}$ we get

\begin{equation*}
	\frac{1}{\varepsilon^n}\frac{1}{\varepsilon^{|\alpha|}}\int_{\Real^n}\left|\partderx{\alpha}\rho\left(\frac{y}{\varepsilon}\right)\right|d\lambda_n(y)
	= \frac{1}{\varepsilon^{|\alpha|}}\int_{\Real^n}\left|\partderx{\alpha}\rho(u)\right|d\lambda_n(u)
	\leq\frac{1}{\varepsilon^{|\alpha|}}\left\|\partderx{\alpha}\rho\right\|_{L^1}.
\end{equation*}

Therefore, we have

\begin{equation*}
	\left\|\partderx{\alpha}g\right\|_\infty = \sup_{x\in\Real^n}\left|\left(\mathds{1}_B * \partderx{\alpha}\rho_\varepsilon\right)(x)\right|
	\leq\frac{1}{\varepsilon^{|\alpha|}}\left\|\partderx{\alpha}\rho\right\|_{L^1},
\end{equation*}

as asserted.
\end{proof}

Now, we have all the tools together to prove Proposition 
\ref{prop:smooth-compact-supp-dense-in-weighted-Ckl-assertion}.

\begin{proof}[Proof of Proposition \ref{prop:smooth-compact-supp-dense-in-weighted-Ckl-assertion}]
\underline{Step $1$.} First we show that the set

\begin{equation*}
	\begin{split}
		M:=\left\{\eta\in C_\Weights^{k,l}(U\times V,E) 	: (\exists r>0)\right.	&(\forall (x,y)\in U\times V)\\
																		&\left.\left\|(x,y)\right\|_\infty>r\Rightarrow\eta(x,y)=0\right\}
	\end{split}
\end{equation*}

is dense in $C_\Weights^{k,l}(U\times V,E)$. 
To this end, let 
$\gamma\in C_\Weights^{k,l}(U\times V,E)$ and $r_1>0$.

\underline{Step $1.1$.} We define the map

\begin{equation*}
	\eta_{r_1}:U\times V\to\interclcl{0}{1},\quad
	(x,y)\mapsto\left(\mathds{1}_{\overline{B_{r_1+\frac{1}{2}}(0)}} * \rho_{\frac{1}{2}}\right)(x),
\end{equation*}

using the function $\mathds{1}_{\overline{B_{r_1+\frac{1}{2}}(0)}}:\Real^n\to\left\{0,1\right\}$ and the mollifier 
$\rho_{\frac{1}{2}}:\Real^n\to\Real$. Note that $\eta_{r_1}(x,y)=0$ if $\left\|x\right\|_\infty>r_1+1$. 
Since $\eta_{r_1}$ is partially $C^\infty$, the map $\eta_{r_1}\cdot\gamma$ is partially $C^{k,l}$ 
(see Remark \ref{rem:part-Ck-part-Ckl-product-rules} (ii)). To see that $\eta_{r_1}\cdot\gamma\in C_\Weights^{k,l}(U\times V,E)$, let 
$f=f_1\otimes f_2\in\Weights$, $q\in\contsemiE$, $\alpha\in\Natural_0^n$ with $|\alpha|\leq k$, $\beta\in\Natural_0^m$ 
with $|\beta|\leq l$ and $(x,y)\in U\times V$. 
We have

\begin{align*}
	&	f_1(x)f_2(y)q\left(\partderx{\alpha}\partdery{\beta}(\eta_{r_1}\cdot\gamma)(x,y)\right)\\
	&	=f_1(x)f_2(y)q\left(\sum_{\tau\leq\alpha}\binom{\alpha}{\tau}
	\partderx{\alpha-\tau}\eta_{r_1}(x,y)\partderx{\tau}\partdery{\beta}\gamma(x,y)\right)\\
	&	\leq\sum_{\tau\leq\alpha}\binom{\alpha}{\tau}\left|\partderx{\alpha-\tau}\eta_{r_1}(x,y)\right|
	f_1(x)f_2(y)q\left(\partderx{\tau}\partdery{\beta}\gamma(x,y)\right),
\end{align*}

because $\partdery{\beta-\kappa}\eta_{r_1}(x,y)=0$ for all $\kappa\in\Natural_0^m$ with $\kappa<\beta$.

From Lemma \ref{lem:estimate-der-rho}, it follows that

\begin{equation*}
		\left|\partderx{\alpha-\tau}\eta_{r_1}(x,y)\right| 
		\leq 2^{|\alpha|-|\tau|}\cdot\left\|\partderx{\alpha-\tau}\rho\right\|_{L^1} =:C_1(\alpha,\tau),
\end{equation*}

whence

\begin{align*}
	\left\|\eta_{r_1}\cdot\gamma\right\|_{f,(\alpha,\beta),q}
	&\defeq \sup_{(x,y)\in U\times V} f_1(x)f_2(y)q\left(\partderx{\alpha}\partdery{\beta}(\eta_{r_1}\cdot\gamma)(x,y)\right)\\
	&\leq\sum_{\tau\leq\alpha}\binom{\alpha}{\tau}C_1(\alpha,\tau)\left\|\gamma\right\|_{f,(\tau,\beta),q}<\infty.
\end{align*}

Thus $\eta_{r_1}\cdot\gamma\in C_\Weights^{k,l}(U\times V,E)$. Now, we want to show that

\begin{equation}\label{eq:eta-r1gamma-to-gamma}
	\left\|\gamma - \eta_{r_1}\cdot\gamma\right\|_{f,(\alpha,\beta),q} \to 0
\end{equation}

as $r_1\to\infty$. The proof is by contradiction. If \eqref{eq:eta-r1gamma-to-gamma} is false, then for some $f\in\Weights$,
$q\in\contsemiE$ and $\alpha$, $\beta$ as above there exist $\delta>0$ and a sequence $(s_j)_{j\in\Natural}$ consisting of
values of $r_1$ such that $s_j\to\infty$ as $j\to\infty$ and

\begin{equation*}
	\left\|\gamma - \eta_{s_j}\cdot\gamma\right\|_{f,(\alpha,\beta),q}>\delta
\end{equation*}

for each $j\in\Natural$. Since the seminorm $\left\|\cdot\right\|_{f,(\alpha,\beta),q}$ is defined as a supremum, there exist
$(x_j,y_j)\in U\times V$ such that

\begin{equation}\label{eq:contr-first-assumption-sup-geq-delta}
	f_1(x_j)f_2(y_j)q\left(\partderx{\alpha}\partdery{\beta}(\gamma - \eta_{s_j}\cdot\gamma)(x_j,y_j)\right)>\delta.
\end{equation}

By construction, $\eta_{s_j}(x,y)=1$ if $\left\|x\right\|_\infty\leq s_j$, whence

\begin{equation*}
	q\left(\partderx{\alpha}\partdery{\beta}(\gamma - \eta_{s_j}\cdot\gamma)(x,y)\right)=0
\end{equation*}

in this case. Thus, we must have $\left\|x\right\|_\infty>s_j$, and hence $\left\|x\right\|_\infty\to\infty$ as $j\to\infty$. 
Further, we have

\begin{align*}
	&	f_1(x_j)f_2(y_j)q\left(\partderx{\alpha}\partdery{\beta}(\gamma - \eta_{s_j}\cdot\gamma)(x_j,y_j)\right)\\
	&	\leq f_1(x_j)f_2(y_j)q\left(\partderx{\alpha}\partdery{\beta}\gamma(x_j,y_j)\right)\\
	&	\mbox{\quad}+ f_1(x_j)f_2(y_j)q\left(\partderx{\alpha}\partdery{\beta}(\eta_{s_j}\cdot\gamma)(x_j,y_j)\right)\\
	&	\leq f_1(x_j)f_2(y_j)q\left(\partderx{\alpha}\partdery{\beta}\gamma(x_j,y_j)\right)\\
	&	\mbox{\quad}+ \sum_{\tau\leq\alpha}\binom{\alpha}{\tau}C_1(\alpha,\tau)
	f_1(x_j)f_2(y_j)q\left(\partderx{\tau}\partdery{\beta}\gamma(x_j,y_j)\right).
\end{align*}

Since $U$ is open and $\Weights_1$ satisfies the $o$-condition, Lemma \ref{lem:o-condition-then-Ckl-derivatives-to-zero} $(i)$
yields

\begin{equation*}
	f_1(x_j)f_2(y_j)q\left(\partderx{\alpha}\partdery{\beta}\gamma(x_j,y_j)\right)\to 0
\end{equation*}

and

\begin{equation*}
	f_1(x_j)f_2(y_j)q\left(\partderx{\tau}\partdery{\beta}\gamma(x_j,y_j)\right)\to 0,
\end{equation*}

uniformly in $y_j$, as $j\to\infty$. This contradicts \eqref{eq:contr-first-assumption-sup-geq-delta}, as required.

\underline{Step $1.2$.} Now, we define the map

\begin{equation*}
	\zeta_{r_1}:U\times V\to\interclcl{0}{1},\quad
	(x,y)\mapsto\left(\mathds{1}_{\overline{B_{r_1+\frac{1}{2}}(0)}} * \rho_{\frac{1}{2}}\right)(y),
\end{equation*}

using the function $\mathds{1}_{\overline{B_{r_1+\frac{1}{2}}(0)}}:\Real^m\to\left\{0,1\right\}$ and the mollifier 
$\rho_{\frac{1}{2}}:\Real^m\to\Real$. Thus $\zeta_{r_1}(x,y)=0$ if $\left\|y\right\|_\infty>r_1+1$. 
Proceeding similarly to Step $1.1$ (and using that $V$ is open and $\Weights_2$ satisfies the $o$-condition), we show that

\begin{equation*}
	\left\|\gamma - \zeta_{r_1}\cdot\gamma\right\|_{f,(\alpha,\beta),q} \to 0
\end{equation*}

as $r_1\to\infty$, for each weight $f=f_1\otimes f_2\in\Weights$, $q\in\contsemiE$, $\alpha\in\Natural_0^n$ with $|\alpha|\leq k$
and $\beta\in\Natural_0^m$ with $|\beta|\leq l$.

\underline{Conclusion of Step $1$.} In Step $1.1$. we approximated $\gamma\in C_\Weights^{k,l}(U\times V,E)$ with a function
$\eta\in C_\Weights^{k,l}(U\times V,E)$ such that $\eta(x,y)=0$ if $\left\|x\right\|_\infty>r$ for some $r>0$. Further, from Step $1.2$
we conclude that $\eta$ can be approximated with a function $\zeta\in C_\Weights^{k,l}(U\times V,E)$ such that $\zeta(x,y)=0$ if 
$\left\|x\right\|_\infty>r$ or $\left\|y\right\|_\infty>r$ for some $r>0$. That is $\zeta\in M$, whence $M$ is dense in
$C_\Weights^{k,l}(U\times V,E)$, as asserted.

\underline{Step $2$.} Now, let $\gamma\in M$, that is, $\gamma\in C_\Weights^{k,l}(U\times V,E)$ and $\gamma(x,y)=0$ 
if $\left\|(x,y)\right\|_\infty>r$ for some $r>0$. 

\underline{Step $2.1$.} If $U=\Real^n$, then we jump to Step $2.2$. Otherwise, for 
$\varepsilon>0$ we define the set

\begin{equation*}
	U_\varepsilon := \left\{x\in U : d(x,\partial U)\geq\varepsilon\right\}.
\end{equation*}

Further, for $r_2>0$ we define the map

\begin{equation*}
	\eta_{r_2} : U\times V\to\interclcl{0}{1},\quad(x,y)\mapsto\left(\mathds{1}_{U_{\frac{3}{4r_2}}} * \rho_{\frac{1}{4r_2}}\right)(x),
\end{equation*}

using the functions $\mathds{1}_{U_{\frac{3}{4r_2}}}:\Real^n\to\left\{0,1\right\}$ and $\rho_{\frac{1}{4r_2}}:\Real^n\to\Real$. 
(Note that $\eta_{r_2}(x,y)=0$ if $d(x,\partial U)<\frac{1}{2r_2}$.) Then $\eta_{r_2}$ is partially $C^\infty$, whence the map $\eta_{r_2}\cdot\gamma$ is partially $C^{k,l}$ 
(see Remark \ref{rem:part-Ck-part-Ckl-product-rules} (ii)). We want to show that

\begin{equation}\label{eq:eta-r2gamma-to-gamma}
	\left\|\gamma - \eta_{r_2}\cdot\gamma\right\|_{f,(\alpha,\beta),q}\to 0	
\end{equation}

as $r_2\to\infty$, for each weight $f=f_1\otimes f_2\in\Weights$, seminorm $q\in\contsemiE$ and $\alpha\in\Natural_0^n$ with $|\alpha|\leq k$, $\beta\in\Natural_0^m$ with $|\beta|\leq l$. The proof is by contradiction. If \eqref{eq:eta-r2gamma-to-gamma} is wrong, then we find
$\delta>0$ and a sequence $(s_j)_{j\in\Natural}$ of values of $r_2$ such that $s_j\to\infty$ as $j\to\infty$ and for each $j\in\Natural$
we have

\begin{equation*}
	\left\|\gamma - \eta_{s_j}\cdot\gamma\right\|_{f,(\alpha,\beta),q}>\delta.
\end{equation*}

By definition of the seminorm $\left\|\cdot\right\|_{f,(\alpha,\beta),q}$ as a supremum, there exist $(x_j,y_j)\in U\times V$ such that

\begin{equation}\label{eq:contr-second-assumption-sup-geq-delta}
	f_1(x_j)f_2(y_j)q\left(\partderx{\alpha}\partdery{\beta}\left(\gamma - \eta_{s_j}\cdot\gamma\right)(x_j,y_j)\right)>\delta.
\end{equation}

Since $\gamma\in M$, we have $\left(\gamma - \eta_{s_j}\cdot\gamma\right)(x,y)=0$ for all $(x,y)\in U\times V$ with 
$\left\|(x,y)\right\|_\infty>r$, therefore we must have $(x_j,y_j)\in\overline{B_r(0)}$. Thus, after passage to a subsequence, we
may assume $(x_j,y_j)\to(\overline{x},\overline{y})\in\overline U\times\overline V$ as $j\to\infty$. Further, we have 
$\eta_{s_j}(x,y)=1$ for all $x\in U_{\frac{1}{s_j}}$, and hence $(\gamma - \eta_{s_j}\cdot\gamma)(x,y)=0$ in this case. 
Thus we conclude that $d(x_j,\partial U)<\frac{1}{s_j}$, whence $d(\overline{x},\partial U)=0$, that is $\overline{x}\in\partial U$. 
We have

\begin{align}
	&f_1(x_j)f_2(y_j)q\left(\partderx{\alpha}\partdery{\beta}(\gamma - \eta_{s_j}\cdot\gamma)(x_j,y_j)\right)\nonumber\\
	&\leq f_1(x_j)f_2(y_j)q\left(\partderx{\alpha}\partdery{\beta}\gamma(x_j,y_j)\right)\label{eq:part-two-sum-first}\\ 
	&+ f_1(x_j)f_2(y_j)q\left(\partderx{\alpha}\partdery{\beta}(\eta_{s_j}\cdot\gamma)(x_j,y_j)\right)\label{eq:part-two-sum-second}.	
\end{align}

If $j\to\infty$, then in \eqref{eq:part-two-sum-first} we have

\begin{equation*}
	f_1(x_j)f_2(y_j)q\left(\partderx{\alpha}\partdery{\beta}\gamma(x_j,y_j)\right)\to 0
\end{equation*}

uniformly in $y_j$, by Lemma \ref{lem:o-condition-then-Ckl-derivatives-to-zero} $(i)$, since $U$ is an open subset and $\Weights_1$
satisfies the $o$-condition. We apply the Product Rule 
to \eqref{eq:part-two-sum-second} and obtain

\begin{align*}
	&f_1(x_j)f_2(y_j)q\left(\partderx{\alpha}\partdery{\beta}(\eta_{s_j}\cdot\gamma)(x_j,y_j)\right)\\
	&=f_1(x_j)f_2(y_j)q\left(\sum_{\tau\leq\alpha}\binom{\alpha}{\tau}
	\partderx{\alpha-\tau}\eta_{s_j}(x_j,y_j)\partderx{\tau}\partdery{\beta}\gamma(x_j,y_j)\right)\\
	&\leq\sum_{\tau\leq\alpha}\binom{\alpha}{\tau}\left|\partderx{\alpha-\tau}\eta_{s_j}(x_j,y_j)\right|
	f_1(x_j)f_2(y_j)q\left(\partderx{\tau}\partdery{\beta}\gamma(x_j,y_j)\right),
\end{align*}

using that $\partdery{\beta-\kappa}\eta_{s_j}(x_j,y_j)=0$ for all $\kappa\in\Natural_0^m$ with $\kappa<\beta$.

From Lemma \ref{lem:estimate-der-rho} we conclude

\begin{align*}
	&f_1(x_j)f_2(y_j)q\left(\partderx{\alpha}\partdery{\beta}(\eta_{s_j}\cdot\gamma)(x_j,y_j)\right)\\
	&\leq\sum_{\tau\leq\alpha}\binom{\alpha}{\tau}C_2(\alpha,\tau)s_j^{|\alpha|-|\tau|}
	f_1(x_j)f_2(y_j)q\left(\partderx{\tau}\partdery{\beta}\gamma(x_j,y_j)\right)
\end{align*}

with

\begin{equation*}
	C_2(\alpha,\tau):=4^{|\alpha|-|\tau|}\cdot\left\|\partderx{\alpha-\tau}\rho\right\|_{L^1}.
\end{equation*}

Now, we want to show that the term

\begin{equation}\label{eq:part-two-important-term}
	s_j^{|\alpha|-|\tau|}f_1(x_j)f_2(y_j)q\left(\partderx{\tau}\partdery{\beta}\gamma(x_j,y_j)\right)
\end{equation}

tends to $0$ as $j\to\infty$, for all $\tau\leq\alpha$ (which yields the desired contradiction). If $\tau=\alpha$, then
in \eqref{eq:part-two-important-term} we have

\begin{equation*}
	s_j^0f_1(x_j)f_2(y_j)q\left(\partderx{\alpha}\partdery{\beta}\gamma(x_j,y_j)\right)\to 0
\end{equation*}

uniformly in $y_j$, as $j\to\infty$, by Lemma \ref{lem:o-condition-then-Ckl-derivatives-to-zero} $(i)$. Otherwise,
we rewrite \eqref{eq:part-two-important-term} as

\begin{align*}
	&s_j^{|\alpha|-|\tau|}f_1(x_j)f_2(y_j)q\left(\partderx{\tau}\partdery{\beta}\gamma(x_j,y_j)\right)\\
	&=s_j^{|\alpha|-|\tau|}q\left(f_1(x_j)\partderx{\tau}\left(f_2(y_j)\partdery{\beta}\gamma(\bullet,y_j)\right)(x_j)\right)\\
	&=s_j^{|\alpha|-|\tau|}q\left(f_1(x_j)\partderx{\tau}\gamma_{\beta,y_j,f_2}(x_j)\right),
\end{align*}

where $\gamma_{\beta,y_j,f_2}$ is the map defined in Lemma \ref{lem:useful-weighted-Ckl-maps}. We have 
$\gamma_{\beta,y_j,f_2}\in C_{\Weights_1}^k(U,E)\subseteq C_{\Weights_1}^{|\alpha|}(U,E)$, whence

\begin{equation*}
	\partderx{\tau}\gamma_{\beta,y_j,f_2}\in C_{\Weights_1}^{|\alpha|-|\tau|}(U,E).
\end{equation*}

Using Remark \ref{rem:o-condition-then-derivatives-to-zero} and Lemma \ref{lem:o-condition-then-Ckl-derivatives-to-zero} $(i)$,
we see that for each $\pi\in\Natural_0^n$ with $|\pi|\leq|\alpha|-|\tau|$ we have

\begin{equation*}
	f_1(x_j)\partderx{\tau}\gamma_{\beta,y_j,f_2}(x_j)\to 0
\end{equation*}

and

\begin{equation*}
	\partderx{\pi}\left(f_1\cdot\partderx{\tau}\gamma_{\beta,y_j,f_2}\right)(x_j)\to 0
\end{equation*}

uniformly in $y_j$, as $j\to\infty$. Hence, Proposition \ref{prop:partially-Ck-extensions-partially-Ck} implies that the extension 

\begin{equation*}
	\widetilde{\left(f_1\cdot\partderx{\tau}\gamma_{\beta,y_j,f_2}\right)} :\Real^n\to E,\quad
														x\mapsto\left\{
															\begin{array}{ll}	
																\left(f_1\cdot\partderx{\tau}\gamma_{\beta,y_j,f_2}\right)(x)	& x\in U\\
																0	& x\notin U\\
															\end{array}
														\right.
\end{equation*}

is partially $C^{|\alpha|-|\tau|}$ with partial derivatives

\begin{equation*}
	\partderx{\pi}\widetilde{\left(f_1\cdot\partderx{\tau}\gamma_{\beta,y_j,f_2}\right)} :\Real^n\to E,\quad
														x\mapsto\left\{
															\begin{array}{ll}
																\partderx{\pi}\left(f_1\cdot\partderx{\tau}\gamma_{\beta,y_j,f_2}\right)(x)	& x\in U\\
																0	& x\notin U.\\
															\end{array}
														\right.	
\end{equation*}

Since $d(x_j,\partial U)<\frac{1}{s_j}$ for each $j\in\Natural$, there exists $\overline{x}_j\in\partial U$ with 
$d_j:=d(x_j,\overline{x}_j)<\frac{1}{s_j}$, and $\overline{x}_j\to\overline{x}$ as $j\to\infty$. 
Lemma \ref{lem:taylor-series} yields $\xi_j\in\interclcl{0}{1}$ such that

\begin{align*}
	&s_j^{|\alpha|-|\tau|}q\left(f_1(x_j)\partderx{\tau}\gamma_{\beta,y_j,f_2}(x_j)\right)\\
	&\leq\frac{(s_jd_j)^{|\alpha|-|\tau|}}{(|\alpha|-|\tau|-1)!}\sum_{|\kappa|=|\alpha|-|\tau|}
				q\left(\partderx{\kappa}\widetilde{\left(f_1\cdot\partderx{\tau}\gamma_{\beta,y_j,f_2}\right)}(x_j')\right)\\
	&<\frac{1}{(|\alpha|-|\tau|-1)!}\sum_{|\kappa|=|\alpha|-|\tau|}
				q\left(\partderx{\kappa}\widetilde{\left(f_1\cdot\partderx{\tau}\gamma_{\beta,y_j,f_2}\right)}(x_j')\right)
\end{align*}

with $x_j':=\overline{x}_j-\xi_j(x_j-\overline{x}_j)$. Thus we have

\begin{equation*}
	q\left(\partderx{\kappa}\widetilde{\left(f_1\cdot\partderx{\tau}\gamma_{\beta,y_j,f_2}\right)}(x_j')\right)\to 0
\end{equation*}

if $j\to\infty$ (i.e., $x_j,\overline{x}_j\to\overline{x}$, whence $x_j'\to\overline{x}$).

Altogether, we have 

\begin{equation*}
		f_1(x_j)f_2(y_j)q\left(\partderx{\alpha}\partdery{\beta}(\gamma - \eta_{s_j}\cdot\gamma)(x_j,y_j)\right)\to 0
\end{equation*}

as $j\to\infty$, which contradicts \eqref{eq:contr-second-assumption-sup-geq-delta}, as desired.

\underline{Step $2.2$.} If $V=\Real^m$, then we go over to the conclusion of Step $2$. Otherwise, we define the set 
$V_\varepsilon$ and the partially $C^\infty$-map 

\begin{equation*}
	\zeta_{r_2}:U\times V\to\interclcl{0}{1},\quad(x,y)\mapsto\left(\mathds{1}_{V_{\frac{3}{4r_2}}} * \rho_{\frac{1}{4r_2}}\right)(y)
\end{equation*}

as in Step $2.1$ (note that $\zeta_{r_2}(x,y)=0$ if $d(y,\partial V)<\frac{1}{2r_2}$) and proceed similarly. This yields

\begin{equation*}
	\left\|\gamma - \zeta_{r_2}\cdot\gamma\right\|_{f,(\alpha,\beta),q}\to 0
\end{equation*}

as $r_2\to\infty$, for each weight $f=f_1\otimes f_2\in\Weights$, seminorm $q\in\contsemiE$ and $\alpha\in\Natural_0^n$ with $|\alpha|\leq k$, $\beta\in\Natural_0^m$ with $|\beta|\leq l$.

\underline{Conclusion of Step $2$.} We conclude that the space $C_c^{k,l}(U\times V,E)$ is dense in $M$. In fact, if $U=\Real^n$ and $V=\Real^m$,
then the assertion is obviously true. Consider the case $U\subsetneqq\Real^n$ and $V\subsetneqq\Real^m$. In Step $2.1$ we approximated
$\gamma\in M$ with a partially $C^{k,l}$-map $\eta$ such that

\begin{align*}
	\supp(\eta)\subseteq\left\{(x,y)\in U\times V \right.&: (\exists r,s>0)\\
	& \left.\left\|(x,y)\right\|_\infty\leq r\mbox{ and }d(x,\partial U)\geq s\right\}.
\end{align*}

In Step $2.2$ we showed that $\eta$ can be approximated with a partially $C^{k,l}$-map $\zeta$ such that

\begin{align*}
	\supp(\zeta)\subseteq\left\{(x,y)\right.&\in U\times V : (\exists r,s>0)\\
	& \left.\left\|(x,y)\right\|_\infty\leq r\mbox{ and }d((x,y),\partial(U\times V))\geq s\right\}=:K.
\end{align*}

Furthermore, we have

\begin{equation}\label{eq:K-as-closed-bounded-subset}
\begin{split}
	K=\left\{(x,y)\right.\in \overline{U}\times\overline{V} &: (\exists r,s>0)\\
	& \left.\left\|(x,y)\right\|_\infty\leq r\mbox{ and }d((x,y),\partial(U\times V))\geq s\right\}.
\end{split}	
\end{equation}

(In fact, for $(x,y)\in(\overline{U}\times\overline{V})\backslash(U\times V)=\overline{U\times V}\backslash(U\times V)=\partial(U\times V)$
we have $d((x,y),\partial(U\times V))=0$, hence $(x,y)$ is not an element of the right-hand side of \eqref{eq:K-as-closed-bounded-subset}.)
Thus $K$ is a closed subset of $(\overline{U}\times\overline{V})\cap\overline{B_r(0)}$, hence $K$ is bounded and consequently compact.
Therefore $\zeta\in C_c^{k,l}(U\times V,E)$.

If $U\subsetneqq\Real^n$ and $V=\Real^m$, then Step $2.1$ yields an approximation of $\gamma\in M$ with a partially $C^{k,l}$-map
$\eta$ such that

\begin{align*}
	\supp(\eta)\subseteq\left\{(x,y)\in U\times\Real^m \right.&: (\exists r,s>0)\\
	& \left.\left\|(x,y)\right\|_\infty\leq r\mbox{ and }d(x,\partial U)\geq s\right\}\\
	=\left\{(x,y)\in\overline{U}\times\Real^m \right.&: (\exists r,s>0)\\
	& \left.\left\|(x,y)\right\|_\infty\leq r\mbox{ and }d(x,\partial U)\geq s\right\}.
\end{align*}

(We see again that if $(x,y)\in(\overline{U}\times\Real^m)\backslash(U\times\Real^m)=\overline{U\times\Real^m}\backslash(U\times\Real^m)
=\partial(U\times\Real^m)=(\partial U\times\Real^m)\cup(\overline{U}\times\partial\Real^m)=\partial U\times\Real^m$, then $d(x,\partial U)=0$.)
Thus $\supp(\eta)$ is a subset of a closed subset of $(\overline{U}\times\Real^m)\cap\overline{B_r(0)}$, and we have
$\eta\in C_c^{k,l}(U\times\Real^m,E)$. In the case $U=\Real^n$ and $V\subsetneqq\Real^m$, the argumentation is similar, using Step $2.2$.
\end{proof}

\begin{proposition}\label{prop:Ckl-cp-supp-in-image}
Let $E$ be a \Hlc space, $U\subseteq\Real^n$ and $V\subseteq\Real^m$ be locally convex subsets with dense interior, and $k,l\in\Natzeroinfty$.
For the set of weights $\Weights_1$ on $U$ we assume that

\begin{enumerate}
	\item [(i)] each weight is bounded on compact subsets of $U$,
	\item [(ii)] for each compact subset $K\subseteq U$ there exists a weight $f_K\in\Weights_1$ such that $\inf_{x\in K}f_K(x)>0$,
\end{enumerate}

and likewise for the set of weights $\Weights_2$ on $V$. Then we have $C_c^{k,l}(U\times V,E)\subseteq im(\Psi)$, where $\Psi$ is the topological embedding

\begin{equation*}
	\Psi:C_{\Weights_1}^k(U,C_{\Weights_2}^l(V,E))\to C_\Weights^{k,l}(U\times V,E),\quad\gamma\mapsto\gamma^\wedge
\end{equation*}

defined in Theorem \ref{thm:weighted-Ck-exp-top-emb}
\end{proposition}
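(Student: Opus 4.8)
The plan is to transcribe the proof of Proposition~\ref{prop:cp-supp-in-image} to the differentiable setting, substituting the auxiliary results about $C^{k,l}$-maps for those about continuous maps. First I would fix $\gamma\in C_c^{k,l}(U\times V,E)$ and set $L:=\supp(\gamma)$, a compact subset of $U\times V$; if $\gamma=0$ there is nothing to prove, so we may assume $L\neq\varnothing$. Writing $\pi_1,\pi_2$ for the restrictions to $U\times V$ of the canonical projections $\Real^n\times\Real^m\to\Real^n$ and $\Real^n\times\Real^m\to\Real^m$, the sets $K_1:=\pi_1(L)\subseteq U$ and $K_2:=\pi_2(L)\subseteq V$ are compact (continuous images of the compact set $L$) and satisfy $L\subseteq K_1\times K_2$; hence $\supp(\gamma)\subseteq K_1\times K_2$, i.e.\ $\gamma\in C_{K_1\times K_2}^{k,l}(U\times V,E)$. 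Lemma~\ref{lem:gamma-check-kl-cp-supp} then yields $\gamma^\vee\in C_{K_1}^k\bigl(U,C_{K_2}^l(V,E)\bigr)$.

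Next I would upgrade the target from compactly supported $C^l$-maps to weighted ones, in two applications of Lemma~\ref{lem:Ck-K-supp-in-part-Ck-weighted-incl-cont}. Since each weight in $\Weights_2$ is bounded on the compact set $K_2$ (assumption~(i) for $\Weights_2$), that lemma, applied with $V$, $l$, $K_2$, $\Weights_2$ in place of $U$, $k$, $K$, $\Weights$, shows that the inclusion $i:C_{K_2}^l(V,E)\to C_{\Weights_2}^l(V,E)$ is continuous and linear. Applying the lemma once more, now with the Hausdorff locally convex space $C_{K_2}^l(V,E)$ as coefficient space and using that each weight in $\Weights_1$ is bounded on $K_1$, we obtain $C_{K_1}^k\bigl(U,C_{K_2}^l(V,E)\bigr)\subseteq C_{\Weights_1}^k\bigl(U,C_{K_2}^l(V,E)\bigr)$, so $\gamma^\vee$ lies in the latter space. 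By Lemma~\ref{lem:k-superposition-operator-cont} the superposition operator $C_{\Weights_1}^k(U,i)$ maps $C_{\Weights_1}^k\bigl(U,C_{K_2}^l(V,E)\bigr)$ continuously and linearly into $C_{\Weights_1}^k\bigl(U,C_{\Weights_2}^l(V,E)\bigr)$ and carries $\gamma^\vee$ to $i\circ\gamma^\vee$, which is the very same map $\gamma^\vee$ regarded with codomain $C_{\Weights_2}^l(V,E)$. Hence $\gamma^\vee\in C_{\Weights_1}^k\bigl(U,C_{\Weights_2}^l(V,E)\bigr)$, and since $(\gamma^\vee)^\wedge(x,y)=\gamma^\vee(x)(y)=\gamma(x,y)$ we get $\Psi(\gamma^\vee)=\gamma$ (with $\Psi$ well defined as a topological embedding by assumption~(ii) together with Theorem~\ref{thm:weighted-Ck-exp-top-emb}), proving $\gamma\in im(\Psi)$.

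I do not anticipate a genuine obstacle here: the substance has been absorbed into Lemmas~\ref{lem:gamma-check-kl-cp-supp}, \ref{lem:Ck-K-supp-in-part-Ck-weighted-incl-cont} and~\ref{lem:k-superposition-operator-cont}, and what remains is to chain them in the right order. The only point demanding care is the bookkeeping of which Hausdorff locally convex space plays the role of the coefficient space when Lemma~\ref{lem:Ck-K-supp-in-part-Ck-weighted-incl-cont} is reused---in particular, treating $C_{K_2}^l(V,E)$, and then $C_{\Weights_2}^l(V,E)$, as ``the space $E$'' when passing from $C^l$-valued maps on $V$ to spaces of maps on $U$.
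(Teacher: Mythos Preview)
Your proof is correct and follows essentially the same route as the paper: reduce to $\gamma\in C_{K_1\times K_2}^{k,l}$ via projections, invoke Lemma~\ref{lem:gamma-check-kl-cp-supp} to get $\gamma^\vee\in C_{K_1}^k(U,C_{K_2}^l(V,E))$, then push through the two inclusions into the weighted spaces. The only cosmetic difference is the order in which you upgrade the two layers---the paper first replaces the target $C_{K_2}^l(V,E)$ by $C_{\Weights_2}^l(V,E)$ using Lemma~\ref{lem:lcx-partCk-lin-operator} (composition with the continuous linear inclusion $i$) while still in $C_{K_1}^k$, and only then passes to $C_{\Weights_1}^k$ via Lemma~\ref{lem:Ck-K-supp-in-part-Ck-weighted-incl-cont}, whereas you first move to $C_{\Weights_1}^k(U,C_{K_2}^l(V,E))$ and then apply the superposition operator $C_{\Weights_1}^k(U,i)$ from Lemma~\ref{lem:k-superposition-operator-cont}; both orderings are fine.
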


\begin{proof}
Let $\gamma\in C_c^{k,l}(U\times V,E)$. We need to show that $\gamma^\vee\in C_{\Weights_1}^k(U,C_{\Weights_2}^l(V,E))$,
then by construction of $\Psi$ we have $\Psi(\gamma^\vee)=(\gamma^\vee)^\wedge=\gamma$, as required.

Let

\begin{equation*}
	\pi_1:U\times V\to U,\quad\pi_2:U\times V\to V
\end{equation*}

be the coordinate projections onto the first and second component, respectively. We set $K:=\supp(\gamma)$ and
$K_1:=\pi_1(K)$, $K_2:=\pi_2(K)$. Since $K$ is compact, so is the set $K_1\times K_2$ and $K\subseteq K_1\times K_2$,
whence $\gamma\in C_{K_1\times K_2}^{k,l}(U\times V,E)$. Therefore, we have $\gamma^\vee\in C_{K_1}^k(U,C_{K_2}^l(V,E))$,
by Lemma \ref{lem:gamma-check-kl-cp-supp}. We know that $C_{K_2}^l(V,E)\subseteq C_{\Weights_2}^l(V,E)$ and the 
inclusion map

\begin{equation*}
	i:C_{K_2}^l(V,E)\to C_{\Weights_2}^l(V,E)
\end{equation*}

is continuous and linear, by Lemma \ref{lem:Ck-K-supp-in-part-Ck-weighted-incl-cont}. Thus 
$\gamma^\vee(x)=\gamma_x\in C_{\Weights_2}^l(V,E)$ and $\gamma^\vee=i\circ\gamma^\vee:U\to C_{\Weights_2}^l(V,E)$ is
partially $C^k$, 
by Lemma \ref{lem:lcx-partCk-lin-operator}. Hence
$\gamma^\vee\in C_{K_1}^k(U,C_{\Weights_2}^l(V,E))\subseteq C_{\Weights_1}^k(U,C_{\Weights_2}^l(V,E))$, as desired.
\end{proof}

Now, we prove the Exponential Law for spaces of weighted differentiable functions
with values in locally convex spaces.

\begin{theorem}[\textbf{Exponential Law for spaces of weighted differentiable functions}]\label{thm:weighted-Ckl-exp-law-comb}
Let $E$ be a \Hlc space, $U\subseteq\Real^n$ and $V\subseteq\Real^m$ be open subsets, and $k,l\in\Natzeroinfty$.
For the set of weights $\Weights_1\subseteq C^k(U,\zeroinfty)$ on $U$ we assume that

\begin{itemize}
	\item [(i)]	$\Weights_1$ satisfies the $o$-condition,
	\item [(ii)]	for each $f\in\Weights_1$ and $\alpha\in\Natural_0^n$ with $|\alpha|\leq k$ there exists 
					$g\in\Weights_1$ such that
					
					\begin{equation*}
						\left|\partderx{\alpha}f(x)\right|\leq g(x)
					\end{equation*}
					
					for all $x\in U$,
\end{itemize}

and likewise for the set of weights $\Weights_2\subseteq C^l(V,\zeroinfty)$ on $V$. Then the linear map 

\begin{equation*}
	\Psi:C_{\Weights_1}^k(U,C_{\Weights_2}^l(V,E))\to C_\Weights^{k,l}(U\times V,E),\quad\gamma\mapsto\gamma^\wedge,
\end{equation*}

where $\Weights=\Weights_1\otimes\Weights_2$, is a homeomorphism.
\end{theorem}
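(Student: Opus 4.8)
The plan is to run the same ``closed $+$ dense $=$ everything'' argument that proved Theorem~\ref{thm:weighted-exp-law-comb}, now with the $C^{k,l}$-ingredients. First I would check that the standing hypotheses are strong enough to invoke the auxiliary results: since $\Weights_1\subseteq C^k(U,\zeroinfty)$ and $\Weights_2\subseteq C^l(V,\zeroinfty)$ consist of \emph{continuous} functions, Remark~\ref{rem:weights-cont-then-inf-geq-0} supplies, for each compact $K$ in $U$ (respectively in $V$), a weight bounded away from $0$ on $K$, and continuous functions are bounded on compact sets. Hence the hypotheses of Theorem~\ref{thm:weighted-Ck-exp-top-emb}, Proposition~\ref{prop:weighted-Ck-space-complete}, Proposition~\ref{prop:Ckl-cp-supp-in-image} and Proposition~\ref{prop:smooth-compact-supp-dense-in-weighted-Ckl-assertion} (for the last one, (i) and (ii) are exactly what is assumed) are all satisfied. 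By Theorem~\ref{thm:weighted-Ck-exp-top-emb}, $\Psi$ is already a linear topological embedding, so everything reduces to proving that $\Psi$ is surjective.

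\emph{Step 1: $E$ complete.} Here Proposition~\ref{prop:weighted-Ck-space-complete} shows first that $C_{\Weights_2}^l(V,E)$ is complete and then that $C_{\Weights_1}^k(U,C_{\Weights_2}^l(V,E))$ is complete. Since $\Psi$ is a topological embedding, $im(\Psi)$ is complete, hence closed in $C_\Weights^{k,l}(U\times V,E)$. On the other hand $C_c^{k,l}(U\times V,E)\subseteq im(\Psi)$ by Proposition~\ref{prop:Ckl-cp-supp-in-image}, and $C_c^{k,l}(U\times V,E)$ is dense in $C_\Weights^{k,l}(U\times V,E)$ by Proposition~\ref{prop:smooth-compact-supp-dense-in-weighted-Ckl-assertion}; so $im(\Psi)$ is closed and dense, i.e.\ $im(\Psi)=C_\Weights^{k,l}(U\times V,E)$, and $\Psi$ is a homeomorphism.

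\emph{Step 2: general $E$.} Let $\widetilde E$ be the completion of $E$ and view $C_\Weights^{k,l}(U\times V,E)\subseteq C_\Weights^{k,l}(U\times V,\widetilde E)$. Given $\gamma$ in the former, Step~1 applied to $\widetilde E$ yields $\eta\in C_{\Weights_1}^k(U,C_{\Weights_2}^l(V,\widetilde E))$ with $\eta^\wedge=\gamma$, i.e.\ $\eta(x)=\gamma(x,\bullet)$ for all $x\in U$. I would then move $\eta$ into the $E$-valued spaces by applying Lemma~\ref{lem:lcx-part-Ck-in-image} twice. For fixed $x$ and $|\beta|\leq l$ one has $\partdery{\beta}\bigl(\eta(x)\bigr)(y)=\partdery{\beta}\gamma(x,y)\in E$ because $\gamma$ is $C^{k,l}$ with values in $E$; hence $\eta(x)\in C^l(V,E)$ by Lemma~\ref{lem:lcx-part-Ck-in-image}, and since the defining seminorm conditions for $C_{\Weights_2}^l(V,\cdot)$ are insensitive to whether one works in $E$ or in $\widetilde E$ (every $q\in\contsemiE$ extends to $\widetilde E$ and conversely every seminorm on $\widetilde E$ restricts to $E$), in fact $\eta(x)\in C_{\Weights_2}^l(V,E)$. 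More generally, the relation $\partderx{\alpha}\partdery{\beta}\gamma(x,y)=\partdery{\beta}\bigl((\partderx{\alpha}\eta)(x)\bigr)(y)$ (from Proposition~\ref{cor:part-Ckl-classical-exponential-law}, as used in the proof of Theorem~\ref{thm:weighted-Ck-exp-top-emb}) shows $(\partderx{\alpha}\eta)(x)\in C_{\Weights_2}^l(V,E)$ for all $|\alpha|\leq k$; applying Lemma~\ref{lem:lcx-part-Ck-in-image} to the subspace $C_{\Weights_2}^l(V,E)\subseteq C_{\Weights_2}^l(V,\widetilde E)$ then gives $\eta\in C^k(U,C_{\Weights_2}^l(V,E))$, and the weighted seminorm bounds carry over unchanged, so $\eta\in C_{\Weights_1}^k(U,C_{\Weights_2}^l(V,E))$. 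Thus $\Psi(\eta)=\gamma$; combined with Theorem~\ref{thm:weighted-Ck-exp-top-emb}, $\Psi$ is a homeomorphism.

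The genuinely hard work has already been done in Proposition~\ref{prop:smooth-compact-supp-dense-in-weighted-Ckl-assertion} (density of compactly supported maps) and in Theorem~\ref{thm:weighted-Ck-exp-top-emb}; given those, this final assembly is short. The one point that requires care is the bookkeeping in Step~2 --- verifying at every order of differentiation that the map produced over the completion actually takes its values in the $E$-valued function spaces --- and I expect the double use of Lemma~\ref{lem:lcx-part-Ck-in-image} together with the remark that the weight seminorms do not change under completion to be exactly what settles it.
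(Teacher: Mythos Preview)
Your proposal is correct and follows essentially the same approach as the paper: reduce to surjectivity, handle complete $E$ via ``image is closed (by completeness, Proposition~\ref{prop:weighted-Ck-space-complete} and Theorem~\ref{thm:weighted-Ck-exp-top-emb}) and dense (since it contains $C_c^{k,l}$, Propositions~\ref{prop:Ckl-cp-supp-in-image} and~\ref{prop:smooth-compact-supp-dense-in-weighted-Ckl-assertion})'', then descend from the completion using Lemma~\ref{lem:lcx-part-Ck-in-image} twice together with the identity $\partdery{\beta}\bigl((\partderx{\alpha}\eta)(x)\bigr)(y)=\partderx{\alpha}\partdery{\beta}\gamma(x,y)\in E$. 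Your explicit remark that the weighted seminorm conditions are insensitive to passing between $E$ and $\widetilde E$ is exactly the point the paper leaves implicit when it jumps from $C^l(V,E)$ to $C_{\Weights_2}^l(V,E)$.
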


\begin{proof}
\underline{Step $1$.} First we assume that the space $E$ is complete. By Proposition \ref{prop:weighted-Ck-space-complete}, the space $C_{\Weights_2}^l(V,E)$ is complete, whence also
the space $C_{\Weights_1}^k(U,C_{\Weights_2}^l(V,E))$ is complete (we recall that the condition in Proposition 
\ref{prop:weighted-Ck-space-complete} is satisfied, since the weights are assumed continuous, see Remark 
\ref{rem:weights-cont-then-inf-geq-0}). The map
$\Psi$ is a topological embedding, by Theorem \ref{thm:weighted-Ck-exp-top-emb}, thus $im(\Psi)$ is complete, hence it is
closed in $C_\Weights^{k,l}(U\times V,E)$. By Proposition \ref{prop:Ckl-cp-supp-in-image} we know that
$C_c^{k,l}(U\times V,E)\subseteq im(\Psi)$, and in Proposition \ref{prop:smooth-compact-supp-dense-in-weighted-Ckl-assertion} we have
shown that $C_c^{k,l}(U\times V,E)$ is dense in $C_\Weights^{k,l}(U\times V,E)$. Thus $im(\Psi)$ is dense in 
$C_\Weights^{k,l}(U\times V,E)$, whence

\begin{equation*}
	im(\Psi)=\overline{im(\Psi)}=C_\Weights^{k,l}(U\times V,E),
\end{equation*}

which shows that $\Psi$ is surjective, hence a homeomorphism (being a topological embedding).

\underline{Step $2$.} Now we show the surjectivity of $\Psi$ in the general case. To this end, let $\widetilde{E}$ be the completion of $E$ and 
$\gamma\in C_\Weights^{k,l}(U\times V,E)\subseteq C_\Weights^{k,l}(U\times V,\widetilde{E})$. In Step $1$ we have shown that the map

\begin{equation*}
	\widetilde{\Psi}:C_{\Weights_1}^k(U,C_{\Weights_2}^l(V,\widetilde{E}))\to C_\Weights^{k,l}(U\times V,\widetilde{E}),
	\quad\eta\mapsto\eta^\wedge,
\end{equation*}

is a homeomorphism. Hence there exists $\eta\in C_{\Weights_1}^k(U,C_{\Weights_2}^l(V,\widetilde{E}))$ with

\begin{equation}\label{eq:tilde-Psi-eq-gamma}
	\widetilde{\Psi}(\eta)=\eta^\wedge=\gamma.
\end{equation}

If we can show that $\eta\in C_{\Weights_1}^k(U,C_{\Weights_2}^l(V,E))$, then we have $\Psi(\eta)=\gamma$,
and the assertion is proven. It suffices to prove that

\begin{equation}\label{eq:part-der-eta-in-E}
	\partdery{\beta}\left(\partderx{\alpha}\eta(x)\right)(y)\in E
\end{equation}

for all $x\in U$, $y\in V$ and $\alpha\in\Natural_0^n$ with $|\alpha|\leq k$ and $\beta\in\Natural_0^m$
with $|\beta|\leq l$. From Lemma \ref{lem:lcx-part-Ck-in-image} we then conclude that

\begin{equation*}
	\partderx{\alpha}\eta(x)\in C_{\Weights_2}^l(V,E)
\end{equation*}

for all $x\in U$ and $\alpha$ as above, and using the lemma again, we get

\begin{equation*}
	\eta\in C_{\Weights_1}^k(U,C_{\Weights_2}^l(V,E)),
\end{equation*}

as required. But using \eqref{eq:tilde-Psi-eq-gamma}, we have

\begin{equation*}
	\partdery{\beta}\left(\partderx{\alpha}\eta(x)\right)(y)
	=\partdery{\beta}\left(\partderx{\alpha}\gamma(x,\bullet)\right)(y)
	=\partderx{\alpha}\partdery{\beta}\gamma(x,y)\in E,
\end{equation*}

thus \eqref{eq:part-der-eta-in-E} holds and the proof is finished.
\end{proof}

A useful consequence of the Exponential Law is the following:

\begin{corollary}\label{cor:weighted-Ckl-exp-law}
Let $E$ be a \Hlc space, $U\subseteq\Real^n$ and $V\subseteq\Real^m$ be open subsets, and $k,l\in\Natzeroinfty$.
For the set of weights $\Weights_1\subseteq C^k(U,\zeroinfty)$ on $U$ we assume that

\begin{itemize}
	\item [(i)]	$\Weights_1$ satisfies the $o$-condition,
	\item [(ii)]	for each $f\in\Weights_1$ and $\alpha\in\Natural_0^n$ with $|\alpha|\leq k$ there exists 
					$g\in\Weights_1$ such that
					
					\begin{equation*}
						\left|\partderx{\alpha}f(x)\right|\leq g(x)
					\end{equation*}
					
					for all $x\in U$,
\end{itemize}

and likewise for the set of weights $\Weights_2\subseteq C^l(V,\zeroinfty)$ on $V$. Then the linear map 

\begin{equation*}
\begin{split}
	\Psi:C_{\Weights_1}^k(U,C_{\Weights_2}^l(V,E))&\to C_{\Weights_2}^l(V,C_{\Weights_1}^k(U,E)),\\
	\quad\Psi(\gamma)&:=(y\mapsto(x\mapsto\gamma(x)(y)))
\end{split}
\end{equation*}

is a homeomorphism.
\end{corollary}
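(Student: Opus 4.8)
The plan is to obtain $\Psi$ as a composition of three homeomorphisms: two instances of the Exponential Law (Theorem~\ref{thm:weighted-Ckl-exp-law-comb}) and the evident coordinate-flip homeomorphism between weighted $C^{k,l}$- and $C^{l,k}$-spaces. First, since $\Weights_1\subseteq C^k(U,\zeroinfty)$ satisfies (i) and (ii) and $\Weights_2\subseteq C^l(V,\zeroinfty)$ satisfies the analogous conditions, Theorem~\ref{thm:weighted-Ckl-exp-law-comb} says that
\[
	\Psi_1:C_{\Weights_1}^k(U,C_{\Weights_2}^l(V,E))\to C_{\Weights_1\otimes\Weights_2}^{k,l}(U\times V,E),\quad\gamma\mapsto\gamma^\wedge
\]
is a linear homeomorphism. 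Because the hypotheses on $\Weights_1$ and $\Weights_2$ are completely symmetric, the same theorem, applied with the roles of $(U,n,k,\Weights_1)$ and $(V,m,l,\Weights_2)$ interchanged, shows that
\[
	\Psi_2:C_{\Weights_2}^l(V,C_{\Weights_1}^k(U,E))\to C_{\Weights_2\otimes\Weights_1}^{l,k}(V\times U,E),\quad\delta\mapsto\delta^\wedge
\]
is a linear homeomorphism as well.

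Next I would verify that the coordinate-flip map
\[
	F:C_{\Weights_1\otimes\Weights_2}^{k,l}(U\times V,E)\to C_{\Weights_2\otimes\Weights_1}^{l,k}(V\times U,E),\quad (F\mu)(y,x):=\mu(x,y)
\]
is a linear homeomorphism. That $\mu:U\times V\to E$ is partially $C^{k,l}$ if and only if $(y,x)\mapsto\mu(x,y)$ is partially $C^{l,k}$, together with the Schwarz-type identity $\partderx{\alpha}\partdery{\beta}\mu(x,y)=\partdery{\beta}\partderx{\alpha}\mu(x,y)$, is precisely the content of Remark~\ref{rem:C-alpha-beta-eq-partially-Ckl}, so $F$ is a well-defined linear bijection. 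For the topologies, one only needs the observation that for $f_1\in\Weights_1$, $f_2\in\Weights_2$, $q\in\contsemiE$, $\alpha\in\Natural_0^n$ with $|\alpha|\leq k$ and $\beta\in\Natural_0^m$ with $|\beta|\leq l$,
\[
	\left\|F\mu\right\|_{f_2\otimes f_1,(\beta,\alpha),q}=\sup_{(x,y)\in U\times V}f_1(x)f_2(y)\,q\!\left(\partdery{\beta}\partderx{\alpha}\mu(x,y)\right)=\left\|\mu\right\|_{f_1\otimes f_2,(\alpha,\beta),q},
\]
so $F$ carries the generating seminorms of its domain bijectively onto those of its codomain; hence both $F$ and $F^{-1}$ are continuous.

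Finally, tracing the definitions, for $\gamma\in C_{\Weights_1}^k(U,C_{\Weights_2}^l(V,E))$ the element $\Psi_2^{-1}\!\big(F(\Psi_1(\gamma))\big)$ is the map $V\to C_{\Weights_1}^k(U,E)$ whose value at $y$, evaluated at $x$, equals $(F\gamma^\wedge)(y,x)=\gamma^\wedge(x,y)=\gamma(x)(y)$; that is, $\Psi=\Psi_2^{-1}\circ F\circ\Psi_1$. Being a composition of linear homeomorphisms, $\Psi$ is itself a linear homeomorphism, which in particular shows a posteriori that $\Psi$ indeed takes values in $C_{\Weights_2}^l(V,C_{\Weights_1}^k(U,E))$ as claimed. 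The only step that requires genuine (though entirely routine) checking is the homeomorphism property of the flip map $F$; once Theorem~\ref{thm:weighted-Ckl-exp-law-comb} and Remark~\ref{rem:C-alpha-beta-eq-partially-Ckl} are available, everything else is formal.
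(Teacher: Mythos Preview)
Your proof is correct and follows essentially the same approach as the paper: both compose two instances of Theorem~\ref{thm:weighted-Ckl-exp-law-comb} (one in each direction) with the coordinate-flip homeomorphism between $C_{\Weights_1\otimes\Weights_2}^{k,l}(U\times V,E)$ and $C_{\Weights_2\otimes\Weights_1}^{l,k}(V\times U,E)$. The only difference is cosmetic---the paper labels the three maps $\Psi_1,\Psi_2,\Psi_3$ and writes $\Psi=\Psi_3\circ\Psi_2\circ\Psi_1$, while you use $\Psi_2^{-1}\circ F\circ\Psi_1$---and you spell out the seminorm computation for the flip (invoking Remark~\ref{rem:C-alpha-beta-eq-partially-Ckl}) where the paper simply declares it ``obvious''.
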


\begin{proof}
By Theorem \ref{thm:weighted-Ckl-exp-law-comb}, the map

\begin{equation*}
	\Psi_1:C_{\Weights_1}^k(U,C_{\Weights_2}^l(V,E))\to C_{\Weights_1\otimes\Weights_2}^{k,l}(U\times V,E)
\end{equation*}

is a homeomorphism, and obviously so is

\begin{equation*}
\begin{split}
	\Psi_2:C_{\Weights_1\otimes\Weights_2}^{k,l}(U\times V,E)&\to 
	C_{\Weights_2\otimes\Weights_1}^{l,k}(V\times U,E),\\
	\quad\Psi_2((x,y)\mapsto\gamma(x)(y))&:=((y,x)\mapsto\gamma(x)(y)).
\end{split}
\end{equation*}

Finally, using the homeomorphism

\begin{equation*}
	\Psi_3:C_{\Weights_2\otimes\Weights_1}^{l,k}(V\times U,E)\to C_{\Weights_2}^l(V,C_{\Weights_1}^k(U,E)),
\end{equation*}

we construct

\begin{equation*}
	\Psi:=\Psi_3\circ\Psi_2\circ\Psi_1:
	C_{\Weights_1}^k(U,C_{\Weights_2}^l(V,E))\to C_{\Weights_2}^l(V,C_{\Weights_1}^k(U,E)),
\end{equation*}

and see that

\begin{align*}
	\Psi(\gamma)
	&=(\Psi_3\circ\Psi_2\circ\Psi_1)(x\mapsto(y\mapsto\gamma(x)(y)))\\
	&=(\Psi_3\circ\Psi_2)((x,y)\mapsto\gamma(x)(y))\\
	&=\Psi_3((y,x)\mapsto\gamma(x)(y))
	=(y\mapsto(x\mapsto\gamma(x)(y))),
\end{align*}

and $\Psi$ is a homeomorphism, by construction.
\end{proof}

Using the fact that a convex, compact subset $K\subseteq\Real^m$  with nonempty interior has dense interior (whence
differentiability is defined on such sets), we get
corresponding results for spaces of weighted $C^{k,l}$-functions on products of open and compact subsets, in particular:

\begin{proposition}\label{prop:Ckl-compact-supp-dense-in-weighted-Ckl-cp-op-and-op-cp}
Let $E$ be a \Hlc space, $U\subseteq\Real^n$ be an open subset,
$K\subseteq\Real^m$ be a convex, compact subset (with $K^\circ\neq\emptyset$) and $k,l\in\Natzeroinfty$. Let $\Weights_1\subseteq C^k(U,\zeroinfty$ be a set of weights on $U$ as in Theorem 
\ref{thm:weighted-Ckl-exp-law-comb} and $\Weights_2$ be an arbitrary set of weights on $K$. Then

\begin{itemize}
	\item[(i)] the space $C_c^{k,l}(U\times K,E)$ is dense in $C_{\Weights_1\otimes\Weights_2}^{k,l}(U\times K,E)$
\end{itemize}	

 and
	 
\begin{itemize}	 
	\item[(ii)] the space $C_c^{l,k}(K\times U,E)$ is dense in $C_{\Weights_2\otimes\Weights_1}^{l,k}(K\times U,E)$.
\end{itemize}

\end{proposition}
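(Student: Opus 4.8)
The plan is to re-run the proof of Proposition~\ref{prop:smooth-compact-supp-dense-in-weighted-Ckl-assertion}, keeping only those steps that modify the $U$-variable. I would prove (i); statement (ii) then follows from (i) by the flip $(y,x)\mapsto(x,y)$, which by Remark~\ref{rem:C-alpha-beta-eq-partially-Ckl} (together with Schwarz' Theorem, so that the defining seminorms correspond) is a linear homeomorphism $C_{\Weights_1\otimes\Weights_2}^{k,l}(U\times K,E)\to C_{\Weights_2\otimes\Weights_1}^{l,k}(K\times U,E)$ carrying $C_c^{k,l}(U\times K,E)$ onto $C_c^{l,k}(K\times U,E)$.

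For (i) set $\Weights:=\Weights_1\otimes\Weights_2$. Since $K$ is convex and compact with $K^\circ\neq\emptyset$, it is a locally convex subset of $\Real^m$ with dense interior, so $U\times K$ is a legitimate domain and the auxiliary results quoted below apply with $K$ in the role of the second factor. First I would show, exactly as in Step~$1.1$ of the proof of Proposition~\ref{prop:smooth-compact-supp-dense-in-weighted-Ckl-assertion}, that for $\gamma\in C_\Weights^{k,l}(U\times K,E)$ the functions $\eta_{r_1}\cdot\gamma$, where $\eta_{r_1}(x,y):=\bigl(\mathds{1}_{\overline{B_{r_1+\frac{1}{2}}(0)}}*\rho_{\frac{1}{2}}\bigr)(x)$ is a function of $x$ alone, lie in $C_\Weights^{k,l}(U\times K,E)$ (by Remark~\ref{rem:part-Ck-part-Ckl-product-rules}(ii) and Lemma~\ref{lem:estimate-der-rho}) and satisfy $\|\gamma-\eta_{r_1}\cdot\gamma\|_{f,(\alpha,\beta),q}\to0$ as $r_1\to\infty$, using Lemma~\ref{lem:o-condition-then-Ckl-derivatives-to-zero}(i), which applies since $U$ is open and $\Weights_1$ satisfies the $o$-condition. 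No hypothesis on $\Weights_2$ enters; compactness of $K$ supplies the uniformity in $y$ in the limit, and, as $K$ is bounded, the approximant already vanishes whenever $\|(x,y)\|_\infty$ is large. If $U=\Real^n$ this single step already produces a function with compact support in $\Real^n\times K$, and (i) is proved.

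If $U\neq\Real^n$, I would then carry out the analogue of Step~$2.1$: for $\gamma$ vanishing for $\|(x,y)\|_\infty$ large, multiply by $\eta_{r_2}(x,y):=\bigl(\mathds{1}_{U_{\frac{3}{4r_2}}}*\rho_{\frac{1}{4r_2}}\bigr)(x)$ and prove $\|\gamma-\eta_{r_2}\cdot\gamma\|_{f,(\alpha,\beta),q}\to0$ as $r_2\to\infty$, by contradiction, essentially verbatim as in the original argument. One passes to a subsequence of witnesses $(x_j,y_j)$ with $d(x_j,\partial U)<\tfrac{1}{s_j}$, so that $x_j\to\overline{x}\in\partial U$, while $y_j\in K$; the critical term $s_j^{|\alpha|-|\tau|}f_1(x_j)f_2(y_j)q\bigl(\partderx{\tau}\partdery{\beta}\gamma(x_j,y_j)\bigr)$ is rewritten through the map $\gamma_{\beta,y_j,f_2}$ of Lemma~\ref{lem:useful-weighted-Ckl-maps}, which lies in $C_{\Weights_1}^k(U,E)$ with no condition on $\Weights_2$; then Remark~\ref{rem:o-condition-then-derivatives-to-zero}, Proposition~\ref{prop:partially-Ck-extensions-partially-Ck} and Lemma~\ref{lem:taylor-series} force this term to $0$, contradicting the choice of $(x_j,y_j)$. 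Crucially there is no Step~$2.2$ to perform, because $K$ is already compact and closed, so no cutoff in the $y$-variable is ever needed.

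Finally I would check that the resulting approximant $\zeta$ lies in $C_c^{k,l}(U\times K,E)$: by construction $\supp(\zeta)\subseteq\{(x,y)\in U\times K:\|x\|_\infty\leq r,\ d(x,\partial U)\geq s\}$ for suitable $r,s>0$, and since $d(x,\partial U)\geq s>0$ forces $x\in U$, this set coincides with $\{(x,y)\in\overline{U}\times K:\|x\|_\infty\leq r,\ d(x,\partial U)\geq s\}$, which is closed in $\Real^{n+m}$, bounded (as $K$ is bounded), hence compact, and contained in $U\times K$; so $\zeta\in C_c^{k,l}(U\times K,E)$. I expect the only point requiring real care to be the verification that the lemmas borrowed from the open--open case --- above all Lemma~\ref{lem:useful-weighted-Ckl-maps} and Lemma~\ref{lem:o-condition-then-Ckl-derivatives-to-zero}(i) --- remain valid with the second factor being the convex compact set $K$ rather than an open set; this holds because each is stated for locally convex subsets with dense interior and imposes the $o$-condition only on the factor in which the cutting off actually takes place, namely $U$.
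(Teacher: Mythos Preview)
Your proof of (i) follows the paper's own argument essentially verbatim: the paper also reruns Step~1.1 of Proposition~\ref{prop:smooth-compact-supp-dense-in-weighted-Ckl-assertion} and then, if $U\neq\Real^n$, Step~2.1, omitting Steps~1.2 and 2.2 because $K$ is already compact, and concludes with the same verification that the support is a closed bounded subset of $\overline{U}\times K$.

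For (ii) there is a small organizational difference. The paper handles (ii) directly by invoking Steps~1.2 and 2.2 of the original proof (the cutoffs acting on the \emph{second} variable, which is now the $U$-variable in $K\times U$), whereas you reduce (ii) to (i) via the flip homeomorphism from Remark~\ref{rem:C-alpha-beta-eq-partially-Ckl}. Both routes are correct and amount to the same computation; your reduction is slightly cleaner in that it avoids repeating the symmetric argument, while the paper's version keeps the labeling of the steps consistent with their original meaning. Either way, nothing substantive is different.
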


\begin{proof}
To prove $(i)$, we use the same arguments as in Step $1.1$ in the proof of Proposition 
\ref{prop:smooth-compact-supp-dense-in-weighted-Ckl-assertion} and
approximate $\gamma\in C_{\Weights_1\otimes\Weights_2}^{k,l}(U\times K,E)$ with a function 
$\eta\in C_{\Weights_1\otimes\Weights_2}^{k,l}(U\times K,E)$ such that 
$\eta(x,y)=0$ if $\left\|x\right\|_\infty>r$ for some $r>0$. If $U=\Real^n$, then the proof is finished. 
Otherwise, we use Step $2.1$ and 
approximate $\eta$ with a partially $C^{k,l}$-function $\zeta$ such that

\begin{align*}
	\supp(\zeta)&\subseteq\left\{(x,y)\in U\times K : (\exists r,s>0)\left\|x\right\|_\infty\leq r\mbox{ and }d(x,\partial U)\geq s\right\}\\
	&=\left\{(x,y)\in \overline{U}\times K : (\exists r,s>0)\left\|x\right\|_\infty\leq r\mbox{ and }d(x,\partial U)\geq s\right\}.
\end{align*}

This is a compact subset (see the conclusion of Step 2 for details), whence $\zeta\in C_c^{k,l}(U\times K,E)$, as required.

To prove $(ii)$, we proceed similarly, using Step $1.2$ and Step $2.2$ in the proof of Proposition 
\ref{prop:smooth-compact-supp-dense-in-weighted-Ckl-assertion}.
\end{proof}

\begin{theorem}\label{thm:weighted-Ckl-exp-law-cp-op-and-op-cp}
Let $E$ be a \Hlc space, $U\subseteq\Real^n$ be an open subset and $K\subseteq\Real^m$ be a convex, compact subset (with $K^\circ\neq\emptyset$). Let $k,l\in\Natzeroinfty$, $\Weights_1\subseteq C^k(U,\zeroinfty)$ be a set of weights on $U$
as in Theorem \ref{thm:weighted-Ckl-exp-law-comb} and $\Weights_2$ be a set of weights on $K$ such that

\begin{itemize}
	\item[(i)]	there is a weight $h_K\in\Weights_2$ such that $\inf_{y\in K}h_K(y)>0$,
	\item[(ii)]	each weight $h\in\Weights_2$ is bounded on $K$.
\end{itemize}

Then each of the linear maps

\begin{equation*}
	\Psi_1:C_{\Weights_1}^k(U,C_{\Weights_2}^l(K,E))\to C_{\Weights_1\otimes\Weights_2}^{k,l}(U\times K,E),
	\quad\gamma\mapsto\gamma^\wedge,
\end{equation*}

and

\begin{equation*}
	\Psi_2:C_{\Weights_2}^l(K,C_{\Weights_1}^k(U,E))\to C_{\Weights_2\otimes\Weights_1}^{l,k}(K\times U,E),
	\quad\gamma\mapsto\gamma^\wedge
\end{equation*}

is a homeomorphism.
\end{theorem}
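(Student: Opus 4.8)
The plan is to run, essentially verbatim, the two-step argument used for Theorem~\ref{thm:weighted-Ckl-exp-law-comb}, with the open factor $V$ replaced by the convex compact set $K$ and with Proposition~\ref{prop:smooth-compact-supp-dense-in-weighted-Ckl-assertion} replaced by Proposition~\ref{prop:Ckl-compact-supp-dense-in-weighted-Ckl-cp-op-and-op-cp}. First I would record that all the auxiliary results apply. Since $K$ is convex, compact and $K^\circ\neq\emptyset$, it is a locally convex subset of $\Real^m$ with dense interior, so $C^l_{\Weights_2}(K,E)$ and the spaces $C^{k,l}$ on $U\times K$ (and $K\times U$) are defined. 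Condition~(i) yields a weight $h_K\in\Weights_2$ with $\inf_{y\in K}h_K(y)>0$; as any compact $K'\subseteq K$ satisfies $\inf_{y\in K'}h_K(y)\ge\inf_{y\in K}h_K(y)>0$, the ``positive infimum on compacta'' hypothesis holds for $\Weights_2$, while for $\Weights_1$ it holds because its weights are continuous (Remark~\ref{rem:weights-cont-then-inf-geq-0}). Likewise condition~(ii) says every weight in $\Weights_2$ is bounded on $K$, hence on every compact subset of $K$, and continuous weights on the open set $U\subseteq\Real^n$ are bounded on compacta; so the ``bounded on compacta'' hypothesis holds for both $\Weights_1$ and $\Weights_2$, and by hypothesis $\Weights_1$ already has the form required in Theorem~\ref{thm:weighted-Ckl-exp-law-comb}.

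For $\Psi_1$ I would first treat the case where $E$ is complete. By Theorem~\ref{thm:weighted-Ck-exp-top-emb} (applied with $U$, $K$ in place of $U$, $V$) the map $\Psi_1$ is a linear topological embedding. By Proposition~\ref{prop:weighted-Ck-space-complete}, $C^l_{\Weights_2}(K,E)$ is complete, hence so is $C^k_{\Weights_1}(U,C^l_{\Weights_2}(K,E))$, whence $\operatorname{im}(\Psi_1)$ is complete and therefore closed in $C^{k,l}_{\Weights_1\otimes\Weights_2}(U\times K,E)$. On the other hand $C^{k,l}_c(U\times K,E)\subseteq\operatorname{im}(\Psi_1)$ by Proposition~\ref{prop:Ckl-cp-supp-in-image}, and $C^{k,l}_c(U\times K,E)$ is dense in $C^{k,l}_{\Weights_1\otimes\Weights_2}(U\times K,E)$ by Proposition~\ref{prop:Ckl-compact-supp-dense-in-weighted-Ckl-cp-op-and-op-cp}(i). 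Hence $\operatorname{im}(\Psi_1)$ is simultaneously closed and dense, so it is the whole space and $\Psi_1$ is a surjective topological embedding, i.e.\ a homeomorphism. For general $E$ I would pass to the completion $\widetilde E$ exactly as in Step~2 of the proof of Theorem~\ref{thm:weighted-Ckl-exp-law-comb}: the corresponding map $\widetilde\Psi_1$ for $\widetilde E$ is a homeomorphism by the complete case, so there is $\eta\in C^k_{\Weights_1}(U,C^l_{\Weights_2}(K,\widetilde E))$ with $\widetilde\Psi_1(\eta)=\gamma$; since $\partdery{\beta}(\partderx{\alpha}\eta(x))(y)=\partderx{\alpha}\partdery{\beta}\gamma(x,y)\in E$ for all admissible $\alpha,\beta,x,y$, two applications of Lemma~\ref{lem:lcx-part-Ck-in-image} give $\eta\in C^k_{\Weights_1}(U,C^l_{\Weights_2}(K,E))$, so $\Psi_1$ is surjective.

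The assertion for $\Psi_2$ is proved by the same scheme with the roles of the two factors interchanged: one applies Theorem~\ref{thm:weighted-Ck-exp-top-emb} with $K$ as first and $U$ as second factor (its hypotheses are again supplied by the checks above), uses Proposition~\ref{prop:weighted-Ck-space-complete} for completeness of $C^k_{\Weights_1}(U,E)$ and hence of $C^l_{\Weights_2}(K,C^k_{\Weights_1}(U,E))$ when $E$ is complete, uses the version of Proposition~\ref{prop:Ckl-cp-supp-in-image} with the factors swapped to get $C^{l,k}_c(K\times U,E)\subseteq\operatorname{im}(\Psi_2)$, invokes Proposition~\ref{prop:Ckl-compact-supp-dense-in-weighted-Ckl-cp-op-and-op-cp}(ii) for density, and finally passes to the completion as before. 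I do not expect a genuine obstacle: all the hard analysis has already been absorbed into Proposition~\ref{prop:Ckl-compact-supp-dense-in-weighted-Ckl-cp-op-and-op-cp} and into the structural results of the preceding sections, so the only point demanding real attention is the bookkeeping verification that the compact-factor hypotheses~(i) and~(ii) provide exactly the ``positive infimum on compacta'' and ``bounded on compacta'' conditions needed by Theorem~\ref{thm:weighted-Ck-exp-top-emb}, Proposition~\ref{prop:weighted-Ck-space-complete} and Proposition~\ref{prop:Ckl-cp-supp-in-image}.
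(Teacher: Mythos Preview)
Your proposal is correct and follows exactly the approach indicated in the paper, which simply says to argue as in Theorem~\ref{thm:weighted-Ckl-exp-law-comb} with Proposition~\ref{prop:Ckl-compact-supp-dense-in-weighted-Ckl-cp-op-and-op-cp} in place of Proposition~\ref{prop:smooth-compact-supp-dense-in-weighted-Ckl-assertion}. Your explicit verification that hypotheses~(i) and~(ii) on $\Weights_2$ supply the ``positive infimum on compacta'' and ``bounded on compacta'' conditions needed by Theorem~\ref{thm:weighted-Ck-exp-top-emb}, Proposition~\ref{prop:weighted-Ck-space-complete} and Proposition~\ref{prop:Ckl-cp-supp-in-image} is a useful expansion of what the paper leaves implicit.
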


\begin{proof}
The proposition can be proven similarly to Theorem \ref{thm:weighted-Ckl-exp-law-comb}, using Proposition
\ref{prop:Ckl-compact-supp-dense-in-weighted-Ckl-cp-op-and-op-cp} instead of Proposition 
\ref{prop:smooth-compact-supp-dense-in-weighted-Ckl-assertion}.
\end{proof}

\begin{corollary}\label{cor:weighted-Ckl-exp-law-cp-open}
Let $E$ be a \Hlc space, $U\subseteq\Real^n$ be an open subset and $K\subseteq\Real^m$ be a convex, compact subset (with $K^\circ\neq\emptyset$). Let $k,l\in\Natzeroinfty$, and $\Weights_1$, $\Weights_2$ be sets of weights
on $U$ and $K$, respectively, as is Theorem \ref{thm:weighted-Ckl-exp-law-cp-op-and-op-cp}.

Then the linear map 

\begin{equation*}
\begin{split}
	\Psi:C_{\Weights_1}^k(U,C_{\Weights_2}^l(K,E))&\to C_{\Weights_2}^l(K,C_{\Weights_1}^k(U,E)),\\
	\quad\Psi(\gamma)&:=(y\mapsto(x\mapsto\gamma(x)(y)))
\end{split}
\end{equation*}

is a homeomorphism.
\end{corollary}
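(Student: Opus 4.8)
The plan is to imitate the proof of Corollary~\ref{cor:weighted-Ckl-exp-law} almost verbatim, with Theorem~\ref{thm:weighted-Ckl-exp-law-cp-op-and-op-cp} playing the role that Theorem~\ref{thm:weighted-Ckl-exp-law-comb} played there; the map $\Psi$ will be obtained as a composition of three homeomorphisms.

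First, since $\Weights_1$ and $\Weights_2$ satisfy the hypotheses of Theorem~\ref{thm:weighted-Ckl-exp-law-cp-op-and-op-cp}, that theorem yields that
\begin{equation*}
	\Psi_1:C_{\Weights_1}^k(U,C_{\Weights_2}^l(K,E))\to C_{\Weights_1\otimes\Weights_2}^{k,l}(U\times K,E),\quad\gamma\mapsto\gamma^\wedge
\end{equation*}
is a homeomorphism, and likewise that
\begin{equation*}
	C_{\Weights_2}^l(K,C_{\Weights_1}^k(U,E))\to C_{\Weights_2\otimes\Weights_1}^{l,k}(K\times U,E),\quad\delta\mapsto\delta^\wedge
\end{equation*}
is a homeomorphism; I let $\Psi_3$ denote the inverse of the latter map.

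The only ingredient not already supplied is the coordinate-swap map
\begin{equation*}
	\Psi_2:C_{\Weights_1\otimes\Weights_2}^{k,l}(U\times K,E)\to C_{\Weights_2\otimes\Weights_1}^{l,k}(K\times U,E),\quad\eta\mapsto\big((y,x)\mapsto\eta(x,y)\big),
\end{equation*}
which I would verify to be a (linear) homeomorphism as follows. By Remark~\ref{rem:C-alpha-beta-eq-partially-Ckl}, a continuous map $\eta:U\times K\to E$ is partially $C^{k,l}$ if and only if $(y,x)\mapsto\eta(x,y)$ is partially $C^{l,k}$, and under this correspondence $\partderx{\alpha}\partdery{\beta}\eta(x,y)$ is carried to $\partdery{\beta}\partderx{\alpha}$ of the flipped map; hence the defining seminorms satisfy $\left\|\eta\right\|_{f_1\otimes f_2,(\alpha,\beta),q}=\left\|\Psi_2(\eta)\right\|_{f_2\otimes f_1,(\beta,\alpha),q}$ for all $f_1\in\Weights_1$, $f_2\in\Weights_2$, $q\in\contsemiE$ and admissible $\alpha,\beta$. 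Thus $\Psi_2$ is a bijection identifying the two families of seminorms, whence a topological isomorphism with inverse the analogous swap.

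Finally I would set $\Psi:=\Psi_3\circ\Psi_2\circ\Psi_1$, a homeomorphism as a composition of homeomorphisms, and trace the definitions: for $\gamma\in C_{\Weights_1}^k(U,C_{\Weights_2}^l(K,E))$ one has $\Psi_1(\gamma)=((x,y)\mapsto\gamma(x)(y))$, then $\Psi_2$ sends this to $((y,x)\mapsto\gamma(x)(y))$, and $\Psi_3$ sends that to $(y\mapsto(x\mapsto\gamma(x)(y)))$, which is exactly the asserted formula. I do not expect a genuine obstacle here: everything reduces to the two cited theorems together with the Schwarz symmetry recorded in Remark~\ref{rem:C-alpha-beta-eq-partially-Ckl}, and the only point requiring a little care is the verification that the coordinate flip simultaneously preserves the differentiability class and the weighted seminorms.
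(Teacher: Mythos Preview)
Your proposal is correct and follows essentially the same approach as the paper, which simply says to argue as in Corollary~\ref{cor:weighted-Ckl-exp-law} but with Theorem~\ref{thm:weighted-Ckl-exp-law-cp-op-and-op-cp} in place of Theorem~\ref{thm:weighted-Ckl-exp-law-comb}. Your explicit verification that the coordinate swap $\Psi_2$ is a homeomorphism (via Remark~\ref{rem:C-alpha-beta-eq-partially-Ckl} and the equality of seminorms) supplies a detail the paper leaves as ``obvious'' in the proof of Corollary~\ref{cor:weighted-Ckl-exp-law}.
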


\begin{proof}
The assertion can be proven similarly to Corollary \ref{cor:weighted-Ckl-exp-law}, using Theorem 
\ref{thm:weighted-Ckl-exp-law-cp-op-and-op-cp} for the
construction of the homeomorphism $\Psi$.
\end{proof}

\begin{remark}\label{rem:weighted-Ck-space-eq-Ck-space}
Setting

\begin{equation*}
	\Weights_1:=C_c^\infty(U,\zeroinfty),
\end{equation*}

for a locally convex subset $U\subseteq\Real^n$, we obtain

\begin{equation*}
	C_{\Weights_1}^k(U,E)=C^k(U,E)
\end{equation*}

as topological vector spaces, for all $k\in\Natzeroinfty$. In fact, since each $f\in\Weights_1$ is continuous, the condition in Lemma
\ref{lem:k-inclusion-map-continuous} is satisfied (see Remark \ref{rem:weights-cont-then-inf-geq-0}),
whence the inclusion map

\begin{equation*}
	C_{\Weights_1}^k(U,E)\to C^k(U,E)
\end{equation*}

is continuous. Conversely, if $\gamma\in C^k(U,E)$, $f\in\Weights_1$, $\alpha\in\Natural_0^n$ with
$|\alpha|\leq k$ and $q\in\contsemiE$, then

\begin{align*}
	\left\|\gamma\right\|_{f,\alpha,q}
	&\defeq\sup_{x\in U}f(x)q\left(\partderx{\alpha}\gamma(x)\right)
	=\sup_{x\in\supp(f)}f(x)q\left(\partderx{\alpha}\gamma(x)\right)\\
	&\leq r\sup_{x\in\supp(f)}q\left(\partderx{\alpha}\gamma(x)\right)
	=r\left\|\gamma\right\|_{\supp(f),\alpha,q},
\end{align*}

where

\begin{equation*}
	r:=\max_{x\in\supp(f)}f(x)<\infty.
\end{equation*}

Thus $\gamma\in C_{\Weights_1}^k(U,E)$ and the inclusion map

\begin{equation*}
	C^k(U,E)\to C_{\Weights_1}^k(U,E)
\end{equation*}

is continuous. 

Therefore, using Corollary \ref{cor:weighted-Ckl-exp-law}, we know that

\begin{equation*}
	C^k(U,C_{\Weights_2}^l(V,E))\cong C_{\Weights_2}^l(V,C^k(U,E))
\end{equation*}

for open subsets $U\subseteq\Real^n$, $V\subseteq\Real^m$, a suitable set of weights $\Weights_2$ on $V$
and $k,l\in\Natzeroinfty$.

Moreover, using Corollary \ref{cor:weighted-Ckl-exp-law-cp-open}, we have

\begin{equation*}
	C^k(K,C_{\Weights_2}^l(V,E))\cong C_{\Weights_2}^l(V,C^k(K,E))
\end{equation*}

where $K\subseteq\Real^n$ is a convex, compact subset with $K^\circ\neq\emptyset$ and $V$, $\Weights_2$ as above.
\end{remark}

\section{Regularity of the Lie group $C_\Weights^l(U,H)$}

In this section we show that the Exponential Law for spaces of weighted differentiable functions (in particular,
Corollary \ref{cor:weighted-Ckl-exp-law-cp-open}) can be used to prove $C^k$-regularity of certain Lie groups.
Throughout the section, we will use the following definitions:

\begin{definition}\label{def:lie-group-on-lcx-space}
A \textit{Lie group}\index{Lie group} modeled on a \Hlc space $E$ is a group $G$, equipped with a $C^\infty$-manifold
structure modeled on $E$ which turns the group multiplication

\begin{equation*}
	\mu:G\times G\to G,\quad(g,h)\mapsto gh
\end{equation*}

and the inversion

\begin{equation*}
	\iota:G\to G,\quad g\mapsto g^{-1}
\end{equation*}

into smooth maps. 

We always denote by $e_G$ the neutral element of $G$ and by $\mathfrak{g}:=L(G):=T_{e_g}(G)$ the
Lie algebra of $G$.
\end{definition}

\begin{definition}\label{def:lie-group-Ck-reg}
Let $G$ be a Lie group modeled on a \Hlc space $E$ and $g\in G$. Using the tangent map of the left translation $\lambda_g:G\to G,x\mapsto gx$, the product of $g$ and $v\in T_x(G)$ for some $x\in G$ is defined as

\begin{equation*}
	g.v:=(T_x\lambda_g)(v)\in T_{gx}(G).
\end{equation*}

The Lie group $G$ is called \textit{$C^k$-semiregular}\index{Lie Group!$C^k$-semiregular}
(with $k\in\Natzeroinfty$) if for each $C^k$-curve $\gamma:\interclcl{0}{1}\to\mathfrak{g}$ there exists a (unique) 
$C^{k+1}$-curve $\Evol_G(\gamma):=\eta:\interclcl{0}{1}\to G$ such that

\begin{equation*}
	\eta(0)=e_G\mbox{ and } \eta'=\eta.\gamma.
\end{equation*}

The Lie group $G$ ia called \textit{$C^k$-regular}\index{Lie Group!$C^k$-regular} if $G$ is $C^k$-semiregular 
and the map

\begin{equation*}
	\evol_G:C^k(\interclcl{0}{1},\mathfrak{g})\to G,\quad\gamma\to\Evol_G(\gamma)(1)
\end{equation*}

is smooth.
\end{definition}

\begin{remark}\label{rem:lie-group-Ck-reg-iff}
From \cite[Lemma 3.1]{GlSemireg} follows, that a Lie group is $C^k$-regular if and only if
$G$ is $C^k$-semiregular and the map

\begin{equation*}
	\Evol_G:C^k(\interclcl{0}{1},\mathfrak{g})\to C^{k+1}(\interclcl{0}{1},G),\quad
	\gamma\mapsto\Evol_G(\gamma)
\end{equation*}

is smooth.
\end{remark}

For the theory of locally convex Lie groups the reader is referred to \cite{Milnor}, \cite{Neeb}
and \cite{GlNeeb}. Recall that if $H$ is a locally convex Lie group, then we can turn 
$C^l(\interclcl{0}{1},H)$ into a Lie group modeled on $C^l(\interclcl{0}{1},L(H))$.
Moreover, if $H$ is $C^k$-regular, then so is $C^l(\interclcl{0}{1},H)$ (more general, the last facts are 
true for Lie groups $C^l(K,H)$, where $K$ is a compact manifold, see \cite{GlSemireg}). 
By \cite[Rem. 13.4]{GlSemireg}, each $C^k$-regular Lie group $G$ is $C^l$-regular, for $l\geq k$.
Thus $C^\infty$-regularity, simply called \textit{regularity}\index{Lie group!regular}, going back to Milnor 
(\cite{Milnor}), is the weakest concept. Constructions of Lie groups of weighted smooth Lie group-valued mappings
can be found in \cite{Boseck-Czichowski-Rudolph}. In \cite{BWalter} B.Walter
describes the construction of Lie groups $C_\Weights^l(U,H)$ modeled on the space 
$C_\Weights^l(U,L(H))$, where $U$ is an open subset of a finite-dimensional space $X$ and $\Weights$ is a set of weights on $U$ such that the $o$-condition is satisfied and $\mathds{1}_U\in\Weights$. (In \cite{BWalter}, the 
author works with spaces $C_\Weights^l(U,H)^\bullet$, but these spaces coincide with $C_\Weights^l(U,H)$ if
the $o$-condition is satisfied by $\Weights$.) Moreover, the author shows that if $U$ 
is an open subset of a normed space, $\mathds{1}_U\in\Weights$ and $H$ is a Banach Lie group, then the Lie group $C_\Weights^l(U,H)$ is regular.

\medskip
From \cite{GlSemireg}, we derive the following fact:

\begin{lemma}\label{lem:Lie-group-Ck-regular-iff}
Let $G$ be a Lie group with Lie algebra $\mathfrak{g}$, $\Omega\subseteq C^k(\interclcl{0}{1},\mathfrak{g})$ 
be an open $0$-neighborhood and the map

\begin{equation*}
	\Theta:\Omega\to C^{k+1}(\interclcl{0}{1},G)
\end{equation*}

be smooth. If there exists a family $(\alpha_j)_{j\in J}$ of smooth homomorphisms

\begin{equation*}
	\alpha_j:G\to G_j
\end{equation*}

to Lie groups $G_j$ such that

\begin{equation*}
	\alpha_j\circ\Theta(\gamma) = \Evol_{G_j}(T_{e_G}\alpha_j\circ\gamma)
\end{equation*}

for each $\gamma\in\Omega$ and the maps $T_{e_G}\alpha_j$ separate points on $\mathfrak{g}$, then $G$ is $C^k$-regular and
$\Theta=\Evol_G$.
\end{lemma}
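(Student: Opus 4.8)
The plan is to proceed in three steps: first identify $\Theta$ with the restriction of $\Evol_G$ to $\Omega$, then bootstrap from $\Omega$ to all of $C^k(\interclcl{0}{1},\mathfrak{g})$ to obtain $C^k$-semiregularity, and finally deduce that $\Evol_G$ is smooth, so that Remark \ref{rem:lie-group-Ck-reg-iff} applies. For a $C^1$-curve $\eta$ in a Lie group write $\delta^\ell(\eta):=\eta(\bullet)^{-1}.\eta'(\bullet)\in\mathfrak{g}$ for the left logarithmic derivative, so that the defining property of $\Evol_G(\gamma)$ reads $\eta(0)=e_G$ and $\delta^\ell(\eta)=\gamma$. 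Since each $\alpha_j:G\to G_j$ is a smooth homomorphism, differentiating $\alpha_j\circ\lambda_g=\lambda_{\alpha_j(g)}\circ\alpha_j$ at $e_G$ gives $T_g\alpha_j(g.v)=\alpha_j(g).(L(\alpha_j)v)$ for $g\in G$ and $v\in\mathfrak{g}$, where $L(\alpha_j):=T_{e_G}\alpha_j$; hence $\delta^\ell(\alpha_j\circ\eta)=L(\alpha_j)\circ\delta^\ell(\eta)$ for every $C^1$-curve $\eta$ in $G$. First I would fix $\gamma\in\Omega$, set $\eta:=\Theta(\gamma)$, and exploit the hypothesis $\alpha_j\circ\eta=\Evol_{G_j}(L(\alpha_j)\circ\gamma)$: taking logarithmic derivatives of both sides and comparing with the identity just displayed yields $L(\alpha_j)\bigl(\delta^\ell(\eta)(t)-\gamma(t)\bigr)=0$ for all $t$ and all $j$, so $\delta^\ell(\eta)=\gamma$ because the $L(\alpha_j)$ separate points on $\mathfrak{g}$; evaluating the hypothesis at $t=0$ gives $\alpha_j(\eta(0))=e_{G_j}$ for all $j$. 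With the normalization $\Theta(\gamma)(0)=e_G$ (which is part of $\Theta$ being a candidate evolution operator; otherwise one replaces $\Theta(\gamma)$ by $\Theta(\gamma)(0)^{-1}\cdot\Theta(\gamma)$, changing neither $\delta^\ell$ nor the relations above) this shows that $\Evol_G(\gamma)$ exists and equals $\Theta(\gamma)$ for every $\gamma\in\Omega$.

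Next I would establish $C^k$-semiregularity of $G$. Given an arbitrary $C^k$-curve $\gamma:\interclcl{0}{1}\to\mathfrak{g}$, I would choose $N\in\Natural$ so large that all the rescaled curves $\gamma_i:\interclcl{0}{1}\to\mathfrak{g}$, $\gamma_i(s):=\tfrac{1}{N}\gamma\bigl(\tfrac{i-1}{N}+\tfrac{s}{N}\bigr)$ for $i\in\{1,\dots,N\}$, lie in $\Omega$; this is possible since for $0\le j\le k$ the relevant seminorms of $\gamma_i^{(j)}$ are at most $N^{-1}$ times those of $\gamma^{(j)}$, and $\Omega$ is a $0$-neighborhood. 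Using Step~1 I would then define $\eta:\interclcl{0}{1}\to G$ piecewise by $\eta(0):=e_G$ and, on $\interclcl{(i-1)/N}{i/N}$, $\eta(t):=\eta\bigl(\tfrac{i-1}{N}\bigr)\cdot\Evol_G(\gamma_i)(Nt-(i-1))$. Each piece is $C^{k+1}$ and satisfies $\delta^\ell=\gamma$ on its subinterval, and at a node the two one-sided germs both solve this logarithmic differential equation with equal value, hence agree to all orders; so $\eta\in C^{k+1}(\interclcl{0}{1},G)$ with $\eta(0)=e_G$ and $\delta^\ell(\eta)=\gamma$. Uniqueness of such $\eta$ is the standard uniqueness for left logarithmic derivatives (cf. \cite{GlNeeb}), so $\Evol_G(\gamma):=\eta$ is well defined and $G$ is $C^k$-semiregular; taking $N=1$ recovers $\Evol_G|_{\Omega}=\Theta$.

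Finally, by Remark \ref{rem:lie-group-Ck-reg-iff} it suffices to show that $\Evol_G:C^k(\interclcl{0}{1},\mathfrak{g})\to C^{k+1}(\interclcl{0}{1},G)$ is smooth. Near a given $\gamma_0$ the integer $N$ above can be chosen to work uniformly, and there $\Evol_G$ is a composite of smooth maps: the continuous linear (hence smooth) maps $\gamma\mapsto\gamma_i$ (restriction, affine reparametrization, scaling by $1/N$); the smooth map $\Theta$, together with the smooth evaluation maps $C^{k+1}(\interclcl{0}{1},G)\to G$ producing the factors $\eta((i-1)/N)$ and the smooth pointwise multiplication of the $C^{k+1}$-curve groups; affine reparametrizations back to the subintervals; and the gluing of the $N$ pieces into a single $C^{k+1}$-curve. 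The last operation is smooth by the gluing machinery for curve spaces from \cite{GlSemireg}. Hence $\Evol_G$ is smooth, $G$ is $C^k$-regular, and $\Theta=\Evol_G$ by Step~1. I expect this last step --- verifying that the piecewise-built evolution is genuinely $C^{k+1}$ at the nodes and that its assembly into $C^{k+1}(\interclcl{0}{1},G)$ is a smooth operation --- to be the main obstacle; the logarithmic-derivative and separation argument of Step~1 and the scaling estimates of Step~2 are routine.
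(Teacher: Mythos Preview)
The paper does not actually prove this lemma: it is introduced with the sentence ``From \cite{GlSemireg}, we derive the following fact'' and stated without proof, after which the text moves directly to the application in Theorem~\ref{thm:weighted-Lie-group-Ck-regular}. So there is no argument in the paper to compare yours against.

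That said, your sketch is the standard route by which such results are established in \cite{GlSemireg}: use the separating family $(L(\alpha_j))_j$ and the naturality $\delta^\ell(\alpha_j\circ\eta)=L(\alpha_j)\circ\delta^\ell(\eta)$ to identify $\Theta$ with $\Evol_G$ on $\Omega$; then cover an arbitrary $C^k$-curve by rescaled pieces $\gamma_i$ lying in $\Omega$ and concatenate the local evolutions; finally observe that the assembly is a composite of smooth building blocks (restriction/reparametrization, $\Theta$, pointwise multiplication in the curve group, and the gluing operation), all of which are handled in \cite{GlSemireg}. This is exactly the machinery the paper is deferring to.

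One small caveat worth flagging explicitly in your write-up: the hypothesis only asserts that the \emph{tangent} maps $T_{e_G}\alpha_j$ separate points on $\mathfrak{g}$, so from $\alpha_j(\Theta(\gamma)(0))=e_{G_j}$ you cannot directly conclude $\Theta(\gamma)(0)=e_G$. Your normalization $\Theta(\gamma)\mapsto\Theta(\gamma)(0)^{-1}\cdot\Theta(\gamma)$ is the right fix and preserves both smoothness and the hypothesis, but then the literal conclusion $\Theta=\Evol_G$ refers to the normalized map. In the paper's sole application the constructed $\Theta$ visibly satisfies $\Theta(\gamma)(0)=e_G$ (it arises via $\Evol_H$ composed with a chart sending $e_H$ to $0$), so this is a cosmetic point rather than a gap.
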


Now, we show the $C^k$-regularity of the Lie group $C_\Weights^l(U,H)$, checking that in the following situation all the hypotheses of Lemma \ref{lem:Lie-group-Ck-regular-iff} are satisfied.

\begin{theorem}\label{thm:weighted-Lie-group-Ck-regular}
Let $E$ be a \Hlc space and $H$ be a $C^k$-regular Lie group modeled on $E$, with $k\in\Natzeroinfty$. Let 
$\Weights\subseteq C^l(U,\zeroinfty)$ be a set of weights on an open subset $U\subseteq\Real^n$ such that

\begin{enumerate}
	\item [(i)] 	$\mathds{1}_U\in\Weights$,
	\item [(ii)]	$\Weights$ satisfies the $o$-condition,
	\item [(iii)]	for each $f\in\Weights$ and $\alpha\in\Natural_0^n$ with $|\alpha|\leq l$ there exists $g\in\Weights$ such that
					
					\begin{equation*}
						\left|\partderx{\alpha}f(x)\right|\leq g(x)
					\end{equation*}
					
					for all $x\in U$.
\end{enumerate}

Then the Lie group $G:=C_\Weights^l(U,H)$ is $C^k$-regular for each $l\in\Natzeroinfty$.
\end{theorem}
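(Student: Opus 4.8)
The strategy is to apply Lemma~\ref{lem:Lie-group-Ck-regular-iff} to $G=C_\Weights^l(U,H)$, taking the point evaluations as the separating family of homomorphisms and building the candidate evolution map by evolving in $H$ pointwise over $U$. First I would record the structural facts. The modeling space of $G$ is $C_\Weights^l(U,E)$, so its Lie algebra is $\mathfrak{g}=L(G)=C_\Weights^l(U,\mathfrak{h})$ with $\mathfrak{h}:=L(H)\cong E$; hypotheses (i)--(iii) are precisely what is needed both for B.~Walter's construction of $G$ to be available and for $\Weights$ to be admissible in the Exponential Law. For each $u\in U$ the point evaluation $\ev_u\colon G\to H,\ \phi\mapsto\phi(u)$ is a smooth group homomorphism (multiplication in $G$ is pointwise), with differential $T_{e_G}\ev_u=\ev_u\colon C_\Weights^l(U,\mathfrak{h})\to\mathfrak{h}$, $\xi\mapsto\xi(u)$. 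These differentials separate the points of $\mathfrak{g}$: if $0\neq\xi\in C_\Weights^l(U,\mathfrak{h})$ then $\xi(u)\neq 0$ for some $u\in U$.

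Next I would construct the map $\Theta$. Put $\Omega:=C^k(\interclcl{0}{1},\mathfrak{g})$, an open $0$-neighbourhood in itself. Since $H$ is $C^k$-regular it is in particular $C^k$-semiregular, so $\Evol_H$ is defined on \emph{all} of $C^k(\interclcl{0}{1},\mathfrak{h})$ and, by Remark~\ref{rem:lie-group-Ck-reg-iff}, the map $\Evol_H\colon C^k(\interclcl{0}{1},\mathfrak{h})\to C^{k+1}(\interclcl{0}{1},H)$ is smooth. Given $\gamma\in\Omega$, transpose it via Corollary~\ref{cor:weighted-Ckl-exp-law-cp-open}, applied with $K=\interclcl{0}{1}$ and $\Weights_2=\{\mathds{1}_K\}$ (so that $C_{\Weights_2}^k(K,F)=C^k(K,F)$ for locally convex $F$), to obtain $\gamma^\vee\in C_\Weights^l(U,C^k(\interclcl{0}{1},\mathfrak{h}))$, i.e.\ the assignment $u\mapsto\gamma_u:=\ev_u\circ\gamma$. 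Post-composing with $\Evol_H$ gives $u\mapsto\Evol_H(\gamma_u)$, which I would show lies in $C_\Weights^l(U,C^{k+1}(\interclcl{0}{1},H))$ and depends smoothly on $\gamma$; transposing back through the corresponding $H$-valued exponential law then produces $\Theta(\gamma)\in C^{k+1}(\interclcl{0}{1},C_\Weights^l(U,H))=C^{k+1}(\interclcl{0}{1},G)$, determined by $\Theta(\gamma)(t)(u)=\Evol_H(\gamma_u)(t)$, and $\Theta$ is smooth as a composite of the transposition diffeomorphisms with the smooth post-composition operator induced by $\Evol_H$.

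Finally, for every $u\in U$ and $\gamma\in\Omega$ one has
\[
	\bigl(\ev_u\circ\Theta(\gamma)\bigr)(t)=\Theta(\gamma)(t)(u)=\Evol_H(\gamma_u)(t)=\Evol_H\bigl(T_{e_G}\ev_u\circ\gamma\bigr)(t),
\]
so $\ev_u\circ\Theta(\gamma)=\Evol_H(T_{e_G}\ev_u\circ\gamma)$ holds by construction; since the maps $T_{e_G}\ev_u$ separate points on $\mathfrak{g}$, Lemma~\ref{lem:Lie-group-Ck-regular-iff} (with $J=U$, all $G_j=H$, $\alpha_u=\ev_u$) yields that $G$ is $C^k$-regular and $\Theta=\Evol_G$, for every $l\in\Natzeroinfty$. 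The step I expect to be the main obstacle is the middle one: establishing, within B.~Walter's framework for $C_\Weights^l(U,H)$, the $H$-valued analogue of the Exponential Law together with the ``weighted $\Omega$-lemma'' that post-composition with a smooth map of Lie groups preserves weighted $C^l$-differentiability and induces a smooth operator between the associated function spaces --- in charts this reduces to Lemma~\ref{lem:k-superposition-operator-cont}, the chain rule, and the Product Rule of Remark~\ref{rem:part-Ck-part-Ckl-product-rules}, but the globalisation and the smoothness statements require care. Everything else is routine bookkeeping once these functorial properties are in place.
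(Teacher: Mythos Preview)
Your overall strategy---apply Lemma~\ref{lem:Lie-group-Ck-regular-iff} with the point evaluations $\ev_u\colon G\to H$ as the separating family, transpose via the exponential law, evolve pointwise in $H$, and transpose back---is exactly the paper's. The difference lies in the execution, and it is not merely cosmetic.

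You take $\Omega=C^k(\interclcl{0}{1},\mathfrak{g})$ globally and then need two ingredients the paper does \emph{not} provide: an $H$-valued exponential law $C_\Weights^l(U,C^{k+1}(\interclcl{0}{1},H))\cong C^{k+1}(\interclcl{0}{1},C_\Weights^l(U,H))$, and a weighted $\Omega$-lemma for a smooth nonlinear map into a manifold. The paper sidesteps both by first choosing a chart $\phi\colon V\to W$ around $e_H$, then restricting to a small $0$-neighbourhood $Q\subseteq C^k(\interclcl{0}{1},\mathfrak{h})$ with $\Evol_H(Q)\subseteq C^{k+1}(\interclcl{0}{1},V)$, and setting $\Omega:=\Psi_1^{-1}(C_\Weights^l(U,Q))$. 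The composite $C^{k+1}(\interclcl{0}{1},\phi)\circ\Evol_H|_Q$ is then a smooth map between open subsets of \emph{locally convex spaces}, so the nonlinear push-forward $C_\Weights^l(U,Q)\to C_\Weights^l(U,C^{k+1}(\interclcl{0}{1},W))$ is smooth by the vector-valued $\Omega$-lemma already in \cite{BWalter}, and both transpositions are instances of the linear Corollary~\ref{cor:weighted-Ckl-exp-law-cp-open}. One undoes the chart only at the very end, via the diffeomorphism $C^{k+1}(\interclcl{0}{1},C_\Weights^l(U,\phi)^{-1})$. Since Lemma~\ref{lem:Lie-group-Ck-regular-iff} only asks for $\Theta$ on an open $0$-neighbourhood, nothing is lost.

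One correction: your remark that ``in charts this reduces to Lemma~\ref{lem:k-superposition-operator-cont}'' is not right. That lemma handles post-composition with a continuous \emph{linear} map; $\Evol_H$ (even read in charts) is nonlinear, and showing that $f_*\colon C_\Weights^l(U,P)\to C_\Weights^l(U,F)$ is smooth for a smooth nonlinear $f\colon P\to F$ between open subsets of locally convex spaces is a genuinely separate result (the weighted $\Omega$-lemma in \cite{BWalter}), not a consequence of Lemma~\ref{lem:k-superposition-operator-cont}, the chain rule, or the product rule. So even with the chart reduction you must invoke Walter's nonlinear push-forward, as the paper does.
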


\begin{proof}
Let $\mathfrak{h}$ be the Lie algebra of $H$. For an open $e_H$-neighborhood $V\subseteq H$ and an open $0$-neighborhood $W\subseteq~\mathfrak{h}$ there exists a
$C^\infty$-diffeomorphism $\phi:V\to W$ such that

\begin{equation*}
	d\phi\big|_\mathfrak{h}=\id_\mathfrak{h}\mbox{ and }\phi(e_H)=0.
\end{equation*}

Consider the evaluation maps

\begin{equation*}
	\varepsilon_x:G=C_\Weights^l(U,H)\to H,\quad\gamma\mapsto\gamma(x)
\end{equation*}

and

\begin{equation*}
	\ev_x:\mathfrak{g}\cong C_\Weights^l(U,\mathfrak{h})\to\mathfrak{h},\quad\gamma\mapsto\gamma(x)
\end{equation*}

for $x\in U$. We have a commutative diagram

\begin{equation*}
\begin{xy}
	\xymatrix{
	C_\Weights^l(U,V)\ar[r]^{C_\Weights^l(U,\phi)}\ar[d]_{\varepsilon_x\big|_{C_\Weights^l(U,V)}}	
	&	C_\Weights^l(U,W)\ar[d]^{\ev_x\big|_{C_\Weights^l(U,W)}}\\
	V\ar[r]_\phi			&	W
	}
\end{xy}
\end{equation*}

where

\begin{equation*}
	C_\Weights^l(U,\phi):C_\Weights^l(U,V)\to C_\Weights^l(U,W)
\end{equation*}

is a $C^\infty$-diffeomorphism and $\ev_x$ is continuous and linear, hence
$C^\infty$ (cf. \cite{GlNeeb}). Therefore

\begin{equation*}
	\varepsilon_x\big|_{C_\Weights^l(U,V)}=\phi^{-1}\circ\ev_x\big|_{C_\Weights^l(U,W)}\circ C_\Weights^l(U,\phi)
\end{equation*}

and $\varepsilon_x$ is $C^\infty$. Moreover, the diagram

\begin{equation*}
\begin{xy}
	\xymatrix{
         \mathfrak{g}\ar[rr]^{dC_\Weights^l(U,\phi)\big|_\mathfrak{g}}\ar[d]_{T_{e_G}\varepsilon_x}
		&&   C_\Weights^l(U,\mathfrak{h})\ar[d]^{\ev_x}\\
         \mathfrak{h}\ar[rr]_{d\phi\big|_\mathfrak{h}=\id_\mathfrak{h}}
		&&   \mathfrak{h}   
	}
\end{xy}	
\end{equation*}

commutes, thus we identify each $T_{e_G}\varepsilon_x$ with $\ev_x$ 
(as $dC_\Weights^l(U,\phi)\big|_\mathfrak{g}$ is an isomorphism), which separates points on 
$\mathfrak{g}$. 

The Lie group $H$ was assumed $C^k$-regular, thus the map

\begin{equation*}
	\Evol_H:C^k(\interclcl{0}{1},\mathfrak{h})\to C^{k+1}(\interclcl{0}{1},H)
\end{equation*}

is $C^\infty$ (by Remark \ref{rem:lie-group-Ck-reg-iff}), and we have $\Evol_H(0)(t)=e_H$ for each $t\in\interclcl{0}{1}$. Therefore, there exists an open 
$0$-neighborhood $Q$ in $C^k(\interclcl{0}{1},\mathfrak{h})$ such that

\begin{equation*}
	\Evol_H(Q)\subseteq C^{k+1}(\interclcl{0}{1},V).
\end{equation*}

Setting

\begin{equation*}
	\Omega:=\Psi_1^{-1}(C_\Weights^l(U,Q)),
\end{equation*}

where 

\begin{equation*}
	\Psi_1:C^k(\interclcl{0}{1},C_{\Weights}^l(U,\mathfrak{h}))\to C_{\Weights}^l(U,C^k(\interclcl{0}{1},\mathfrak{h}))
\end{equation*}

is the homeomorphism from Corollary \ref{cor:weighted-Ckl-exp-law-cp-open}, we obtain an
open $0$-neighborhood $\Omega$ in $C^k(\interclcl{0}{1},C_\Weights^l(U,\mathfrak{h}))$. We now construct the $C^\infty$-map

\begin{equation*}
	\Theta:\Omega\to C^{k+1}(\interclcl{0}{1},C_\Weights^l(U,H))
\end{equation*}

as follows.

The map

\begin{equation*}
	\Theta_1:=\Psi_1\big|_\Omega:\Omega\to C_\Weights^l(U,Q)
\end{equation*}

is a restriction of a continuous and linear map, hence $C^\infty$. Further, the maps $C^{k+1}(\interclcl{0}{1},\phi)$ and
$\Evol_H\big|_Q$are $C^\infty$, whence the map

\begin{equation*}
	C_\Weights^l(U,C^{k+1}(\interclcl{0}{1},\phi)\circ\Evol_H\big|_Q):
	C_\Weights^l(U,Q)\to C_\Weights^l(U,C^{k+1}(\interclcl{0}{1},W))
\end{equation*}

is $C^\infty$ (see \cite{BWalter}), and we set

\begin{equation*}
	\Theta_2:=i\circ C_\Weights^l(U,C^{k+1}(\interclcl{0}{1},\phi)\circ\Evol_H\big|_Q):
	C_\Weights^l(U,Q)\to C_\Weights^l(U,C^{k+1}(\interclcl{0}{1},\mathfrak{h})).
\end{equation*}

with the inclusion map $i:C_\Weights^l(U,C^{k+1}(\interclcl{0}{1},W))\to 
C_\Weights^l(U,C^{k+1}(\interclcl{0}{1},\mathfrak{h}))$. By Corollary 
\ref{cor:weighted-Ckl-exp-law-cp-open}, the flip

\begin{equation*}
	\Psi_2:C_\Weights^l(U,C^{k+1}(\interclcl{0}{1},\mathfrak{h}))\to
	C^{k+1}(\interclcl{0}{1},C_\Weights^l(U,\mathfrak{h}))
\end{equation*}

is $C^\infty$ (being continuous and linear). Now, we have $im(\Psi_2\circ\Theta_2\circ\Theta_1)
\subseteq C^{k+1}(\interclcl{0}{1},C_\Weights^l(U,W))$, which is open
in $C^{k+1}(\interclcl{0}{1},C_\Weights^l(U,\mathfrak{h}))$, thus we obtain the $C^\infty$-map

\begin{equation*}
	\Theta_3:=(\Psi_2\circ\Theta_2\circ\Theta_1)\big|^{C^{k+1}(\interclcl{0}{1},C_\Weights^l(U,W))}:
	\Omega\to C^{k+1}(\interclcl{0}{1},C_\Weights^l(U,W)).
\end{equation*}

With the $C^\infty$-diffeomorphism 

\begin{equation*}
	C^{k+1}(\interclcl{0}{1},C_\Weights^l(U,\phi)^{-1}):C^{k+1}(\interclcl{0}{1},C_\Weights^l(U,W))\to
	C^{k+1}(\interclcl{0}{1},C_\Weights^l(U,V))
\end{equation*}

and the inclusion map $j:C^{k+1}(\interclcl{0}{1},C_\Weights^l(U,V))\to C^{k+1}(\interclcl{0}{1},C_\Weights^l(U,H))$ 
we get

\begin{multline*}
	\Theta_4:=j\circ C^{k+1}(\interclcl{0}{1},C_\Weights^l(U,\phi)^{-1}):
	C^{k+1}(\interclcl{0}{1},C_\Weights^l(U,W))\\ \to C^{k+1}(\interclcl{0}{1},C_\Weights^l(U,H))
\end{multline*}

and finally construct the $C^\infty$-map

\begin{equation*}
	\Theta:=\Theta_4\circ\Theta_3:\Omega\to C^{k+1}(\interclcl{0}{1},C_\Weights^l(U,H)).
\end{equation*}

It remains to show that

\begin{equation*}
	\varepsilon_x\circ\Theta(\gamma)=\Evol_H(\ev_x\circ\gamma)
\end{equation*}

for each $\gamma\in\Omega$ and $x\in U$. By construction of $\Theta$ we have

\begin{align*}
	\Theta(\gamma)
	&=(\Theta_4\circ\Theta_3)(t\mapsto(x\mapsto\gamma(t)(x)))\\
	&=(\Theta_4\circ\Psi_2\circ\Theta_2\circ\Theta_1)(t\mapsto(x\mapsto\gamma(t)(x)))\\
	&=(\Theta_4\circ\Psi_2\circ\Theta_2)(x\mapsto(t\mapsto\gamma(t)(x)))\\
	&=(\Theta_4\circ\Psi_2)(x\mapsto(s\mapsto\phi(\Evol_H(t\mapsto\gamma(t)(x))(s))))\\
	&=\Theta_4(s\mapsto(x\mapsto\phi(\Evol_H(t\mapsto\gamma(t)(x))(s))))\\
	&=(s\mapsto(x\mapsto\Evol_H(t\mapsto\gamma(t)(x))(s))),
\end{align*}

hence for $s\in\interclcl{0}{1}$ we see that

\begin{align*}
	(\varepsilon_x\circ\Theta(\gamma))(s)
	&=\Evol_H(t\mapsto\gamma(t)(x))(s)=\Evol_H(\ev_x\circ\gamma)(s),
\end{align*}

as required, and we obtain the desired result using Lemma \ref{lem:Lie-group-Ck-regular-iff}.
\end{proof}


\begin{thebibliography}{50}
\bibitem{Alz}
	Alzaareer, H.,
	\emph{Lie groups of mappings on non-compact spaces and manifolds},
	Paderborn, Univ., Diss., 2013,
	\url{nbn-resolving.de/urn:nbn:de:hbz:466:2-11572}

\bibitem{Bastiani}
	Bastiani, A.,
	\emph{Applications diff\'{e}rentiables et vari\'{e}t\'{e}s diff\'{e}rentiables de dimension infinie},
	J. Analyse Math. 13 (1964), 1 - 114

\bibitem{Bierstedt}
	Bierstedt, K.-D.,
	\emph{Gewichtete R\"aume stetiger vektorwertiger Funktionen und das injektive Tensorprodukt I},
	 J. Reine Angew. Math. 259 (1973), 186 - 210

\bibitem{Boseck-Czichowski-Rudolph}
	Boseck, H., Czichowski, G., Rudolph, K.-P.,
	Analysis on Topological Groups - General Lie Theory,
	Teubner-Texte zur Mathematik, Bd.37,
	Teubner Verlagsgesellschaft Leipzig, 1981

\bibitem{BourbGenTop510}
	Bourbaki, N.,
	General Topology 5-10,
	Springer-Verlag, 1998

\bibitem{Garnir-DeWilde-SchmetsIII}
	Garnir, H.G., De Wilde, M., Schmets, J.,
	Analyse fonctionnelle. Tome III: Espaces fonctionnels usuels,
	Birkh\"auser Verlag Basel, 1973

\bibitem{GlLieGrStr}
	Gl\"ockner, H.,
	\emph{Lie group structures on quotient groups and universal complexifications for infinite-dimensional Lie groups},
	J. Funct. Anal. 194 (2002), No.2, 347 - 409

\bibitem{GlSemireg} 
	Gl\"ockner, H., 
	\emph{Regularity properties of infinite-dimensional Lie groups, and semiregularity},
	preprint, \url{arXiv:1208.0715}
	
\bibitem{GlExpLaws}
	Gl\"ockner, H.,
	\emph{Exponential laws for ultrametric partially differentiable functions and applications},
	 extended preprint version \url{arXiv: 1209.1384} (cf. publication in p-Adic Numbers Ultrametric Anal. Appl. 5 (2013), no. 2, pp. 122 -159)
	
\bibitem{GlNeeb}
	Gl\"ockner, H., Neeb, K.-H.,
	\emph{Infinite-dimensional Lie groups},
	book in preparation
	
\bibitem{Keller}
	Keller, H.H.,
	Differential Calculus in Locally Convex Spaces,
	Lecture Notes in Mathematics 417,
	Springer-Verlag, 1974

\bibitem{KelleyGenTop}
	Kelley, J.L.,
	General Topology,
	Springer-Verlag, 1975

\bibitem{Kriegl-Michor-Rainer}
	Kriegl A., Michor P.W., Rainer A.,
	\emph{The exponential law for spaces of test functions and diffeomorphism groups},
	Indagationes Mathimaticae (2015) \url{dx.doi.org/10.1016/j.indag.2015.10.006}
	
\bibitem{Milnor}
	Milnor, J.,
	\emph{Remarks on infinite dimensional Lie groups},
	Relativit\'{e}, groupes et topologie II (Les Houches, 1983), 1007-1055

\bibitem{Nachbin}
	Nachbin, L.,
	\emph{Elements of approximation theory},
	Van Nostrand Mathematical Studies, No. 14, Princeton, 1967

\bibitem{Neeb}
	Neeb, K.-H.,
	\emph{Towards a Lie theory of locally convex groups},
	Jpn. J. Math. 1 (2006), no. 2, 291 - 468.

\bibitem{NikMT}
	Nikitin, N.,
	\emph{Exponential laws for weighted function spaces and regularity of weighted mapping groups},
	master's thesis,
	University of Paderborn, 2015 (advisor: Helge Gl\"ockner)
	
\bibitem{Prolla}
	Prolla, J.,
	\emph{Weighted spaces of vector-valued continuous functions},
	Ann. Mat. Pura Appl. (4) 89 (1971), 145 - 157
	
\bibitem{SchTopVec}
	Sch\"afer, H. H.,
	Topological Vector Spaces,
	Springer-Verlag, 1971
	
\bibitem{Summers}
	Summers, W.H.,
	\emph{A representation theorem for biequicontinuous completed tensor products of weighted spaces},
	Trans. Amer. Math. Soc. 146 (1969) 121 - 131
	
\bibitem{BWalter}
	Walter, B.,
	\emph{Weighted diffeomorphism groups of Banach spaces and weighted mapping groups},
	Diss. Math. 484, 2012

\bibitem{WalAnalysis1}
	Walter, W.,
	Analysis 1,
	Springer-Verlag, 2004

\bibitem{WalAnalysis2}
	Walter, W.,
	Analysis 2,
	Springer-Verlag, 1992
\end{thebibliography}
\end{document}